\documentclass[11pt]{article}

\usepackage[english]{babel}
\usepackage{amsmath,amssymb,mathtools}
\usepackage{amsthm}
\usepackage{bm}
\usepackage{booktabs}
\usepackage{enumitem}
\usepackage{tikz}
\usepackage{float}
\usepackage[margin=1in]{geometry}
\usepackage[expansion=false]{microtype}
\usepackage{natbib}
\setcitestyle{semicolon}
\usepackage[pdfencoding=auto,
  pdftitle={Spectral Equality for Novikov Integrability: Recursive Criticality and Unbounded Asymptotic Depth},
  pdfauthor={Jaewoo Lee},
  pdflang={en-GB}]{hyperref}
\hypersetup{colorlinks=true,linkcolor=blue,citecolor=blue,urlcolor=blue}
\usepackage[nameinlink,capitalise]{cleveref}
\usepackage{aliascnt}

\newtheorem{theorem}{Theorem}[section]

\newaliascnt{lemma}{theorem}
\newtheorem{lemma}[lemma]{Lemma}
\aliascntresetthe{lemma}

\newaliascnt{proposition}{theorem}
\newtheorem{proposition}[proposition]{Proposition}
\aliascntresetthe{proposition}

\newaliascnt{corollary}{theorem}
\newtheorem{corollary}[corollary]{Corollary}
\aliascntresetthe{corollary}

\newaliascnt{definition}{theorem}
\newtheorem{definition}[definition]{Definition}
\aliascntresetthe{definition}

\newaliascnt{example}{theorem}
\newtheorem{example}[example]{Example}
\aliascntresetthe{example}

\theoremstyle{remark}
\newaliascnt{remark}{theorem}
\newtheorem{remark}[remark]{Remark}
\aliascntresetthe{remark}

\crefname{theorem}{Theorem}{Theorems}
\Crefname{theorem}{Theorem}{Theorems}
\crefname{lemma}{Lemma}{Lemmas}
\Crefname{lemma}{Lemma}{Lemmas}
\crefname{proposition}{Proposition}{Propositions}
\Crefname{proposition}{Proposition}{Propositions}
\crefname{corollary}{Corollary}{Corollaries}
\Crefname{corollary}{Corollary}{Corollaries}
\crefname{definition}{Definition}{Definitions}
\Crefname{definition}{Definition}{Definitions}
\crefname{example}{Example}{Examples}
\Crefname{example}{Example}{Examples}
\crefname{remark}{Remark}{Remarks}
\Crefname{remark}{Remark}{Remarks}
\crefname{appendix}{Appendix}{Appendices}
\Crefname{appendix}{Appendix}{Appendices}

\renewcommand{\epsilon}{\varepsilon}
\AtBeginDocument{\renewcommand{\d}{\mathrm{d}}}
\newcommand{\dd}{\,\mathrm{d}}
\allowdisplaybreaks
\title{Spectral Equality for Novikov Integrability:
Recursive Criticality and Unbounded Asymptotic Depth}

\author{Jaewoo Lee\thanks{%
  Email: \texttt{jaewoolee7@outlook.com}.}}
\date{}

\begin{document}
\maketitle

\begin{abstract}
We determine the finiteness boundary of the Novikov exponential moment for a
one-dimensional constant-volatility mean-reverting diffusion.  A localised
change of measure cancels the squared drift and leaves a Brownian
Feynman--Kac functional with potential \(q/2\), where \(q=-\lambda'\).  If
\(q(\theta+y)\sim\kappa_\infty y^2\), the exact spectral boundary is
\(\kappa_\infty\sigma^2T^2=\pi^2\); for cubic drift it becomes
\(c\sigma^2T^2=\pi^2/3\), and equality is divergent.  On the spectral
equality surface, the first lower-order transition occurs at power \(4/3\),
where an explicit coefficient separates the two sides.  For symmetric finite
pure-power tails, exact tuning generates the recursion
\(\beta_n=1+3^{-(n+1)}\).  We prove that this recursion gives a complete
classification of the class.  Each individual tail is decided after
finitely many comparisons, but the required depth is unbounded: arbitrarily
long common critical prefixes can lead to opposite outcomes.  The
drift-removing stochastic exponential nevertheless remains a true
martingale; under the physical law it has no higher moments on the
steep-drift class, while the reverse density is essentially bounded.

\medskip
\noindent\textbf{Keywords.} nonlinear mean reversion; exponential
functional; Novikov condition; stochastic exponential; Brownian bridge;
critical moment.

\smallskip
\noindent\textbf{MSC 2020.} 60H10; 60J60; 60G44; 60G15.
\end{abstract}

\section{Introduction}\label{sec:introduction}

Let \(X\) solve
\begin{equation}\label{eq:intro-sde}
  \d X_t=\lambda(X_t)\dd t+\sigma\dd W_t,
  \qquad \sigma>0,
\end{equation}
where the drift \(\lambda\) is decreasing and vanishes at a unique equilibrium
\(\theta\).  Put
\begin{equation}\label{eq:intro-q}
  \eta(x):=\frac{\lambda(x)}{\sigma},
  \qquad q(x):=-\lambda'(x)\ge0,
\end{equation}
and consider the pointwise exponential functional
\begin{equation}\label{eq:intro-functional}
  \mathcal N_a(T,x)
  :=\mathbb E_x\bigg[
       \exp\bigg\{a\int_0^T\eta(X_s)^2\dd s\bigg\}
     \bigg],
  \qquad a\ge0.
\end{equation}
At \(a=1/2\), finiteness of \(\mathcal N_a(T,x)\) is the classical
Novikov condition for
\begin{equation}\label{eq:intro-Z}
  Z_t=\mathcal E\bigg(-\int_0^\cdot\eta(X_s)\dd W_s\bigg)_t.
\end{equation}
We study the pointwise finiteness boundary of
\cref{eq:intro-functional} for nonlinear mean reversion, with particular
attention to the spectral equality surface.  This is a stronger question
than martingality.  Novikov finiteness implies that \(Z\) is a martingale,
but failure of the condition need not cause loss of mass.  In the class
considered below, \(Z\) remains a true martingale beyond the moment boundary.

The two directions of the change of measure are sharply asymmetric in the
steep-drift regime.  The physical-to-Brownian density has no higher moments,
whereas the Brownian-to-physical density is essentially bounded.  These
statements hold for every horizon, independently of the critical time found
below.  Moment criticality, martingality, and directional likelihood-ratio
integrability therefore have different thresholds.

A localised change of measure gives an exact cancellation.  If
\(\Lambda'=\lambda\), transfer to the measure under which \(X\) is Brownian,
followed by It\^o's formula, gives
\begin{align}\label{eq:intro-transfer}
 \mathcal N_a(T,x)
 =\mathbb E^{\mathbb Q_0}_x\bigg[
  \exp\bigg\{
   \frac{\Lambda(X_T)-\Lambda(x)}{\sigma^2}
   +\int_0^T\Psi_a(X_s)\dd s
  \bigg\}\bigg],
 \qquad
 \Psi_a:=\frac{q}{2}+
          \bigg(a-\frac12\bigg)\frac{\lambda^2}{\sigma^2},
\end{align}
where \(X_s=x+\sigma W_s\) under \(\mathbb Q_0\).  The localisation is important:
\cref{eq:intro-transfer} is proved without assuming the global martingale
property that the formula is later used to study.

The moment boundary has a concrete likelihood interpretation.  Once
\cref{thm:Z-martingale} identifies \(Z_T\) as the likelihood ratio of the
drift-removed law with respect to the physical law, put
\[
 M_t:=\int_0^t\eta(X_s)\dd W_s,
 \qquad
 \langle M\rangle_T=\int_0^T\eta(X_s)^2\dd s.
\]
Then \(Z_T=\exp\{-M_T-\langle M\rangle_T/2\}\) and
\(\mathcal N_a(T,x)=\mathbb E_x[e^{a\langle M\rangle_T}]\).  Define the
exponential-moment domain
\begin{equation}\label{eq:intro-moment-domain}
 \mathcal A_T(x):=
 \{a\ge0:\mathcal N_a(T,x)<\infty\}.
\end{equation}
For every \(a\in\mathcal A_T(x)\) and \(u\ge0\), Markov's inequality gives
the standard Chernoff bound
\begin{equation}\label{eq:intro-chernoff}
 \mathbb P_x\big(\langle M\rangle_T>u\big)
 \le \mathcal N_a(T,x)e^{-au}.
\end{equation}
On \(\mathcal D_\star\), \cref{cor:critical-coefficient} and monotonicity in
\(a\) imply that, for each fixed \(T\) and \(x\), precisely one of the two
possibilities holds:
\begin{equation}\label{eq:intro-domain-dichotomy}
 \mathcal A_T(x)=[0,1/2)
 \qquad\text{or}\qquad
 \mathcal A_T(x)=[0,1/2].
\end{equation}
Moreover, for \(0\le a<1/2\), \cref{eq:subcritical-bound} makes
\cref{eq:intro-chernoff} explicit:
\begin{equation}\label{eq:intro-explicit-chernoff}
 \mathbb P_x\big(\langle M\rangle_T>u\big)
 \le
 \exp\bigg\{
  \frac{\Lambda(\theta)-\Lambda(x)}{\sigma^2}
  +T\sup_{y\in\mathbb R}\Psi_a(y)-au
 \bigg\}.
\end{equation}
Thus the equality problem asks whether the endpoint \(1/2\) belongs to
\(\mathcal A_T(x)\).  For cubic reversion, \cref{eq:intro-cubic} gives
\begin{equation}\label{eq:intro-cubic-moment-domain}
 \mathcal A_T(x)=
 \begin{cases}
  [0,1/2],&c\sigma^2T^2<\pi^2/3,\\
  [0,1/2),&c\sigma^2T^2\ge\pi^2/3.
 \end{cases}
\end{equation}
Below the critical horizon, the endpoint choice in
\cref{eq:intro-chernoff} yields
\(\mathbb P_x(\langle M\rangle_T>u)
 \le\mathcal N_{1/2}(T,x)e^{-u/2}\).  At and above it, that endpoint
moment-based bound is unavailable, although every bound with \(a<1/2\)
remains available.  This is a statement about endpoint inclusion in the
Laplace-transform domain; it does not rule out tail estimates obtained by
other methods.

The paper has two central contributions.  The first is an exact spectral
criticality theorem, including the equality behaviour of cubic reversion.
The second is a complete equality-surface classification on the class
\(\mathcal P_{\mathrm{sym}}(\kappa_\infty)\) of~\cref{def:Psym}: a recursive sequence of
critical powers and coefficients decides every tail in the class, although
the number of required comparisons is not uniformly bounded.  The remaining
results locate these statements within the off-critical moment problem and
the two directions of the change of measure.

The two terms in \(\Psi_a\) create two different phase transitions.  On the
class \(\mathcal D_\star\) of~\cref{def:Dstar}, \(\lambda^2\) dominates \(q\)
in the tails.  Hence the sign of \(a-1/2\) decides every off-critical case:
\(\mathcal N_a(T,x)\) is finite for every \(T\) when \(a<1/2\), and infinite
for every \(T>0\) when \(a>1/2\).  At \(a=1/2\), however, the entire
\(\lambda^2\)-term cancels and the residual potential is \(q/2\).  Brownian
paths pay a quadratic Cameron--Martin cost to spend time far from the origin,
so subquadratic \(q\) is integrable, windowed superquadratic \(q\) is not, and
quadratic \(q\) is the spectral boundary.

Suppose, in particular, that
\begin{equation}\label{eq:intro-kappa}
  \frac{q(\theta+y)}{y^2}\longrightarrow\kappa_\infty
  \in(0,\infty).
\end{equation}
Conditioning the transferred Brownian motion on its endpoint produces a
Brownian bridge.  Its covariance eigenvalues are
\(T^2/(n^2\pi^2)\), and therefore the strict sides of the boundary are
\begin{equation}\label{eq:intro-spectral-surface}
  \kappa_\infty\sigma^2T^2<\pi^2
  \quad\text{and}\quad
  \kappa_\infty\sigma^2T^2>\pi^2.
\end{equation}
For the cubic drift \(\lambda(\theta+y)=-cy^3\), one has
\(\kappa_\infty=3c\), so the exact result becomes
\begin{equation}\label{eq:intro-cubic}
  \mathcal N_{1/2}(T,x)<\infty
  \quad\Longleftrightarrow\quad
  c\sigma^2T^2<\frac{\pi^2}{3};
\end{equation}
equality belongs to the divergent side.

The leading quadratic coefficient does not decide the equality surface.
Suppose that the first correction has the form
\begin{equation}\label{eq:intro-second-order}
 q(\theta+y)
 =\kappa_\infty y^2-b|y|^\alpha+o(|y|^\alpha),
 \qquad b>0,\quad 0<\alpha<2,
\end{equation}
and impose \(\kappa_\infty\sigma^2T^2=\pi^2\).  We prove
\begin{equation}\label{eq:intro-alpha-transition}
 \alpha<\frac43
 \ \Longrightarrow\ \mathcal N_{1/2}(T,x)=\infty,
 \qquad
 \alpha>\frac43
 \ \Longrightarrow\ \mathcal N_{1/2}(T,x)<\infty.
\end{equation}
The exact asymptotic in \cref{eq:intro-second-order} can be replaced by
one-sided comparisons for both implications.  At \(\alpha=4/3\), define
\begin{equation}\label{eq:intro-b-star}
 b_*=\frac{3^{4/3}\sigma^{1/3}\kappa_\infty^{5/6}}{2J_{4/3}},
 \qquad
 J_{4/3}=\int_0^\pi\sin^{4/3}u\dd u.
\end{equation}
We prove divergence for \(b<b_*\) and finiteness for \(b>b_*\).  At
\(b=b_*\), the next critical power is \(10/9\), with a second explicit
coefficient threshold.  Signed regularly varying corrections above that
power are decided by their sign, while the exact two-term tail is divergent.
The mechanism is a balance between the one-dimensional Legendre growth of
the critical bridge mode and quartic terminal confinement.  This balance
also explains why two-term data at coefficient equality are insufficient:
the next stated term can reverse the answer.

Our second main result makes this hierarchy complete on the symmetric finite
pure-power class \(\mathcal P_{\mathrm{sym}}(\kappa_\infty)\) of
\cref{def:Psym}.  Define
\[
 \beta_0=\frac43,\qquad
 \beta_{n+1}=\frac{\beta_n+2}{3},
 \qquad
 \beta_n=1+3^{-(n+1)}.
\]
A quantitative multiscale bridge expansion shows that the endpoint term at
power \(\beta_n+2\) is matched by the next bridge term at power
\(3\beta_{n+1}\).  The resulting coefficient recursion starts with
\(c_0^*=-b_*\) and satisfies \(c_{n+1}^*/c_n^*\to1/2\).  More than a local
continuation rule is proved: \cref{thm:Psym-complete} classifies every member
of \(\mathcal P_{\mathrm{sym}}(\kappa_\infty)\).  The decision procedure
terminates for each fixed finite expansion, but its depth has no uniform
bound.  For every \(N\), \cref{cor:arbitrary-depth-pairs} constructs two
tails with the same first \(N+1\) critical terms and opposite moment
outcomes.  Thus the equality theory is complete on the stated class, yet no
fixed finite amount of asymptotic data classifies the whole class.
Equivalently, on the spectral equality surface the entire recursive hierarchy
decides whether the endpoint \(1/2\) belongs to the moment domain
\(\mathcal A_T(x)\).

Several companion results complete the phase diagram.  On the steep-drift
class \(\mathcal D_\star\), all off-critical coefficients are decided by
their position relative to \(a=1/2\).  Subquadratic and windowed
superquadratic reversion give the two extreme critical regimes, while sinh
reversion has a coefficient threshold rather than a horizon threshold.  The
density \(Z\) in
\cref{eq:intro-Z} remains a true martingale throughout, but on
\(\mathcal D_\star\) it has no higher-order moment.  In the
opposite direction, \(D_T=\d\mathbb P^x/\d \mathbb Q_0\) is essentially bounded
under Brownian motion.  The latter two conclusions actually hold under the
weaker hypotheses stated in
\cref{thm:no-higher-moments,prop:forward-density-moments};
\(\mathcal D_\star\) is their principal special case.

The closest historical question is due to \citet{stummer1993}, who asked how
Novikov integrability changes when its coefficient \(1/2\) is replaced by
\(1/2\pm\varepsilon\).  On the steep mean-reverting class
\(\mathcal D_\star\), \cref{cor:critical-coefficient} gives a sharp pointwise
answer: every coefficient below \(1/2\) is finite on all horizons, every
coefficient above \(1/2\) is infinite on every positive horizon, and only
\(a=1/2\) retains a nontrivial horizon problem.  Classical Novikov,
Kazamaki, and Bene\v{s}-type criteria instead give sufficient conditions for
a stochastic exponential to be a true martingale
\citep{novikov,kazamaki,benes1971,klebanerliptser2014}.  Diffusion-specific
tests can decide that martingale question directly \citep{mu,ruf,larssonruf2019}.
They concern preservation of mass, whereas we determine the pointwise
finiteness set of a particular exponential moment.  The latter property is
strictly stronger than martingality and is not implied by it.

Integral and Kac functionals of one-dimensional diffusions connect additive
functionals to Schr\"odinger equations and provide convergence, divergence,
and moment criteria \citep{musiela1985,musiela1986,stummersturm2000}.  The
present problem is not merely the finiteness of a prescribed additive
functional.  The localised Girsanov--It\^o identity couples the running
potential \(q/2\) to the terminal factor
\(\exp\{\Lambda(X_T)/\sigma^2\}\).  At spectral equality, the competition
between this endpoint confinement and the shifted first bridge mode produces
the \(4/3\), \(10/9\), and recursively generated lower-order transitions.
Thus the general theory supplies the framework, while the sharp equality
asymptotics require the coupled endpoint analysis developed here.

Classical gauge and conditional-gauge theory relates Feynman--Kac finiteness
to Schr\"odinger Green functions and to critical or subcritical operators
\citep{zhao1986,chungzhao1995,chensong2002,takeda2002}.  The word
``critical'' below has a different, finite-horizon meaning: it refers to
moment and bridge-spectral transition surfaces, not to operator criticality
in the potential-theoretic sense.  The bridge spectrum is classical and
identifies the codimension-one surface
\(\kappa_\infty\sigma^2T^2=\pi^2\); it does not decide what happens on that
surface.  Equality is governed by the lower-order tail and the shifted
endpoint saddle.  Unlike the usual long-time spectral analysis of
Feynman--Kac semigroups, the transition here occurs at a finite horizon.

Sharp asymptotics for Gaussian path functionals form a second adjacent
literature.  Banach-space Laplace methods give exact \(L^p\)-large-deviation
asymptotics for Brownian motion and Brownian bridges \citep{fatalov2003},
while spectral and Laplace-transform methods yield sharp results for
quadratic functionals and \(L^2\)-small balls of integrated Brownian motion
\citep{chenli2003,gaohannigtorcaso2003}.  Those results analyse Gaussian
functionals under a fixed Gaussian law.  Here the relevant object is an
endpoint-dependent translate of a conditional bridge functional, followed
by integration against a terminal potential at the same asymptotic scale.
On the equality surface, successive cancellations also change the active
power recursively.  The proofs below therefore isolate the critical bridge
mode and control the transverse remainder at the endpoint-integration scale,
rather than applying a single Gaussian tail or small-ball asymptotic.

The same Girsanov--It\^o factorisation at \(a=0\) underlies exact diffusion
simulation \citep{beskosroberts2005,
beskospapaspiliopoulosroberts2006,beskospapaspiliopoulosroberts2008}.
Those algorithms control the killing rate
\(\phi=\lambda^2/(2\sigma^2)-q/2=-\Psi_0\) by local bridge envelopes; they do
not require the global physical-law moment in \cref{eq:intro-functional}.

Moment-explosion times in affine stochastic-volatility models are also
called critical horizons \citep{andersenpiterbarg2007,kellerressel2011}.
Here the functional is non-affine: the first Dirichlet bridge mode selects the
critical time, and terminal confinement governs equality.

The classical ingredients above identify the strict-side spectral boundary.
Our contribution begins on that boundary: the exact cubic equality result,
the \(4/3\) and \(10/9\) transitions and their coefficient thresholds, and
the recursive continuation.  Previous Novikov and additive-functional
criteria do not provide a complete classification on this spectral equality
surface, nor the recursively generated critical tail exponents and
coefficient thresholds.  The resulting decision theory is complete on
\(\mathcal P_{\mathrm{sym}}(\kappa_\infty)\): every finite pure-power germ is
decided after finitely many comparisons, but the required depth is unbounded
over the class.  To our knowledge, neither the exact cubic critical horizon
nor this complete unbounded-depth classification has previously been
identified for the Novikov functional of a nonlinear mean-reverting
diffusion.

The paper is organised as follows.  \Cref{sec:setup} fixes the diffusion
class and the martingality framework.  \Cref{sec:effective} derives the
transfer identity and identifies the critical coefficient.
It also treats exponential reversion in \cref{ex:smr-threshold} and, in
\cref{sec:critical-line}, the subquadratic and superquadratic regimes.
\Cref{sec:quadratic} first proves the exact cubic theorem, then establishes
the strict-side quadratic classification, the equality transitions, the
finite-depth cascade, and the complete classification on
\(\mathcal P_{\mathrm{sym}}(\kappa_\infty)\).
\Cref{sec:martingality} records the moment asymmetry of the two
change-of-measure directions, and
\cref{sec:discussion} summarises the phase diagram.  The Brownian tube
and spectral details are collected in the appendices.
\section{Mean-reverting diffusions and stochastic exponentials}
\label{sec:setup}

Throughout, \((\Omega,\mathcal F,(\mathcal F_t)_{t\ge0},\mathbb P)\)
is a filtered probability space carrying a standard one-dimensional Brownian
motion \(W=(W_t)_{t\ge0}\), with the usual conditions.  We consider
\begin{equation}\label{eq:sde}
  \d X_t=\lambda(X_t)\dd t+\sigma\dd W_t,
  \qquad X_0=x\in\mathbb R,\qquad \sigma>0,
\end{equation}
with generator
\(\mathcal Lf=\lambda f'+\frac{\sigma^2}{2}f''\).

\begin{definition}[Mean-reverting drift]\label{def:mean-reverting}
A function \(\lambda\) belongs to \(\mathcal D\) if
\(\lambda\in C^1(\mathbb R)\), \(\lambda\) is strictly decreasing on
\(\mathbb R\), and \(\lambda(\theta)=0\) for some \(\theta\in\mathbb R\).
The equilibrium \(\theta\) is then unique, and
\begin{equation}\label{eq:q-def}
  q(x):=-\lambda'(x)\ge0
\end{equation}
is the \emph{local reversion rate}.
\end{definition}

\begin{remark}[Strict monotonicity rather than a pointwise sign condition]
\label{rem:D-monotone}
For \(\lambda\in C^1\), strict monotonicity is equivalent to \(q\ge0\)
together with the requirement that \(\{q=0\}\) contain no interval.  It is
therefore strictly weaker than the pointwise condition \(\lambda'<0\), and it
is the condition used below: \cref{lem:wellposed} needs only
\((x-\theta)\lambda(x)<0\), \cref{lem:Lambda-concave} only that
\(\Lambda'=\lambda\) is strictly decreasing, and \cref{thm:Z-martingale}
imports conditions on the diffusion coefficients rather than on the sign of
\(\lambda'\).  In particular, it includes the pure cubic drift
\(\lambda(y)=-cy^3\).  Here \(q(y)=3cy^2\), so \(q(0)=0\) and
\(\lambda'<0\) fails at the single point \(y=0\), while \(\lambda\) is
strictly decreasing and hence lies in \(\mathcal D\).  The same applies to every drift
\(-c\operatorname{sgn}(y)|y|^p\) with \(p>1\), and in particular to the
critical family of \cref{sec:quadratic}.
\end{remark}

Membership of \(\mathcal D\) implies
\((x-\theta)\lambda(x)<0\) for \(x\ne\theta\): the drift always points
towards equilibrium.  It also provides global well-posedness without a
linear-growth assumption.

\begin{remark}[Global well-posedness]\label{lem:wellposed}
Every \(\lambda\in\mathcal D\) gives a pathwise unique, non-explosive
global strong solution of \cref{eq:sde}.  Indeed, local Lipschitz continuity
gives uniqueness up to explosion, while for
\(V(x)=1+(x-\theta)^2\),
\[
 \mathcal LV(x)=2(x-\theta)\lambda(x)+\sigma^2
 \le \sigma^2\le \sigma^2V(x).
\]
Khasminskii's test therefore excludes explosion
\citep[Theorem~3.5]{kh}; see also \citet[Chapter~5]{ks}.
\end{remark}

We shall also use the following consequence of
\cref{def:mean-reverting}.  It controls the terminal contribution in the
transfer identity without an additional growth assumption.

\begin{lemma}[Concavity of the potential]\label{lem:Lambda-concave}
Let \(\lambda\in\mathcal D\) and let \(\Lambda\) be an antiderivative of
\(\lambda\).  Then \(\Lambda\) is strictly concave, attains its maximum at
\(\theta\), and
\begin{equation}\label{eq:Lambda-bounded}
 \Lambda(y)\le\Lambda(\theta)\qquad\text{for every }y\in\mathbb R.
\end{equation}
\end{lemma}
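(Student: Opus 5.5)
The plan is to argue directly from \cref{def:mean-reverting}, since \(\Lambda\in C^2(\mathbb R)\) with \(\Lambda'=\lambda\) and \(\Lambda''=\lambda'=-q\le0\). Pointwise non-positivity of \(\Lambda''\) alone gives only concavity, so for strictness I will work with the derivative rather than the second derivative. A differentiable function with strictly decreasing derivative is strictly concave. To show this, take \(y_0<y_1\), \(t\in(0,1\)), and \(y_t=(1-t)y_0+ty_1\). Then apply the mean value theorem on \([y_0,y_t]\) and \([y_t,y_1]\) to get
\[
 \Lambda(y_t)-\Lambda(y_0)=\lambda(\xi_0)(y_t-y_0)
 \quad\text{and}\quad
 \Lambda(y_1)-\Lambda(y_t)=\lambda(\xi_1)(y_1-y_t),
\]
with \(\xi_0<y_t<\xi_1\). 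Strict monotonicity of \(\lambda\) gives \(\lambda(\xi_0)>\lambda(\xi_1)\). Using \(y_t-y_0=t(y_1-y_0)\) and \(y_1-y_t=(1-t)(y_1-y_0)\), a short rearrangement yields \(\Lambda(y_t)>(1-t)\Lambda(y_0)+t\Lambda(y_1)\). This is the only step needing any care, and only because \(\lambda'\) may vanish at isolated points, as for the cubic drift in \cref{rem:D-monotone}. Working with the derivative sidesteps that issue.

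For the maximum I will use the sign structure of \(\lambda\). Since \(\lambda\) is strictly decreasing with \(\lambda(\theta)=0\), we have \(\lambda>0\) on \((-\infty,\theta)\) and \(\lambda<0\) on \((\theta,\infty)\). For \(y>\theta\) this gives \(\Lambda(y)-\Lambda(\theta)=\int_\theta^y\lambda(s)\dd s<0\). For \(y<\theta\) it gives \(\Lambda(\theta)-\Lambda(y)=\int_y^\theta\lambda(s)\dd s>0\). Hence \(\theta\) is the unique maximiser, and \cref{eq:Lambda-bounded} holds, with strict inequality for \(y\ne\theta\).

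Finally, I will note that the conclusion does not depend on the choice of antiderivative: any two antiderivatives differ by a constant, so both properties transfer. I do not expect any genuine obstacle in this lemma.
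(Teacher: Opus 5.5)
Your proposal is correct and follows the paper's proof: strict concavity comes from the strict monotonicity of \(\Lambda'=\lambda\) rather than from \(\Lambda''=-q\), which may vanish at points, and \(\theta\) is the unique maximiser because \(\lambda\) changes sign there. The only difference is that you prove, via the mean value theorem, the standard fact that a differentiable function with strictly decreasing derivative is strictly concave, whereas the paper simply invokes it.
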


\begin{proof}
The derivative \(\Lambda'=\lambda\) is strictly decreasing by
\cref{def:mean-reverting}, and for a differentiable function that is exactly
strict concavity; that \(\Lambda''=-q\) may vanish at some points, as it does
at the origin for the pure cubic drift, is immaterial.  Since
\(\Lambda'(\theta)=\lambda(\theta)=0\) and \(\Lambda'\) is strictly
decreasing, \(\theta\) is the unique maximiser.
\end{proof}

Membership of \(\mathcal D\) alone therefore forces
\(\Lambda(X_T)/\sigma^2\) to be bounded above, uniformly in the terminal
value and with no growth hypothesis on \(\lambda\).  Every sufficiency
argument below reduces accordingly to controlling a running potential.  The
one place where \cref{eq:Lambda-bounded} is too crude is the critical scale
itself, where the terminal factor is precisely what pins the transferred
path; see \cref{rem:spectral} and the finiteness half of
\cref{thm:universality}.

Let \(\eta=\lambda/\sigma\), and for \(0\le t\le s\le T\) define
\begin{equation}\label{eq:Z}
 Z_{t,s}:=\exp\bigg(
   -\int_t^s\eta(X_r)\dd W_r
   -\frac12\int_t^s\eta(X_r)^2\dd r
 \bigg).
\end{equation}
This nonnegative local martingale is the stochastic exponential that removes
the drift from \cref{eq:sde}.  Its martingale property will be used in the
exact calculations below, so we establish it independently of every
exponential-moment estimate.

\begin{theorem}[Martingality of the drift-removing density]
\label{thm:Z-martingale}
Let \(\lambda\in\mathcal D\).  For every starting pair \((t,x)\) and
finite \(T\), \((Z_{t,s})_{s\in[t,T]}\) is a true
\(\mathbb P^{t,x}\)-martingale, \(\mathbb E^{t,x}[Z_{t,T}]=1\), and
\(Z_{t,T}>0\) almost surely.  Consequently
\[
 \frac{\d\mathbb Q^{t,x}}{\d\mathbb P^{t,x}}
 \big|_{\mathcal F_s}=Z_{t,s}
\]
defines a probability measure equivalent to \(\mathbb P^{t,x}\) on
\(\mathcal F_T\), and under \(\mathbb Q^{t,x}\),
\begin{equation}\label{eq:Q-dynamics}
  \d X_s=\sigma\dd W_s^{\mathbb Q},
  \qquad X_s=x+\sigma W_s^{\mathbb Q},
\end{equation}
where \(W_t^{\mathbb Q}=0\).
\end{theorem}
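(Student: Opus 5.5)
The plan is to prove martingality by a non-explosion argument for the drift-removed law, in the spirit of the diffusion-specific tests cited in the introduction \citep{mu,ruf}, rather than through any exponential-moment bound. Without loss of generality take \(t=0\). Since \(Z_{0,\cdot}\) is a nonnegative local martingale, it is a supermartingale, and it suffices to show \(\mathbb E^{x}[Z_{0,T}]=1\).

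First I localise. For \(n>|x-\theta|\) put \(\tau_n:=\inf\{s\ge0:|X_s-\theta|\ge n\}\wedge T\). On \([0,\tau_n]\) the integrand \(\eta(X_s)\) is bounded, because \(\lambda\) is continuous on compacts. Novikov's condition therefore holds for the stopped exponential, so \(Z_{0,\cdot\wedge\tau_n}\) is a true martingale with expectation one. Define the probability measure \(\mathbb Q_n\) by \(\d\mathbb Q_n/\d\mathbb P^x=Z_{0,\tau_n}\). By Girsanov's theorem, \(W^{\mathbb Q}_s:=W_s+\int_0^{s\wedge\tau_n}\eta(X_r)\dd r\) is a \(\mathbb Q_n\)-Brownian motion. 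Consequently \(X_{s\wedge\tau_n}=x+\sigma W^{\mathbb Q}_{s\wedge\tau_n}\) under \(\mathbb Q_n\), and \(\tau_n\) is the exit time of the stopped process \(x+\sigma W^{\mathbb Q}\) from \((\theta-n,\theta+n)\), capped at \(T\).

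Next comes the standard decomposition
\[
 \mathbb E^x[Z_{0,T}]\ \ge\ \mathbb E^x\big[Z_{0,T}\mathbf 1_{\{\tau_n=T,\ \sup_{s\le T}|X_s-\theta|<n\}}\big]
 =\mathbb Q_n\big(\sup_{s\le T}|X_{s\wedge\tau_n}-\theta|<n\big).
\]
The equality holds because \(Z_{0,T}=Z_{0,\tau_n}\) on that event. Under \(\mathbb Q_n\), the law of the stopped path coincides with that of stopped Brownian motion \(x+\sigma B\), by uniqueness in law for the stopped driftless equation. The right-hand side therefore equals \(\mathbb P(\sup_{s\le T}|x+\sigma B_s-\theta|<n)\), which tends to \(1\) as \(n\to\infty\), since Brownian motion does not explode on \([0,T]\). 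Combining this with the supermartingale inequality gives \(\mathbb E^x[Z_{0,T}]=1\). A supermartingale with constant expectation is a martingale, so \(Z_{0,\cdot}\) is a true martingale on \([0,T]\).

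The remaining claims follow quickly. Positivity of \(Z_{0,T}\) is immediate: \(\int_0^T\eta(X_r)^2\dd r<\infty\) almost surely because \(X\) is continuous and non-explosive by \cref{lem:wellposed}, so the exponent in \cref{eq:Z} is finite. Hence \(\mathbb Q\sim\mathbb P^x\) on \(\mathcal F_T\). The consistency \(\d\mathbb Q/\d\mathbb P|_{\mathcal F_s}=Z_{0,s}\) follows from the martingale property. By Girsanov, \(W^{\mathbb Q}=W+\int\eta(X)\dd r\) is a \(\mathbb Q\)-Brownian motion, and substituting into \cref{eq:sde} gives exactly \cref{eq:Q-dynamics}. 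I expect the main obstacle to be the identification step: one must verify carefully that the event \(\{\tau_n=T\}\) carries all the mass in the limit under \(\mathbb Q_n\), and that the law of the stopped process under \(\mathbb Q_n\) is that of stopped Brownian motion, without circularly invoking global Girsanov. This is where the argument uses only the non-explosion of the driftless equation, consistent with the remark that no condition on the sign of \(\lambda'\) is needed.
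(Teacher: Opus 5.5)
Your argument is correct, and it rests on the same mechanism as the paper's proof: $Z$ is a martingale because the driftless auxiliary diffusion, Brownian motion, does not explode. The difference is in how that mechanism is used. The paper invokes \citet[Theorem~2.1 and Corollary~2.2]{mu} as a black box, with kernel $b=-\lambda/\sigma$, and observes that the auxiliary drift $\lambda+\sigma b$ vanishes. You instead prove the special case of that criterion directly, in four steps:
\begin{enumerate}[label=\textup{(\arabic*)}]
\item localise at the exit time $\tau_n$;
\item apply Novikov to the bounded stopped kernel;
\item identify, under $\mathbb Q_n$, the no-exit event with the event that $x+\sigma W^{\mathbb Q}$ stays in $(\theta-n,\theta+n)$ on $[0,T]$;
\item let $n\to\infty$ and combine with the supermartingale bound $\mathbb E^x[Z_{0,T}]\le1$.
\end{enumerate}

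Your route is self-contained. It needs only Girsanov for bounded integrands and continuity of Brownian paths, it avoids checking the hypotheses of the cited criterion, and it reuses exactly the localisation of \cref{lem:local-transfer}. The citation is shorter and places the result in a general framework that also decides cases with nonzero auxiliary drift, where boundary behaviour of the auxiliary process matters; that generality is not needed here.

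One small simplification is available. Because $\tau_n$ coincides pathwise with the capped exit time of $x+\sigma W^{\mathbb Q}$, the event in your decomposition is a measurable functional of $W^{\mathbb Q}$ on $[0,T]$. Its $\mathbb Q_n$-probability is therefore a Brownian probability directly, and no appeal to uniqueness in law for the stopped driftless equation is required.
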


\begin{proof}
Apply \citet[Theorem~2.1 and Corollary~2.2]{mu} on the state space
\(J=\mathbb R\), with diffusion coefficient \(\sigma\) and kernel
\(b=-\lambda/\sigma\).  The local integrability hypotheses hold because
\(\lambda\in C^1\) and \(\sigma>0\); the original diffusion is non-explosive
by \cref{lem:wellposed}, while the auxiliary drift
\(\lambda+\sigma b\) vanishes, so the auxiliary diffusion is non-explosive
Brownian motion.  The cited criterion gives martingality, positivity follows
pathwise, and Girsanov's theorem gives \cref{eq:Q-dynamics}.
\end{proof}

\begin{remark}[Why the theorem does not use Novikov]
\label{rem:why-not-novikov}
The auxiliary drift vanishes by construction.  Its non-explosion, rather than
an exponential moment under \(\mathbb P\), proves martingality.  Thus
\cref{thm:Z-martingale} remains valid in every regime in which the Novikov and
Kazamaki expectations below diverge.  The two non-explosion arguments concern
different processes.  \Cref{lem:wellposed} applies Khasminskii's test to
\(X\), whereas the criterion above applies to the auxiliary diffusion of the
measure change.  For the general
relationship between explosion tests of this kind and the failure of the
martingale property, see \citet{dandapaniprotter2022}.
\end{remark}

We next formulate the two tail conditions used later.  The first is the
windowed superquadratic condition that drives the obstruction.

\begin{definition}[Supercritical reversion]\label{def:supercritical}
For \(R>0\), set
\begin{equation}\label{eq:q-star}
 q_*^{\pm}(R):=\inf\{q(y):|\,\pm(y-\theta)-R\,|\le1\}.
\end{equation}
A drift \(\lambda\in\mathcal D\) is \emph{supercritical} if there is a
sequence \(R_n\uparrow\infty\) such that
\begin{equation}\label{eq:supercritical}
 \frac{q_*^+(R_n)}{R_n^2}\longrightarrow\infty
 \quad\text{or}\quad
 \frac{q_*^-(R_n)}{R_n^2}\longrightarrow\infty.
\end{equation}
\end{definition}

The fixed half-width \(1\) in \cref{eq:q-star} is part of the hypothesis,
not an invariant choice: replacing it by another prescribed positive width
gives an analogous sufficient condition with different tube constants.
What is essential is control on a window of non-vanishing width; a potential
concentrated on arbitrarily thin spikes need not be occupied long enough to
affect the exponential functional.  For eventually monotone or regularly
varying \(q\), the choice of fixed width is immaterial and the condition
reduces to superquadratic growth in at least one tail.

The second condition delimits the class on which the critical coefficient is
determined.  It is stated in terms of a single scalar attached to the drift,
which is introduced first because \cref{thm:subcritical-coefficient} uses it
at values other than \(0\).

\begin{definition}[Steeply reverting drifts]\label{def:Dstar}
For \(\lambda\in\mathcal D\) and \(\sigma>0\), set
\begin{equation}\label{eq:rho}
 \rho_\sigma(\lambda)
 :=\limsup_{|y|\to\infty}\frac{\sigma^2q(y)}{\lambda(y)^2}
 \ \in[0,\infty].
\end{equation}
A drift \(\lambda\in\mathcal D\) is
\begin{enumerate}[label=\textup{(\alph*)}]
\item \emph{superlinear} if
 \(|\lambda(y)|/|y-\theta|\to\infty\) as \(|y-\theta|\to\infty\);
\item \emph{drift-dominated} if \(\rho_\sigma(\lambda)=0\), that is, if the squared
 drift-to-volatility ratio \(\lambda^2/\sigma^2\) eventually dominates the
 local reversion rate.
\end{enumerate}
We write \(\mathcal D_\star\) for the class of drifts in \(\mathcal D\) that
are both superlinear and drift-dominated.
\end{definition}

The ratio in \cref{eq:rho} is well defined away from \(\theta\) because
\(\lambda\) vanishes only there, and it is nonnegative because \(q\ge0\).
The two conditions of \cref{def:Dstar} are logically
independent: superlinearity constrains \(\lambda\) and says nothing about
\(\lambda'\), so one may construct \(\lambda\in\mathcal D\) with
\(|\lambda(y)|\asymp y^2\) whose derivative carries spikes of height
\(|y|^5\) on intervals of width \(|y|^{-4}\), which is superlinear but has
\(\rho_\sigma(\lambda)=\infty\).  Both conditions are used, and for different halves
of \cref{cor:critical-coefficient}.

\begin{example}[Polynomial and sinh drifts]\label{ex:basic-drifts}
For \(\lambda(\theta+y)=-c\operatorname{sgn}(y)|y|^p\) with \(c>0\) and
\(p\ge1\) one has \(\lambda\in C^1\) and
\(q(\theta+y)=cp|y|^{p-1}\), which vanishes at \(y=0\) when \(p>1\);
by \cref{rem:D-monotone} the drift lies in \(\mathcal D\) all the same, and
no smoothing near the origin is required.  The drift is
supercritical exactly when \(p>3\); cubic reversion is the boundary.  It is
superlinear exactly when \(p>1\), and always drift-dominated, since
\(\sigma^2q/\lambda^2\asymp(p\sigma^2/c)|y|^{-p-1}\to0\); hence
\(\lambda\in\mathcal D_\star\) exactly when \(p>1\).  For
\(\lambda(x)=\mu\sinh(\theta-x)\) one has \(q(x)=\mu\cosh(\theta-x)\), so
\(q\) is supercritical by a wide margin, and
\(\sigma^2q/\lambda^2=(\sigma^2/\mu)\cosh/\sinh^2\to0\), so
\(\lambda\in\mathcal D_\star\).
\end{example}

For reference, the symbols that recur across the phase diagram are collected
in \cref{tab:notation}.  Constants denoted by \(C,c\), with or without
subscripts, are local proof constants and may change from line to line.

\begin{table}[htbp]
\centering
\small
\begin{tabular}{@{}ll@{}}
\toprule
Symbol & Meaning \\
\midrule
\(\lambda,\theta,\sigma\) & drift, equilibrium, and constant volatility \\
\(q=-\lambda'\), \(\Lambda'=\lambda\) & local reversion rate and drift potential \\
\(\mathcal D,\mathcal D_\star\) & mean-reverting and steeply reverting drift classes \\
\(\mathcal P_{\mathrm{sym}}(\kappa_\infty)\) & symmetric finite pure-power rate class in \cref{def:Psym} \\
\(\mathcal N_a(T,x)\) & exponential moment in \cref{eq:intro-functional} \\
\(\mathcal A_T(x)\) & exponential-moment domain in \cref{eq:intro-moment-domain} \\
\(Z_{t,T}\), \(D_T=Z_{0,T}^{-1}\) & the two change-of-measure densities \\
\(\kappa_\infty\) & limiting quadratic coefficient of \(q\) \\
\(b_*\) & critical \(|y|^{4/3}\)-coefficient in \cref{eq:b-star} \\\vspace{0.1cm}
\(J_\alpha=\int_0^\pi\sin^\alpha u\dd u\) & sine-power integral used in the bridge constants\\
\(I_p=\int_0^T e_1(s)^p\dd s\) & first-mode power integral \\
\(\mathfrak L(d)\) & critical \(4/3\)-bridge rate in \cref{eq:sharp-bridge-limit} \\
\(\beta_n,c_n^*\) & pure-power cascade exponents and critical coefficients \\
\(\delta_{\mathrm{cp}}(q),\delta_{\mathrm{dec}}(q)\) & critical-prefix and decision depths \\
\bottomrule
\end{tabular}
\caption{Principal notation.  One-use proof constants are intentionally
excluded.}
\label{tab:notation}
\end{table}

\section{The effective potential and the critical coefficient}
\label{sec:effective}

\subsection{Localised transfer}\label{sec:transfer}

Fix an antiderivative \(\Lambda\) of \(\lambda\), so
\begin{equation}\label{eq:Lambda}
  \Lambda'=\lambda,
  \qquad \Lambda''=\lambda'=-q.
\end{equation}
Let \(\mathbb Q_0\) denote the law on \(C([0,T])\) of
\(x+\sigma W\), and retain \(X\) for the coordinate process.  Here and below,
\(W\) denotes a Brownian motion under whichever measure is in force; where two
measures appear in the same statement, the Brownian motion of the changed
measure carries that measure as a superscript, as in \cref{eq:Q-dynamics}.

\begin{lemma}[Localised transfer and residual potential]
\label{lem:local-transfer}
Let \(n\in\mathbb N\) and \(A\in\mathcal F_T\) satisfy
\(A\subseteq\{\sup_{s\le T}|X_s|\le n\}\).  For every
\(\mathcal F_T\)-measurable \(F\) such that \(F\mathbf1_A\) is bounded,
\begin{multline}\label{eq:transfer}
 \mathbb E^{\mathbb P}\big[e^F\mathbf1_A\big]
 =\mathbb E^{\mathbb Q_0}\bigg[\exp\bigg(
  F+\frac{\Lambda(X_T)-\Lambda(x)}{\sigma^2}
  +\frac12\int_0^Tq(X_s)\dd s
  -\frac1{2\sigma^2}\int_0^T\lambda(X_s)^2\dd s
 \bigg)\mathbf1_A\bigg].
\end{multline}
Moreover, with
\(W_s=\sigma^{-1}(X_s-x-\int_0^s\lambda(X_r)\dd r)\),
\begin{equation}\label{eq:pathwise-identity}
 \int_0^T\eta(X_s)\dd W_s
 =\frac1{\sigma^2}\int_0^T\lambda(X_s)\dd X_s
  -\frac1{\sigma^2}\int_0^T\lambda(X_s)^2\dd s.
\end{equation}
\end{lemma}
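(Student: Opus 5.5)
The plan is to use the martingale change of measure of \cref{thm:Z-martingale} (with \(t=0\)) and then rewrite the Girsanov density pathwise via It\^o's formula. The identity \cref{eq:pathwise-identity} comes first and is purely algebraic. Under \(\mathbb P\) we have \(\sigma\,\d W_s=\d X_s-\lambda(X_s)\dd s\), so
\[
\int_0^T\eta(X_s)\dd W_s=\sigma^{-2}\int_0^T\lambda(X_s)\dd X_s-\sigma^{-2}\int_0^T\lambda(X_s)^2\dd s .
\]
Both stochastic integrals are defined because \(\lambda\) is continuous and \(X\) is a continuous semimartingale.

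Next, \(\Lambda\in C^2\) since \(\lambda\in C^1\). It\^o's formula for \(\Lambda(X)\) with \(\d\langle X\rangle=\sigma^2\dd s\) gives
\[
\int_0^T\lambda(X_s)\dd X_s=\Lambda(X_T)-\Lambda(x)+\frac{\sigma^2}{2}\int_0^Tq(X_s)\dd s .
\]
The decisive point is that this is a pathwise identity between continuous semimartingales. It therefore holds for the coordinate process under any measure for which \(X\) is a semimartingale, in particular under \(\mathbb Q_0\) (and under \(\mathbb Q\), which coincides with \(\mathbb Q_0\) on \(\mathcal F_T\) by \cref{eq:Q-dynamics}). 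Stochastic integrals are invariant under equivalent changes of measure, so the \(\d X\)-integral computed under \(\mathbb P\) agrees a.s.\ with the one computed under \(\mathbb Q\).

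Now use \(\mathbb P\sim\mathbb Q\) on \(\mathcal F_T\) with \(\d\mathbb P/\d\mathbb Q=Z_{0,T}^{-1}\). Writing \(Z_{0,T}^{-1}=\exp\{\int\eta\dd W+\frac12\int\eta^2\dd s\}\) and substituting the two displays gives
\[
Z_{0,T}^{-1}=\exp\Big\{\frac{\Lambda(X_T)-\Lambda(x)}{\sigma^2}+\frac12\int_0^Tq(X_s)\dd s-\frac{1}{2\sigma^2}\int_0^T\lambda(X_s)^2\dd s\Big\}.
\]
Since \(F\mathbf 1_A\) is bounded, \(e^F\mathbf 1_A\) is a bounded nonnegative function (set to \(0\) off \(A\)). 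Hence \(\mathbb E^{\mathbb P}[e^F\mathbf 1_A]=\mathbb E^{\mathbb Q}[e^F\mathbf 1_A Z_{0,T}^{-1}]\), and replacing \(\mathbb Q\) by \(\mathbb Q_0\) yields \cref{eq:transfer}. On \(A\), every term in the exponent is bounded because \(|X_s|\le n\) and \(\lambda,q,\Lambda\) are continuous, so both sides are finite.

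The main obstacle is the remark in the introduction that the identity must be proved "without assuming the global martingale property". If one insists on that, \cref{thm:Z-martingale} cannot be used and the argument must be localised. Take \(\tau_n=\inf\{s:|X_s|\ge n\}\wedge T\). The stopped density \(Z_{0,\cdot\wedge\tau_n}\) satisfies Novikov's condition because \(\eta\) is bounded on \([-n,n]\), so Girsanov applies to the stopped process. On \(A\subseteq\{\tau_n=T\}\) (modulo the boundary case \(\sup|X|=n\), which is handled with \(n+1\)), the stopped and unstopped densities coincide. Under the stopped change of measure, \(X^{\tau_n}\) is Brownian motion stopped at \(\tau_n\), and on \(A\) its law agrees with that of \(x+\sigma W\). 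Running the computation above with this stopped density gives the same formula, so global martingality is never invoked. The step that needs the most care is checking that the event \(A\) and the functional \(F\) are only ever evaluated where the stopped and global laws agree.
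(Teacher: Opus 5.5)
Your proof is correct, but its main line is not the paper's. You derive \cref{eq:transfer} from the global equivalence \(\mathbb P\sim\mathbb Q\) of \cref{thm:Z-martingale}: It\^o's formula rewrites \(Z_{0,T}^{-1}\) as an explicit functional of the path, and \(X\) has law \(\mathbb Q_0\) under \(\mathbb Q\). This is not circular, because \cref{thm:Z-martingale} is proved earlier from the criterion of \citet{mu} without using this lemma. In fact, it is exactly the computation the paper performs later to obtain the global formula for \(D_T\) in the proof of \cref{thm:no-higher-moments}.

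What your main route gives up is independence from martingality. The paper proves the lemma by localisation precisely so that \cref{cor:extended-transfer} and \cref{thm:obstruction} do not presuppose \cref{thm:Z-martingale}. (The corollary is the identity in \([0,\infty]\), obtained from \(A_n\uparrow\Omega\) by monotone convergence.) With your main route, the paper's remarks to that effect would no longer describe the proof.

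Your final paragraph is the paper's argument. Stop at \(\tau_{n+1}\), as the paper does, apply Novikov to the bounded stopped kernel, and use \(A\subseteq\{\tau_{n+1}>T\}\). The step you flag as delicate is settled by a measurability remark, not by comparing laws ``on \(A\)''. Since \(e^F\mathbf 1_A\) is \(\mathcal F_T\)-measurable and vanishes off \(\{T\le\tau_{n+1}\}\), it is \(\mathcal F_{T\wedge\tau_{n+1}}\)-measurable. It therefore suffices that \(\mathbb P\) and \(\mathbb Q_0\) are equivalent on \(\mathcal F_{T\wedge\tau_{n+1}}\) with the stopped exponential as density. After It\^o's formula, that density coincides on \(A\) with the integrand of \cref{eq:transfer}.
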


\begin{proof}
Let \(\tau_{n+1}:=\inf\{s:|X_s|\ge n+1\}\).  On
\(\mathcal F_{T\wedge\tau_{n+1}}\), the stopped kernel \(\eta(X)\) is
bounded, so Novikov's condition holds for the stopped exponentials and
Girsanov gives mutual absolute continuity with density
\[
 \frac{\d\mathbb P}{\d \mathbb Q_0}\Big|_{\mathcal F_{T\wedge\tau_{n+1}}}
 =\exp\bigg(
  \frac1{\sigma^2}\int_0^{T\wedge\tau_{n+1}}\lambda(X_s)\dd X_s
  -\frac1{2\sigma^2}\int_0^{T\wedge\tau_{n+1}}\lambda(X_s)^2\dd s
 \bigg).
\]
On \(A\), the stopping time exceeds \(T\).  Under \(\mathbb Q_0\), It\^o's
formula and \cref{eq:Lambda} give
\[
 \frac1{\sigma^2}\int_0^T\lambda(X_s)\dd X_s
 =\frac{\Lambda(X_T)-\Lambda(x)}{\sigma^2}
  +\frac12\int_0^Tq(X_s)\dd s,
\]
which proves \cref{eq:transfer}.  Identity
\cref{eq:pathwise-identity} follows by substituting
\(\d W_s=\sigma^{-1}(\d X_s-\lambda(X_s)\dd s)\).
\end{proof}

\begin{definition}[Effective potential]\label{def:effective-potential}
For \(a\ge0\) set
\begin{equation}\label{eq:effective-potential}
 \Psi_a(y):=\frac{q(y)}{2}
  +\bigg(a-\frac12\bigg)\frac{\lambda(y)^2}{\sigma^2}.
\end{equation}
\end{definition}

\begin{corollary}[Extended-valued transfer identity]
\label{cor:extended-transfer}
For every \(a\ge0\), \(x\in\mathbb R\) and \(T>0\),
\begin{equation}\label{eq:transfer-Psi}
 \mathcal N_a(T,x)
 =e^{-\Lambda(x)/\sigma^2}\,
  \mathbb E^{\mathbb Q_0}\bigg[
   \exp\bigg\{\frac{\Lambda(X_T)}{\sigma^2}
    +\int_0^T\Psi_a(X_s)\dd s\bigg\}\bigg],
 \qquad X_s=x+\sigma W_s,
\end{equation}
as an identity in \([0,\infty]\).  In particular,
\cref{eq:transfer-Psi} does not presuppose that the drift-removing stochastic
exponential is a true martingale.
\end{corollary}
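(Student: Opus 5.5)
The plan is to apply \cref{lem:local-transfer} on an exhausting sequence of events and then pass to the limit by monotone convergence on both sides. Throughout we work with the original diffusion \(X\) under \(\mathbb P=\mathbb P^x\), which is non-explosive by \cref{lem:wellposed}. Fix \(a\ge0\), and for \(n\in\mathbb N\) put
\[
A_n:=\{\sup_{s\le T}|X_s|\le n\},\qquad F:=\frac{a}{\sigma^2}\int_0^T\lambda(X_s)^2\dd s .
\]
On \(A_n\), continuity of \(\lambda\) gives \(F\le aT\sup_{|y|\le n}\lambda(y)^2/\sigma^2\), and \(F\ge0\), so \(F\mathbf 1_A\) is bounded. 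The hypotheses of \cref{lem:local-transfer} therefore hold with \(A=A_n\).

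The lemma, combined with \(F-\frac{1}{2\sigma^2}\int_0^T\lambda^2+\frac12\int_0^T q=\int_0^T\Psi_a(X_s)\dd s\), gives for every \(n\)
\[
 \mathbb E^{\mathbb P}\big[e^{a\int_0^T\eta(X_s)^2\dd s}\mathbf 1_{A_n}\big]
 =e^{-\Lambda(x)/\sigma^2}\,\mathbb E^{\mathbb Q_0}\Big[\exp\Big\{\frac{\Lambda(X_T)}{\sigma^2}+\int_0^T\Psi_a(X_s)\dd s\Big\}\mathbf 1_{A_n}\Big].
\]
Both integrands are nonnegative, and \(A_n\uparrow\bigcup_n A_n\).

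Next we identify the limit of the events. Under \(\mathbb Q_0\), paths are continuous on \([0,T]\), so \(\bigcup_n A_n\) has full \(\mathbb Q_0\)-measure. Under \(\mathbb P\) the same holds, because \(X\) is a global continuous solution by \cref{lem:wellposed}. The one point needing care is that \(\mathbb P\) and \(\mathbb Q_0\) are measures on the same canonical path space \(C([0,T])\), with \(\mathbb P\) the law of \(X\). On this space, \(\bigcup_n A_n\) is the whole space, since every continuous path on a compact interval is bounded. Letting \(n\to\infty\) and applying monotone convergence on each side then yields \cref{eq:transfer-Psi} as an identity in \([0,\infty]\), with the left-hand side equal to \(\mathcal N_a(T,x)\) by \cref{eq:intro-functional}.

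The main point to verify is that no step uses a global change of measure. Within \cref{lem:local-transfer}, Girsanov is applied only up to \(\tau_{n+1}\), where the kernel is bounded, and the passage to the limit is purely monotone. Hence \cref{thm:Z-martingale} is never invoked, which justifies the final sentence of the corollary. Finiteness is never assumed either, since both sides may equal \(+\infty\) simultaneously.
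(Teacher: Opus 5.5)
Your proposal is correct and follows the paper's own proof. Both use the exhausting events \(A_n=\{\sup_{s\le T}|X_s|\le n\}\) and apply \cref{lem:local-transfer} with \(F=a\int_0^T\eta(X_s)^2\dd s\) (which is bounded on \(A_n\)), then pass to the limit by monotone convergence on both sides, using that \(A_n\) increases to a set of full measure under both \(\mathbb P\) and \(\mathbb Q_0\).
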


\begin{proof}
Let
\[
 A_n:=\big\{\sup_{0\le s\le T}|X_s|\le n\big\},
 \qquad
 F:=a\int_0^T\eta(X_s)^2\dd s.
\]
Continuity of \(\eta\) makes \(F\mathbf1_{A_n}\) bounded, so
\cref{lem:local-transfer} applies.  Under both \(\mathbb P\) and \(\mathbb Q_0\), the
coordinate process is continuous and non-explosive on \([0,T]\); hence
\(A_n\uparrow\Omega\) almost surely under both measures.  The two localised
integrands are nonnegative, and monotone convergence therefore gives
\cref{eq:transfer-Psi}, allowing either side to equal \(+\infty\).
\end{proof}

Only \(q\) and \(\Lambda\) appear on the right of
\cref{eq:transfer-Psi}, and \(\Lambda\) is determined by \(\lambda\) up to a
constant that cancels.  This is why the classification of
\cref{sec:critical-line} can be stated in terms of the growth of \(q\) alone,
and why the strict-side spectral surface of \cref{thm:universality} depends
on \(\lambda\) only through \(\lim q(\theta+y)/y^2\).

\subsection{The critical coefficient}\label{sec:critical-coefficient}

\begin{theorem}[Subcritical coefficients]\label{thm:subcritical-coefficient}
Let \(\lambda\in\mathcal D\) and let \(\rho_\sigma(\lambda)\in[0,\infty]\) be as in
\cref{eq:rho}.  If
\(a<\big(1-\rho_\sigma(\lambda)\big)/2\), then \(\Psi_a\) is bounded above on
\(\mathbb R\) and
\begin{equation}\label{eq:subcritical-bound}
 \mathcal N_a(T,x)\le
 \exp\bigg\{\frac{\Lambda(\theta)-\Lambda(x)}{\sigma^2}
  +T\sup_{y\in\mathbb R}\Psi_a(y)\bigg\}<\infty
\end{equation}
for every \(T>0\) and \(x\in\mathbb R\).  The hypothesis is vacuous unless
\(\rho_\sigma(\lambda)<1\); in particular, if \(\lambda\) is
drift-dominated then \(\mathcal N_a(T,x)<\infty\) for every \(a<1/2\), every
horizon and every initial state.
\end{theorem}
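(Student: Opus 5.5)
The plan is to read the bound directly off the extended transfer identity \cref{cor:extended-transfer}, once two things are in place. The terminal factor \(\Lambda(X_T)\) must be controlled by \cref{lem:Lambda-concave}, and \(\Psi_a\) must be bounded above on \(\mathbb R\). The first is immediate: \cref{eq:Lambda-bounded} gives \(\Lambda(X_T)\le\Lambda(\theta)\) pathwise. Granting \(\sup\Psi_a=:S<\infty\), the integrand in \cref{eq:transfer-Psi} is then bounded pathwise by \(\exp\{\Lambda(\theta)/\sigma^2+TS\}\). Multiplying by \(e^{-\Lambda(x)/\sigma^2}\) yields \cref{eq:subcritical-bound}. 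No integrability or martingale issue arises, because \cref{cor:extended-transfer} is an identity in \([0,\infty]\).

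The substantive step is the upper bound on \(\Psi_a\). Write \(\rho=\rho_\sigma(\lambda)\) and assume \(a<(1-\rho)/2\), so in particular \(\rho<1\). Choose \(\rho'\in(\rho,1)\) with \(a<(1-\rho')/2\), which is possible by strict inequality. By the definition of limsup in \cref{eq:rho}, there is \(R\) with \(\sigma^2q(y)\le\rho'\lambda(y)^2\) for \(|y|\ge R\). On that region,
\[
 \Psi_a(y)\le\frac{\lambda(y)^2}{\sigma^2}\Big(\frac{\rho'}{2}+a-\frac12\Big)\le0,
\]
since the bracket is negative. On the compact set \(|y|\le R\), \(\Psi_a\) is continuous, because \(\lambda\in C^1\), and hence bounded. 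Thus \(\sup\Psi_a\le\max\{0,\max_{|y|\le R}\Psi_a\}<\infty\).

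The case \(a<1/2\) needs a small check. The bracket \(\rho'/2+a-1/2\) is negative exactly when \(a<(1-\rho')/2\), and this is what the choice of \(\rho'\) guarantees. There is no separate sign issue when \(a\ge 1/2\), since then the hypothesis forces \(\rho<0\), which is impossible.

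For the final claims, note that if \(\rho\ge1\) the condition \(a<(1-\rho)/2\le0\) contradicts \(a\ge0\), so the hypothesis is vacuous. If \(\lambda\) is drift-dominated, then \(\rho=0\) and the condition reads \(a<1/2\).

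The only point requiring any care is the limsup step, specifically the passage to \(\rho'\) strictly between \(\rho\) and \(1-2a\). I expect no real obstacle: the argument is a direct combination of \cref{lem:Lambda-concave} and \cref{cor:extended-transfer}.
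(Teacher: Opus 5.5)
Your proposal is correct and matches the paper's proof essentially step for step. You choose \(\rho'\) strictly between \(\rho_\sigma(\lambda)\) and \(1-2a\), use the \(\limsup\) to make \(\Psi_a\) negative outside a compact set, bound it on that set by continuity, and then insert \(\sup\Psi_a\) and \(\Lambda\le\Lambda(\theta)\) into the extended transfer identity \cref{eq:transfer-Psi}.
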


\begin{proof}
The hypothesis \(a<\big(1-\rho_\sigma(\lambda)\big)/2\) with \(a\ge0\) forces
\(\rho_\sigma(\lambda)<1-2a\le1\), and in particular \(\rho_\sigma(\lambda)<\infty\).
Choose \(\rho'\) with \(\rho_\sigma(\lambda)<\rho'<1-2a\).  By \cref{eq:rho} there is
\(R>0\) with
\(\sigma^2q(y)\le\rho'\lambda(y)^2\) whenever \(|y-\theta|\ge R\).  For such
\(y\),
\[
 \Psi_a(y)
 =\frac{q(y)}{2}-\bigg(\frac12-a\bigg)\frac{\lambda(y)^2}{\sigma^2}
 \le\frac{\lambda(y)^2}{\sigma^2}
   \bigg[\frac{\rho'}{2}-\frac{1-2a}{2}\bigg]<0 .
\]
On the compact set \(\{|y-\theta|\le R\}\) the function \(\Psi_a\) is
continuous, hence bounded, so \(K:=\sup_{\mathbb R}\Psi_a<\infty\).
Inserting \(K\) and \cref{eq:Lambda-bounded} into \cref{eq:transfer-Psi}
gives \cref{eq:subcritical-bound}.  Drift-domination is the case
\(\rho_\sigma(\lambda)=0\).
\end{proof}

No moment estimate under \(\mathbb P\), no spectral input and no growth
hypothesis on \(\lambda\) enter \cref{thm:subcritical-coefficient}: once
\(\sup\Psi_a\) is finite the bound is deterministic.  The matching direction
uses superlinearity instead, through the excursion of
\cref{lem:excursion}.

\begin{theorem}[Supercritical coefficients]
\label{thm:supercritical-coefficient}
Let \(\lambda\in\mathcal D\) be superlinear.  Then
\(\mathcal N_a(T,x)=\infty\) for every \(a>1/2\), every \(x\in\mathbb R\) and
every \(T>0\).
\end{theorem}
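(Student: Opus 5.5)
We would prove the statement through the extended-valued transfer identity \cref{eq:transfer-Psi}, which is available for every \(a\ge0\) without any martingale assumption. Since \(q\ge0\), for \(a>1/2\) we have the pointwise lower bound
\[
 \Psi_a(y)\ \ge\ \Big(a-\tfrac12\Big)\frac{\lambda(y)^2}{\sigma^2}.
\]
It therefore suffices to show that
\[
 \mathbb E^{\mathbb Q_0}\Big[\exp\Big\{\tfrac{\Lambda(X_T)}{\sigma^2}+\big(a-\tfrac12\big)\sigma^{-2}\!\int_0^T\lambda(X_s)^2\dd s\Big\}\Big]=\infty,
 \qquad X=x+\sigma W .
\]
The upper bound \cref{eq:Lambda-bounded} does not help here. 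The danger is that \(\Lambda(X_T)\) may be very negative on the paths that make the running integral large. To neutralise this, we force the path to \emph{return} near equilibrium at time \(T\).

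Fix \(T>0\), \(x\), and a small \(\epsilon\in(0,T/4)\). For large \(R\), let \(h_R\) be the piecewise linear path that goes from \(x\) to \(\theta+R\) on \([0,\epsilon]\), stays at \(\theta+R\) on \([\epsilon,T-\epsilon]\), and returns to \(\theta\) on \([T-\epsilon,T]\). Let \(B_R\) be the tube event \(\{\sup_{s\le T}|X_s-h_R(s)|\le1\}\). On \(B_R\) the following hold.
\begin{itemize}
\item For \(s\in[\epsilon,T-\epsilon]\), \(X_s\in[\theta+R-1,\theta+R+1]\). Since \(\lambda\) is decreasing and negative there, \(\lambda(X_s)^2\ge\lambda(\theta+R-1)^2\).
\item \(X_T\in[\theta-1,\theta+1]\). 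Hence \(\Lambda(X_T)\ge m:=\min_{[\theta-1,\theta+1]}\Lambda\), a constant independent of \(R\).
\end{itemize}
Consequently the expectation is at least
\[
 \mathbb Q_0(B_R)\,\exp\Big\{\tfrac{m}{\sigma^2}+\big(a-\tfrac12\big)(T-2\epsilon)\tfrac{\lambda(\theta+R-1)^2}{\sigma^2}\Big\}.
\]

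For the tube probability, we apply the Cameron--Martin formula to \(W\) shifted by \((h_R-x)/\sigma\). Combined with the symmetry of the ball \(\{\sup|W|\le 1/\sigma\}\) (the standard Anderson-type bound, or Jensen on the linear Cameron--Martin term), this gives
\[
 \mathbb Q_0(B_R)\ \ge\ \exp\Big\{-\tfrac{1}{2\sigma^2}\textstyle\int_0^T\dot h_R^2\Big\}\,\mathbb P\big(\sup_{s\le T}|W_s|\le 1/\sigma\big)
 \ \ge\ c_T\exp\Big\{-\tfrac{C(R+|x-\theta|)^2}{\sigma^2\epsilon}\Big\},
\]
where \(c_T>0\) does not depend on \(R\). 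This is presumably the content of \cref{lem:excursion}, and we would invoke it if its form matches. The whole lower bound is then
\[
 \exp\Big\{\sigma^{-2}\big[(a-\tfrac12)(T-2\epsilon)\lambda(\theta+R-1)^2-C\epsilon^{-1}R^2\big]+O(1)\Big\}.
\]
Superlinearity gives \(\lambda(\theta+R-1)^2/R^2\to\infty\). With \(\epsilon\) fixed and \(a>1/2\), the exponent therefore tends to \(+\infty\) as \(R\to\infty\), so \(\mathcal N_a(T,x)=\infty\). The left tail would serve equally well.

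The main obstacle is the choice of path. A naive tube that ends at level \(R\) pays a terminal cost \(\Lambda(\theta+R)-\Lambda(\theta)\). This cost can be as large as \(R\,|\lambda(\theta+R)|\), which is not obviously dominated by \(\lambda(\theta+R-1)^2\) for very steep drifts. The return leg removes this issue at a Brownian cost of only \(O(R^2/\epsilon)\), which superlinearity beats. The remaining care is to make the tube bound's constants uniform in \(R\): only the Cameron--Martin energy depends on \(R\), while the centred small-ball probability is fixed.
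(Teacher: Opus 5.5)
Your proposal is correct and follows essentially the paper's proof. The paper likewise restricts the transfer identity to the returning excursion of \cref{lem:excursion} (using thirds of \([0,T]\) where you use \(\epsilon\)-legs), discards \(q\ge0\), bounds \(\Lambda(X_T)\) near \(\theta\), and lets superlinearity beat the \(O(R^2)\) Cameron--Martin cost. The only cosmetic difference is the tube bound: you use symmetry of the centred ball, whereas the paper's \cref{lem:tube} integrates by parts and picks up an extra, harmless \(O(R)\) term.
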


\begin{proof}
Assume without loss of generality that superlinearity is realised in the
right tail.  Let \(A_R\) be the excursion event of \cref{lem:excursion}.  On
\(A_R\) we have \(|X_s-\theta-R|<1\) for \(s\in[T/3,2T/3]\), and since
\(|\lambda|\) is increasing away from \(\theta\),
\[
 |\lambda(X_s)|\ge\lambda_*(R):=\inf_{|y-\theta-R|\le1}|\lambda(y)|
 =|\lambda(\theta+R-1)| .
\]
Because \(q\ge0\), the running term in \cref{eq:transfer-Psi} obeys, on
\(A_R\),
\[
 \int_0^T\Psi_a(X_s)\dd s
 \ge\bigg(a-\frac12\bigg)\frac1{\sigma^2}
   \int_{T/3}^{2T/3}\lambda(X_s)^2\dd s
 \ge\frac{(2a-1)T}{6\sigma^2}\,\lambda_*(R)^2 .
\]
Restricting \cref{eq:transfer-Psi} to \(A_R\) and using the two estimates of
\cref{lem:excursion},
\[
 \mathcal N_a(T,x)\ge p_{\mathrm{tube}}
 \exp\bigg\{\frac{(2a-1)T}{6\sigma^2}\lambda_*(R)^2
  -\frac{12R^2+24R}{\sigma^2T}
  -\frac{C_\Lambda}{\sigma^2}\bigg\}.
\]
Superlinearity gives
\(\lambda_*(R)/R=|\lambda(\theta+R-1)|/R\to\infty\), so for
fixed \(a>1/2\) and fixed \(T>0\) the exponent tends to \(+\infty\) as
\(R\to\infty\).
\end{proof}

\begin{corollary}[Critical coefficient]\label{cor:critical-coefficient}
Let \(\lambda\in\mathcal D_\star\).  Then for every \(x\in\mathbb R\),
\[
 \mathcal N_a(T,x)<\infty\ \text{ for all }T>0\quad\text{if }a<\frac12,
 \qquad
 \mathcal N_a(T,x)=\infty\ \text{ for all }T>0\quad\text{if }a>\frac12 .
\]
The coefficient \(a=1/2\) is therefore the unique value at which
\(\mathcal N_a(\cdot,x)\) can exhibit horizon dependence.
\end{corollary}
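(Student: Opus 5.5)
The statement follows by combining \cref{thm:subcritical-coefficient} and \cref{thm:supercritical-coefficient}, using the two defining properties of \(\mathcal D_\star\) separately.

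First, let \(a<1/2\). Since \(\lambda\in\mathcal D_\star\) is drift-dominated, \(\rho_\sigma(\lambda)=0\), so the hypothesis \(a<(1-\rho_\sigma(\lambda))/2\) of \cref{thm:subcritical-coefficient} reduces to \(a<1/2\). The deterministic bound \cref{eq:subcritical-bound} then gives \(\mathcal N_a(T,x)<\infty\) for every \(T>0\) and \(x\in\mathbb R\). This bound has no hidden dependence on the horizon beyond the factor \(e^{T\sup\Psi_a}\). The supremum is finite because \(\Psi_a\) is continuous on compacts and negative in the tails.

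Second, let \(a>1/2\). Superlinearity is the only input to \cref{thm:supercritical-coefficient}, which yields \(\mathcal N_a(T,x)=\infty\) for every \(T>0\) and every \(x\). That theorem relies on the excursion estimate of \cref{lem:excursion}, which the paper states and which I take as given. Its key point is that the Cameron--Martin cost of an excursion to distance \(R\) grows like \(R^2/T\), while the gain \((2a-1)T\lambda_*(R)^2/(6\sigma^2)\) grows superquadratically in \(R\).

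For the final sentence, the two cases show that for \(a\ne1/2\) the finiteness of \(\mathcal N_a(T,x)\) does not depend on \(T\). It is finite for all \(T\) when \(a<1/2\) and infinite for all \(T\) when \(a>1/2\). Hence only \(a=1/2\) can show a change with the horizon.

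There is no genuine obstacle, since both halves are already proved. The only care needed is to note which property of \(\mathcal D_\star\) each half uses: drift-domination for finiteness and superlinearity for divergence. Both halves also hold uniformly in the initial point \(x\).
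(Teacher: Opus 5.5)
Your proof is correct and is the paper's argument: drift-domination (\(\rho_\sigma(\lambda)=0\)) feeds \cref{thm:subcritical-coefficient} for \(a<1/2\), and superlinearity feeds \cref{thm:supercritical-coefficient} for \(a>1/2\). One small correction to your closing remark: the finiteness half should read ``for every \(x\)'' rather than ``uniformly in \(x\)'', because the bound \cref{eq:subcritical-bound} carries the factor \(e^{(\Lambda(\theta)-\Lambda(x))/\sigma^2}\), which is unbounded in \(x\); the paper stresses that the conclusion is pointwise in the initial state in \cref{ex:smr-threshold}.
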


\begin{proof}
Drift-domination and \cref{thm:subcritical-coefficient} give the first
assertion; superlinearity and \cref{thm:supercritical-coefficient} give the
second.
\end{proof}

On \(\mathcal D_\star\), Novikov's coefficient is the only value not already
decided by the preceding estimates; see
\cref{cor:critical-coefficient}.  The remaining analysis is therefore
concentrated on that line in \cref{sec:critical-line,sec:quadratic}.

\begin{remark}[Both hypotheses are used, and neither is removable]
\label{rem:OU-sharpness}
The two halves of \cref{cor:critical-coefficient} use different hypotheses.
The Ornstein--Uhlenbeck drift is drift-dominated but not superlinear, and for
\(a>1/2\) retains a classical horizon-dependent Gaussian-quadratic boundary
\citep{cameronmartin1944}.  Conversely, the spiked construction following
\cref{def:Dstar} is superlinear but not drift-dominated, so
\cref{thm:subcritical-coefficient} gives no positive-coefficient conclusion.
\end{remark}

\begin{figure}[!ht]
\centering
\begin{tikzpicture}[x=1cm,y=1cm,scale=0.9]
\foreach \i/\lab/\type in {0/{$p=1$ (OU)}/A, 1/{$1<p<3$}/B, 2/{$p=3$}/C,
                           3/{$p>3$}/D}{
  \begin{scope}[xshift=\i*3.75cm]
    \node[font=\small] at (1.45,3.28) {\lab};
    \begin{scope}
      \clip (-0.15,-0.15) rectangle (2.95,2.95);
      \if\type A
        \fill[gray!18] (0,0) rectangle (1.4,2.9);
        \fill[gray!18] plot[domain=1.42:2.9,samples=120,smooth]
          (\x,{min(2.9,0.62/sqrt(\x-1.4))}) -- (2.9,0) -- (1.4,0) -- cycle;
        \draw[very thick,domain=1.4477:2.9,samples=120,smooth]
          plot (\x,{0.62/sqrt(\x-1.4)});
      \else
        \fill[gray!18] (0,0) rectangle (1.4,2.9);
      \fi
      \if\type B \draw[very thick] (1.4,0) -- (1.4,2.9); \fi
      \if\type D \draw[very thick,gray!45] (1.4,0) -- (1.4,2.9); \fi
      \if\type C
        \draw[very thick,gray!45] (1.4,1.15) -- (1.4,2.9);
        \draw[very thick] (1.4,0) -- (1.4,1.15);
        \fill[white,draw=black,thick] (1.4,1.15) circle (2.2pt);
      \fi
    \end{scope}
    \draw[->] (0,0) -- (3.05,0) node[right,font=\scriptsize] {$a$};
    \draw[->] (0,0) -- (0,3.02) node[above,font=\scriptsize] {$T$};
    \draw[dashed,gray] (1.4,0) -- (1.4,2.9);
    \node[font=\scriptsize] at (1.4,-0.33) {$\tfrac12$};
    \node[font=\scriptsize] at (0.7,1.5) {finite};
    \if\type A
      \node[font=\scriptsize] at (2.3,2.1) {$\infty$};
    \else
      \node[font=\scriptsize] at (2.2,1.5) {$\infty$};
    \fi
    \if\type C
      \node[font=\scriptsize,anchor=west] at (1.52,1.15) {$T_c$};
    \fi
  \end{scope}
}
\end{tikzpicture}
\caption{Finiteness of \(\mathcal N_a(T,x)\) for
\(\lambda(\theta+y)\asymp-c\operatorname{sgn}(y)|y|^p\); shaded means finite.
For \(p>1\), horizon dependence is confined to \(a=1/2\): the critical line
is finite for \(p<3\), infinite for \(p>3\), and for \(p=3\) splits at
\(c\sigma^2T_c^2=\pi^2/3\), with divergence at the open circle.  For the
Ornstein--Uhlenbeck case \(p=1\), the black curve in \(a>1/2\) remains
nondegenerate; see \cref{rem:OU-sharpness}.  For cubic reversion and
\(a<1/2\), the explicit bound \cref{eq:subcritical-bound} uses
\(\sup\Psi_a=\sigma\sqrt{c/(1-2a)}\).}
\label{fig:phase-diagram}
\end{figure}

\subsection{Critical behaviour at \texorpdfstring{\(a=1/2\)}{a=1/2}}
\label{sec:critical-line}

At \(a=1/2\) the effective potential reduces to
\(\Psi_{1/2}=\frac12q\), and \cref{eq:transfer-Psi} becomes a Feynman--Kac
functional of Brownian motion with potential \(\frac12q\) and terminal weight
\(e^{\Lambda(X_T)/\sigma^2}\).  This section settles the two extreme regimes.

Both this section and \cref{sec:quadratic} reduce a Gaussian quadratic
functional to a product over Karhunen--Lo\`eve coordinates, so we isolate the
one-dimensional factor.

\begin{lemma}[Shifted Gaussian quadratic factor]
\label{lem:gaussian-factor}
For \(\chi>0\), \(\nu>0\), \(m\in\mathbb R\) and
\(\xi\sim\mathcal N(0,\nu)\),
\begin{equation}\label{eq:gaussian-factor}
 \mathbb E\big[e^{\chi(m+\xi)^2}\big]
 =\begin{cases}
   \dfrac{1}{\sqrt{1-2\chi\nu}}\,
     \exp\bigg\{\dfrac{\chi m^2}{1-2\chi\nu}\bigg\},
     & 2\chi\nu<1,\\[3mm]
   +\infty, & 2\chi\nu\ge1 .
  \end{cases}
\end{equation}
\end{lemma}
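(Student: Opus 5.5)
The plan is to compute the expectation directly as a Gaussian integral by completing the square. Write the density of \(\xi\) as \((2\pi\nu)^{-1/2}e^{-z^2/(2\nu)}\). Then
\[
 \mathbb E\big[e^{\chi(m+\xi)^2}\big]
 =\frac{1}{\sqrt{2\pi\nu}}\int_{\mathbb R}
  \exp\Big\{\chi(m+z)^2-\frac{z^2}{2\nu}\Big\}\dd z ,
\]
an integral of a nonnegative function, so it is well defined in \([0,\infty]\). The exponent is a quadratic polynomial in \(z\) with leading coefficient \(\chi-1/(2\nu)=-(1-2\chi\nu)/(2\nu)\). The case split in \cref{eq:gaussian-factor} is exactly the sign of this coefficient.

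For \(2\chi\nu\ge1\) the leading coefficient is nonnegative. If it is strictly positive, the integrand tends to \(+\infty\) as \(|z|\to\infty\). If it vanishes, the exponent is affine, namely \(2\chi mz+\chi m^2\). This is unbounded above when \(m\neq0\) and constant when \(m=0\). In every subcase the integrand is bounded below by a positive constant on a half-line, so the integral is \(+\infty\).

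For \(2\chi\nu<1\), put \(\gamma:=(1-2\chi\nu)/(2\nu)>0\) and complete the square:
\[
 \chi(m+z)^2-\frac{z^2}{2\nu}
 =-\gamma\Big(z-\frac{\chi m}{\gamma}\Big)^2+\chi m^2+\frac{\chi^2m^2}{\gamma}.
\]
The constant term simplifies to \(\chi m^2\big(1+2\chi\nu/(1-2\chi\nu)\big)=\chi m^2/(1-2\chi\nu)\). The remaining Gaussian integral equals \(\sqrt{\pi/\gamma}\). Multiplying by \((2\pi\nu)^{-1/2}\) gives \((2\nu\gamma)^{-1/2}=(1-2\chi\nu)^{-1/2}\), which is the stated formula.

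There is no genuine obstacle here. The only point needing care is the boundary case \(2\chi\nu=1\), where divergence must be argued separately for \(m\neq0\) and \(m=0\) rather than read off from the formula. Treating the expectation as an integral of a nonnegative function keeps the identity valid in \([0,\infty]\), matching how it is later used in the Karhunen--Lo\`eve product.
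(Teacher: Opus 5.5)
Your proposal is correct and follows the paper's own argument: insert the density, complete the square when \(2\chi\nu<1\), and for \(2\chi\nu\ge1\) observe that the integrand grows quadratically, or at equality is an exponential of an affine function or a positive constant, so the integral diverges. Your algebra for the constant term and the normalisation \((2\nu\gamma)^{-1/2}=(1-2\chi\nu)^{-1/2}\) checks out.
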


\begin{proof}
This is the standard scalar Gaussian integral
\citep[Chapter~1]{bogachev1998}: after inserting the density, complete the
square when \(1-2\chi\nu>0\).  If \(1-2\chi\nu<0\) the integrand grows
quadratically, while at equality it is an exponential affine function or a
positive constant and is still non-integrable.  Thus \(2\chi\nu=1\) lies in
the divergent branch, which makes every equality statement below uniform.
\end{proof}

We next record the classical input of this section in the shifted form used
below.

\begin{lemma}[Shifted Cameron--Martin criterion]\label{lem:cameron-martin}
Let \(W\) be a standard Brownian motion, \(\gamma\ge0\), \(u\in\mathbb R\)
and \(T>0\).  The relevant covariance problem has a Dirichlet condition at
\(0\) and a Neumann condition at \(T\), and
\[
 \mathbb E\bigg[
  \exp\bigg\{\frac{\gamma^2}{2}\int_0^T(u+W_s)^2\dd s\bigg\}\bigg]
 <\infty
 \quad\Longleftrightarrow\quad
 \gamma T<\frac{\pi}{2}.
\]
\end{lemma}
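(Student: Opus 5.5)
The plan is a Karhunen--Lo\`eve expansion followed by \cref{lem:gaussian-factor} applied coordinatewise. The covariance of \(W\) on \([0,T]\) is \(\min(s,t)\). Its eigenfunctions solve \(-\phi''=\mu^{-1}\phi\) with \(\phi(0)=0\) and \(\phi'(T)=0\); this is the Dirichlet--Neumann problem referred to in the statement. They are
\[
 \phi_k(s)=\sqrt{2/T}\,\sin\!\big((k-\tfrac12)\pi s/T\big),
 \qquad
 \mu_k=\frac{T^2}{(k-\frac12)^2\pi^2},
 \qquad k\ge1.
\]
Thus \(W_s=\sum_k\sqrt{\mu_k}\,\zeta_k\phi_k(s)\) with \(\zeta_k\) i.i.d.\ standard normal, converging in \(L^2([0,T])\) almost surely.

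Next I expand the constant \(u\) in the same basis, \(u=\sum_k u_k\phi_k\) with \(u_k=u\int_0^T\phi_k\). By Parseval,
\[
 \int_0^T(u+W_s)^2\dd s=\sum_k\big(u_k+\sqrt{\mu_k}\zeta_k\big)^2 ,
\]
and the summands are independent. Monotone convergence applied to the partial products, together with \cref{lem:gaussian-factor} with \(\chi=\gamma^2/2\), \(m=u_k\) and \(\nu=\mu_k\), gives
\[
 \mathbb E[\cdot]=\prod_k(1-\gamma^2\mu_k)^{-1/2}
 \exp\Big\{\frac{\gamma^2u_k^2}{2(1-\gamma^2\mu_k)}\Big\}
\]
whenever \(\gamma^2\mu_k<1\) for all \(k\). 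If instead some factor has \(\gamma^2\mu_k\ge1\), that factor alone is infinite. Since the other factors are at least \(1\), the lower bound by a finite product shows the whole expectation is \(+\infty\).

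The largest eigenvalue is \(\mu_1=4T^2/\pi^2\). Hence \(\gamma^2\mu_1<1\) holds exactly when \(\gamma T<\pi/2\), and the divergence direction follows immediately, including the equality case.

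For finiteness it remains to show the infinite product converges when \(\gamma^2\mu_1<1\). Two sums have to be controlled.
\begin{enumerate}[label=\textup{(\roman*)}]
\item The determinant part: \(\sum_k\gamma^2\mu_k<\infty\) because \(\mu_k\asymp k^{-2}\). Since \(1-\gamma^2\mu_k\ge1-\gamma^2\mu_1>0\) uniformly in \(k\), \(\prod_k(1-\gamma^2\mu_k)\) is a convergent product with a positive limit.
\item The exponential part: \(\sum_k u_k^2=u^2T<\infty\) by Parseval, and \(1-\gamma^2\mu_k\) is bounded below uniformly, so the exponent is summable.
\end{enumerate}

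The only delicate point is justifying the passage from the finite-dimensional truncations to the full functional. For this I use monotone convergence: the partial sums of nonnegative terms increase to the integral, so the expectations of the exponentials of the partial sums increase to the full expectation. This also legitimises the product formula in \([0,\infty]\), so no separate martingale or uniform-integrability argument is needed.
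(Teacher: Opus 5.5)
Your proposal is correct and follows essentially the same route as the paper's proof in \cref{app:cameron-martin-proof}: the Dirichlet--Neumann Karhunen--Lo\`eve expansion of \(W\) (your \(\mu_k=T^2/((k-\tfrac12)^2\pi^2)\) coincides with the paper's \(4T^2/((2n-1)^2\pi^2)\)), Parseval, independence and monotone convergence of the partial products, and \cref{lem:gaussian-factor} with \(\gamma^2\mu_1<1\iff\gamma T<\pi/2\). Finiteness then follows from \(\sum\mu_k<\infty\) and \(\sum u_k^2=u^2T\). The only omission is the trivial case \(\gamma=0\), where \cref{lem:gaussian-factor} (which needs \(\chi>0\)) does not apply but the expectation is simply \(1\); the paper does not treat it separately either.
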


\begin{proof}
The spectral-product proof is given in \cref{app:cameron-martin-proof}.
\end{proof}

\begin{remark}[Why analytic continuation is insufficient]
The classical negative-exponent identity
\citep{cameronmartin1944} contains \(\cosh(\gamma T)\).  Formally replacing
\(\gamma\) by an imaginary parameter predicts the first zero of the
corresponding cosine, but analytic continuation alone does not certify that a
positive exponential moment exists before that zero.  The appendix instead
derives finiteness from the nonnegative Karhunen--Lo\`eve product.
\end{remark}

The remaining bulk reward at \(a=1/2\) is \(\frac12\int q\), while keeping a
Brownian path near distance \(R\) for positive time costs only order \(R^2\).
This proves the following general obstruction.

\begin{theorem}[Failure of classical exponential-moment criteria]
\label{thm:obstruction}
Let \(\lambda\in\mathcal D\) be supercritical, let \(X\) solve
\cref{eq:sde}, and put \(\eta=\lambda/\sigma\).  For every
\(x\in\mathbb R\) and \(T>0\):
\begin{enumerate}[label=\textup{(\roman*)}]
\item \emph{Novikov} \citep{novikov}.
\begin{equation}\label{eq:novikov-diverges}
 \mathbb E_x\bigg[
  \exp\bigg\{\frac12\int_0^T\eta(X_s)^2\dd s\bigg\}
 \bigg]=\infty.
\end{equation}
The same holds with \(1/2\) replaced by any \(a\ge1/2\).  It also
holds on every deterministic subinterval from every restart state, so
partitioned refinements of Novikov's condition fail
\citep[Chapter~6]{ls}.
\item \emph{Kazamaki} \citep{kazamaki}.
\begin{equation}\label{eq:kazamaki-diverges}
 \mathbb E_x\bigg[
  \exp\bigg\{-\frac12\int_0^T\eta(X_s)\dd W_s\bigg\}
 \bigg]=\infty.
\end{equation}
Thus Kazamaki's criterion for
\(\mathcal E(-\int\eta(X)\dd W)\) fails already at the deterministic
time \(T\).
\end{enumerate}
\end{theorem}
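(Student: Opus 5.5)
Part (i) reduces to the transfer identity \cref{eq:transfer-Psi} at \(a=1/2\). There the effective potential is \(\Psi_{1/2}=q/2\), so
\[
 \mathcal N_{1/2}(T,x)=e^{-\Lambda(x)/\sigma^2}\,
 \mathbb E^{\mathbb Q_0}\Big[\exp\Big\{\tfrac{\Lambda(X_T)}{\sigma^2}+\tfrac12\int_0^Tq(X_s)\dd s\Big\}\Big].
\]
Suppose supercriticality is realised in the right tail along \(R_n\). I restrict to the excursion event \(A_{R}\) of \cref{lem:excursion}, as in the proof of \cref{thm:supercritical-coefficient}. On \(A_R\) the path stays in \(\{|y-\theta-R|<1\}\) for \(s\in[T/3,2T/3]\), and the lemma supplies:
\begin{itemize}
\item the probability lower bound \(p_{\mathrm{tube}}\exp\{-(12R^2+24R)/(\sigma^2T)\}\);
\item a lower bound \(-C_\Lambda\) for the terminal term \(\Lambda(X_T)/\sigma^2\), since the event also returns the endpoint to a fixed compact neighbourhood of \(\theta\).
\end{itemize}
Since \(q\ge0\), on \(A_R\) the running term is at least \(\tfrac{T}{6}q_*^+(R)\). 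Hence
\[
 \mathcal N_{1/2}(T,x)\ge p_{\mathrm{tube}}\exp\Big\{\tfrac{T}{6}q_*^+(R_n)-\tfrac{12R_n^2+24R_n}{\sigma^2T}-\tfrac{C_\Lambda+\Lambda(x)}{\sigma^2}\Big\}.
\]
Because \(q_*^+(R_n)/R_n^2\to\infty\), the exponent tends to \(+\infty\). The left tail is symmetric.

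For \(a\ge1/2\) the same conclusion follows by monotonicity in \(a\), since \(\eta^2\ge0\). For deterministic subintervals \([t,t']\) and an arbitrary restart state \(y\), time-homogeneity reduces the claim to \(\mathcal N_{1/2}(t'-t,y)=\infty\), which is the case already proved with horizon \(t'-t>0\).

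For (ii), I express Kazamaki's functional through the pathwise identity \cref{eq:pathwise-identity} combined with Itô under \(\mathbb Q_0\). On localising sets \(A_n\), \cref{lem:local-transfer} with
\[
 F=-\tfrac12\int_0^T\eta\dd W=-\tfrac1{2\sigma^2}\int\lambda\dd X+\tfrac1{2\sigma^2}\int\lambda^2\dd s
\]
gives, after adding the Girsanov density
\[
 \tfrac1{\sigma^2}\int\lambda\dd X-\tfrac1{2\sigma^2}\int\lambda^2,
\]
an exponent equal to \(\tfrac1{2\sigma^2}\int_0^T\lambda(X_s)\dd X_s\). By Itô this is \(\frac{\Lambda(X_T)-\Lambda(x)}{2\sigma^2}+\frac14\int_0^Tq(X_s)\dd s\). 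Monotone convergence then yields
\[
 \mathbb E_x\big[e^{-\frac12\int\eta\dd W}\big]=\mathbb E^{\mathbb Q_0}\Big[\exp\Big\{\tfrac{\Lambda(X_T)-\Lambda(x)}{2\sigma^2}+\tfrac14\int_0^Tq(X_s)\dd s\Big\}\Big].
\]
This step needs care because \(F\mathbf1_{A_n}\) must be bounded. The stochastic integral is not bounded, so I instead apply the lemma with truncations \(F\wedge k\) and pass \(k\to\infty\), or apply the Girsanov density on \(\mathcal F_{T\wedge\tau}\) directly. The tube argument above, with \(\tfrac{T}{12}q_*^+(R_n)\) in place of \(\tfrac T6 q_*^+(R_n)\), again gives divergence.

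The main obstacle is the uniform terminal control on \(A_R\): \(\Lambda(X_T)\) is unbounded below, so I need \(X_T\) confined near \(\theta\) (or near \(x\)). I take this from \cref{lem:excursion}, which by the constants quoted in \cref{thm:supercritical-coefficient} already supplies \(C_\Lambda\).
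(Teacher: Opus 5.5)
Your proposal is correct and follows essentially the paper's argument: transfer to the Brownian law, restrict to the returning excursion tube \(A_R\) of \cref{lem:excursion}, and let the windowed gain \(\tfrac T6 q_*^+(R_n)\) (resp.\ \(\tfrac T{12}q_*^+(R_n)\) for Kazamaki) beat the quadratic tube cost, with monotonicity in \(a\) and time-homogeneity handling the extensions. Your concern about boundedness of \(F\mathbf 1_A\) in part (ii) is harmless but unnecessary. By \cref{eq:pathwise-identity} and It\^o's formula, \(F\) is a path functional that is bounded on \(A_R\), which is why the paper can apply \cref{lem:local-transfer} with \(A=A_R\) directly.
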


The proof is deferred to \cref{app:tubes}.  It uses the local transfer lemma,
so it does not presuppose \cref{thm:Z-martingale}.  The theorem concerns the
classical pure Novikov and Kazamaki tests and their partitioned versions; it
does not claim that every generalised martingale criterion must fail.  Mixed
criteria involving both a stochastic exponential and its quadratic variation
may remain applicable \citep{ruf}, as may Bene\v{s}-type conditions
\citep{benes1971,klebanerliptser2014}.

\begin{remark}[Why quadratic \(q\) is the boundary]\label{rem:q-boundary}
Under the Brownian law the residual quantity is the Feynman--Kac functional
\(\mathbb E^{\mathbb Q_0}\big[\exp\big\{\frac12
  \int_0^Tq(x+\sigma W_s)\dd s\big\}\big]\),
up to the terminal antiderivative.  Quadratic \(q\) is the
Cameron--Martin quadratic functional: its expectation has a finite critical
time.  Faster-than-quadratic \(q\) makes the functional infinite for every
positive time, whereas a subquadratic potential is absorbable on every fixed
horizon.  Since a polynomial drift of order \(p\) has
\(q\) of order \(p-1\), cubic restoring drift is the exact boundary.
\end{remark}

\begin{theorem}[Dichotomy away from the critical scale]\label{thm:q-dichotomy}
Let \(\lambda\in\mathcal D\).
\begin{enumerate}[label=\textup{(\roman*)}]
\item \textup{(Subquadratic.)}  If
 \(q(\theta+y)=o(y^2)\) as \(|y|\to\infty\), then
 \(\mathcal N_{1/2}(T,x)<\infty\) for every \(T>0\) and every
 \(x\in\mathbb R\).
\item \textup{(Supercritical.)}  If \(\lambda\) is supercritical in the
 windowed sense of \cref{def:supercritical}, then
 \(\mathcal N_{1/2}(T,x)=\infty\) for every \(T>0\) and every
 \(x\in\mathbb R\).
\end{enumerate}
\end{theorem}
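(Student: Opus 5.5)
Both parts start from \cref{cor:extended-transfer} at \(a=1/2\), where \(\Psi_{1/2}=q/2\):
\[
 \mathcal N_{1/2}(T,x)=e^{-\Lambda(x)/\sigma^2}\,
 \mathbb E^{\mathbb Q_0}\Big[\exp\Big\{\tfrac{\Lambda(X_T)}{\sigma^2}
 +\tfrac12\int_0^Tq(X_s)\dd s\Big\}\Big],
 \qquad X_s=x+\sigma W_s .
\]
This identity holds in \([0,\infty]\) and does not use \cref{thm:Z-martingale}.

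\emph{Subquadratic case.}  By \cref{lem:Lambda-concave} the terminal factor is at most \(e^{\Lambda(\theta)/\sigma^2}\), so only the running term has to be controlled.  Fix \(T\) and choose \(\gamma>0\) with \(\gamma T<\pi/2\).  Since \(q(\theta+y)=o(y^2)\) and \(q\) is continuous, there is a constant \(C_\gamma\) with
\[
 \tfrac12 q(y)\le \tfrac{\gamma^2\sigma^2}{2}\Big(\tfrac{y-\theta}{\sigma}\Big)^2+C_\gamma
 \qquad\text{for all }y .
\]
Substituting \(y=x+\sigma W_s\) gives \(\tfrac12q(X_s)\le\tfrac{\gamma^2}{2}(u+W_s)^2\sigma^2\cdot\sigma^{-2}+C_\gamma\) with \(u=(x-\theta)/\sigma\).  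The quadratic envelope therefore has exactly the form treated in \cref{lem:cameron-martin}, whose criterion \(\gamma T<\pi/2\) gives a finite expectation.  This yields \(\mathcal N_{1/2}(T,x)<\infty\) for every \(T\).  Because \(\gamma\) can be taken arbitrarily small, the only delicate point is to match the constants and scaling of \cref{lem:cameron-martin} correctly; this is routine.

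\emph{Supercritical case.}  Assume the right tail is the supercritical one, with \(R_n\uparrow\infty\) as in \cref{def:supercritical}.  I would reuse the Brownian tube event \(A_R\) from \cref{lem:excursion}, already used in \cref{thm:supercritical-coefficient}.  On \(A_R\) the path satisfies \(|X_s-\theta-R|<1\) for \(s\in[T/3,2T/3]\), and the lemma provides two estimates.  First, \(\mathbb Q_0(A_R)\ge p_{\mathrm{tube}}\exp\{-(12R^2+24R)/(\sigma^2T)\}\).  Second, the terminal term is controlled, \(\Lambda(X_T)\ge -C_\Lambda\) on \(A_R\).  Since \(q\ge0\), on \(A_R\) we have \(\tfrac12\int_0^Tq(X_s)\dd s\ge \tfrac{T}{6}\,q_*^+(R)\).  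Restricting the expectation to \(A_R\) then gives
\[
 \mathcal N_{1/2}(T,x)\ge p_{\mathrm{tube}}\exp\Big\{\tfrac{T}{6}q_*^+(R_n)-\tfrac{12R_n^2+24R_n}{\sigma^2T}-\tfrac{C_\Lambda+\Lambda(x)}{\sigma^2}\Big\}.
\]
The hypothesis \(q_*^+(R_n)/R_n^2\to\infty\) makes the exponent tend to \(+\infty\) for each fixed \(T>0\), so \(\mathcal N_{1/2}(T,x)=\infty\).

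\emph{Main obstacle.}  The main difficulty is uniformity of the tube estimate.  The cost has to be \(O(R^2)\) with constants independent of \(R\), and the terminal point \(X_T\) has to be returned to a bounded neighbourhood of \(\theta\) so that \(\Lambda(X_T)\) stays bounded below.  The reason is that \(\Lambda\) may decrease very fast (as fast as \(-\int\lambda\)), so an uncontrolled endpoint could cancel the gain from the running term.  I expect \cref{lem:excursion} to be built so that the path goes out to \(R\), stays there, and comes back near \(\theta\) by time \(T\); that is what gives the constant \(C_\Lambda\).  If the lemma did not control the endpoint, I would add a final leg returning to \([\theta-1,\theta+1]\), which costs a further \(O(R^2/T)\) in Cameron--Martin energy.
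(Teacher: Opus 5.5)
Your proposal is correct and follows the paper's proof: part (i) absorbs the terminal factor via \(\Lambda\le\Lambda(\theta)\), bounds \(q\) by a small quadratic envelope and applies \cref{lem:cameron-martin}, and part (ii) is exactly the returning-excursion argument of \cref{thm:obstruction}(i) built on \cref{lem:excursion}. The only slip is arithmetic in (i): with your displayed envelope the running term is \(\tfrac{(\gamma\sigma)^2}{2}\int_0^T(u+W_s)^2\dd s\), so you need \(\gamma\sigma T<\pi/2\) rather than \(\gamma T<\pi/2\) (the paper takes \(\gamma=\sigma\sqrt\varepsilon\) with \(\varepsilon\sigma^2T^2<\pi^2/4\)), and this is harmless because \(q=o(y^2)\) lets you shrink \(\gamma\).
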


\begin{proof}
(ii) is \cref{thm:obstruction}(i).

(i)  Let \(T>0\) and choose \(\varepsilon>0\) with
\(\varepsilon\sigma^2T^2<\pi^2/4\).  By hypothesis there is
\(C_\varepsilon<\infty\) with
\(q(\theta+y)\le\varepsilon y^2+C_\varepsilon\) for all \(y\).  By
\cref{eq:Lambda-bounded,eq:transfer-Psi} at \(a=1/2\),
\[
 \mathcal N_{1/2}(T,x)\le
 e^{(\Lambda(\theta)-\Lambda(x))/\sigma^2}e^{C_\varepsilon T/2}\,
 \mathbb E\bigg[\exp\bigg\{\frac{\varepsilon}{2}
  \int_0^T(x-\theta+\sigma W_s)^2\dd s\bigg\}\bigg].
\]
Writing the exponent as
\(\frac{\varepsilon\sigma^2}{2}\int_0^T
  \big(\frac{x-\theta}{\sigma}+W_s\big)^2\dd s\)
and applying \cref{lem:cameron-martin} with
\(\gamma=\sigma\sqrt\varepsilon\), the expectation is finite precisely when
\(\sigma\sqrt\varepsilon\,T<\pi/2\), that is
\(\varepsilon\sigma^2T^2<\pi^2/4\), which is how \(\varepsilon\) was chosen.
As \(T\) was arbitrary, (i) follows.
\end{proof}

\begin{corollary}[Polynomial trichotomy]
\label{cor:polynomial-trichotomy}
Let
\[
 \lambda(\theta+y)=-c\operatorname{sgn}(y)|y|^p,
 \qquad c>0,\quad p\ge1.
\]
At Novikov's coefficient, for every initial state \(x\),
\[
 \begin{array}{ccl}
  1\le p<3
   &\Longrightarrow& \mathcal N_{1/2}(T,x)<\infty
      \text{ for every }T>0,\\[1mm]
  p=3
   &\Longrightarrow& \mathcal N_{1/2}(T,x)<\infty
      \Longleftrightarrow c\sigma^2T^2<\pi^2/3,\\[1mm]
  p>3
   &\Longrightarrow& \mathcal N_{1/2}(T,x)=\infty
      \text{ for every }T>0.
 \end{array}
\]
At \(p=3\), equality belongs to the divergent side.
\end{corollary}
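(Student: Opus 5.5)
The three cases follow the value of \(q(\theta+y)=cp|y|^{p-1}\) computed in \cref{ex:basic-drifts}.

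\emph{Case \(1\le p<3\).} Here \(p-1<2\), so \(q(\theta+y)=o(y^2)\) as \(|y|\to\infty\). Since \(\lambda\in\mathcal D\) by \cref{rem:D-monotone}, \cref{thm:q-dichotomy}(i) gives finiteness for every \(T\) and every \(x\). For \(p=1\), \(q\equiv c\) is constant, and the same argument applies.

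\emph{Case \(p>3\).} Now \(q\) is increasing in \(|y|\). Hence \(q_*^{\pm}(R)=cp(R-1)^{p-1}\) for \(R>1\), and \(q_*^{\pm}(R)/R^2\to\infty\) because \(p-1>2\). The drift is therefore supercritical in the sense of \cref{def:supercritical}, and \cref{thm:q-dichotomy}(ii) (equivalently \cref{thm:obstruction}(i)) gives divergence for every \(T>0\) and every \(x\).

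\emph{Case \(p=3\).} This is the only substantive case, and I would deduce it from the later spectral criticality result (\cref{thm:universality} together with the exact cubic theorem \cref{eq:intro-cubic}).

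First, \(q(\theta+y)=3cy^2\) exactly, so \(\kappa_\infty=3c\). The strict-side classification then gives finiteness when \(3c\sigma^2T^2<\pi^2\) and divergence when \(3c\sigma^2T^2>\pi^2\).

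Equality is the delicate point. Because \(q\) is exactly quadratic with no lower-order correction, I would argue directly rather than through the \(4/3\) hierarchy. The plan:
\begin{enumerate}
\item Transfer via \cref{cor:extended-transfer}, with \(X_s=x+\sigma W_s\). Condition on the endpoint \(X_T\), which reduces the running term \(\tfrac{3c}{2}\int_0^T(X_s-\theta)^2\dd s\) to a shifted Brownian-bridge quadratic functional.
\item Expand the bridge in its Karhunen--Lo\`eve basis, with eigenvalues \(T^2/(n^2\pi^2)\), and apply \cref{lem:gaussian-factor} to each mode. At \(3c\sigma^2T^2=\pi^2\), the first mode has \(2\chi\nu=1\), so its factor is \(+\infty\) for \emph{every} endpoint value.
\item Conclude that the conditional expectation is infinite for every endpoint. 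Since the terminal weight \(e^{\Lambda(X_T)/\sigma^2}=e^{-c(X_T-\theta)^4/(4\sigma^2)}\) is strictly positive, integrating over the endpoint still gives \(+\infty\).
\end{enumerate}

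The main point to verify is in step 2: the quartic terminal confinement cannot rescue equality. It acts only on the endpoint and not on the bridge's first mode, and \cref{lem:gaussian-factor} places \(2\chi\nu=1\) in the divergent branch uniformly in the shift \(m\). Hence divergence occurs at \(c\sigma^2T^2=\pi^2/3\), as claimed.
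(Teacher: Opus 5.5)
Your proposal is correct and follows the paper's proof. The cases $p<3$ and $p>3$ go through \cref{thm:q-dichotomy}(i) and (ii) exactly as you describe. For $p=3$ the paper simply cites \cref{thm:cubic-boundary}, whose proof is precisely your bridge argument: it conditions on the endpoint, shows the first Karhunen--Lo\`eve factor is infinite at $2\chi\nu_1=1$ for every endpoint by \cref{lem:gaussian-factor}, and concludes by Tonelli. Routing the strict sides through \cref{thm:universality} is harmless but unnecessary, because \cref{thm:cubic-boundary} already settles both strict sides and equality together.
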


\begin{proof}
For \(p<3\), the local reversion rate
\(q(\theta+y)=cp|y|^{p-1}\) is subquadratic, whereas for \(p>3\) it is
windowed superquadratic; apply \cref{thm:q-dichotomy}.  The cubic case is
\cref{thm:cubic-boundary}.
\end{proof}

\begin{remark}[The two hypotheses are not complementary]
\label{rem:not-complementary}
\Cref{thm:q-dichotomy} is a dichotomy between two extremes, not a partition.
Condition (i) is the relation \(q(\theta+y)=o(y^2)\), whereas (ii) is the
windowed condition \cref{eq:supercritical}, which is strictly stronger
than \(\limsup q(\theta+y)/y^2=\infty\): a reversion rate carrying
arbitrarily thin spikes has infinite \(\limsup\) but need not be occupied long
enough for the tube argument to bite.  For eventually monotone or regularly
varying \(q\) the two coincide, and then \cref{thm:q-dichotomy} leaves exactly
the intermediate scale \(\limsup q(\theta+y)/y^2\in(0,\infty)\) undecided.
That scale is the subject of \cref{sec:quadratic}.
\end{remark}

\begin{example}[Exponential reversion]\label{ex:smr-threshold}
Let \(\lambda(x)=\mu\sinh(\theta-x)\), where \(\mu>0\).  Then
\[
 \eta(x)=\frac{\mu}{\sigma}\sinh(\theta-x),\qquad
 q(x)=\mu\cosh(\theta-x),\qquad
 \Lambda(x)=-\mu\cosh(\theta-x).
\]
For every starting state and positive horizon,
\begin{equation}\label{eq:smr-threshold}
 \mathcal N_a(T,x)<\infty\quad\Longleftrightarrow\quad a<\frac12.
\end{equation}
Indeed,
\[
 \Psi_a(x)=\frac{\mu}{2}\cosh(\theta-x)
 -\bigg(\frac12-a\bigg)\frac{\mu^2}{\sigma^2}
   \sinh^2(\theta-x),
\]
which is bounded above when \(a<1/2\), as is \(\Lambda\); hence
\cref{eq:transfer-Psi} gives finiteness.  At and above \(1/2\), the
windowed-supercritical obstruction in \cref{thm:obstruction} gives
divergence.

The conclusion is pointwise in the initial state.  In fact, for every
\(c,T>0\),
\[
 \sup_x\mathbb E\Big[
  e^{c\int_0^T\eta(x+\sigma W_s)^2\dd s}\Big]=\infty.
\]
To see this, fix \(0<\delta\le T\) and restrict to
\(\{\sup_{s\le\delta}|\sigma W_s|\le1\}\), whose probability is positive
and independent of \(x\); the resulting lower bound contains
\(\sup_x\exp\{c\delta\inf_{|y-x|\le1}\eta(y)^2\}=\infty\).
Thus natural uniform additive-functional bounds cannot recover the pointwise
threshold, consistent with the distinction in
\citet{stummer1993,stummer1997,stummersturm2000}.
\end{example}

\section{Exact quadratic criticality}\label{sec:quadratic}

\subsection{The cubic boundary}\label{sec:cubic-boundary}

\begin{theorem}[Exact Novikov boundary for cubic reversion]
\label{thm:cubic-boundary}
Let
\begin{equation}\label{eq:cubic-sde}
 \d Y_t=-cY_t^3\dd t+\sigma\dd W_t,
 \qquad c,\sigma>0,\qquad Y_0=y_0,
\end{equation}
and define
\begin{equation}\label{eq:cubic-functional}
 N_T:=\mathbb E_{y_0}\bigg[
  \exp\bigg\{\frac1{2\sigma^2}\int_0^Tc^2Y_s^6\dd s\bigg\}
 \bigg].
\end{equation}
Then, for every \(y_0\in\mathbb R\),
\begin{equation}\label{eq:cubic-critical}
 N_T<\infty
 \quad\Longleftrightarrow\quad
 c\sigma^2T^2<\frac{\pi^2}{3}.
\end{equation}
At equality, as well as above it, \(N_T=\infty\).  The same equivalence
holds for affine--cubic drift \(-\kappa Y_t-cY_t^3\) with any
\(\kappa\ge0\).
\end{theorem}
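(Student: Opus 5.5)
The plan is to work entirely through the extended-valued transfer identity \cref{eq:transfer-Psi} at \(a=1/2\), where \(\Psi_{1/2}=q/2\). Here \(\theta=0\), \(q(y)=3cy^2\) and \(\Lambda(y)=-cy^4/4\). With \(X_s=y_0+\sigma W_s\) under \(\mathbb Q_0\), this gives
\[
 N_T=e^{cy_0^4/(4\sigma^2)}\,
 \mathbb E\bigg[\exp\bigg\{-\frac{cX_T^4}{4\sigma^2}
  +\frac{3c}{2}\int_0^TX_s^2\dd s\bigg\}\bigg]\in[0,\infty].
\]
For the affine--cubic drift, \(q=\kappa+3cy^2\) and \(\Lambda=-\kappa y^2/2-cy^4/4\). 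The extra constant contributes the harmless factor \(e^{\kappa T/2}\). The extra terminal term \(-\kappa X_T^2/(2\sigma^2)\le0\) only helps on the finite side, and on the divergent side it is irrelevant, as explained below. So the argument is identical in both cases.

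Next I disintegrate with respect to the endpoint \(z=X_T\), using Tonelli since the integrand is nonnegative. Conditionally on \(X_T=z\), one has \(X_s=\ell_z(s)+\sigma B_s\), where \(\ell_z(s)=y_0+(z-y_0)s/T\) and \(B\) is a standard Brownian bridge on \([0,T]\). I expand \(B\) in its Karhunen--Lo\`eve basis \(e_n(s)=\sqrt{2/T}\sin(n\pi s/T)\). Its coordinates \(\xi_n\) are independent \(\mathcal N(0,\nu_n)\) with \(\nu_n=T^2/(n^2\pi^2)\). Writing \(m_n(z)=\sigma^{-1}\langle\ell_z,e_n\rangle\), the running term equals \(\frac{3c\sigma^2}{2}\sum_n(m_n+\xi_n)^2\) plus the explicit contribution of the component of \(\ell_z\) orthogonal to the basis. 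That contribution is zero, since the \(e_n\) are complete in \(L^2[0,T]\). The conditional expectation is then an infinite product of factors of the form in \cref{lem:gaussian-factor}, with \(\chi=3c\sigma^2/2\). The critical parameter is \(2\chi\nu_1=3c\sigma^2T^2/\pi^2\).

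\emph{Divergence.} If \(c\sigma^2T^2\ge\pi^2/3\), then \(2\chi\nu_1\ge1\), and \cref{lem:gaussian-factor} makes the \(n=1\) factor \(+\infty\) for every shift \(m_1(z)\). All other factors are at least \(1\) by Jensen, \(\mathbb E[e^{\chi(m+\xi)^2}]\ge e^{\chi m^2}\ge 1\). Hence the conditional expectation is \(+\infty\) for every \(z\). Integrating against the positive weight \(e^{-cz^4/(4\sigma^2)}p_T(y_0,z)\), or \(e^{-\kappa z^2/(2\sigma^2)-cz^4/(4\sigma^2)}p_T(y_0,z)\) in the affine--cubic case, gives \(N_T=\infty\). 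This includes equality. The infinite product causes no trouble here: it suffices to lower-bound by the first factor times the truncated product, and then let the truncation grow.

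\emph{Finiteness.} If \(2\chi\nu_1<1\), then \(2\chi\nu_n\le 2\chi\nu_1<1\) for every \(n\). The factors multiply to
\[
 \prod_n(1-2\chi\nu_n)^{-1/2}\exp\Big\{\sum_n\frac{\chi m_n^2}{1-2\chi\nu_n}\Big\}.
\]
The first product is finite because \(\sum\nu_n<\infty\). The exponent is at most \((1-2\chi\nu_1)^{-1}\chi\sum_nm_n^2\), and by Parseval \(\sum_n m_n^2=\sigma^{-2}\|\ell_z\|_{L^2}^2\le C(y_0^2+z^2)\). So the conditional expectation is bounded by \(C'e^{C''z^2}\). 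Against the Gaussian endpoint density, the terminal weight \(e^{-cz^4/(4\sigma^2)}\) dominates any \(e^{C''z^2}\), and the \(z\)-integral is finite.

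The step needing most care is the rigorous justification of the bridge/Karhunen--Lo\`eve factorisation. Specifically, the exchange of expectation with the infinite product and the identification of \(\int_0^T(\ell_z+\sigma B)^2\) with the series both need justifying. I would handle these by truncating to \(N\) modes, applying monotone convergence in \(N\) (the partial sums of squares increase to the full \(L^2\) norm), and using independence of the coordinates. This is the same device as in the appendix proof of \cref{lem:cameron-martin}, which I would cite for the routine details.
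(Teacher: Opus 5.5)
Your proposal is correct and follows the paper's proof essentially step for step. The shared steps are: endpoint conditioning; the Brownian-bridge Karhunen--Lo\`eve product built from \cref{lem:gaussian-factor}; divergence from the first factor alone when \(3c\sigma^2\nu_1\ge1\), which covers equality; and, on the finite side, a \(z\)-uniform bound \(C'e^{C''z^2}\) on the conditional expectation that the quartic terminal weight beats, with the affine term altering only constants and a nonpositive terminal term. The one difference is that you take the Brownian representation from the localised identity \cref{cor:extended-transfer} rather than from the global drift-removing measure of \cref{thm:Z-martingale}; this is equally valid and avoids relying on the martingale property of \(Z\).
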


\begin{proof}
The drift \(\lambda(y)=-cy^3\) is \(C^1\) and strictly decreasing with
\(\lambda(0)=0\), so \(\lambda\in\mathcal D\) by
\cref{def:mean-reverting,rem:D-monotone}; the same holds for
\(-\kappa y-cy^3\) with \(\kappa\ge0\).  By \cref{thm:Z-martingale}, the
measure removing the drift is therefore well defined.
Under that measure, which we denote by \(\mathbb Q\), the state satisfies
\(Y_s=y_0+\sigma W_s\).  Dividing the Novikov functional by the density
and applying It\^o's formula to \(Y^4\) gives
\begin{equation}\label{eq:cubic-transfer}
 N_T=e^{cy_0^4/(4\sigma^2)}
 \mathbb E^{\mathbb Q}_{y_0}\bigg[
  \exp\bigg\{-\frac{cY_T^4}{4\sigma^2}
       +\frac{3c}{2}\int_0^TY_s^2\dd s\bigg\}
 \bigg].
\end{equation}
Condition on \(Y_T=y\).  The conditional path is its linear interpolation
\(\ell_{y_0,y}\) plus \(\sigma\beta\), where \(\beta\) is a standard
Brownian bridge independent of \(y\).  Its covariance operator on
\(L^2[0,T]\) has kernel
\[
 K(s,r)=s\wedge r-\frac{sr}{T},
\]
eigenfunctions \(e_n(s)=\sqrt{2/T}\sin(n\pi s/T)\), and eigenvalues
\begin{equation}\label{eq:bridge-eigenvalues}
 \nu_n=\frac{T^2}{n^2\pi^2},\qquad n\ge1.
\end{equation}
Indeed, \((Kf)''=-f\) with \((Kf)(0)=(Kf)(T)=0\), so the eigenproblem
reduces to the Dirichlet spectrum of \(-\partial_s^2\).

Set
\[
 m_n:=\bigg\langle\frac{\ell_{y_0,y}}{\sigma},e_n\bigg\rangle,
 \qquad
 \xi_n:=\langle\beta,e_n\rangle,
 \qquad \chi:=\frac32c\sigma^2.
\]
Exactly as in the proof of \cref{lem:cameron-martin}, the \(\xi_n\) are
jointly Gaussian with
\(\operatorname{Cov}(\xi_n,\xi_k)=\langle Ke_n,e_k\rangle
 =\nu_n\delta_{nk}\), hence independent with
\(\xi_n\sim\mathcal N(0,\nu_n)\), and Parseval gives the pathwise identity
\begin{equation}\label{eq:parseval-bridge}
 \frac{3c}{2}\int_0^TY_s^2\dd s
 =\chi\big\|\sigma^{-1}\ell_{y_0,y}+\beta\big\|_{L^2[0,T]}^2
 =\chi\sum_{n\ge1}(m_n+\xi_n)^2 .
\end{equation}
The partial sums \(\chi\sum_{n\le N}(m_n+\xi_n)^2\) are nonnegative and
increase to the right-hand side of \cref{eq:parseval-bridge}, so monotone
convergence and independence turn the conditional bridge expectation into a
limit of finite products,
\begin{equation}\label{eq:bridge-product}
 \mathbb E\bigg[
  \exp\bigg\{\frac{3c}{2}\int_0^TY_s^2\dd s\bigg\}\biggm| Y_T=y\bigg]
 =\lim_{N\to\infty}\prod_{n=1}^N
  \mathbb E\big[\exp\big\{\chi(m_n+\xi_n)^2\big\}\big]
 \ \in(0,\infty] ,
\end{equation}
every factor being at least \(1\).  Each factor is \cref{eq:gaussian-factor}
with \(\nu=\nu_n\), and \((\nu_n)\) is decreasing, so the whole product is
governed by \(2\chi\nu_1=3c\sigma^2\nu_1\), and
\[
 3c\sigma^2\nu_1<1
 \quad\Longleftrightarrow\quad
 c\sigma^2T^2<\frac{\pi^2}{3}.
\]
If \(c\sigma^2T^2\ge\pi^2/3\), then \(2\chi\nu_1\ge1\) and the second branch
of \cref{eq:gaussian-factor} makes the first factor infinite, so
\cref{eq:bridge-product} diverges; this covers equality and strict excess at
once.  The conclusion holds for every endpoint \(y\), so Tonelli's theorem
gives divergence of \cref{eq:cubic-transfer}.

Suppose now that the inequality is strict, and put
\(\delta=1-2\chi\nu_1>0\).  Then
\(1-2\chi\nu_n\ge\delta\) for all \(n\).  Since
\(\sum_n\nu_n=T^2/6<\infty\), the determinant product converges.  Moreover,
\[
 \sum_nm_n^2
 =\sigma^{-2}\|\ell_{y_0,y}\|_{L^2}^2
 \le\frac{T}{\sigma^2}\big(|y_0|+|y|\big)^2.
\]
The conditional bridge expectation is therefore bounded by
\(C\exp\{C_1(1+y^2)\}\), with constants independent of \(y\).  The
endpoint density is Gaussian, and the factor
\(\exp\{-cy^4/(4\sigma^2)\}\) in \cref{eq:cubic-transfer} dominates
this quadratic bound.  Integration over \(y\) proves finiteness.

Adding \(-\kappa Y\) to the drift contributes only
\[
 \frac{\kappa T}{2}
 -\frac{\kappa(Y_T^2-y_0^2)}{2\sigma^2}
\]
to the transferred exponent.  It does not alter the bridge quadratic
coefficient, so the critical value is unchanged.
\end{proof}

\begin{proposition}[Blow-up rate at the cubic critical horizon]
\label{prop:cubic-blowup-rate}
In the pure cubic model of \cref{thm:cubic-boundary}, put
\[
 T_c:=\frac{\pi}{\sigma\sqrt{3c}},\qquad
 \omega:=\sigma\sqrt{3c}.
\]
For every fixed starting state \(y_0\),
\begin{equation}\label{eq:cubic-blowup-log-rate}
 \lim_{T\uparrow T_c}(T_c-T)^2\log N_T
 =\frac{1}{4c\sigma^2}.
\end{equation}
At the equilibrium the sharper equivalent is
\begin{equation}\label{eq:cubic-blowup-equivalent}
 N_T\sim
 \sqrt{\frac{2}{\cos\{\omega(T_c-T)\}}}
 \exp\bigg\{\frac34\cot^2\!\{\omega(T_c-T)\}\bigg\}.
\end{equation}
Thus the divergence is super-exponential in \((T_c-T)^{-1}\), with
\(\log N_T\sim[4c\sigma^2(T_c-T)^2]^{-1}\).
\end{proposition}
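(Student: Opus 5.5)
The plan is to rerun the proof of \cref{thm:cubic-boundary} quantitatively, now for \(T<T_c\), and then evaluate the endpoint integral by Laplace's method. The starting point is the transfer identity \cref{eq:cubic-transfer}, conditioned on \(Y_T=y\) as before. With \(\chi=\tfrac32c\sigma^2\) and \(\delta_n:=1-2\chi\nu_n\), the product \cref{eq:bridge-product} and \cref{eq:gaussian-factor} give
\[
 N_T=e^{cy_0^4/(4\sigma^2)}\int_{\mathbb R}\frac{e^{-(y-y_0)^2/(2\sigma^2T)}}{\sqrt{2\pi\sigma^2T}}
 \prod_{n\ge1}\delta_n^{-1/2}\,
 \exp\bigg\{\sum_{n\ge1}\frac{\chi m_n(y)^2}{\delta_n}-\frac{cy^4}{4\sigma^2}\bigg\}\dd y .
\]
Only \(\delta_1\) degenerates. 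Since \(3c\sigma^2T_c^2=\pi^2\), we have
\[
 \delta_1=1-\frac{T^2}{T_c^2}\sim\frac{2(T_c-T)}{T_c},
\]
while \(\delta_n\ge 3/4\) for \(n\ge2\) uniformly near \(T_c\). The first coefficient is explicit: using \(\int_0^T\sin(\pi s/T)\dd s=2T/\pi\) and \(\int_0^Ts\sin(\pi s/T)\dd s=T^2/\pi\), one finds
\[
 m_1=\frac{\sqrt{2T}}{\pi\sigma}(y_0+y).
\]

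\emph{Step 1 (logarithmic rate).} The exponent has the form \(A_Ty^2-By^4+O(1+|y|)\), with \(B=c/(4\sigma^2)\) and
\[
 A_T=\frac{2T\chi}{\pi^2\sigma^2\delta_1}+O(1)\sim\frac{1}{2\sigma^2(T_c-T)} .
\]
The transverse modes contribute at most \(\sum_{n\ge2}\chi m_n^2/\delta_n\le C(1+y^2)\) by the \(L^2\) bound on \(\ell_{y_0,y}\). The Gaussian density and the \(y_0\)-dependent cross terms in \(m_1^2\) are likewise \(O(1+y^2)+O(|y|/\delta_1)\). All of these are lower order: at the maximiser \(y^2\asymp A_T\asymp(T_c-T)^{-1}\) they are \(O((T_c-T)^{-3/2})\), which is \(o((T_c-T)^{-2})\). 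The prefactor \(\delta_1^{-1/2}\prod_{n\ge2}\delta_n^{-1/2}\) is only polynomial. Upper and lower Laplace bounds for \(\int e^{A y^2-By^4}\) then give
\[
 \log N_T\sim\frac{A_T^2}{4B}=\frac{1}{4c\sigma^2(T_c-T)^2},
\]
which is \cref{eq:cubic-blowup-log-rate}. For the lower bound I would restrict to a window around \(y^2=A_T/(2B)\); the upper bound is a pointwise maximisation of the exponent.

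\emph{Step 2 (sharp equivalent at \(y_0=0\)).} Here I would avoid the mode sum and use the closed form instead. With \(y_0=0\), the joint law of \(\exp\{\tfrac{\omega^2}{2\sigma^2}\int_0^TY_s^2\dd s\}\) and \(Y_T\in\dd y\) is the harmonic-oscillator (Mehler) kernel at imaginary frequency:
\[
 \sqrt{\frac{\omega}{2\pi\sigma^2\sin\omega T}}\,
 \exp\Big\{-\frac{\omega\cot(\omega T)}{2\sigma^2}y^2\Big\}\dd y .
\]
I would justify this by agreement with the product formula: both sides are analytic in \(\omega\) for \(\omega T<\pi\), and at real positive parameter the mode product equals the \(\sinh/\cosh\) Cameron--Martin kernel. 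Write \(\varepsilon=T_c-T\). Then \(\omega T=\pi-\omega\varepsilon\) and \(\cot\omega T=-\cot\omega\varepsilon\), so the \(y\)-integral becomes \(\int e^{ay^2-By^4}\dd y\) with
\[
 a=\frac{\omega\cot(\omega\varepsilon)}{2\sigma^2}.
\]
Its two-sided Laplace asymptotic is \(\sqrt{\pi/(2B)}\,a^{-1/2}\cdot\sqrt{\cdots}\,e^{a^2/(4B)}(1+o(1))\), and
\[
 \frac{a^2}{4B}=\frac34\cot^2(\omega\varepsilon).
\]
Collecting the prefactors \(\sin(\omega\varepsilon)\), \(a\) and \(B\) should then produce \(\sqrt{2/\cos(\omega\varepsilon)}\), giving \cref{eq:cubic-blowup-equivalent}.

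I expect the main obstacle to be the prefactor bookkeeping in Step 2, rather than the exponent. That means a careful two-peak Laplace expansion of \(\int e^{ay^2-By^4}\), including the Hessian \(8B\cdot a/(2B)\) at each peak, and checking that the constants indeed combine to \(\sqrt{2/\cos}\). If they do not, I would fall back on proving \cref{eq:cubic-blowup-equivalent} up to a constant factor and then recompute the constant explicitly.
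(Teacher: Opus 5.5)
Your proposal is correct. For the sharp equivalent at $y_0=0$ it is the paper's argument: the paper also writes $N_T$ through the inverted-oscillator Mehler kernel \cref{eq:cubic-mehler-integral} and applies the two-peak Laplace method to $\int e^{B_\varepsilon y^2-ay^4}\dd y$.

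The genuine difference is Step~1. The paper gets the logarithmic rate for $y_0\ne0$ from the same closed-form integral. It treats the cross term $\omega y_0y/(\sigma^2\sin\omega T)=O(|y|/(T_c-T))$ as a perturbation at the saddle scale $|y|\asymp(T_c-T)^{-1/2}$. You instead stay inside the Karhunen--Lo\`eve product already proved in \cref{thm:cubic-boundary}:
\begin{itemize}
\item only $\delta_1=1-T^2/T_c^2$ degenerates;
\item the first-mode coefficient alone gives $A_T\sim[2\sigma^2(T_c-T)]^{-1}$;
\item the transverse modes, the endpoint Gaussian and the $y_0$-terms of $m_1^2$ are $O((T_c-T)^{-3/2})$ at the saddle.
\end{itemize}
Your route needs no kernel formula. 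It is the same first-mode/transverse split used in \cref{sec:equality}, so it would survive perturbations of $q$ that have no closed form. The paper's route is shorter and gives one exact integral for every $y_0$.

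Two small corrections to Step~1:
\begin{itemize}
\item The remainder in your first display should be $O(\delta_1^{-1}(1+|y|))$, not $O(1+|y|)$, because the $y_0$-terms of $\chi m_1^2/\delta_1$ carry the factor $\delta_1^{-1}$. You say this later, and the conclusion is unaffected.
\item For the upper bound, do not use a bare pointwise maximisation over $\mathbb R$. Either maximise $A_Ty^2-(1-\eta)By^4$ and let $\eta\downarrow0$, or restrict to $|y|\le K(T_c-T)^{-1/2}$. Otherwise controlling the tails costs a constant factor in the rate.
\end{itemize}

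Your analytic-continuation justification of the kernel is sound. It is not the use warned against in the remark following \cref{lem:cameron-martin}, because finiteness for $T<T_c$ is already known. Hence the conditional moment and the Mehler expression are both analytic in $\omega^2$ on $\{\operatorname{Re}\omega^2<\pi^2/T^2\}$ and agree on the negative axis. Alternatively, Euler's product $\prod_n(1-\omega^2T^2/(n^2\pi^2))=\sin(\omega T)/(\omega T)$ identifies $\prod_n\delta_n^{-1/2}$ directly.

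The prefactor bookkeeping you were worried about does close, so no fallback is needed. In your notation the two maxima sit at $y^2=a/(2B)$ with second derivative $-4a$. Hence $\int e^{ay^2-By^4}\dd y\sim\sqrt{2\pi/a}\,e^{a^2/(4B)}$, so your placeholder $\sqrt{\cdots}$ is $\sqrt{4B}$. Then
\[
 \sqrt{\frac{\omega}{2\pi\sigma^2\sin(\omega\varepsilon)}}\,
 \sqrt{\frac{4\pi\sigma^2}{\omega\cot(\omega\varepsilon)}}
 =\sqrt{\frac{2}{\cos(\omega\varepsilon)}} ,
\]
which is exactly \cref{eq:cubic-blowup-equivalent}.
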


\begin{proof}
Write \(a=c/(4\sigma^2)\).  The transfer formula
\cref{eq:cubic-transfer} and the Mehler kernel for the inverted harmonic
oscillator give, for \(T<T_c\),
\begin{equation}
\label{eq:cubic-mehler-integral}
 N_T=e^{ay_0^4}
 \sqrt{\frac{\omega}{2\pi\sigma^2\sin(\omega T)}}
 \int_{\mathbb R}\exp\bigg\{
  -\frac{\omega\{(y_0^2+y^2)\cos(\omega T)-2y_0y\}}
         {2\sigma^2\sin(\omega T)}-ay^4
 \bigg\}\dd y.
\end{equation}
Equivalently, this follows by solving
\(\partial_Tv=(\sigma^2/2)v''+(3c/2)y^2v\) with
\(v(0,y)=e^{-ay^4}\), and then using \(N_T=e^{ay_0^4}v(T,y_0)\).

Let \(\delta=T_c-T\) and \(\varepsilon=\omega\delta\).  When \(y_0=0\),
the integral in \cref{eq:cubic-mehler-integral} is
\[
 \int_{\mathbb R}e^{B_\varepsilon y^2-ay^4}\dd y,
 \qquad
 B_\varepsilon:=\frac{\omega\cot\varepsilon}{2\sigma^2}.
\]
Its two nondegenerate maxima are
\(y=\pm\sqrt{B_\varepsilon/(2a)}\), each has value
\(B_\varepsilon^2/(4a)\), and the second derivative there is
\(-4B_\varepsilon\).  Laplace's method therefore gives
\[
 \int_{\mathbb R}e^{B_\varepsilon y^2-ay^4}\dd y
 \sim\sqrt{\frac{2\pi}{B_\varepsilon}}
       e^{B_\varepsilon^2/(4a)}.
\]
Combining the prefactors and using \(\omega^2=3c\sigma^2\) gives
\cref{eq:cubic-blowup-equivalent}.

For fixed \(y_0\ne0\), the additional linear term in the exponent of
\cref{eq:cubic-mehler-integral} is \(O(\delta^{-1}|y|)\).  The maximising
scale remains \(|y|\asymp\delta^{-1/2}\), so this term and the displacement
of the maximiser contribute \(O(\delta^{-3/2})\) to the logarithm.  The
terms independent of \(y\) are \(O(\delta^{-1})\).  Hence
\[
 \log N_T=\frac{1}{4c\sigma^2\delta^2}
           +O(\delta^{-3/2}),
\]
which proves \cref{eq:cubic-blowup-log-rate}.
\end{proof}

\begin{remark}[Why the constant is \(\pi^2/3\)]\label{rem:spectral}
The quartic terminal factor in \cref{eq:cubic-transfer} suppresses path
families with unbounded terminal values.  The divergent competitors are
therefore pinned at \(T\), and the relevant spectrum is that of
\(-\partial_s^2\) with Dirichlet conditions at both endpoints.  Its first
eigenvalue is \(\pi^2/T^2\).  If one discards the terminal factor, the right
endpoint is free and the classical Brownian quadratic-functional threshold is
\(c\sigma^2T^2=\pi^2/12\).  Retaining the endpoint penalty changes the
boundary condition and moves the threshold by the ratio of the two first
eigenvalues, from \(\pi^2/12\) to \(\pi^2/3\).  The bridge decomposition is
the probabilistic expression of that pinning, and is the conditioning
underlying the conditional-gauge theory of \citet{chungzhao1995}.
Analytically, the residual operator
\(\frac{\sigma^2}{2}\partial_y^2+\frac{3c}{2}y^2\) is the inverted harmonic
oscillator.  Formula \cref{eq:cubic-mehler-integral} is its Mehler kernel,
and \(T_c\) is its first kernel blow-up time against the quartically decaying
terminal datum.  The values \(\pi^2/12\) and \(\pi^2/3\) are therefore the
Dirichlet--Neumann and Dirichlet--Dirichlet versions, respectively, of the
same spectral mechanism.
\end{remark}

\subsection{The quadratic spectral surface away from equality}
\label{sec:universality}

\Cref{thm:cubic-boundary} treats the pure cubic and affine--cubic families.
The next result extracts what survives for a general asymptotically quadratic
reversion rate.  It is deliberately a strict-side theorem: the leading
curvature locates the spectral surface but, as \cref{sec:equality} will show,
does not decide equality.  Write
\begin{equation}\label{eq:kappa-infty}
 \kappa_\infty:=\lim_{|y|\to\infty}\frac{q(\theta+y)}{y^2},
\end{equation}
when the limit exists through both tails; the symbol \(\kappa\) remains
reserved for the affine coefficient of \cref{thm:cubic-boundary}.

\begin{theorem}[Strict-side quadratic classification]\label{thm:universality}
Let \(\lambda\in\mathcal D\) and suppose the limit \cref{eq:kappa-infty}
exists in \((0,\infty)\).  Then for every \(x\in\mathbb R\),
\begin{equation}\label{eq:universal-boundary}
 \kappa_\infty\sigma^2T^2<\pi^2
 \ \Longrightarrow\ \mathcal N_{1/2}(T,x)<\infty,
 \qquad
 \kappa_\infty\sigma^2T^2>\pi^2
 \ \Longrightarrow\ \mathcal N_{1/2}(T,x)=\infty.
\end{equation}
No conclusion at \(\kappa_\infty\sigma^2T^2=\pi^2\) follows from the leading
limit alone; see \cref{sec:equality}.
\end{theorem}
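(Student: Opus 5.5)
The plan is to imitate the proof of \cref{thm:cubic-boundary}, replacing exact identities by two-sided quadratic comparisons. Throughout we use the extended transfer identity \cref{cor:extended-transfer} at \(a=1/2\):
\[
\mathcal N_{1/2}(T,x)=e^{-\Lambda(x)/\sigma^2}\,
\mathbb E^{\mathbb Q_0}\Big[\exp\Big\{\tfrac{\Lambda(X_T)}{\sigma^2}+\tfrac12\int_0^Tq(X_s)\dd s\Big\}\Big].
\]
Without loss of generality \(\theta=0\). From \(q(y)/y^2\to\kappa_\infty\) we get, for each \(\varepsilon>0\), a constant \(C_\varepsilon\) with
\[
(\kappa_\infty-\varepsilon)y^2-C_\varepsilon\le q(y)\le(\kappa_\infty+\varepsilon)y^2+C_\varepsilon
\quad\text{for all }y .
\]
Integrating \(\Lambda''=-q\) twice gives
\[
-\tfrac{\kappa_\infty+\varepsilon}{12}y^4-C'_\varepsilon(1+y^2)\le\Lambda(y)\le-\tfrac{\kappa_\infty-\varepsilon}{12}y^4+C'_\varepsilon(1+y^2).
\]
So the terminal factor is quartically confining with a two-sided quartic bound. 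This is what replaces the exact \(-cY_T^4/4\) of the cubic case.

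For the finiteness half, assume \(\kappa_\infty\sigma^2T^2<\pi^2\) and fix \(\varepsilon\) so small that \((\kappa_\infty+\varepsilon)\sigma^2T^2<\pi^2\).
1. Bound the running term by \(\frac{\kappa_\infty+\varepsilon}{2}\int_0^TX_s^2\dd s+C_\varepsilon T/2\).
2. Condition on \(X_T=y\). The path is \(\ell_{x,y}+\sigma\beta\), and we expand in the bridge basis \(e_n\) exactly as in \cref{eq:parseval-bridge,eq:bridge-product}, with \(\chi=\frac{\kappa_\infty+\varepsilon}{2}\sigma^2\).
3. The condition \(2\chi\nu_1<1\) is precisely \((\kappa_\infty+\varepsilon)\sigma^2T^2<\pi^2\). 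Hence \cref{lem:gaussian-factor} gives a convergent determinant product, and the conditional expectation is bounded by \(C\exp\{C_1(1+y^2)\}\).
4. Integrate this against the Gaussian endpoint density times \(e^{\Lambda(y)/\sigma^2}\le e^{-(\kappa_\infty-\varepsilon)y^4/(12\sigma^2)+C(1+y^2)}\). The quartic decay beats the quadratic growth, so the integral is finite.

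For the divergence half, assume \(\kappa_\infty\sigma^2T^2>\pi^2\) and fix \(\varepsilon\) with \((\kappa_\infty-\varepsilon)\sigma^2T^2>\pi^2\).
1. Use the lower bound \(q\ge(\kappa_\infty-\varepsilon)y^2-C_\varepsilon\), which holds globally.
2. Condition again on \(X_T=y\). The conditional bridge expectation of \(\exp\{\frac{\kappa_\infty-\varepsilon}{2}\int_0^TX_s^2\dd s\}\) is bounded below by the product of factors in \cref{eq:bridge-product}. Every factor is at least \(1\), and the first factor is \(+\infty\) because \(2\chi\nu_1\ge1\).
3. This holds for every \(y\), and \(e^{\Lambda(y)/\sigma^2}>0\). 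Tonelli's theorem therefore gives \(\mathcal N_{1/2}(T,x)=\infty\).

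The divergence step needs no control of the terminal factor beyond its positivity. This is why the strict inequality suffices there, and why equality is left open.

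The main obstacle is the finiteness step, specifically making the comparison uniform in the endpoint. One must check that the \(y\)-dependent shift \(m_n\) enters only through \(\sum_n m_n^2\le T\sigma^{-2}(|x|+|y|)^2\), multiplied by the finite constant \((1-2\chi\nu_1)^{-1}\). The resulting \(e^{Cy^2}\) growth must then be dominated by the quartic bound on \(\Lambda\) derived above. That bound uses the limit in both tails; if only one tail were quadratic, the other would need separate handling. With \(\varepsilon>0\) fixed strictly inside the spectral gap, both constants are finite, and the argument closes.
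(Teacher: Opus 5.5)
Your proposal is correct and follows the paper's proof essentially step for step. The shared steps are:
\begin{itemize}
\item the $\varepsilon$-sandwich for $q$;
\item quartic confinement of $\Lambda$ by double integration, where your explicit constants $(\kappa_\infty\pm\varepsilon)/12$ are a sharper form of the paper's $c_\pm$;
\item for finiteness, conditioning on $X_T$ and using the Karhunen--Lo\`eve product to get the uniform bound $C\exp\{C_1(1+y^2)\}$;
\item for divergence, an infinite first bridge factor followed by Tonelli.
\end{itemize}

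One small correction concerns your closing remark. Equality is left open because the comparison necessarily loses an $\varepsilon y^2$ term, so no admissible $\varepsilon$ exists on either side. It is not because of how the terminal factor enters the divergence step.
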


The hypotheses imply
\(\lambda(\theta+y)\sim-\kappa_\infty y^3/3\), and hence
\(\lambda\in\mathcal D_\star\).  Thus \cref{thm:universality} describes the
only coefficient at which this class can retain horizon dependence.

\begin{proof}
Assume \(\theta=0\).  Fix \(\varepsilon\in(0,\kappa_\infty)\).  By
\cref{eq:kappa-infty} and continuity of \(q\) there is
\(C_\varepsilon<\infty\) with
\begin{equation}\label{eq:sandwich}
 \frac12(\kappa_\infty-\varepsilon)y^2-C_\varepsilon
 \le\frac12q(y)
 \le\frac12(\kappa_\infty+\varepsilon)y^2+C_\varepsilon
 \qquad\text{for all }y\in\mathbb R,
\end{equation}
the compact part being absorbed into \(C_\varepsilon\).  Integrating
\cref{eq:kappa-infty} twice gives \(\lambda(y)/y^3\to-\kappa_\infty/3\) and
\begin{equation}\label{eq:Lambda-asymptotic}
 \frac{\Lambda(y)}{y^4}\longrightarrow-\frac{\kappa_\infty}{12},
\end{equation}
so there are \(c_\pm>0\) and \(C'<\infty\) with
\(-c_+y^4-C'\le\Lambda(y)\le-c_-y^4+C'\).

\emph{Finiteness.}  Let \(\kappa_\infty\sigma^2T^2<\pi^2\) and choose
\(\varepsilon\) with \((\kappa_\infty+\varepsilon)\sigma^2T^2<\pi^2\).  By
\cref{eq:sandwich,eq:Lambda-asymptotic} the exponent in
\cref{eq:transfer-Psi} at \(a=1/2\) is at most
\[
 \frac{-c_-X_T^4+C'-\Lambda(x)}{\sigma^2}
 +\frac{\kappa_\infty+\varepsilon}{2}\int_0^TX_s^2\dd s
 +C_\varepsilon T.
\]
This is the functional treated in the proof of \cref{thm:cubic-boundary},
with quadratic coefficient \(\frac12(\kappa_\infty+\varepsilon)\) in place of
\(\frac32c\) and quartic penalty coefficient \(c_-\) in place of \(c/4\).
Conditioning on \(X_T\) and diagonalising the bridge as in
\cref{eq:bridge-eigenvalues}, with the passage to the infinite product
justified as in \cref{eq:parseval-bridge,eq:bridge-product}, the
Karhunen--Lo\`eve product converges because
\((\kappa_\infty+\varepsilon)\sigma^2\nu_1<1\) with \(\nu_1=T^2/\pi^2\); the
conditional expectation is bounded by \(C\exp\{C_1(1+y^2)\}\) uniformly in
the endpoint \(y\), and the endpoint integration converges because
\(c_->0\) makes the quartic factor dominate.

\emph{Divergence.}  Let \(\kappa_\infty\sigma^2T^2>\pi^2\) and choose
\(\varepsilon\) with \((\kappa_\infty-\varepsilon)\sigma^2T^2>\pi^2\).  By
\cref{eq:sandwich} the exponent is at least
\[
 \frac{-c_+X_T^4-C'-\Lambda(x)}{\sigma^2}
 +\frac{\kappa_\infty-\varepsilon}{2}\int_0^TX_s^2\dd s
 -C_\varepsilon T.
\]
Condition on \(X_T=y\).  In Karhunen--Lo\`eve coordinates the first bridge
factor is
\(\mathbb E[\exp\{\chi_\varepsilon(m_1+\beta_1)^2\}]\) with
\(\chi_\varepsilon=\frac12(\kappa_\infty-\varepsilon)\sigma^2\) and
\(\beta_1\sim\mathcal N(0,\nu_1)\).  Since \(2\chi_\varepsilon\nu_1>1\), the
second branch of \cref{eq:gaussian-factor} makes that factor infinite.  The
remaining
factors are positive and the terminal weight
\(\exp\{(-c_+y^4-C')/\sigma^2\}\) is a finite positive constant for each
fixed \(y\).  The conditional expectation is therefore \(+\infty\) for every
endpoint, and Tonelli's theorem concludes.
\end{proof}

\begin{remark}[A one-sided version under \(\limsup\) only]
\label{rem:limsup-version}
The finiteness half of \cref{thm:universality} does not need the limit to
exist.  If \(\bar\kappa:=\limsup_{|y|\to\infty}q(\theta+y)/y^2<\infty\) and
\(\liminf_{|y|\to\infty}(-\Lambda(\theta+y))/|y|^{2+\delta}>0\) for some
\(\delta>0\), the same argument with \(\bar\kappa+\varepsilon\) in place of
\(\kappa_\infty+\varepsilon\) gives \(\mathcal N_{1/2}(T,x)<\infty\) whenever
\(\bar\kappa\sigma^2T^2<\pi^2\).  The extra hypothesis on \(\Lambda\) is
automatic under \cref{eq:kappa-infty} by \cref{eq:Lambda-asymptotic}, but must
be assumed when only the \(\limsup\) is controlled.
\end{remark}

\begin{corollary}[The cubic scale in intrinsic form]
\label{cor:intrinsic-form}
For \(\lambda(y)=-cy^3\) one has \(\kappa_\infty=3c\), so
\cref{eq:universal-boundary} reads \(3c\sigma^2T^2\lessgtr\pi^2\), that is
\(c\sigma^2T^2\lessgtr\pi^2/3\).  The same strict-side classification is
obtained for every \(\lambda\in\mathcal D\) whose reversion rate
satisfies
\begin{equation}\label{eq:cubic-intrinsic}
 q(\theta+y)\sim3cy^2\qquad\text{as }|y|\to\infty .
\end{equation}
This statement alone makes no claim at equality.  In particular,
\cref{eq:cubic-intrinsic} holds for the affine--cubic drift
\(-\kappa y-cy^3\) with \(\kappa\ge0\), and more generally for every
\(\lambda\in\mathcal D\) of the form
\begin{equation}\label{eq:cubic-remainder}
 \lambda(\theta+y)=-cy^3+r(y),
 \qquad r\in C^1(\mathbb R),\qquad r'(y)=o(y^2).
\end{equation}
The pure and affine--cubic cases diverge at equality by
\cref{thm:cubic-boundary}.  Equality for general lower-order corrections is
treated in \cref{thm:alpha-transition,cor:equality-nonuniversal}.
\end{corollary}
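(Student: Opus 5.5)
The corollary follows from \cref{thm:universality} once \(\kappa_\infty\) is identified; the only work is checking the hypotheses of that theorem for each family listed. First, for the pure cubic drift \(\lambda(y)=-cy^3\) one has \(q(y)=3cy^2\), so \(q(\theta+y)/y^2\equiv3c\) and \(\kappa_\infty=3c\in(0,\infty)\). Membership in \(\mathcal D\) is \cref{rem:D-monotone}. Substituting into \cref{eq:universal-boundary} gives \(3c\sigma^2T^2\lessgtr\pi^2\), i.e. \(c\sigma^2T^2\lessgtr\pi^2/3\).

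Next, for a general \(\lambda\in\mathcal D\) satisfying \cref{eq:cubic-intrinsic}, the limit \cref{eq:kappa-infty} exists through both tails and equals \(3c\). Hence \cref{thm:universality} applies verbatim and gives the same strict-side statement. Nothing is asserted on the equality surface, in accordance with the final sentence of \cref{thm:universality}.

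Then I would verify that \cref{eq:cubic-remainder} implies \cref{eq:cubic-intrinsic}. Differentiating gives \(q(\theta+y)=-\lambda'(\theta+y)=3cy^2-r'(y)\), and \(r'(y)=o(y^2)\) yields \(q(\theta+y)/y^2\to3c\). The affine--cubic drift is the special case \(r(y)=-\kappa y\), with \(r'=-\kappa=o(y^2)\). Its membership in \(\mathcal D\) for \(\kappa\ge0\) was already noted in the proof of \cref{thm:cubic-boundary}. For general \(r\), membership in \(\mathcal D\) is part of the hypothesis, so nothing further is needed.

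Finally, the equality claims for the pure and affine--cubic cases are exactly the equality clause of \cref{thm:cubic-boundary}. For general corrections, the statement only defers to later results, so there is nothing to prove there. No step is a genuine obstacle. The one point to watch is that the limit in \cref{eq:kappa-infty} is required in both tails, and the \(o(y^2)\) hypothesis on \(r'\) as \(|y|\to\infty\) supplies exactly that.
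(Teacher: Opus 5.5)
Your proposal is correct and follows essentially the same route as the paper: compute \(q(\theta+y)=3cy^2-r'(y)=3cy^2+o(y^2)\), treat the affine--cubic drift as the case \(r(y)=-\kappa y\), apply \cref{thm:universality} with \(\kappa_\infty=3c\), and quote the equality claims for the pure and affine--cubic drifts from \cref{thm:cubic-boundary}. Your extra remarks on membership in \(\mathcal D\) and on the limit being needed in both tails are accurate points that the paper leaves implicit.
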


\begin{proof}
Under \cref{eq:cubic-remainder}, \(q(\theta+y)=3cy^2-r'(y)=3cy^2+o(y^2)\),
which is \cref{eq:cubic-intrinsic}; the affine--cubic case is
\(r(y)=-\kappa y\), with \(r'\equiv-\kappa\).  \Cref{thm:universality} then
applies with \(\kappa_\infty=3c\).
\end{proof}

\begin{example}[Drift asymptotics do not control the critical datum]
\label{ex:derivative-bumps}
The following construction explains why the principal hypotheses must be
stated through \(q=-\lambda'\), rather than through the size of
\(\lambda\).  The remainder condition \cref{eq:cubic-remainder} is imposed on \(r'\) rather
than on \(r\), and it cannot be relaxed to \(r(y)=O(y^2)\), or even to
\(r(y)=O(y)\).  The classifying quantity in \cref{thm:universality} is
\(q=-\lambda'\), and a \(C^1\) remainder may be uniformly small in value while
carrying large derivatives on short intervals.  Let \(r\in C^1\) with
\(r(0)=0\) have \(-r'\) equal to a smooth bump of height \(n^5\) and width
\(n^{-5}\) centred at \(y=n\) for each \(n\ge1\), and \(r'=0\) elsewhere.
Each bump contributes \(O(1)\) to \(r\), so \(r(y)=O(y)\); yet
\(\lambda(\theta+y)=-cy^3+r(y)\) has \(q(\theta+y)=3cy^2+n^5\) at \(y=n\), so
\(q(\theta+y)/y^2\to\infty\) along the integers and the limit
\cref{eq:kappa-infty} does not exist.  Note that this \(\lambda\) does lie in
\(\mathcal D\), since \(\lambda'=-3cy^2+r'\le0\) vanishes only at \(y=0\);
and it is not supercritical in the windowed sense of
\cref{def:supercritical}, the bumps being too thin, so it falls in the gap
described in \cref{rem:not-complementary}.  Only a hypothesis controlling
\(\lambda'\) transfers to the boundary, which is why
\cref{thm:universality,cor:intrinsic-form} are stated through \(q\): the
polynomial representation of \(\lambda\) is not the intrinsic datum.
\end{example}

\subsection{Equality surface: one-sided comparison and first thresholds}
\label{sec:equality}

The strict-side comparison in \cref{thm:universality} necessarily loses an
\(\varepsilon y^2\) term and therefore cannot be continued to equality.
Exact two-term asymptotics are not needed away from the second-order
borderline: one-sided bounds suffice.  We first retain the soft critical
bridge estimate in a form adapted to those comparisons.

\begin{lemma}[First-mode representation at equality]
\label{lem:first-mode-apparatus}
Fix \(x\in\mathbb R\) and \(T,\sigma,\kappa_\infty>0\) with
\(\kappa_\infty\sigma^2T^2=\pi^2\).  Let
\(\ell_{x,y}(s)=x+(y-x)s/T\), let \(\beta\) be a standard Brownian bridge,
and set
\[
 e_n(s)=\sqrt{\frac2T}\sin\frac{n\pi s}{T},\qquad
 \nu_n=\frac{T^2}{n^2\pi^2}.
\]
Write \(e=e_1\),
\[
 \beta=ze+\beta^\perp,\qquad
 \ell_{x,y}=l_1e+\ell^\perp,\qquad
 u=l_1+\sigma z,\qquad
 g=\ell^\perp+\sigma\beta^\perp.
\]
Then \(z\sim\mathcal N(0,\nu_1)\) is independent of \(\beta^\perp\), and
the symmetry \(e_1(T-s)=e_1(s)\) gives the midpoint identity
\begin{equation}\label{eq:first-mode-midpoint}
 \langle\ell_{x,y},e_1\rangle
 =\ell_{x,y}(T/2)\langle1,e_1\rangle
 =\frac{x+y}{2}I_1,
 \qquad I_1:=\int_0^T e_1(s)\dd s
 =\frac{2\sqrt{2T}}{\pi}.
\end{equation}
In particular,
\begin{equation}\label{eq:first-mode-data}
 l_1=\frac{\sqrt{2T}}{\pi}(x+y),\qquad
 A_y=\kappa_\infty\sigma l_1
     =\frac{\sqrt2\pi}{\sigma T^{3/2}}(x+y),\qquad
 B_y=\frac{A_y}{\sigma}.
\end{equation}
The critical cancellation is
\begin{equation}\label{eq:critical-cancellation}
 -\frac{z^2}{2\nu_1}
 +\frac{\kappa_\infty}{2}(l_1+\sigma z)^2
 =\frac{\kappa_\infty l_1^2}{2}+A_yz.
\end{equation}
For every measurable real-valued functional \(V\), with both sides
understood in \([0,\infty]\),
\begin{align}
\notag
 & \quad\,\, \mathbb E\Big[
  e^{\frac{\kappa_\infty}{2}\|\ell_{x,y}+\sigma\beta\|_2^2
     -V(\ell_{x,y}+\sigma\beta)}\Big]\\ \label{eq:first-mode-representation}
 &=C_0e^{-\kappa_\infty l_1^2/2}
  \mathbb E\bigg[
   e^{\frac{\kappa_\infty}{2}\|g\|_2^2}
   \int_{\mathbb R}e^{B_yu-V(ue+g)}\dd u\bigg]\\ \notag
 &=C_0e^{-\kappa_\infty l_1^2/2}\widehat H_y
  \widehat{\mathbb E}_y\bigg[
   \int_{\mathbb R}e^{B_yu-V(ue+g)}\dd u\bigg],
 \qquad C_0=(\sigma\sqrt{2\pi\nu_1})^{-1}, 
\end{align}
where
\[
 \widehat H_y
 :=\mathbb E\bigg[e^{\frac{\kappa_\infty}{2}\|g\|_2^2}\bigg]
 \le Ce^{C(1+y^2)},
 \qquad
 \frac{\dd\widehat{\mathbb P}_y}{\dd\mathbb P}
 :=\widehat H_y^{-1}e^{\frac{\kappa_\infty}{2}\|g\|_2^2}.
\]
Under \(\widehat{\mathbb P}_y\),
\(g=\widehat m_y+\widehat\beta\), where, for \(n\ge2\),
\begin{equation}\label{eq:tilted-mode-law}
 \langle\widehat m_y,e_n\rangle
 =\frac{\langle\ell_{x,y},e_n\rangle}{1-n^{-2}},
 \quad
 \operatorname{Var}\langle\widehat\beta,e_n\rangle
 =\frac{\sigma^2\nu_n}{1-n^{-2}}.
\end{equation}
Moreover, for every \(r\ge0\),
\begin{equation}\label{eq:tilted-fernique}
 \|\widehat m_y\|_\infty\le C(1+|y|),\quad
 \widehat{\mathbb P}_y(\|\widehat\beta\|_\infty>r)
 \le C e^{-c_F r^2},
\end{equation}
with constants independent of \(y\).
\end{lemma}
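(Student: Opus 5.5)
The plan is to compute everything in the Karhunen--Lo\`eve coordinates of the bridge already used in the proof of \cref{thm:cubic-boundary}.

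\textbf{Step 1: first-mode data.} Since \(\{e_n\}\) is an orthonormal eigenbasis of the bridge covariance, \(z=\langle\beta,e_1\rangle\sim\mathcal N(0,\nu_1)\) is independent of \(\beta^\perp=\beta-ze\). This holds because jointly Gaussian coordinates with diagonal covariance are independent, exactly as before. For the midpoint identity I decompose \(\ell_{x,y}(s)=\ell_{x,y}(T/2)+(y-x)(s-T/2)/T\). The second piece is odd about \(T/2\) while \(e_1\) is even, so its inner product with \(e_1\) vanishes. A direct integration gives \(I_1=\sqrt{2/T}\cdot 2T/\pi\), and then \cref{eq:first-mode-data} follows from \(\kappa_\infty\sigma^2=\pi^2/T^2\). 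The cancellation \cref{eq:critical-cancellation} is algebra: \(\kappa_\infty\sigma^2\nu_1=1\), so the \(z^2\) terms cancel and the cross term is \(\kappa_\infty\sigma l_1z=A_yz\).

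\textbf{Step 2: the representation.} By Parseval, \(\|\ell+\sigma\beta\|_2^2=u^2+\|g\|_2^2\), with \(g\perp e\) and \(\ell+\sigma\beta=ue+g\). I condition on \(\beta^\perp\), which is independent of \(z\), and write the \(z\)-expectation against the density \((2\pi\nu_1)^{-1/2}e^{-z^2/(2\nu_1)}\). Using \cref{eq:critical-cancellation}, the integrand becomes \(e^{\kappa_\infty l_1^2/2+A_yz-V}\).

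Next I change variables \(u=l_1+\sigma z\), so that \(dz=du/\sigma\). Then \(A_yz=B_yu-A_yl_1/\sigma=B_yu-\kappa_\infty l_1^2\), and the constant collects to \(e^{-\kappa_\infty l_1^2/2}C_0\). Tonelli's theorem justifies all of this in \([0,\infty]\), since the integrands are nonnegative. This gives the first line of \cref{eq:first-mode-representation}.

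Defining \(\widehat{\mathbb P}_y\) is legitimate once \(\widehat H_y<\infty\), and that gives the second line.

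\textbf{Step 3: finiteness of \(\widehat H_y\) and the tilted law.} I write \(g_n=\ell_n+\sigma\xi_n\) for \(n\ge2\), with \(\xi_n\sim\mathcal N(0,\nu_n)\) independent. Then
\[
\widehat H_y=\prod_{n\ge2}\mathbb E\big[e^{\frac{\kappa_\infty}{2}(\ell_n+\sigma\xi_n)^2}\big].
\]
In each factor \(2\chi\nu=\kappa_\infty\sigma^2\nu_n=n^{-2}\le1/4\), so \cref{lem:gaussian-factor} applies. It gives factors \((1-n^{-2})^{-1/2}\exp\{\kappa_\infty\ell_n^2/(2(1-n^{-2}))\}\). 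The determinant product converges, and the exponent is at most \(C\|\ell\|_2^2\le C(1+y^2)\).

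Completing the square in each factor shows that under the tilt the coordinate \(g_n\) is Gaussian. Its mean is \(\ell_n/(1-n^{-2})\) and its variance is \(\sigma^2\nu_n/(1-n^{-2})\), which is \cref{eq:tilted-mode-law}; independence across modes is preserved.

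\textbf{Step 4: sup-norm bounds (the main obstacle).} These bounds are the only non-algebraic part.

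For the mean, I write \(\widehat m_y=\ell^\perp+\sum_{n\ge2}\frac{n^{-2}}{1-n^{-2}}\ell_ne_n\). The first term satisfies \(\|\ell^\perp\|_\infty\le\|\ell\|_\infty+|l_1|\|e_1\|_\infty\le C(1+|y|)\). For the series, \(\sum n^{-2}|\ell_n|\|e_n\|_\infty\le C\|\ell\|_2(\sum n^{-4})^{1/2}\).

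For \(\widehat\beta\), I plan to compare it with a fixed Gaussian process. Writing \(\widehat\beta=\sigma\sum_{n\ge2}(1-n^{-2})^{-1/2}\xi_ne_n\), it differs from \(\sigma\beta^\perp\) by the series \(\sigma\sum_{n\ge2}((1-n^{-2})^{-1/2}-1)\xi_ne_n\). That series has coefficients \(O(n^{-3})\) and so converges uniformly almost surely with Gaussian tails. Since \(\beta^\perp\) is a continuous centred Gaussian process, \(\widehat\beta\) is a centred continuous Gaussian process whose law does not depend on \(y\). Fernique's theorem (or Borell--TIS) then yields the tail bound \cref{eq:tilted-fernique} with constants independent of \(y\).
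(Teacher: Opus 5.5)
Your proposal is correct and takes essentially the paper's route. Both arguments use Karhunen--Lo\`eve coordinates and the substitution \(u=l_1+\sigma z\) with Tonelli. Both complete the square mode by mode via \cref{lem:gaussian-factor} with \(\kappa_\infty\sigma^2\nu_n=n^{-2}\), bound \(\widehat m_y\) through a uniformly convergent correction series, and apply Fernique to the \(y\)-independent centred part. Your odd/even splitting of \(\ell_{x,y}\) is an equivalent form of the paper's reflection \(s\mapsto T-s\). Your coupling of \(\widehat\beta\) with \(\sigma\beta^\perp\) makes explicit the path continuity that the paper only asserts.
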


\begin{proof}
The Karhunen--Lo\`eve decomposition gives independence.  Since
\(\ell_{x,y}(s)+\ell_{x,y}(T-s)=x+y\) and
\(e_1(T-s)=e_1(s)\), a change of variables gives
\[
 2\langle\ell_{x,y},e_1\rangle
 =\int_0^T\bigl(\ell_{x,y}(s)+\ell_{x,y}(T-s)\bigr)e_1(s)\dd s
 =(x+y)I_1,
\]
which proves \cref{eq:first-mode-midpoint}.  Direct integration then
gives \cref{eq:first-mode-data}, and
\(\kappa_\infty\sigma^2\nu_1=1\) gives
\cref{eq:critical-cancellation}.  Substituting \(u=l_1+\sigma z\), using
orthogonality, and applying Tonelli proves
\cref{eq:first-mode-representation}.  Completing the square mode by mode,
with \(\kappa_\infty\sigma^2\nu_n=n^{-2}\), gives
\cref{eq:tilted-mode-law} and
\(\widehat H_y\le Ce^{C(1+y^2)}\).

The affine bridge has sine coefficients
\(O((1+|y|)/n)\).  Since
\((1-n^{-2})^{-1}-1=O(n^{-2})\), the correction to its transverse mean has
an absolutely and uniformly convergent sine series of size \(O(1+|y|)\),
which proves the supremum bound in \cref{eq:tilted-fernique}.  The centred
covariance in \cref{eq:tilted-mode-law} is independent of \(y\), defines a
continuous Gaussian bridge, and has finite trace; Fernique's theorem gives
the stated uniform tail bound.
\end{proof}

\begin{lemma}[Critical bridge bounds]\label{lem:critical-bridge}
Fix \(x\in\mathbb R\), \(T,\sigma,\kappa_\infty,d>0\) with
\(\kappa_\infty\sigma^2T^2=\pi^2\).  Let
\(\ell_{x,y}(s)=x+(y-x)s/T\), let \(\beta\) be a standard Brownian bridge on
\([0,T]\), and set
\begin{equation}\label{eq:critical-bridge-functional}
 G_{\alpha,d}(y):=
 \mathbb E\bigg[
  \exp\bigg\{
    \frac{\kappa_\infty}{2}\int_0^T(\ell_{x,y}+\sigma\beta)^2\dd s
    -d\int_0^T|\ell_{x,y}+\sigma\beta|^\alpha\dd s
  \bigg\}\bigg].
\end{equation}
If \(\alpha>1\) and \(p=\alpha/(\alpha-1)\), then constants
\(c,C\in(0,\infty)\), depending on the displayed parameters but not on \(y\),
satisfy
\begin{align}
 G_{\alpha,d}(y)
 &\le C\exp\{C(1+y^2+|y|^p)\},
 \label{eq:critical-bridge-upper}\\
 G_{\alpha,d}(y)
 &\ge c\exp\{c|y|^p-C(1+y^2+|y|^\alpha)\}
 \qquad\text{for }|y|\text{ sufficiently large}.
 \label{eq:critical-bridge-lower}
\end{align}
If \(0<\alpha<1\), then \(G_{\alpha,d}(y)=\infty\) whenever \(x+y\ne0\).
If \(\alpha=1\), it is infinite for all sufficiently large \(|y|\).
\end{lemma}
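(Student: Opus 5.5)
Everything will be run through \cref{lem:first-mode-apparatus} with \(V(f)=d\int_0^T|f|^\alpha\dd s\), so that
\[
G_{\alpha,d}(y)=C_0e^{-\kappa_\infty l_1^2/2}\widehat H_y\,
\widehat{\mathbb E}_y\bigg[\int_{\mathbb R}e^{B_yu-d\int_0^T|ue+g|^\alpha\dd s}\dd u\bigg].
\]
The prefactor \(e^{-\kappa_\infty l_1^2/2}\widehat H_y\) is at most \(Ce^{C(1+y^2)}\). It is also at least \(e^{-C(1+y^2)}\), because \(\widehat H_y\ge1\) and \(l_1=O(1+|y|)\). All the \(y\)-dependence beyond Gaussian order therefore comes from the inner one-dimensional integral in \(u\). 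That integral is a Legendre-transform problem: a linear reward \(B_yu\), with \(|B_y|\asymp|x+y|\), against a penalty that grows like \(d|u|^\alpha I_\alpha\), where \(I_\alpha=\int_0^Te^\alpha\dd s\). Since \(\sup_u(B u-dI_\alpha|u|^\alpha)\asymp|B|^{p}\) with \(p=\alpha/(\alpha-1)\), this is exactly where the exponent \(|y|^p\) comes from.

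\emph{Upper bound.} I will use \(|a+b|^\alpha\ge 2^{1-\alpha}|a|^\alpha-|b|^\alpha\) for \(\alpha\ge1\), which gives
\[
d\int|ue+g|^\alpha\ge c_1|u|^\alpha I_\alpha-d\int|g|^\alpha .
\]
The negative term is awkward because it sits in the exponent. The cleaner route is to bound \(e^{-V}\le e^{-c_1I_\alpha|u|^\alpha+d\int|g|^\alpha}\), and then absorb \(d\int|g|^\alpha\le \epsilon\|g\|_2^2+C_\epsilon\) into the Gaussian tilt \(\tfrac{\kappa_\infty}{2}\|g\|_2^2\). That absorption is legitimate because the transverse modes have \(\kappa_\infty\sigma^2\nu_n\le1/4<1\), leaving room to spare. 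So, working from the first equality in the representation, \(\mathbb E[e^{(\kappa_\infty/2+\epsilon)\|g\|^2}]\le Ce^{C(1+y^2)}\) by the mode-by-mode Gaussian factor \cref{lem:gaussian-factor}. The remaining \(u\)-integral is \(\int e^{B_yu-c_1I_\alpha|u|^\alpha}\dd u\le Ce^{C|B_y|^p}\) by elementary Legendre bounds. Since \(|B_y|\le C(1+|y|)\), this yields \eqref{eq:critical-bridge-upper}.

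\emph{Lower bound.} I will restrict to the event \(\{\|\widehat\beta\|_\infty\le r\}\), which has \(\widehat{\mathbb P}_y\)-probability at least \(1/2\) for a fixed \(r\) by \eqref{eq:tilted-fernique}. On that event \(\|g\|_\infty\le C(1+|y|)\), so
\[
\int|ue+g|^\alpha\le 2^{\alpha-1}\big(|u|^\alpha I_\alpha+C(1+|y|)^\alpha\big).
\]
Next I restrict \(u\) to a unit window around the maximiser \(u_*\asymp \operatorname{sgn}(B_y)|B_y|^{1/(\alpha-1)}\) of \(B_yu-2^{\alpha-1}dI_\alpha|u|^\alpha\). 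There the exponent is at least \(c|B_y|^p-C|B_y|^{p-1}-C(1+|y|^\alpha)\). For large \(|y|\) we have \(|B_y|\asymp|y|\), and the \(|B_y|^{p-1}\) error is absorbed by shrinking \(c\). Combined with the lower bound on the prefactor, this gives \eqref{eq:critical-bridge-lower}.

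\emph{Case \(\alpha\le1\).} For \(0<\alpha<1\), the inner penalty satisfies \(d\int|ue+g|^\alpha\le d\int(|u|^\alpha e^\alpha+|g|^\alpha)\le C|u|^\alpha+d\int|g|^\alpha\), which is sublinear in \(u\). When \(B_y\ne0\), that is \(x+y\ne0\), the integral \(\int e^{B_yu-C|u|^\alpha}\dd u\) diverges for every realisation of \(g\), so the expectation is \(+\infty\). For \(\alpha=1\) the penalty is at most \(d|u|I_1+d\int|g|\). Divergence then holds as soon as \(|B_y|>dI_1\), which by \eqref{eq:first-mode-data} is the case for all large \(|y|\).

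The main obstacle is the upper bound: the transverse remainder \(g\) enters the exponent unfavourably, and the \(\epsilon\)-absorption into the tilt must not push the transverse Gaussian modes past their own thresholds. The spectral gap \(\nu_2/\nu_1=1/4\) is what makes this work.
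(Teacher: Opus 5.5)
Your lower bound and your treatment of \(0<\alpha\le1\) are correct and essentially the paper's. The paper restricts \(\beta^\perp\) under the original bridge law and works in the \(z\)-coordinate via \cref{eq:critical-cancellation}, rather than under \(\widehat{\mathbb P}_y\), which makes no difference. Your stated window loss \(C|B_y|^{p-1}\) is not accurate for large \(\alpha\), but the loss on a unit window is trivially \(O(|B_y|)=o(|B_y|^p)\), which is all you need.

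\textbf{The gap is in the upper bound}, which the lemma asserts for every \(\alpha>1\). You bound the penalty below by \(c_1I_\alpha|u|^\alpha-d\int_0^T|g|^\alpha\dd s\). You then absorb \(d\int_0^T|g|^\alpha\dd s\) into the tilt using \(d|g|^\alpha\le\epsilon|g|^2+C_\epsilon\), and that pointwise inequality requires \(\alpha<2\). For \(\alpha>2\), the quantity you need to be finite,
\[
 \mathbb E\Big[e^{\frac{\kappa_\infty}{2}\|g\|_2^2+d\int_0^T|g|^\alpha\dd s}\Big],
\]
is in fact \(+\infty\). To see this, freeze every transverse coordinate except \(\xi_2=\langle\beta,e_2\rangle\). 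The exponent is then at least \(c|\xi_2|^\alpha\) minus a quantity depending only on the frozen coordinates, integrated against a Gaussian density in \(\xi_2\). At \(\alpha=2\) the extra term is \(d\|g\|_2^2\). The mode-\(n\) factor is then finite only if \((1+2d/\kappa_\infty)n^{-2}<1\), which fails at \(n=2\) as soon as \(d\ge3\kappa_\infty/2\). So the spectral gap \(\nu_2/\nu_1=1/4\) cannot save this route. The inequality \(|a+b|^\alpha\ge2^{1-\alpha}|a|^\alpha-|b|^\alpha\) discards the one structural fact that makes the bound trivial.

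That fact is orthogonality. Since \(\langle g,e_1\rangle=0\) and \(\|e_1\|_2=1\), you have \(\langle ue_1+g,e_1\rangle=u\). H\"older's inequality with the conjugate pair \((\alpha,p)\) then gives
\[
 \int_0^T|ue_1+g|^\alpha\dd s\ \ge\ \frac{|u|^\alpha}{\|e_1\|_{L^p}^\alpha},
\]
a lower bound on the penalty in which \(g\) does not appear at all. Hence \(e^{-V(ue_1+g)}\le e^{-K|u|^\alpha}\) pathwise, with \(K=d/\|e_1\|_{L^p}^\alpha\). By Young's inequality the \(u\)-integral is at most \(Ce^{C|B_y|^p}\). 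The only transverse input left is \(e^{-\kappa_\infty l_1^2/2}\widehat H_y\le Ce^{C(1+y^2)}\) from \cref{lem:first-mode-apparatus}, which you already have.

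This is the paper's argument. It proves \cref{eq:critical-bridge-upper} for all \(\alpha>1\) and needs no slack in the transverse tilt. Your version is valid for \(1<\alpha<2\), which covers every use of the lemma in \cref{thm:alpha-transition}, but not the lemma as stated.
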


\begin{proof}
Use the notation and transverse estimate of
\cref{lem:first-mode-apparatus}.  Suppose first that \(\alpha>1\), and recall
\(p=\alpha/(\alpha-1)\).  H\"older's inequality gives
\begin{equation}\label{eq:projection-damping}
 \int_0^T|f(s)|^\alpha\dd s
 \ge\frac{|\langle f,e_1\rangle|^\alpha}
          {\|e_1\|_{L^p}^\alpha}.
\end{equation}
Put \(K_{\alpha,d}:=d/\|e_1\|_{L^p}^\alpha\).  After the substitution
\(u=l_1+\sigma z\), the remaining first-mode factor is, up to a fixed
constant,
\begin{equation}\label{eq:first-mode-integral}
 e^{-\kappa_\infty l_1^2/2}
 \int_{\mathbb R}
   \exp\bigg\{\frac{A_y}{\sigma}u-K_{\alpha,d}|u|^\alpha\bigg\}\dd u.
\end{equation}
Young's inequality yields
\[
 \int_{\mathbb R}e^{Bu-K_{\alpha,d}|u|^\alpha}\dd u
 \le C\exp\{C|B|^p\}.
\]
Together with
\cref{eq:first-mode-data,eq:first-mode-representation,eq:tilted-fernique},
this
proves \cref{eq:critical-bridge-upper}.

For the lower bound, choose \(M\) so that
\[
 p_M:=\mathbb P(\|\beta^\perp\|_\infty\le M)>0.
\]
On this event,
\[
 \int_0^T|\ell_{x,y}+\sigma ze_1+\sigma\beta^\perp|^\alpha\dd s
 \le C(1+|y|^\alpha+|z|^\alpha).
\]
The transverse part of the quadratic norm is nonnegative, so it may be
discarded in a lower bound.  By \cref{eq:critical-cancellation},
\[
 G_{\alpha,d}(y)
 \ge c e^{-C(1+|y|^\alpha)}
       \int_{\mathbb R}e^{A_yz-C|z|^\alpha}\dd z.
\]
For \(\alpha>1\), the maximiser has the sign of \(A\) and modulus
\((|A|/(C\alpha))^{1/(\alpha-1)}\).  Scaling about that maximiser gives
\[
 \int_{\mathbb R}e^{Az-C|z|^\alpha}\dd z
 \ge c\exp\{c|A|^p\}
\]
for large \(|A|\), which proves
\cref{eq:critical-bridge-lower}.  If \(0<\alpha<1\), the same integral is
infinite for every \(A\ne0\), since its linear term dominates in one tail.
For \(\alpha=1\), it is infinite once \(|A|>C\).
\Cref{eq:first-mode-data} gives the final assertions.
\end{proof}

\begin{lemma}[Perturbed Laplace principle]
\label{lem:perturbed-laplace}
Let \(n_y>0\), with \(n_y\to\infty\) as \(|y|\to\infty\), and let
\(F:\mathbb R\to\mathbb R\) be continuous with a unique maximiser \(v_*\).
Assume that \(F\) is twice differentiable near \(v_*\) and
\(F''(v_*)<0\).  Let \(R_y:\mathbb R\to\mathbb R\) be measurable and
suppose the following two quantified conditions hold:
\begin{enumerate}[label=\textup{(L\arabic*)}]
\item for every compact \(K\subset\mathbb R\),
\[
 \sup_{v\in K}\frac{|R_y(v)|}{n_y}\longrightarrow0;
\]
\item for every \(M>0\) there are a compact set \(K_M\) and
\(Y_M<\infty\) such that, for \(|y|\ge Y_M\),
\begin{equation}\label{eq:laplace-tightness-basic}
 \int_{K_M^{\mathsf c}}e^{n_yF(v)+R_y(v)}\dd v
 \le e^{n_y\{F(v_*)-M\}}.
\end{equation}
\end{enumerate}
Then
\[
 \log\int_{\mathbb R}e^{n_yF(v)+R_y(v)}\dd v
 =n_yF(v_*)+o(n_y).
\]

For the refined assertion, let \(P_y:\mathbb R\to\mathbb R\) be measurable,
let \(P\) be continuous, and suppose \(P_y\to P\) locally uniformly.
Assume \(\varepsilon_y\to0\),
\begin{equation}\label{eq:laplace-log-scale}
 \frac{n_y|\varepsilon_y|}{\log n_y}\longrightarrow\infty,
\end{equation}
and replace \textup{(L1)--(L2)} by the stronger conditions
\begin{enumerate}[label=\textup{(R\arabic*)}]
\item for every compact \(K\subset\mathbb R\),
\[
 \sup_{v\in K}\frac{|R_y(v)|}{n_y|\varepsilon_y|}
 \longrightarrow0;
\]
\item for every \(M>0\) there are a compact set \(K_M\) and
\(Y_M<\infty\) such that, for \(|y|\ge Y_M\),
\begin{equation}\label{eq:laplace-tightness-refined}
 \int_{K_M^{\mathsf c}}
 e^{n_y\{F(v)+\varepsilon_yP_y(v)\}+R_y(v)}\dd v
 \le e^{n_y\{F(v_*)-M\}}.
\end{equation}
\end{enumerate}
Then
\begin{equation}\label{eq:perturbed-laplace}
 \log\int_{\mathbb R}
 e^{n_y\{F(v)+\varepsilon_yP_y(v)\}+R_y(v)}\dd v
 =n_yF(v_*)+n_y\varepsilon_yP(v_*)
  +o(n_y|\varepsilon_y|).
\end{equation}
\end{lemma}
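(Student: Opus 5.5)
The plan is a standard two-sided Laplace argument: split \(\mathbb R\) into a small neighbourhood of \(v_*\), a compact annulus around it, and the exterior \(K_M^{\mathsf c}\). Conditions (L1) and (R1) control the first two pieces, and (L2) and (R2) control the third.

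\emph{First assertion.} Fix \(\eta>0\). For the lower bound, continuity of \(F\) gives a neighbourhood \(U\) of \(v_*\) with \(F\ge F(v_*)-\eta\) on \(U\). By (L1) applied to \(K=\overline U\), for \(|y|\) large we have \(R_y\ge-\eta n_y\) on \(U\). Hence the integral is at least \(|U|e^{n_y(F(v_*)-2\eta)}\).

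For the upper bound, take \(M=1\) in (L2) to obtain a compact set \(K_1\). On \(K_1\), we have \(F\le F(v_*)\), and by (L1) \(R_y\le\eta n_y\) eventually. So the integral over \(K_1\) is at most \(|K_1|e^{n_y(F(v_*)+\eta)}\), and the tail contributes at most \(e^{n_y(F(v_*)-1)}\). Since \(\eta\) is arbitrary, taking logarithms gives \(n_yF(v_*)+o(n_y)\).

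\emph{Refined assertion.} Here the deviation must be resolved at the finer scale \(n_y|\varepsilon_y|\). Fix \(\eta>0\) and put \(\Delta(v):=F(v)-F(v_*)\le0\). Nondegeneracy gives \(\Delta(v)\le-c(v-v_*)^2\) on a neighbourhood \(V\) of \(v_*\). Uniqueness of the maximiser and continuity give \(\Delta\le-\gamma<0\) on \(K\setminus V\) for every compact \(K\). On a compact \(K\), locally uniform convergence \(P_y\to P\), boundedness of \(P\) and \(\varepsilon_y\to0\) give \(n_y\varepsilon_yP_y=O(n_y|\varepsilon_y|)=o(n_y)\), and (R1) gives \(R_y=o(n_y|\varepsilon_y|)\). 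There are three contributions to compare.

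\begin{enumerate}[label=\textup{(\roman*)}]
\item \emph{Exterior.} By (R2) with any fixed \(M\), the contribution is at most \(e^{n_y(F(v_*)-M)}\). This is negligible, since \(n_y|\varepsilon_y|=o(n_y)\).
\item \emph{Annulus} \(K_M\setminus V\). The integrand is at most \(e^{n_y(F(v_*)-\gamma/2)}\) eventually, which is negligible for the same reason.
\item \emph{Shrinking ball.} Take \(B=\{|v-v_*|\le r\}\) with \(r\) small, so that \(|P-P(v_*)|\le\eta\) on \(B\) by continuity. There the exponent equals \(n_yF(v_*)+n_y\varepsilon_yP(v_*)\) up to an error of \(\pm(2\eta+o(1))n_y|\varepsilon_y|\), plus \(n_y\Delta(v)\).
\end{enumerate}

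The Gaussian integral of \(e^{n_y\Delta}\) over \(B\) lies between \(c\,n_y^{-1/2}\) and \(C\), using the upper curvature bound \(\Delta\le-c(v-v_*)^2\) for the second inequality and \(\Delta\ge-C(v-v_*)^2\) near \(v_*\) (from twice differentiability) for the first. The prefactor therefore has logarithm \(O(\log n_y)\), which is \(o(n_y|\varepsilon_y|)\) by \eqref{eq:laplace-log-scale}. The contribution of \(V\setminus B\) is also bounded above by \(e^{n_yF(v_*)+n_y\varepsilon_yP(v_*)+3\eta n_y|\varepsilon_y|}\), because there \(n_y\Delta\le-cr^2n_y\) absorbs the \(O(n_y|\varepsilon_y|)\) perturbation. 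Letting \(\eta\downarrow0\) yields \eqref{eq:perturbed-laplace}.

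\emph{Main obstacle.} The delicate point is the refined step: the polynomial Laplace prefactor \(n_y^{-1/2}\) and the \(o(n_y)\) contributions away from \(v_*\) must not swamp the \(n_y\varepsilon_y\) correction. Condition \eqref{eq:laplace-log-scale} handles the first. For the second, we must show that the region where \(\Delta\) is small but nonzero, which interacts with \(\varepsilon_yP_y\), is confined to the ball on which \(P\) is within \(\eta\) of \(P(v_*)\). This is why the ball radius is fixed before letting \(y\to\infty\), and why the quadratic lower curvature bound is used outside it.
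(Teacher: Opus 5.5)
Your proposal is correct and follows essentially the same argument as the paper: (L2)/(R2) confine the integral to a compact set, uniqueness gives a fixed gap on the $n_y$-scale away from $v_*$, and on a fixed $\eta$-dependent neighbourhood the quadratic bound gives an $O(\log n_y)$ normalisation, absorbed by \eqref{eq:laplace-log-scale}, before letting $\eta\downarrow0$. The only difference is the refined lower bound: the paper integrates over the shrinking window $[v_*-|\varepsilon_y|,v_*+|\varepsilon_y|]$, with Taylor loss $O(n_y\varepsilon_y^2)$ and length cost $\log(2|\varepsilon_y|)=O(\log n_y)$, whereas you use the fixed ball together with the Peano--Taylor bound $\Delta\ge-C(v-v_*)^2$ to get $\int_Be^{n_y\Delta}\ge c\,n_y^{-1/2}$, at the price of an extra $\eta n_y|\varepsilon_y|$ loss, which is equally valid.
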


\begin{proof}
Condition \cref{eq:laplace-tightness-basic} restricts the first integral to
a compact set with an arbitrarily prescribed loss on the \(n_y\)-scale.
On that compact set, uniqueness gives a strict gap outside every
neighbourhood of \(v_*\), while \textup{(L1)} controls \(R_y\).  A fixed
small neighbourhood gives the matching lower bound by continuity of \(F\).
Letting the neighbourhood shrink proves the first assertion.

For the refined assertion, put \(a_y=|\varepsilon_y|\) and integrate the
lower bound over \([v_*-a_y,v_*+a_y]\).  Taylor's theorem gives a loss
\(O(n_ya_y^2)=o(n_ya_y)\); continuity of \(P\), local uniform convergence
of \(P_y\), and the hypothesis on \(R_y\) contribute only
\(o(n_ya_y)\).  The interval length contributes \(\log(2a_y)\).  Since
\cref{eq:laplace-log-scale} implies \(-\log a_y=O(\log n_y)\), this term is
also \(o(n_ya_y)\).  For the upper bound, a fixed neighbourhood of \(v_*\)
on which \(F(v)\le F(v_*)-c|v-v_*|^2\) gives a Gaussian integral and hence
only an \(O(\log n_y)\) normalisation.  Given \(\eta>0\), shrink this
neighbourhood so that \(|P(v)-P(v_*)|\le\eta\), and then use local uniform
convergence of \(P_y\) and \textup{(R1)}.  Its compact complement loses a
fixed multiple of \(n_y\), and \cref{eq:laplace-tightness-refined} handles
the tails.  The upper error is therefore at most
\(\eta n_ya_y+o(n_ya_y)\).  Letting \(\eta\downarrow0\) proves
\cref{eq:perturbed-laplace}.  Thus all uniformity is over the explicitly
displayed family \(|y|\ge Y_M\); no unquantified tail condition is used.
\end{proof}

At the power \(4/3\), the H\"older projection
\cref{eq:projection-damping} has the correct order but loses the leading
coefficient.  The next lemma replaces it by an expansion around the first
bridge mode.

\begin{lemma}[Sharp bridge asymptotic at power \(4/3\)]
\label{lem:sharp-four-thirds}
Under the hypotheses and notation of \cref{lem:critical-bridge}, put
\[
 J_{4/3}:=\int_0^\pi\sin^{4/3}v\dd v
 =\frac{\sqrt\pi\,\Gamma(7/6)}{\Gamma(5/3)}.
\]
Then, for every \(d>0\),
\begin{equation}\label{eq:sharp-bridge-limit}
 \log G_{4/3,d}(y)
 =\mathfrak L(d)|x+y|^4+O\big(1+y^2+\log(2+|y|)\big),
 \qquad
 \mathfrak L(d)
 :=\frac{27\kappa_\infty^{7/2}}{256d^3\sigma J_{4/3}^3}.
\end{equation}
\end{lemma}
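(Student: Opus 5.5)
The plan is to reduce to the exact one-dimensional first-mode integral supplied by \cref{lem:first-mode-apparatus}, and to show that the transverse part costs only \(O(1+y^2)\) in the exponent. Apply \cref{eq:first-mode-representation} with \(V(f)=d\int_0^T|f|^{4/3}\dd s\). This writes \(G_{4/3,d}(y)\) as
\[
C_0e^{-\kappa_\infty l_1^2/2}\,\widehat H_y\,
\widehat{\mathbb E}_y\Big[\int_{\mathbb R}e^{B_yu-d\int_0^T|ue+g|^{4/3}\dd s}\dd u\Big].
\]
The prefactors are already controlled: \(\kappa_\infty l_1^2/2=O(1+y^2)\) by \cref{eq:first-mode-data}, and \(\log\widehat H_y\le C(1+y^2)\). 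For the lower bound, dropping the transverse part of the norm as in \cref{lem:critical-bridge} even gives \(\widehat H_y\ge1\). So everything rests on the inner integral.

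\textbf{The model integral.} Set \(K:=d\int_0^Te^{4/3}\dd s=d(2/T)^{2/3}(T/\pi)J_{4/3}\). For \(g\equiv0\) the inner integral is \(\int e^{B_yu-K|u|^{4/3}}\dd u\). Rescale \(u=|x+y|^3v\) and put \(n_y=|x+y|^4\); the exponent becomes \(n_yF(v)\) with
\[
F(v)=bv-K|v|^{4/3},\qquad B_y=b\,(x+y).
\]
The unique maximiser is \(v_*=(3b/4K)^3\) (with the sign of \(x+y\)). The maximum value is
\[
F(v_*)=\frac{27}{256}\,\frac{b^4}{K^3}.
\]
Substituting \(b=\sqrt2\pi/(\sigma^2T^{3/2})\) and \(\kappa_\infty=\pi^2/(\sigma^2T^2)\), one checks that \(b^4/K^3=\kappa_\infty^{7/2}/(\sigma d^3J_{4/3}^3)\). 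This is exactly \(\mathfrak L(d)\). Since the error claimed in \cref{eq:sharp-bridge-limit} is \(O(1+y^2+\log|y|)\), I do not need \cref{lem:perturbed-laplace} in its \(o(n_y)\) form. A direct Laplace estimate suffices: on an \(n_y^{-1/2}\)-window around \(v_*\) the loss is \(O(1)\), the Jacobian contributes \(\log|x+y|^3\), and the Gaussian upper bound loses \(O(\log n_y)\).

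\textbf{Lower bound.} Restrict to the event \(\{\|\widehat\beta\|_\infty\le M\}\), which has \(\widehat{\mathbb P}_y\)-probability at least \(1/2\) uniformly in \(y\) by \cref{eq:tilted-fernique}. On it, \(\|g\|_\infty\le C(1+|y|)\). Convexity-type bounds give
\[
|a+h|^{4/3}\le|a|^{4/3}+\tfrac43|a|^{1/3}|h|+C|h|^{4/3}.
\]
On the window \(|u|\asymp|x+y|^3\) the extra terms are \(O(|y|\cdot|y|+|y|^{4/3})=O(y^2)\). The model Laplace lower bound then gives \(\log G\ge\mathfrak L(d)|x+y|^4-O(1+y^2+\log|y|)\).

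\textbf{Upper bound.} This is the main obstacle: the transverse field \(\widehat\beta\) is unbounded and couples to \(u\) through \(|u|^{1/3}|g|\). Use the reverse inequality
\[
|a+h|^{4/3}\ge|a|^{4/3}-\tfrac43|a|^{1/3}|h|,
\]
which gives \(\int|ue+g|^{4/3}\ge K'|u|^{4/3}-C|u|^{1/3}(\|\widehat m_y\|_\infty+r)\) with \(r=\|\widehat\beta\|_\infty\). Then apply Young twice.
\begin{itemize}
\item First, \(C|u|^{1/3}(|y|+r)\le\eta_y|u|^{4/3}+C\eta_y^{-1/3}(|y|+r)^{4/3}\) with \(\eta_y=|x+y|^{-2}\). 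On the relevant scale \(|u|\lesssim|x+y|^3\), the perturbation \(\eta_y|u|^{4/3}\) only changes \(K\) by a relative \(O(|y|^{-2})\), hence changes \(n_yF(v_*)\) by \(O(y^2)\).
\item For large \(|u|\), this perturbation is still dominated by \(K|u|^{4/3}/2\), so the tail of the \(u\)-integral is harmless.
\item Second, the remaining term is \(C|y|^{2/3}(|y|+r)^{4/3}\le\delta r^2+C_\delta(1+y^2)\). Choose \(\delta<c_F\), so \cref{eq:tilted-fernique} gives \(\widehat{\mathbb E}_y e^{\delta r^2}\le C\).
\end{itemize}
Collecting terms yields \(\log G\le\mathfrak L(d)|x+y|^4+O(1+y^2+\log(2+|y|))\). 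Note that \(|x+y|^4\) and \(|y|^4\) differ by \(O(|y|^3)\), so the statement must be kept in terms of \(|x+y|^4\), as written.
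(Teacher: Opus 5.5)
Your proof is correct. It follows the paper's skeleton: the first-mode representation \cref{eq:first-mode-representation}, one-sided bounds for $|\cdot|^{4/3}$ around $ue$, the rescaling $u=|x+y|^3v$, and the identification $27B_0^4/(256\,d^3I_{4/3}^3)=\mathfrak L(d)$. Your lower bound is essentially the paper's, namely restriction to a bounded-transverse event plus the remainder inequality for $|a+b|^{4/3}$.

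The upper bound is organised differently. The paper keeps the signed first-order term $\tfrac43\operatorname{sgn}(u)|u|^{1/3}\langle e^{1/3},g\rangle$ from \cref{eq:four-thirds-convexity}. It uses that this is a scalar Gaussian with mean $O(1+|y|)$ and bounded variance to compute its exponential moment at each fixed $u$. This leaves the $u$-dependent error $C(1+|y|)|u|^{1/3}+C|u|^{2/3}$, i.e.\ $Cr^2(1+|v|^{2/3})$ after rescaling, which the paper absorbs through the refined part of \cref{lem:perturbed-laplace} with $n_y=r^4$, $\varepsilon_y=r^{-2}$.

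You instead bound the coupling by $|u|^{1/3}\|g\|_\infty$ and separate $u$ from $\|\widehat\beta\|_\infty$ by a weighted Young inequality with $\eta_y=|x+y|^{-2}$. This lowers $K$ to $K-C\eta_y$ uniformly in $u$. The exact supremum $27B_y^4/(256(K-C\eta_y)^3)$ then differs from $\mathfrak L(d)|x+y|^4$ by $O(y^2)$. The leftover factor $e^{\delta\|\widehat\beta\|_\infty^2+C(1+y^2)}$ is integrable by \cref{eq:tilted-fernique}. Consequently your split into ``the relevant scale'' and ``large $|u|$'' is unnecessary: the Young step is an exact, global change of the coefficient.

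What your route buys is a fully elementary Laplace step: an exact supremum for the upper bound and an $n_y^{-1/2}$-window for the lower bound. It needs neither the Gaussianity of $\langle e^{1/3},g\rangle$ nor \cref{lem:perturbed-laplace}. It is close in spirit to the Young/Fernique estimate \cref{eq:fernique-linear-subquadratic} that the paper itself uses in \cref{lem:multiscale-critical-bridge}.

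The paper's route instead packages the transverse error as a perturbation $\varepsilon_yP_y$ inside the general Laplace lemma. That same framework is reused in \cref{lem:slowly-varying-bridge}, where the perturbation is a regularly varying functional rather than a rescaling of $K$, so no closed-form supremum is available.
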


\begin{proof}
Use the notation and representation of \cref{lem:first-mode-apparatus} with
\(V(f)=d\|f\|_{4/3}^{4/3}\), and set
\[
 I_{4/3}:=\int_0^T e(s)^{4/3}\dd s
 =\frac{2^{2/3}T^{1/3}}{\pi}J_{4/3},
 \qquad \mathsf D:=dI_{4/3},
 \qquad B_0:=\frac{\sqrt2\pi}{\sigma^2T^{3/2}}.
\]

Because \(e=e_1\) is nonnegative on \([0,T]\),
\(\operatorname{sgn}(ue)=\operatorname{sgn}(u)\) and
\(|ue|^{1/3}=|u|^{1/3}e^{1/3}\).  Convexity of
\(v\mapsto|v|^{4/3}\) therefore yields
\begin{equation}\label{eq:four-thirds-convexity}
 \|ue+g\|_{4/3}^{4/3}
 \ge I_{4/3}|u|^{4/3}
 +\frac43\operatorname{sgn}(u)|u|^{1/3}
   \langle e^{1/3},g\rangle.
\end{equation}
The linear functional \(\langle e^{1/3},g\rangle\) is therefore Gaussian
under \(\widehat{\mathbb P}_y\), with mean \(O(1+|y|)\) and variance bounded
uniformly in \(y\).  The Gaussian exponential bound
\(\mathbb E[e^{t|Z|}]\le2e^{t|\mathbb EZ|+t^2\operatorname{Var}(Z)/2}\)
then implies, from \cref{eq:first-mode-representation},
\begin{equation}
\label{eq:sharp-upper-integral}
 G_{4/3,d}(y)
 \le e^{C(1+y^2)}
 \int_{\mathbb R}\exp\big\{
   B_yu-\mathsf D|u|^{4/3}
   +C(1+|y|)|u|^{1/3}+C|u|^{2/3}
 \big\}\dd u .
\end{equation}

For the lower bound, use the elementary remainder estimate
\begin{equation}\label{eq:four-thirds-remainder}
 |a+b|^{4/3}
 \le |a|^{4/3}
 +\frac43\operatorname{sgn}(a)|a|^{1/3}b
 +C|b|^{4/3}.
\end{equation}
Choose \(M\) such that
\(E_M=\{\|\beta^\perp\|_{4/3}\le M\}\) has positive probability.  On
\(E_M\),
\[
 |\langle e^{1/3},g\rangle|\le C(1+|y|),
 \qquad
 \|g\|_{4/3}^{4/3}\le C(1+|y|^{4/3}).
\]
Restricting the first expectation on the right of
\cref{eq:first-mode-representation} to \(E_M\), and discarding the
nonnegative transverse quadratic exponent, gives
\begin{equation}\label{eq:sharp-lower-integral}
 G_{4/3,d}(y)
 \ge e^{-C(1+y^2)}
 \int_{\mathbb R}\exp\big\{
   B_yu-\mathsf D|u|^{4/3}
   -C(1+|y|)|u|^{1/3}-C(1+|y|^{4/3})
 \big\}\dd u .
\end{equation}

Put \(r=|x+y|\) and substitute \(u=r^3v\).  The leading exponent in both
\cref{eq:sharp-upper-integral,eq:sharp-lower-integral}, divided by
\(r^4\), is exactly
\[
 \operatorname{sgn}(x+y)B_0v-\mathsf D|v|^{4/3},
\]
while, after also absorbing the prefactors outside the integrals, the
displayed transverse errors are bounded above and below by
\(\pm Cr^2(1+|v|^{2/3})\).  To justify the required error scale explicitly,
apply the refined part of \cref{lem:perturbed-laplace} to the two comparison
integrals with
\[
 n_y=r^4,\qquad \varepsilon_y=r^{-2},\qquad
 P_\pm(v)=\pm C(1+|v|^{2/3}),\qquad R_y=0.
\]
Condition \textup{(R1)} is immediate, and
\(n_y\varepsilon_y/\log n_y=r^2/(4\log r)\to\infty\).  For
\textup{(R2)}, the negative \(-\mathsf D|v|^{4/3}\) term dominates
\(r^{-2}P_\pm(v)\) outside a fixed compact set, uniformly for large \(r\).
Thus each comparison integral equals its leading Laplace value up to
\(O(r^2)\); the change of variables contributes only \(3\log r\).  Hence
\begin{align*}
 \log G_{4/3,d}(y)
 &=r^4\sup_{v\in\mathbb R}\{B_0v-\mathsf D|v|^{4/3}\}
  +O\big(1+y^2+\log(2+r)\big)\\
 &=\frac{27B_0^4}{256\mathsf D^3}r^4
  +O\big(1+y^2+\log(2+r)\big).
\end{align*}
Insert the displayed values of \(B_0\) and \(I_{4/3}\), and use
\(\kappa_\infty\sigma^2T^2=\pi^2\), to obtain
\cref{eq:sharp-bridge-limit}.
\end{proof}

We call a measurable function \(h\) \emph{signed regularly varying of
index \(\rho\)} if it is eventually nonzero and of one sign and if
\(L:=|h|\) is regularly varying of index \(\rho\).  Thus
\(L(tr)/L(r)\to t^\rho\) for every \(t>0\).  The case \(\rho=0\) is signed
slow variation.  We always take a locally bounded Borel extension to
\([0,\infty)\).  We use the uniform convergence theorem and Potter bounds
in their standard form; see
\citet[Theorems~1.2.1 and~1.5.6]{binghamgoldieteugels1987}.

\begin{lemma}[Stability under transverse paths]
\label{lem:slow-pathwise-stability}
Let \(e=e_1\), let \(h\) be signed regularly varying of index
\(\rho\in(-2/3,0]\), with \(h(r)\to0\), and put
\[
 \Phi_h(f):=\int_0^T|f(s)|^{4/3}h(|f(s)|)\dd s.
\]
Then, uniformly over continuous functions \(g=g_u\) satisfying
\(\|g\|_\infty\le |u|^{2/3}\),
\begin{equation}\label{eq:pathwise-slow-stability}
 \Phi_h(ue+g)
 =I_{4/3+\rho}|u|^{4/3}h(|u|)\big(1+o(1)\big),
 \qquad |u|\to\infty,
\end{equation}
where
\[
 I_{4/3+\rho}:=\int_0^Te(s)^{4/3+\rho}\dd s.
\]
\end{lemma}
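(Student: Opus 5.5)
The plan is to reduce the statement to pointwise convergence plus domination, after factoring out the scale \(|u|\). Write \(f=ue+g\) and substitute \(s\)-wise
\[
 |f(s)|^{4/3}h(|f(s)|)
 =|u|^{4/3}h(|u|)\,
 \Big|e(s)+\tfrac{g(s)}{u}\Big|^{4/3}
 \frac{h\big(|u|\,|e(s)+g(s)/u|\big)}{h(|u|)} .
\]
Since \(\|g\|_\infty\le|u|^{2/3}\), we have \(\|g/u\|_\infty\le|u|^{-1/3}\to0\) uniformly over the admissible family. Hence \(w_u(s):=|e(s)+g(s)/u|\) converges to \(e(s)\) uniformly in \(s\) and in \(g\). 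The claim then becomes
\[
 \int_0^T w_u^{4/3}\,\frac{h(|u|w_u)}{h(|u|)}\dd s\;\longrightarrow\;\int_0^T e^{4/3+\rho}\dd s=I_{4/3+\rho},
\]
uniformly in \(g\). Because \(h\) eventually has one sign, the ratio is eventually positive, and the sign of \(h(|u|)\) factors out.

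\emph{Interior region.} Fix \(\delta>0\) and let \(S_\delta=\{s:e(s)\ge\delta\}\), a compact subinterval of \((0,T)\). On \(S_\delta\), for large \(|u|\), \(w_u\in[\delta/2,2\|e\|_\infty]\), uniformly in \(g\). The uniform convergence theorem for regularly varying \(L=|h|\) on compact \(t\)-sets in \((0,\infty)\) gives \(L(|u|t)/L(|u|)\to t^\rho\) uniformly for \(t\in[\delta/2,2\|e\|_\infty]\). Combining this with uniform convergence \(w_u\to e\) and continuity of \(t\mapsto t^{4/3+\rho}\), the integral over \(S_\delta\) converges to \(\int_{S_\delta}e^{4/3+\rho}\), uniformly in \(g\).

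\emph{Boundary layers (the main obstacle).} Near \(s=0,T\), \(e\) vanishes and \(w_u\) may be of order \(|u|^{-1/3}\), or even zero. There \(|u|w_u\) need not be large, so regular variation does not apply directly. This is where the hypotheses \(h\to0\), local boundedness, and \(\rho>-2/3\) enter. I would split \(S_\delta^{\mathsf c}\) according to whether \(|u|w_u\ge r_0\), with \(r_0\) the Potter threshold, or not.

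On \(\{|u|w_u\ge r_0\}\), Potter bounds with a small \(\epsilon>0\) give
\[
 \frac{L(|u|w_u)}{L(|u|)}\le A\max\big(w_u^{\rho-\epsilon},w_u^{\rho+\epsilon}\big).
\]
The integrand is therefore at most \(A\,w_u^{4/3+\rho-\epsilon}\) (using \(w_u\le C\)), and \(4/3+\rho-\epsilon>2/3>0\) makes this bounded by a constant. The contribution is thus \(O(|S_\delta^{\mathsf c}|)=O(\delta)\).

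On \(\{|u|w_u<r_0\}\), local boundedness gives \(|h|\le C\) there. So \(|f|^{4/3}|h(|f|)|\le Cr_0^{4/3}\), and relative to \(|u|^{4/3}L(|u|)\) the contribution is at most \(CT/(|u|^{4/3}L(|u|))\). Since \(L\) has index \(\rho>-2/3\), \(|u|^{4/3}L(|u|)\to\infty\), so this term vanishes uniformly in \(g\).

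Finally, \(\int_{S_\delta^{\mathsf c}}e^{4/3+\rho}=O(\delta)\). Letting \(|u|\to\infty\) and then \(\delta\downarrow0\) yields \eqref{eq:pathwise-slow-stability}, with the \(o(1)\) uniform over the admissible \(g_u\).
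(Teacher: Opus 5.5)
Your proof is correct and follows essentially the same route as the paper. Both split the interval into an interior region where \(e\) is bounded below, handled by the uniform convergence theorem, and endpoint layers, handled by Potter bounds together with local boundedness of \(h\) and \(|u|^{4/3}|h(|u|)|\to\infty\). The only difference is bookkeeping: the paper uses the shrinking cutoff \(e\ge|u|^{-1/6}\), whereas you fix \(\delta\) and let \(\delta\downarrow0\) after \(|u|\to\infty\). One small slip in your boundary estimate: the bound \(4/3+\rho-\varepsilon>2/3\) should read \(>2/3-\varepsilon\), which is harmless because only \(4/3+\rho-\varepsilon>0\) is used.
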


\begin{proof}
It is enough to work with \(L=|h|\), since the sign is eventually
constant.  Put \(\delta_u=|u|^{-1/6}\) and split the interval according to
\(e\ge\delta_u\) and \(e<\delta_u\).  On the first set,
\[
 \sup_{e\ge\delta_u}\frac{|g|}{|u|e}
 \le |u|^{-1/6}\longrightarrow0.
\]
The uniform convergence theorem for the regularly varying function
\(r^{4/3}L(r)\), whose index is \(4/3+\rho\), therefore gives, uniformly
in the stated class of \(g\),
\[
 |ue+g|^{4/3}L(|ue+g|)
 =|u|^{4/3}e^{4/3}L(|u|e)\big(1+o(1)\big).
\]
The uniform convergence theorem on fixed compact subintervals of
\((0,\infty)\), followed by monotone enlargement of those intervals, shows
that the integral of the right-hand side is
\(I_{4/3+\rho}|u|^{4/3}L(|u|)(1+o(1))\).

For completeness, the shrinking endpoint pieces are controlled directly.
For any fixed \(0<\varepsilon<4/3+\rho\), Potter's bound gives
\[
 \frac{L(r t)}{L(r)}\le C_\varepsilon
 \max\{t^{\rho+\varepsilon},t^{\rho-\varepsilon}\},
 \qquad r,rt\ \text{large}.
\]
The first eigenfunction vanishes linearly at both endpoints.  Since
\(|ue+g|/|u|\le e+|u|^{-1/3}\), the contribution of \(e<\delta_u\), apart
from a set on which the argument remains in a fixed compact interval, is at
most
\[
 C|u|^{4/3}L(|u|)
 \int_{\{e<\delta_u\}}
   (e+|u|^{-1/3})^{4/3+\rho-\varepsilon}\dd s
 =o\big(|u|^{4/3}L(|u|)\big).
\]
The compact-argument part is \(O(\delta_u)\) by local boundedness, which is
again negligible because \(|u|^{4/3}L(|u|)\to\infty\).  This proves
\cref{eq:pathwise-slow-stability}.
\end{proof}

\begin{lemma}[Suppression of the transverse complement]
\label{lem:transverse-complement}
Under \(\widehat{\mathbb P}_y\), write
\(g=\widehat m_y+\widehat\beta\) as in
\cref{lem:first-mode-apparatus}.  For \(u=|y|^3v\), with \(v\) in a fixed
compact subset of \(\mathbb R\setminus\{0\}\), put
\begin{equation}\label{eq:main-transverse-event}
 E_{y,u}:=\{\|\widehat\beta\|_\infty\le |u|^{2/3}/2\}.
\end{equation}
If \(h\) is signed regularly varying of index \(\rho\in(-2/3,0]\) and
\(h(r)\to0\), then constants
\(c,C>0\), independent of \(y,u\), satisfy
\begin{equation}\label{eq:complement-suppression}
 \widehat{\mathbb E}_y\big[
   e^{|\Phi_h(ue+g)|}\mathbf1_{E_{y,u}^{\mathsf c}}\big]
 \le C e^{-c|u|^{4/3}}
\end{equation}
for all sufficiently large \(|y|\).
\end{lemma}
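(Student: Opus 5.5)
We bound \(|\Phi_h(ue+g)|\) deterministically and then use the Gaussian tail \cref{eq:tilted-fernique}.

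Since \(h(r)\to0\) and \(h\) is locally bounded on \([0,\infty)\), we have \(|h|\le C_h\) everywhere. Moreover, for every \(\epsilon>0\) there is \(C_\epsilon\) with \(|r^{4/3}h(r)|\le\epsilon r^{4/3}+C_\epsilon\) for all \(r\ge0\). Taking \(\epsilon=1\) gives the pointwise bound
\[
 |\Phi_h(f)|\le \|f\|_{4/3}^{4/3}+CT
 \le C\big(1+\|f\|_\infty^{4/3}\big).
\]
On the complement event \(E_{y,u}^{\mathsf c}\) we write \(f=ue+\widehat m_y+\widehat\beta\). With \(u=|y|^3v\) and \(v\) in a fixed compact set away from \(0\), we have \(|u|\asymp|y|^3\). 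By \cref{eq:tilted-fernique}, \(\|\widehat m_y\|_\infty\le C(1+|y|)=O(|u|^{1/3})\). Hence
\[
 |\Phi_h(ue+g)|\le C_1|u|^{4/3}+C\|\widehat\beta\|_\infty^{4/3}+C .
\]

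The sharpness of the constants matters here, because the claimed bound \(Ce^{-c|u|^{4/3}}\) must beat the \(C_1|u|^{4/3}\) produced by the first-mode part. The remedy is Gaussian concentration at the scale fixed by the event. On \(E_{y,u}^{\mathsf c}\), \(\|\widehat\beta\|_\infty>|u|^{2/3}/2\), so the probability is at most \(Ce^{-c_F|u|^{4/3}/4}\). This is of the same order \(|u|^{4/3}\) as the deterministic term, so a naive product bound works only if \(C_1<c_F/4\). I would therefore not use the crude bound with \(\epsilon=1\). Instead I would exploit \(h\to0\) to make \(C_1\) as small as desired: choose \(\epsilon\) small so that
\[
 |\Phi_h(ue+g)|\le \epsilon\,2^{1/3}\big(\|ue+\widehat m_y\|_{4/3}^{4/3}+\|\widehat\beta\|_{4/3}^{4/3}\big)+C_\epsilon T
 \le \epsilon C_2|u|^{4/3}+\epsilon C_3\|\widehat\beta\|_\infty^{4/3}+C_\epsilon .
\]
Here \(C_2\) and \(C_3\) depend only on \(T\) and the compact range of \(v\).

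Next I layer the expectation over dyadic shells of \(\|\widehat\beta\|_\infty\), or equivalently integrate the tail. Writing \(s=\|\widehat\beta\|_\infty\),
\[
 \widehat{\mathbb E}_y\big[e^{\epsilon C_3 s^{4/3}}\mathbf1_{\{s>|u|^{2/3}/2\}}\big]
 \le \sum_{k\ge0} e^{\epsilon C_3 (2^{k+1}r_0)^{4/3}}\,C e^{-c_F(2^kr_0)^2},
 \qquad r_0=|u|^{2/3}/2 .
\]
Because \(s^{4/3}\le s^2\) once \(s\ge1\), for \(\epsilon\) small the sum is dominated by \(Ce^{-c_Fr_0^2/2}=Ce^{-c_F|u|^{4/3}/8}\). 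Multiplying by \(e^{\epsilon C_2|u|^{4/3}+C_\epsilon}\) and taking \(\epsilon C_2<c_F/16\) gives \cref{eq:complement-suppression} with \(c=c_F/16\). The constants are uniform in \(y\) and \(u\) because \(c_F\) and the mean bound in \cref{eq:tilted-fernique} are \(y\)-independent, and \(|y|\) is taken large.

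The main obstacle is exactly this constant competition between the deterministic first-mode size \(|u|^{4/3}\) and the Gaussian tail at threshold \(|u|^{2/3}\), which live on the same scale. The resolution rests entirely on \(h(r)\to0\), which makes the prefactor of \(|u|^{4/3}\) arbitrarily small. I would double-check that the index restriction \(\rho\in(-2/3,0]\) is not needed for this step: it is not, since only boundedness of \(h\) and \(h\to0\) are used. That restriction is required only in \cref{lem:slow-pathwise-stability}.
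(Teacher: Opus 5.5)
Your proposal is correct and follows essentially the paper's argument. The paper also uses \(h\to0\), through \(\varepsilon_0=\sup_{r\ge r_0}|h(r)|\), to push the coefficient of \(|u|^{4/3}\) below the Fernique rate \(c_F/8\), and it controls the \(\|\widehat\beta\|^{4/3}\) term using \(4/3<2\); the only difference is that the paper separates the exponential moment from \(\widehat{\mathbb P}_y(E_{y,u}^{\mathsf c})\) by Cauchy--Schwarz, where you sum over dyadic shells, which is an equivalent technical choice.
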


\begin{proof}
Choose \(r_0\) and set
\(\varepsilon_0:=\sup_{r\ge r_0}|h(r)|\).  Local boundedness and
\(|a+b|^{4/3}\le C(|a|^{4/3}+|b|^{4/3})\) give
\begin{equation}\label{eq:Phi-coarse-bound}
 |\Phi_h(ue+g)|
 \le C\varepsilon_0\big(|u|^{4/3}+\|g\|_{4/3}^{4/3}\big)+C_{r_0}.
\end{equation}
This is also the Potter estimate
\(r^{4/3}|h(r)|\le C(1+r^2)\), using any exponent below \(2/3\), but
\cref{eq:Phi-coarse-bound} retains the small tail coefficient needed here.
By Cauchy--Schwarz and \cref{eq:tilted-fernique},
\begin{align}
 &\qquad\,\widehat{\mathbb E}_y\big[
   e^{|\Phi_h(ue+g)|}\mathbf1_{E_{y,u}^{\mathsf c}}\big]
 \notag\\
 &\quad\le
 \big(\widehat{\mathbb E}_y e^{2|\Phi_h(ue+g)|}\big)^{1/2}
 \widehat{\mathbb P}_y(E_{y,u}^{\mathsf c})^{1/2}
 \notag\\
 &\quad\le C\exp\bigg\{
   \bigg(C\varepsilon_0-\frac{c_F}{8}\bigg)|u|^{4/3}
   +C\varepsilon_0|u|^{4/9}\bigg\}.
 \label{eq:complement-CS}
\end{align}
Here \(\|\widehat m_y\|_{4/3}^{4/3}=O(|y|^{4/3})=O(|u|^{4/9})\), and
Fernique is used once for the probability and once for the exponential
moment; the exponent \(4/3<2\) is essential.  Since
\(\varepsilon_0\downarrow0\) as \(r_0\uparrow\infty\), choose \(r_0\) so that
\(C\varepsilon_0<c_F/16\).  This is non-circular:
\(\varepsilon_0\) depends only on \(h\), whereas \(c_F\) depends only on the
centred bridge covariance.  The \(|u|^{4/9}\) term is lower order, proving
\cref{eq:complement-suppression}.  Thus the complement loses a full
\(e^{-c|y|^4}\), while the signal at \(b=b_*\) has size
\(|y|^4|h(|y|^3v_*)|\); the separation is by a genuine power gap.
\end{proof}

\begin{lemma}[Regularly varying refinement of the critical bridge]
\label{lem:slowly-varying-bridge}
Under the hypotheses and notation of \cref{lem:sharp-four-thirds}, let
\(h\) be signed regularly varying of index \(\rho\in(-2/3,0]\), with
\(h(r)\to0\), and, for \(d>0\), set
\[
 G^h_d(y):=
 \mathbb E\bigg[
  \exp\bigg\{
   \frac{\kappa_\infty}{2}\int_0^T(\ell_{x,y}+\sigma\beta)^2\dd s
   -d\int_0^T|\ell_{x,y}+\sigma\beta|^{4/3}\dd s
   +\Phi_h(\ell_{x,y}+\sigma\beta)
  \bigg\}\bigg].
\]
Let
\[
 B_0:=\frac{\sqrt2\pi}{\sigma^2T^{3/2}},\qquad
 I_{4/3}:=\frac{2^{2/3}T^{1/3}}{\pi}J_{4/3},\qquad
 I_{4/3+\rho}:=\int_0^Te(s)^{4/3+\rho}\dd s,
 \qquad
 v_d:=\bigg(\frac{3B_0}{4dI_{4/3}}\bigg)^3.
\]
Then
\begin{equation}\label{eq:slowly-varying-bridge-limit}
 \log G^h_d(y)
 =\mathfrak L(d)|x+y|^4
  +I_{4/3+\rho} v_d^{4/3}|x+y|^4
    h\big(|x+y|^3v_d\big)\big(1+o(1)\big),
 \qquad |y|\to\infty.
\end{equation}
\end{lemma}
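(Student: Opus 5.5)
We plan to follow the proof of \cref{lem:sharp-four-thirds} closely, but keep \(\Phi_h\) as a perturbation of relative size \(h(r^3)\to0\). The key device is the refined part of \cref{lem:perturbed-laplace}, applied with \(\varepsilon_y:=h(r^3v_d)\) and \(n_y=r^4\), where \(r=|x+y|\).

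\textbf{Step 1: representation and localisation.} Start from \cref{eq:first-mode-representation} with \(V(f)=d\|f\|_{4/3}^{4/3}-\Phi_h(f)\). Substitute \(u=r^3v\). The bound \(\widehat H_y\le Ce^{C(1+y^2)}\) contributes only \(O(r^2)\). We then split the \(\widehat{\mathbb E}_y\)-expectation according to the event \(E_{y,u}\) of \cref{eq:main-transverse-event}.

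\emph{Off \(E_{y,u}\).} By \cref{lem:transverse-complement}, the extra weight \(e^{|\Phi_h|}\) costs at most \(Ce^{-c|u|^{4/3}}\). Combined with the \(d\)-term handled as in \cref{eq:sharp-upper-integral}, this part is exponentially negligible on the \(r^4\) scale after integrating in \(u\).

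\emph{On \(E_{y,u}\).} Here \(\|g\|_\infty\le\|\widehat m_y\|_\infty+|u|^{2/3}/2\le|u|^{2/3}\) for \(v\) in a compact set away from \(0\) and \(|y|\) large. \Cref{lem:slow-pathwise-stability} then gives \(\Phi_h(ue+g)=I_{4/3+\rho}|u|^{4/3}h(|u|)(1+o(1))\), uniformly in \(g\). After the scaling, this equals \(r^4\,I_{4/3+\rho}|v|^{4/3}h(r^3|v|)(1+o(1))\). By the uniform convergence theorem, \(h(r^3|v|)/h(r^3v_d)\to(|v|/v_d)^\rho\) locally uniformly.

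\textbf{Step 2: identify the Laplace data.} Set
\[
F(v)=\operatorname{sgn}(x+y)B_0v-\mathsf D|v|^{4/3},\qquad P(v)=I_{4/3+\rho}|v|^{4/3}(|v|/v_d)^\rho .
\]
Let \(P_y\) be the corresponding exact ratio, which converges to \(P\) locally uniformly. Let \(R_y\) collect the transverse errors \(\pm Cr^2(1+|v|^{2/3})\) coming from \cref{eq:sharp-upper-integral,eq:sharp-lower-integral}.

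The unique maximiser of \(F\) is \(\pm v_d\), and \(F(\pm v_d)=\mathfrak L(d)\) after the constant insertion already done in \cref{lem:sharp-four-thirds}. Moreover \(P(\pm v_d)=I_{4/3+\rho}v_d^{4/3}\), which produces exactly the second term of \cref{eq:slowly-varying-bridge-limit}. One checks the sign: for \(y\) of either sign the maximiser is at \(|v|=v_d\), since \(x+y\) and \(y\) eventually share a sign.

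\textbf{Step 3: verify (R1), (R2) and the scale condition.}

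\emph{(R1).} We need \(R_y=O(r^2)=o(r^4|h(r^3)|)\). This holds because \(h\) is regularly varying of index \(\rho>-2/3\), so \(r^4|h(r^3)|\ge r^{4+3\rho-\epsilon}\gg r^2\). This power gap is exactly where \(\rho>-2/3\) enters.

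\emph{Scale condition.} The same power gap gives \cref{eq:laplace-log-scale}.

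\emph{(R2).} Use \(r^{4/3}|h(r)|\le\varepsilon_0 r^{4/3}+C\). Hence \(-\mathsf D|v|^{4/3}\) dominates outside a compact set.

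\emph{Neighbourhood of \(v=0\).} The region where \(v\) is near \(0\) is excluded from the localisation of Step 1. There the crude bound suffices: \(F(v)\le F(0)+\eta<F(v_d)\), and \(|\Phi_h|\le\varepsilon_0C r^4(|v|^{4/3}+1)+\dots\) is small relative to that gap.

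\textbf{Step 4: assemble.} Upper and lower comparison integrals now share the same two-term Laplace asymptotic, and the prefactors are \(O(r^2+\log r)=o(r^4|h|)\). This yields \cref{eq:slowly-varying-bridge-limit}.

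\textbf{Main obstacle.} The delicate step is the lower bound. One must restrict to a positive-probability transverse event on which both the \(4/3\)-remainder \cref{eq:four-thirds-remainder} and the stability lemma apply simultaneously, and still keep the loss at \(O(r^2)\). We would take \(E_{y,u}\cap\{\|\widehat\beta\|_{4/3}\le M\}\), whose \(\widehat{\mathbb P}_y\)-probability is bounded below uniformly in \(y\) by \cref{eq:tilted-fernique}.
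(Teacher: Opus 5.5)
Your proposal is correct and follows the paper's proof essentially step for step. It uses the first-mode representation with \(V=d\|\cdot\|_{4/3}^{4/3}-\Phi_h\), applies \cref{lem:slow-pathwise-stability} on \(E_{y,u}\) and \cref{lem:transverse-complement} off it, and invokes the refined part of \cref{lem:perturbed-laplace} with \(n_y=r^4\) and \(\varepsilon_y=h(r^3v_d)\); the \(O(r^2)\) transverse errors are absorbed precisely because \(\rho>-2/3\).

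The differences are cosmetic. You treat a neighbourhood of \(v=0\) by a crude gap bound rather than using Potter bounds to get \(P_y\to P\) locally uniformly on all of \(\mathbb R\), and you make the lower-bound event \(E_{y,u}\cap\{\|\widehat\beta\|_{4/3}\le M\}\) explicit. One point to keep in mind: off \(E_{y,u}\), the complement suppression must be combined with the \(d\)-term through a H\"older step, since bounding \(e^{-d\|\cdot\|_{4/3}^{4/3}}\) by \(1\) would not beat \(e^{B_yu}\) on the \(r^4\) scale.
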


\begin{proof}
Apply \cref{eq:first-mode-representation} with
\(V(f)=d\|f\|_{4/3}^{4/3}-\Phi_h(f)\).  Put \(r=|x+y|\).  On the event
\cref{eq:main-transverse-event}, the mean bound in
\cref{eq:tilted-fernique} gives \(\|g\|_\infty\le|u|^{2/3}\) for
\(u=r^3v\), locally uniformly away from \(v=0\).  Hence
\cref{lem:slow-pathwise-stability} yields
\begin{equation}\label{eq:slow-main-event}
 \Phi_h(ue+g)
 =I_{4/3+\rho}|u|^{4/3}h(|u|)\big(1+o(1)\big).
\end{equation}
The complement is negligible by
\cref{lem:transverse-complement}; this is the normalisation step for which a
purely pointwise substitution of the first mode would not suffice.

The ordinary transverse errors in
\cref{eq:sharp-upper-integral,eq:sharp-lower-integral} are \(O(y^2)\).
They are negligible because regular variation with \(\rho>-2/3\) implies
\[
 y^2|h(|y|^3)|\longrightarrow\infty.
\]
Also, \(|h|\to0\) leaves a fixed fraction of the negative
\(-dI_{4/3}|u|^{4/3}\) term, so the first-mode tails remain uniformly
controlled.  After \(u=r^3v\), the leading rate
\[
 F_\pm(v)=\pm B_0v-dI_{4/3}|v|^{4/3}
\]
has the unique maximiser \(v_\pm=\pm v_d\).  The uniform convergence
theorem, with Potter's bound controlling a neighbourhood of the origin,
gives locally uniformly on \(\mathbb R\),
\[
 P_y(v):=I_{4/3+\rho}|v|^{4/3}
 \frac{h(r^3|v|)}{h(r^3v_d)}
 \longrightarrow I_{4/3+\rho}v_d^{-\rho}|v|^{4/3+\rho}.
\]
Put \(n_y=r^4\) and \(\varepsilon_y=h(r^3v_d)\).  Regular variation and
the preceding estimate give
\[
 \frac{n_y|\varepsilon_y|}{\log n_y}
 =\frac{r^4|h(r^3v_d)|}{4\log r}
 \longrightarrow\infty,
\]
and make the \(O(y^2)\) errors \(o(n_y|\varepsilon_y|)\).  The remaining
negative \(-dI_{4/3}|v|^{4/3}\) term, together with Potter bounds for the
perturbation, verifies \cref{eq:laplace-tightness-refined}: for each
\(M>0\), a sufficiently large fixed interval is a valid \(K_M\), uniformly
for all large \(|y|\).  Therefore \cref{lem:perturbed-laplace}, applied
separately to the two endpoint tails, gives
\cref{eq:slowly-varying-bridge-limit}.
\end{proof}

\begin{theorem}[One-sided classification on the equality surface]
\label{thm:alpha-transition}
Let \(\lambda\in\mathcal D\), translate the equilibrium to zero, and fix
\(\kappa_\infty,\sigma,T>0\) with
\begin{equation}\label{eq:critical-surface}
 \kappa_\infty\sigma^2T^2=\pi^2.
\end{equation}
For every starting state \(x\), the following statements hold.
\begin{enumerate}[label=\textup{(\roman*)}]
\item \emph{Divergence.}  Suppose \(q(y)=O(y^2)\), and for some
 \(C<\infty\), either
 \begin{equation}\label{eq:one-sided-divergence-zero}
 q(y)\ge\kappa_\infty y^2-C\qquad(y\in\mathbb R),
 \end{equation}
 or, for some \(b>0\) and \(0<\alpha<4/3\),
 \begin{equation}\label{eq:one-sided-divergence}
 q(y)\ge\kappa_\infty y^2-b|y|^\alpha-C
  \qquad(y\in\mathbb R).
 \end{equation}
 Then \(\mathcal N_{1/2}(T,x)=\infty\).  The first alternative includes
 eventual domination \(q(y)\ge\kappa_\infty y^2\) outside a compact set;
 no vacuous power parameter is attached to that case.
\item \emph{Finiteness.}  Suppose that for some
 \(b>0\), \(4/3<\alpha<2\), and \(R<\infty\),
 \begin{equation}\label{eq:one-sided-finiteness}
  q(y)\le\kappa_\infty y^2-b|y|^\alpha
  \qquad(|y|\ge R).
 \end{equation}
 Assume also the terminal-confinement condition
 \begin{equation}\label{eq:one-sided-Lambda-confinement}
  \limsup_{|y|\to\infty}\frac{\Lambda(y)}{y^4}<0.
 \end{equation}
 Then \(\mathcal N_{1/2}(T,x)<\infty\).
\item \emph{The power \(4/3\).}  Suppose that
 \begin{equation}\label{eq:four-thirds-assumption}
  q(y)=\kappa_\infty y^2-b|y|^{4/3}+o(|y|^{4/3}),
  \qquad b>0.
 \end{equation}
 Define
 \begin{equation}\label{eq:b-star}
  b_*:=\frac{3^{4/3}\sigma^{1/3}\kappa_\infty^{5/6}}{2J_{4/3}},
  \qquad
  J_{4/3}=\frac{\sqrt\pi\,\Gamma(7/6)}{\Gamma(5/3)}.
 \end{equation}
 Then
 \[
  b<b_*\ \Longrightarrow\ \mathcal N_{1/2}(T,x)=\infty,
  \qquad
  b>b_*\ \Longrightarrow\ \mathcal N_{1/2}(T,x)<\infty.
 \]
 Assumption \cref{eq:four-thirds-assumption} alone makes no assertion when
 \(b=b_*\); see \cref{thm:bstar-nonuniversal}.
\end{enumerate}
\end{theorem}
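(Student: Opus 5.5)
The plan is to work throughout with the extended transfer identity \cref{eq:transfer-Psi} at \(a=1/2\). We condition on the endpoint \(X_T=y\), which reduces \(\mathcal N_{1/2}(T,x)\) to
\[
 e^{-\Lambda(x)/\sigma^2}\int p_T(x,y)\,e^{\Lambda(y)/\sigma^2}\,\mathbb E\Big[e^{\frac12\int_0^T q(\ell_{x,y}+\sigma\beta)\dd s}\Big]\dd y ,
\]
where \(p_T\) is the Gaussian transition density. Each hypothesis on \(q\) is then converted into a comparison with the critical bridge functionals \(G_{\alpha,d}\) of \cref{lem:critical-bridge} and \(G_{4/3,d}\) of \cref{lem:sharp-four-thirds}, taking \(d=b/2\). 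The quantitative task is to compare the growth of the bridge factor in \(y\) with the terminal confinement \(e^{\Lambda(y)/\sigma^2}\). From \(q=O(y^2)\) together with the one-sided bounds on \(q\), integrating twice gives \(\Lambda(y)\sim-\kappa_\infty y^4/12\) up to lower-order terms. Under \cref{eq:one-sided-divergence} more is true: \(\Lambda(y)\ge-\kappa_\infty y^4/12-C(1+|y|^{\alpha+2})\), because \(-\Lambda''=q\ge\kappa_\infty y^2-b|y|^\alpha-C\) and \(\Lambda\) is maximal near \(0\). In every case the terminal factor is \(e^{-\kappa_\infty y^4/(12\sigma^2)+o(y^4)}\), and the question becomes whether the bridge factor grows faster than \(e^{c|y|^4}\).

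\emph{(i) Divergence.} Under \cref{eq:one-sided-divergence-zero} the bridge expectation is at least \(e^{-CT/2}\) times the \(d=0\) critical factor. By \cref{eq:critical-cancellation}, this factor contains the first-mode integral \(\int e^{B_yu}\dd u=\infty\), so it is infinite for every \(y\), and Tonelli concludes. Under \cref{eq:one-sided-divergence} with \(\alpha>1\), \cref{eq:critical-bridge-lower} gives a bridge factor of at least \(\exp\{c|y|^p-C(1+y^2+|y|^\alpha)\}\) with \(p=\alpha/(\alpha-1)>4\), because \(\alpha<4/3\). This beats the quartic loss \(e^{-c'y^4}\) coming from \(\Lambda\) and from \(p_T\), so the \(y\)-integral diverges. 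For \(\alpha\le1\), \cref{lem:critical-bridge} already makes the bridge factor infinite for large \(|y|\). Neither case uses any upper bound on \(\Lambda\).

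\emph{(ii) Finiteness.} On \(|y|\ge R\) we have \(\frac12q\le\frac{\kappa_\infty}{2}y^2-\frac b2|y|^\alpha\), and on \(|y|<R\) the difference is bounded, so the bridge factor is at most \(C\,G_{\alpha,b/2}(y)\). By \cref{eq:critical-bridge-upper}, \(G_{\alpha,b/2}(y)\le C e^{C(1+y^2+|y|^p)}\), now with \(p<4\). The confinement condition \cref{eq:one-sided-Lambda-confinement} gives \(e^{\Lambda(y)/\sigma^2}\le Ce^{-c y^4}\), so the integral over \(y\) converges.

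\emph{(iii) The power \(4/3\).} Fix \(\varepsilon>0\) small and sandwich \(\frac12q\) between \(\frac{\kappa_\infty}{2}y^2-\frac{b\pm\varepsilon}{2}|y|^{4/3}\mp C_\varepsilon\). \Cref{lem:sharp-four-thirds} then gives the bridge factor as \(\exp\{\mathfrak L(d)|x+y|^4+O(y^2+\log|y|)\}\) with \(d=(b\pm\varepsilon)/2\). The asymptotic \eqref{eq:four-thirds-assumption} integrates to \(\Lambda(y)=-\kappa_\infty y^4/12+O(|y|^{10/3})\), with the constants of integration bounded. The total exponent is therefore
\[
 \Big(\mathfrak L(d)-\frac{\kappa_\infty}{12\sigma^2}\Big)y^4+o(y^4).
\]
Finiteness holds if \(\mathfrak L(d)<\kappa_\infty/(12\sigma^2)\) and divergence if the reverse strict inequality holds. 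Because \(\mathfrak L\) is strictly decreasing in \(d\), letting \(\varepsilon\downarrow0\) reduces both implications to the algebraic identity \(\mathfrak L(b_*/2)=\kappa_\infty/(12\sigma^2)\). Expanding with \(\mathfrak L(d)=27\kappa_\infty^{7/2}/(256d^3\sigma J_{4/3}^3)\), this reads \(d^3=81\kappa_\infty^{5/2}\sigma/(64J_{4/3}^3)\), that is, \(d=3^{4/3}\kappa_\infty^{5/6}\sigma^{1/3}/(4J_{4/3})=b_*/2\), which matches \cref{eq:b-star}.

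The main obstacle is the bookkeeping of lower-order terms. The \(O(|y|^{10/3})\) error in \(\Lambda\), the \(O(y^2)\) bridge errors, and the \(\varepsilon\)-perturbation must all be \(o(y^4)\) uniformly. One also has to check that the divergence direction in (iii) actually yields an integrand tending to infinity on a set of positive measure in \(y\). This follows because the lower bound is uniform for all large \(|y|\) with \(x+y\) of either sign.
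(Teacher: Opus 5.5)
Your proposal is correct and follows the paper's proof essentially step for step: you condition on \(X_T=y\) in the transfer identity, compare the conditional bridge expectation with \(G_{\alpha,b/2}\) (deciding by \(p=\alpha/(\alpha-1)\gtrless4\)) and with \(G_{4/3,(b\pm\varepsilon)/2}\), and settle (iii) by the strict sign of \(\mathfrak L(b/2)-\kappa_\infty/(12\sigma^2)\) at a fixed small \(\varepsilon\). One correction to your preamble: since \(q=-\Lambda''\), a lower bound on \(q\) gives an upper bound on \(\Lambda\), not a lower one, so your claimed \(\Lambda\ge-\kappa_\infty y^4/12-C(1+|y|^{\alpha+2})\) in (i) is false in general, as is \(\Lambda\sim-\kappa_\infty y^4/12\) outside (iii) (for instance \(q=2\kappa_\infty y^2\) satisfies (i) and gives \(\Lambda=-\kappa_\infty y^4/6\)); the quartic lower bound \(\Lambda\ge-C(1+y^4)\) that (i) actually needs comes from the hypothesis \(q=O(y^2)\), exactly as in the paper, while (ii) needs only the confinement hypothesis, which is what you in fact use.
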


\begin{proof}
Normalise \(\Lambda(0)=0\).  Conditioning the Brownian representation
\cref{eq:transfer-Psi} on \(X_T=y\) gives
\begin{equation}\label{eq:equality-endpoint-integral}
 \mathcal N_{1/2}(T,x)
 =\frac{e^{-\Lambda(x)/\sigma^2}}{\sqrt{2\pi\sigma^2T}}
  \int_{\mathbb R}
   e^{-(y-x)^2/(2\sigma^2T)+\Lambda(y)/\sigma^2}
   \mathbb E\bigg[
     e^{\frac12\int_0^Tq(\ell_{x,y}+\sigma\beta)\dd s}
   \bigg]\dd y.
\end{equation}

For (i), \(q(y)=O(y^2)\) implies
\(\Lambda(y)\ge-C(1+y^4)\).  Under
\cref{eq:one-sided-divergence-zero}, the conditional lower bound contains
the critical first bridge factor from
\cref{eq:parseval-bridge,eq:bridge-product}; it is infinite by
\cref{eq:gaussian-factor}.  Under the second alternative, by
\cref{eq:one-sided-divergence}, the conditional expectation is bounded below,
up to a fixed positive factor, by \(G_{\alpha,b/2}(y)\).  When
\(0<\alpha\le1\), \cref{lem:critical-bridge} makes this conditional
expectation infinite for almost every sufficiently large endpoint.  When
\(1<\alpha<4/3\), put \(p=\alpha/(\alpha-1)>4\).  Then
\cref{eq:critical-bridge-lower} shows that the logarithm of the endpoint
integrand in \cref{eq:equality-endpoint-integral} is bounded below by
\[
 c|y|^p-C(1+|y|^4)
\]
for large \(|y|\).  Hence the endpoint integral diverges.

For (ii), the compact part of \cref{eq:one-sided-finiteness} can be absorbed
into constants:
\[
 q(y)\le\kappa_\infty y^2-b|y|^\alpha+C,
\]
Condition \cref{eq:one-sided-Lambda-confinement} gives constants
\(c_0,C>0\) such that, after enlarging \(C\),
\[
 \Lambda(y)\le-\frac{c_0}{12}y^4+C(1+y^2).
\]
The first inequality and \cref{eq:critical-bridge-upper} bound the
conditional expectation in \cref{eq:equality-endpoint-integral} by
\[
 C\exp\{C(1+y^2+|y|^p)\},
 \qquad p=\frac{\alpha}{\alpha-1}<4.
\]
Since \(p<4\), all positive terms in the endpoint exponent have order
strictly below four, while the terminal contribution is negative quartic.

For (iii), \cref{eq:four-thirds-assumption} implies, for every
\(\varepsilon\in(0,b)\),
\[
 \kappa_\infty y^2-(b+\varepsilon)|y|^{4/3}-C_\varepsilon
 \le q(y)
 \le\kappa_\infty y^2-(b-\varepsilon)|y|^{4/3}+C_\varepsilon.
\]
It also gives, by two integrations,
\[
 \frac{\Lambda(y)}{y^4}\longrightarrow-\frac{\kappa_\infty}{12}.
\]
Apply \cref{lem:sharp-four-thirds} to the two conditional comparisons and
let \(\mathfrak h_x(y)\) denote the integrand in
\cref{eq:equality-endpoint-integral}, including the endpoint Gaussian.  For
every \(\varepsilon\in(0,b)\),
\[
 \liminf_{|y|\to\infty}\frac{\log \mathfrak h_x(y)}{|y|^4}
 \ge-\frac{\kappa_\infty}{12\sigma^2}
      +\mathfrak L\bigg(\frac{b+\varepsilon}{2}\bigg),
\]
whereas
\[
 \limsup_{|y|\to\infty}\frac{\log \mathfrak h_x(y)}{|y|^4}
 \le-\frac{\kappa_\infty}{12\sigma^2}
      +\mathfrak L\bigg(\frac{b-\varepsilon}{2}\bigg).
\]
Introduce the continuous rate
\begin{equation}\label{eq:four-thirds-endpoint-rate}
 \mathcal R(b):=-\frac{\kappa_\infty}{12\sigma^2}
  +\mathfrak L\bigg(\frac b2\bigg)
 =-\frac{\kappa_\infty}{12\sigma^2}
  +\frac{27\kappa_\infty^{7/2}}{32b^3\sigma J_{4/3}^3}.
\end{equation}
It is positive precisely when \(b<b_*\) and negative precisely when
\(b>b_*\).  In either case choose \(\varepsilon\) small enough that the
relevant one-sided bound has the same strict sign as \(\mathcal R(b)\).
This proves divergence or finiteness without interchanging
\(\varepsilon\downarrow0\) with the endpoint limit.  Solving
\(\mathcal R(b)=0\) gives \cref{eq:b-star}.
\end{proof}

\begin{corollary}[Asymmetric corrections at power \(4/3\)]
\label{cor:asymmetric-four-thirds}
Let \(\lambda\in\mathcal D\), translate the equilibrium to zero, and assume
the critical relation \cref{eq:critical-surface}.  Suppose that for some
\(b_+,b_->0\),
\[
 q(y)=
 \begin{cases}
  \kappa_\infty y^2-b_+y^{4/3}+o(y^{4/3}),&y\to+\infty,\\
  \kappa_\infty y^2-b_-|y|^{4/3}+o(|y|^{4/3}),&y\to-\infty.
 \end{cases}
\]
Then
\[
 b_+>b_*\ \text{and}\ b_->b_*
 \quad\Longrightarrow\quad
 \mathcal N_{1/2}(T,x)<\infty,
\]
whereas
\[
 \min\{b_+,b_-\}<b_*
 \quad\Longrightarrow\quad
 \mathcal N_{1/2}(T,x)=\infty.
\]
No assertion is made if one coefficient equals \(b_*\) and neither is
smaller.
\end{corollary}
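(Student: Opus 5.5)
The plan is to run the proof of \cref{thm:alpha-transition}(iii) again, but now with the endpoint integral in \cref{eq:equality-endpoint-integral} split into the two half-lines \(y>0\) and \(y<0\). Each half-line is handled with the coefficient of its own tail. Since \(\mathcal N_{1/2}(T,x)\) is finite exactly when both half-line integrals are finite, the corollary reduces to a one-tail version of the \(4/3\) threshold.

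First I record the inputs the proof needs.
\begin{itemize}
\item The asymmetric hypothesis still gives \(q(y)/y^2\to\kappa_\infty\) through both tails. Integrating twice yields \(\Lambda(y)/y^4\to-\kappa_\infty/12\) in both directions, as in \cref{eq:Lambda-asymptotic}.
\item For every \(\varepsilon\in(0,\min\{b_+,b_-\})\), I take the one-sided global comparisons
\[
 \kappa_\infty y^2-(b_{\pm}+\varepsilon)|y|^{4/3}-C_\varepsilon\le q(y)\le\kappa_\infty y^2-(b_{\pm}-\varepsilon)|y|^{4/3}+C_\varepsilon .
\]
Here the sign of the coefficient matches the sign of \(y\).
\item To keep the conditional bridge functional in the symmetric form covered by \cref{lem:sharp-four-thirds}, I use symmetric global comparisons:
\begin{itemize}
\item an upper bound with coefficient \(\min\{b_+,b_-\}-\varepsilon\), valid on all of \(\mathbb R\);
\item a lower bound with coefficient \(\max\{b_+,b_-\}+\varepsilon\), valid on all of \(\mathbb R\).
\end{itemize}
\end{itemize}

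I expect the main obstacle to be that these symmetric envelopes lose information. The bridge path \(\ell_{x,y}+\sigma\beta\) is dominated by its first mode \(ue_1\), with \(u\approx r^3v_d\operatorname{sgn}(x+y)\). For large positive \(y\), this path therefore lives essentially on the positive half-line, and it should feel only \(b_+\). To capture this, I will redo the proof of \cref{lem:sharp-four-thirds} with a one-sided potential \(d_+\|f_+\|_{4/3}^{4/3}+d_-\|f_-\|_{4/3}^{4/3}\).

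\begin{itemize}
\item \emph{Upper bound.} Convexity of each one-sided power gives the analogue of \cref{eq:four-thirds-convexity} around \(ue\). For \(u>0\), the leading term is \(d_+I_{4/3}|u|^{4/3}\). This holds because \(e\ge0\), so \((ue)_-=0\), and the negative-part contribution is nonnegative and may be dropped.
\item \emph{Lower bound.} On \(E_M\) I use \cref{eq:four-thirds-remainder} for the positive part. The negative part of \(ue+g\) is controlled by \(\|g\|^{4/3}\), which is \(O(1+|y|^{4/3})\).
\end{itemize}

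Carrying this through, the \(u>0\) branch of the Laplace integral has rate \(\sup_{v>0}\{B_0v-d_+I_{4/3}v^{4/3}\}\). The branch where \(u\) has the wrong sign has a strictly smaller rate, namely \(0\). This yields, for \(\pm(x+y)\to\infty\),
\[
 \log G(y)=\mathfrak L(d_\pm)|x+y|^4+O(1+y^2+\log(2+|y|)).
\]

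Given this one-sided sharp asymptotic, the endpoint analysis follows the proof of \cref{thm:alpha-transition}(iii) on each half-line, with \(d_\pm=(b_\pm\mp\varepsilon)/2\) as appropriate. The \(\limsup\) and \(\liminf\) of \(\log\mathfrak h_x(y)/|y|^4\) as \(y\to\pm\infty\) are then squeezed near \(\mathcal R(b_\pm)\) of \cref{eq:four-thirds-endpoint-rate}. Recall that \(\mathcal R(b)\) is positive for \(b<b_*\) and negative for \(b>b_*\).

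\begin{itemize}
\item \emph{Finiteness.} If \(b_+>b_*\) and \(b_->b_*\), choosing \(\varepsilon\) small makes both tails of the integrand decay like \(e^{-c|y|^4}\). Since the integrand is locally bounded by \cref{eq:critical-bridge-upper}, the integral is finite.
\item \emph{Divergence.} If, say, \(b_+<b_*\), the \(y\to+\infty\) tail grows like \(e^{c|y|^4}\), and Tonelli gives \(\mathcal N_{1/2}(T,x)=\infty\).
\item \emph{No assertion at \(b_*\).} When one coefficient equals \(b_*\) and neither is smaller, the corresponding rate is \(0\), so the leading order does not decide the outcome. This matches the final sentence of the corollary.
\end{itemize}
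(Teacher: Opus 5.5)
Your proposal is correct, but it reaches the one-sided rate by a different mechanism from the paper.

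\emph{The paper's route.} It keeps the symmetric functional of \cref{lem:sharp-four-thirds} and localises. On the event \(E_{y,u}\) of \cref{eq:main-transverse-event}, the set \(S_u\) where \(ue_1+g\) has the wrong sign has measure \(O(\|g\|_\infty/|u|)\). Hence the wrong-sign part of the \(4/3\)-norm is \(O(|y|^{5/3})=o(|y|^4)\). The complement \(E_{y,u}^{\mathsf c}\) is discarded by the Fernique suppression of \cref{lem:transverse-complement}.

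\emph{Your route.} You build the asymmetry into the potential, \(V(f)=d_+\|f_+\|_{4/3}^{4/3}+d_-\|f_-\|_{4/3}^{4/3}\). You then observe that both inequalities driving \cref{lem:sharp-four-thirds} survive:
\begin{itemize}
\item The maps \(v\mapsto(v_\pm)^{4/3}\) are convex and \(C^1\), so the tangent-line bound at \(ue_1\) holds for every \(g\).
\item Since \(e_1\ge0\), the wrong-sign part is simply dropped in the upper bound. In the lower bound, after restricting to \(u>0\), it is bounded by \(\|g\|_{4/3}^{4/3}=O(1+|y|^{4/3})\) on \(E_M\).
\item The wrong-sign \(u\)-branch has rate \(0<\mathfrak L(d_+)\).
\end{itemize}

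\emph{What each buys.} Your route gives a self-contained asymmetric version of \cref{lem:sharp-four-thirds} with the same \(O(1+y^2+\log(2+|y|))\) error. It needs neither sup-norm control of \(g\) nor the complement estimate. The paper's route instead reuses the sign-localisation and suppression machinery already built for the regularly varying refinements.

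\emph{A simplification.} The finiteness half needs no one-sided analysis at all. The global envelope \(q(y)\le\kappa_\infty y^2-(\min\{b_+,b_-\}-\varepsilon)|y|^{4/3}+C_\varepsilon\) feeds directly into the finiteness half of the proof of \cref{thm:alpha-transition}(iii). Your concern that symmetric envelopes lose information is valid only for the divergence comparison, where the coefficient \(\max\{b_+,b_-\}+\varepsilon\) is useless.

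\emph{A notational fix.} Your \(d_\pm=(b_\pm\mp\varepsilon)/2\) should read \((b_\pm-\varepsilon)/2\) in the upper comparison and \((b_\pm+\varepsilon)/2\) in the lower one.
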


\begin{proof}
Split the endpoint integral \cref{eq:equality-endpoint-integral} at zero and
repeat the proof of \cref{thm:alpha-transition}(iii) separately on the two
tails.  In the first-mode representation, the near-maximising coordinate has
\(\operatorname{sgn}(u)=\operatorname{sgn}(y)\) and \(|u|\asymp|y|^3\).
Use the event \(E_{y,u}\) from \cref{eq:main-transverse-event}.  The mean
bound in \cref{eq:tilted-fernique} gives
\(\|g\|_\infty\le|u|^{2/3}\) there.  Let \(S_u\) be the set on which
\(ue_1+g\) has sign opposite to \(u\).  Linear vanishing of \(e_1\) gives
\(|S_u|=O(\|g\|_\infty/|u|)\), and hence
\[
 \int_{S_u}|ue_1+g|^{4/3}\dd s
 \le C\frac{\|g\|_\infty^{7/3}}{|u|}
 =O(|u|^{5/9})=O(|y|^{5/3})=o(|y|^4).
\]
Off \(E_{y,u}\), the Fernique tail in
\cref{eq:tilted-fernique} gives the same full
\(e^{-c|u|^{4/3}}\) suppression as in
\cref{lem:transverse-complement}, now without any additional regularly
varying perturbation.  Hence on
\(y\to+\infty\) the rate
\cref{eq:four-thirds-endpoint-rate} contains \(b_+\), and on
\(y\to-\infty\) it contains \(b_-\).  The starting point does not alter
either threshold because
\[
 \frac{|x+y|^4}{|y|^4}\longrightarrow1
 \qquad (y\to\pm\infty).
\]
Thus both tail rates must be negative for finiteness, while a positive rate
in either tail forces divergence.
\end{proof}

\begin{theorem}[Signed regularly varying corrections above power \(10/9\)]
\label{thm:bstar-nonuniversal}
Let \(\lambda\in\mathcal D\), translate the equilibrium to zero, assume
\cref{eq:critical-surface}, and let \(h\) be signed regularly varying of
index \(\rho\in(-2/9,0]\), with \(h(r)\to0\).  Suppose
\begin{equation}\label{eq:bstar-slow-assumption}
 q(y)=\kappa_\infty y^2-b_*|y|^{4/3}
      +|y|^{4/3}h(|y|)
      +o\big(|y|^{4/3}|h(|y|)|\big),
 \qquad |y|\to\infty.
\end{equation}
Then, for every \(x\in\mathbb R\),
\[
 h>0\ \text{eventually}
 \quad\Longrightarrow\quad
 \mathcal N_{1/2}(T,x)=\infty,
 \qquad
 h<0\ \text{eventually}
 \quad\Longrightarrow\quad
 \mathcal N_{1/2}(T,x)<\infty.
\]
Consequently, the two-term assumption
\cref{eq:four-thirds-assumption} with \(b=b_*\) does not determine the
critical moment.  Within the class of eventually signed slowly varying
third-order corrections, and more generally for every
\(\rho\in(-2/9,0]\), the sign gives a complete classification.  This index
range corresponds to third-order powers strictly above \(10/9\), together
with slowly varying perturbations of the \(4/3\) term.
\end{theorem}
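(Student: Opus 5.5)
The plan follows the proof of \cref{thm:alpha-transition}(iii), with \cref{lem:slowly-varying-bridge} replacing \cref{lem:sharp-four-thirds} at \(b=b_*\).

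\emph{Step 1 (conditioning and sandwich).} Normalise \(\Lambda(0)=0\) and write \(\mathcal N_{1/2}(T,x)\) as the endpoint integral \cref{eq:equality-endpoint-integral}. From \cref{eq:bstar-slow-assumption}, for each \(\varepsilon\in(0,1)\) and all \(y\),
\[
 \kappa_\infty y^2-b_*|y|^{4/3}+|y|^{4/3}h_-^\varepsilon(|y|)-C_\varepsilon
 \le q(y)\le
 \kappa_\infty y^2-b_*|y|^{4/3}+|y|^{4/3}h_+^\varepsilon(|y|)+C_\varepsilon,
\]
where \(h_\pm^\varepsilon=(1\pm\varepsilon\operatorname{sgn}h)h\) for large arguments. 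These functions are again signed regularly varying of index \(\rho\) with the sign of \(h\), and the compact part goes into \(C_\varepsilon\). Halving gives \(d=b_*/2\) and perturbations \(h_\pm^\varepsilon/2\) in \(G^h_d\). \Cref{lem:slowly-varying-bridge} then gives, for the conditional expectation,
\[
 \mathfrak L(b_*/2)|x+y|^4+I_{4/3+\rho}v_d^{4/3}|x+y|^4\,\tfrac12 h_\pm^\varepsilon(|x+y|^3v_d)(1+o(1)).
\]
That lemma requires \(\rho>-2/3\), which holds here.

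\emph{Step 2 (terminal factor to second order).} This is the main obstacle. At \(b=b_*\) the rate \(\mathcal R(b_*)=0\), so the leading \(|y|^4\) terms cancel exactly. The endpoint term \(\Lambda(y)/\sigma^2-(y-x)^2/(2\sigma^2T)\) must therefore be known to error \(o(|y|^4|h(|y|^3)|)\). I would integrate \cref{eq:bstar-slow-assumption} twice:
\[
 \Lambda(y)=-\tfrac{\kappa_\infty}{12}y^4+\tfrac{9b_*}{40}|y|^{10/3}-\int_0^{|y|}\!\!\int_0^{t}s^{4/3}h(s)\dd s\dd t+o(\cdot).
\]
Karamata's theorem gives the size of the last term as \(|y|^{10/3}|h(|y|)|\). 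There are three lower-order sources to compare against the signal \(|y|^4h(|y|^3)\asymp|y|^{4}|h(|y|)|^{3}\)-type growth, using regular variation:
\begin{enumerate}[label=\textup{(\alph*)}]
\item the \(|y|^{10/3}\) term from \(b_*\);
\item the \(O(y^2+\log|y|)\) errors from Step 1;
\item the replacement of \(|x+y|^4\) by \(|y|^4\), which produces a cubic \(4x|y|^3\operatorname{sgn}\) term.
\end{enumerate}
Here I expect a genuine difficulty. The \(|y|^{10/3}\) term exceeds \(|y|^4|h|\) only if \(|h(|y|^3)|\lesssim|y|^{-2/3}\), that is, \(\rho\le-2/9\). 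This is exactly the index restriction \(\rho>-2/9\): the signal scales like \(|y|^{4+3\rho}\), which beats \(|y|^{10/3}\) precisely when \(\rho>-2/9\). The deterministic terms of orders \(10/3\) and \(3\) are thus dominated, and \(|y|^{10/3}h\) is dominated as well. I would check carefully that the \(\mathfrak L\)-level Step 1 expansion carries no hidden \(|y|^{10/3}\) contribution of comparable size. The \(O(y^2)\) error in \cref{eq:sharp-bridge-limit} guarantees this, since \(y^2=o(|y|^{4+3\rho})\).

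\emph{Step 3 (sign decision).} Combining Steps 1 and 2, \(\log\mathfrak h_x(y)=\tfrac12I_{4/3+\rho}v_d^{4/3}|y|^4h_\pm^\varepsilon(|y|^3v_d)(1+o(1))\), up to terms that are \(o\) of it. If \(h>0\) eventually, the lower comparison shows the integrand tends to \(+\infty\), indeed like \(\exp\{c|y|^{4+3\rho}\}\), so the endpoint integral diverges. If \(h<0\), the upper comparison bounds the integrand by \(\exp\{-c|y|^{4+3\rho}\}\) with \(4+3\rho>0\), which is integrable; the compact range of \(y\) is finite by \cref{eq:critical-bridge-upper}. Each direction uses one fixed \(\varepsilon\), so no limit interchange is needed. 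The two tails \(y\to\pm\infty\) are handled separately as in \cref{cor:asymmetric-four-thirds}. The final remarks about non-determination by two-term data follow by exhibiting \(h=\pm|y|^{\rho}\) or \(h=\pm1/\log\), both of which satisfy \cref{eq:four-thirds-assumption} with \(b=b_*\).
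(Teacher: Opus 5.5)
Your proposal is correct and follows the paper's own route. You sandwich the conditional bridge expectation between the functionals of \cref{lem:slowly-varying-bridge} with perturbations $(1\pm\varepsilon)h/2$, use the quartic cancellation $\mathfrak L(b_*/2)=\kappa_\infty/(12\sigma^2)$, show that the $|y|^{10/3}$, cubic and quadratic endpoint terms are $o(|y|^4|h(|y|^3v_*)|)$ precisely because $\rho>-2/9$, and decide the sign with one fixed $\varepsilon$. Two slips are harmless: the second-order coefficient of $\Lambda$ is $9b_*/70$ (since $(7/3)(10/3)=70/9$), not $9b_*/40$, and the signal grows like $|y|^{4+3\rho}$ up to slowly varying factors rather than like $|y|^4|h(|y|)|^3$.
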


\begin{proof}
Let \(d_*=b_*/2\) and let \(v_*=v_{d_*}\) be as in
\cref{lem:slowly-varying-bridge}; also put
\(I_{4/3+\rho}=\int_0^Te_1(s)^{4/3+\rho}\dd s\).  For every fixed
\(\varepsilon\in(0,1)\), the remainder in
\cref{eq:bstar-slow-assumption} sandwiches the conditional bridge
expectation in \cref{eq:equality-endpoint-integral}, up to finite positive
factors, between the functionals in
\cref{lem:slowly-varying-bridge} with perturbations
\((1-\varepsilon)h/2\) and \((1+\varepsilon)h/2\), with the order
reversed when \(h<0\).  Moreover,
\cref{eq:bstar-slow-assumption} gives
\[
 \Lambda(y)=-\frac{\kappa_\infty}{12}y^4
 +\frac{9b_*}{70}|y|^{10/3}+o(|y|^{10/3}).
\]
Regular variation with \(\rho>-2/9\) and Potter's bound imply
\begin{equation}\label{eq:slow-endpoint-negligible}
 |y|^{10/3}+|y|^3+y^2
 =o\big(|y|^4|h(|y|^3v_*)|\big).
\end{equation}
At \(b=b_*\), the quartic endpoint term cancels by the definition of
\(b_*\).  Fix \(\varepsilon=1/2\).  If \(h>0\), the lower comparison,
\cref{eq:slowly-varying-bridge-limit}, and
\cref{eq:slow-endpoint-negligible} give eventually
\[
 \log \mathfrak h_x(y)\ge
 \frac{I_{4/3+\rho}v_*^{4/3}}4|y|^4h(|y|^3v_*)>0.
\]
If \(h<0\), the upper comparison gives the reverse strict bound with the
same coefficient.  Its magnitude is regularly varying with index
\(4+3\rho>10/3\), so it dominates \(|y|^{10/3+\delta}\) for some
\(\delta>0\).  Thus the positive case makes
the endpoint integral diverge, while the negative case gives an integrable
stretched-polynomial upper bound.  No limit interchange in \(\varepsilon\)
is required.
\end{proof}

\begin{corollary}[The second critical power at \(b_*\)]
\label{cor:second-critical-power}
Let \(\lambda\in\mathcal D\), translate its equilibrium to zero, and assume
the critical relation \cref{eq:critical-surface}.
Let \(d_*=b_*/2\), let \(v_*=v_{d_*}\) be as in
\cref{lem:slowly-varying-bridge}, and put
\[
 I_{10/9}:=\int_0^Te_1(s)^{10/9}\dd s,
 \qquad
 \gamma_*:=-\frac{9b_*}
 {35\sigma^2I_{10/9}v_*^{10/9}}<0.
\]
Then the following statements hold for every starting state \(x\).
\begin{enumerate}[label=\textup{(\roman*)}]
\item If \(10/9<\beta<4/3\), \(\gamma\ne0\), and
\[
 q(y)=\kappa_\infty y^2-b_*|y|^{4/3}
      +\gamma|y|^\beta+o(|y|^\beta),
\]
then the moment diverges for \(\gamma>0\) and is finite for \(\gamma<0\).
In particular, the correction \(\gamma|y|^{6/5}\) is classified by its
sign.
\item At the boundary power,
\[
 q(y)=\kappa_\infty y^2-b_*|y|^{4/3}
      +\gamma|y|^{10/9}+o(|y|^{10/9}),
\]
the moment diverges for \(\gamma>\gamma_*\) and is finite for
\(\gamma<\gamma_*\).  No assertion is made at \(\gamma=\gamma_*\).
\item If
\begin{equation}\label{eq:below-second-critical-power}
 q(y)=\kappa_\infty y^2-b_*|y|^{4/3}+o(|y|^{10/9}),
\end{equation}
then \(\mathcal N_{1/2}(T,x)=\infty\).  Hence the exact two-term tail, with
no further correction, lies on the divergent side.
\end{enumerate}
\end{corollary}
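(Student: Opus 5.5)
The plan is to obtain all three parts from the machinery behind \cref{thm:bstar-nonuniversal}, taking the third-order correction to be the pure power
\[
 h(r)=\gamma r^{\beta-4/3},
\]
which is signed regularly varying of index \(\rho=\beta-4/3\) with \(h(r)\to0\).

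For (i), \(10/9<\beta<4/3\) gives \(\rho\in(-2/9,0)\). The expansion of \(q\) is then exactly \cref{eq:bstar-slow-assumption}, with remainder \(o(|y|^\beta)=o(|y|^{4/3}|h(|y|)|)\). So \cref{thm:bstar-nonuniversal} applies directly, and the sign of \(h\), which is the sign of \(\gamma\), decides the moment. The correction \(\gamma|y|^{6/5}\) is the instance \(\beta=6/5\).

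For (ii), \(\rho=-2/9\) is excluded from \cref{thm:bstar-nonuniversal}, because the signal no longer dominates the \(|y|^{10/3}\) endpoint term in \cref{eq:slow-endpoint-negligible}. Instead I will balance the two terms directly. Fix \(\varepsilon>0\) and sandwich the conditional bridge expectation in \cref{eq:equality-endpoint-integral}, up to positive factors, between the functionals \(G^h_{d_*}\) of \cref{lem:slowly-varying-bridge} with perturbations \(h_\pm(r)=\frac12(\gamma\pm\varepsilon)r^{-2/9}\). That lemma only needs \(\rho>-2/3\), so it gives
\[
 \log G^{h_\pm}_{d_*}(y)=\mathfrak L(d_*)|x+y|^4
 +\tfrac12(\gamma\pm\varepsilon)I_{10/9}v_*^{10/9}|y|^{10/3}
 +o(|y|^{10/3}).
\]
Here I use \(v_*^{4/3}(|y|^3v_*)^{-2/9}=v_*^{10/9}|y|^{-2/3}\) and \(|x+y|^{10/3}=|y|^{10/3}+O(|y|^{7/3})\).

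Integrating the expansion of \(q\) twice, with \(\Lambda(0)=\lambda(0)=0\), gives
\[
 \Lambda(y)=-\frac{\kappa_\infty}{12}y^4+\frac{9b_*}{70}|y|^{10/3}+O(|y|^{28/9}),
\]
where \(9/70=1/\bigl((7/3)(10/3)\bigr)\). By the definition of \(b_*\), \(\mathcal R(b_*)=0\), so \(\mathfrak L(d_*)-\kappa_\infty/(12\sigma^2)=0\). The leftover quartic discrepancy \(\mathfrak L(d_*)(|x+y|^4-y^4)=O(|y|^3)\) and the Gaussian endpoint factor \(O(y^2)\) are both \(o(|y|^{10/3})\). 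So the log-integrand \(\log\mathfrak h_x(y)\) is, up to \(o(|y|^{10/3})\) and the \(\pm\varepsilon\) slack, equal to
\[
 \Bigl(\frac{9b_*}{70\sigma^2}+\frac{\gamma}{2}I_{10/9}v_*^{10/9}\Bigr)|y|^{10/3}.
\]
This coefficient vanishes exactly at \(\gamma=\gamma_*\), and it is positive for \(\gamma>\gamma_*\) and negative for \(\gamma<\gamma_*\). Choosing \(\varepsilon\) small, as in the proof of \cref{thm:alpha-transition}(iii), gives divergence or finiteness without interchanging limits.

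For (iii), the hypothesis \cref{eq:below-second-critical-power} is the boundary-power situation with \(\gamma=0\) and remainder \(o(|y|^{10/9})\). Since \(\gamma_*<0\), one has \(0>\gamma_*\), and the divergence half of (ii) applies. Concretely, I use the lower sandwich with perturbation \(-\varepsilon r^{-2/9}/2\) and \(-\varepsilon>\gamma_*\).

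One caution concerns comparison. A one-sided comparison in \(q\) alone is not monotone for the full integrand, since larger \(q\) lowers \(\Lambda\). So in all three parts I will estimate \(\Lambda\) directly from the stated two-sided expansion of \(q\). Its remainder integrates to \(o(|y|^{28/9})\), which is negligible.

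The step I expect to be the main obstacle is the error budget in (ii), because everything must hold at the exact scale \(|y|^{10/3}\). The \(o(1)\) relative error from \cref{lem:slowly-varying-bridge} must be checked to hold uniformly at the boundary index. I would first verify that the Potter-bound and tightness steps in that lemma and in \cref{lem:transverse-complement} use only \(\rho>-2/3\), so that the case \(\rho=-2/9\) is genuinely covered.
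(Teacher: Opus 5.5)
Your proposal is correct and follows essentially the same route as the paper. Part (i) is \cref{thm:bstar-nonuniversal} with \(h(r)=\gamma r^{\beta-4/3}\). Part (ii) applies \cref{lem:slowly-varying-bridge} at \(\rho=-2/9\), which lies inside its admissible range \((-2/3,0]\), and combines it with the twice-integrated expansion \(\Lambda(y)=-\kappa_\infty y^4/12+\tfrac{9b_*}{70}|y|^{10/3}+o(|y|^{10/3})\) and the quartic cancellation at \(b_*\). Part (iii) is the case \(\gamma=0>\gamma_*\), handled by a \(\pm\varepsilon|y|^{10/9}\) comparison. Your explicit \(\varepsilon\)-sandwich and your choice to estimate \(\Lambda\) from the exact expansion, rather than from the comparison potentials, only spell out what the paper's proof does implicitly.
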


\begin{proof}
Part (i) is \cref{thm:bstar-nonuniversal} with
\(h(r)=\gamma r^{\beta-4/3}\), whose regular-variation index lies in
\((-2/9,0)\).

For (ii)--(iii), two integrations of the common first two terms give
\begin{equation}\label{eq:bstar-Lambda-third-scale}
 \Lambda(y)=-\frac{\kappa_\infty}{12}y^4
 +\frac{9b_*}{70}|y|^{10/3}+o(|y|^{10/3});
\end{equation}
the \(|y|^{10/9}\) correction itself contributes only order
\(|y|^{28/9}\) to \(\Lambda\).  Apply
\cref{lem:slowly-varying-bridge} at \(\rho=-2/9\) to the conditional
potential, which contains one half of the correction in \(q\).  Together
with \cref{eq:sharp-bridge-limit}, this gives, in case (ii),
\begin{equation}\label{eq:ten-ninth-endpoint-rate}
 \log\mathfrak h_x(y)
 =\bigg\{\frac{9b_*}{70\sigma^2}
   +\frac{\gamma}{2}I_{10/9}v_*^{10/9}\bigg\}|y|^{10/3}
  +o(|y|^{10/3}).
\end{equation}
Indeed, the cancelled quartic bridge term is naturally expressed through
\(|x+y|^4\); replacing it by \(|y|^4\) costs \(O(|y|^3)\), the transverse
error in \cref{eq:sharp-bridge-limit} is \(O(y^2)\), and the endpoint
Gaussian is also lower order.  The coefficient in braces vanishes exactly
at \(\gamma_*\), proving (ii).

Under \cref{eq:below-second-critical-power}, comparison with
\(\pm\varepsilon|y|^{10/9}\), followed by \(\varepsilon\downarrow0\),
turns \cref{eq:ten-ninth-endpoint-rate} into
\[
 \log\mathfrak h_x(y)
 =\frac{9b_*}{70\sigma^2}|y|^{10/3}
  +o(|y|^{10/3}).
\]
The coefficient is strictly positive, so the endpoint integral diverges.
\end{proof}

\begin{remark}[Exact endpoint refinement]
\label{rem:exact-slow-endpoint-rate}
The two one-sided comparisons also squeeze the endpoint rate to
\begin{equation}\label{eq:bstar-slow-endpoint-rate}
 \log \mathfrak h_x(y)
 =\frac{I_{4/3+\rho}v_*^{4/3}}2|y|^4
   h(|y|^3v_*)\big(1+o(1)\big).
\end{equation}
This coefficient is not needed for the sign classification, but it recovers
the explicit logarithmic scale in \cref{cor:bstar-logarithmic}.
\end{remark}

If the normalised correction in \cref{eq:bstar-slow-assumption} instead
converges to a nonzero constant \(a_{4/3}\), simply replace \(b_*\) by the
effective coefficient \(b_{\mathrm{eff}}=b_*-a_{4/3}\).  If
\(b_{\mathrm{eff}}>0\), apply \cref{thm:alpha-transition}(iii); if
\(b_{\mathrm{eff}}<0\), use \cref{eq:one-sided-divergence-zero}.  When
\(b_{\mathrm{eff}}=0\), for every \(\varepsilon\in(0,b_*)\) the potential
is bounded below, up to an additive constant, by
\(\kappa_\infty y^2-\varepsilon|y|^{4/3}\).  The lower comparison used in
the proof of \cref{thm:alpha-transition}(iii) then gives divergence.  The
notation \(a_{4/3}\) avoids
confusing this coefficient with the threshold \(b_*\).  This observation
contains the former five-case restatement and is not repeated as a separate
corollary.

\begin{corollary}[Logarithmic specialisation]
\label{cor:bstar-logarithmic}
Define
\[
 R_{4/3}(r):=\frac{r^{4/3}}{\log(e+r)},\qquad r\ge0.
\]
Let \(I_{4/3}=\int_0^Te_1(s)^{4/3}\dd s\) and retain \(v_*\) from
\cref{thm:bstar-nonuniversal}.
Under the critical relation \cref{eq:critical-surface}, suppose
\begin{equation}\label{eq:bstar-log-assumption}
 q(y)=\kappa_\infty y^2-b_*|y|^{4/3}
      +\gamma R_{4/3}(|y|)
      +o\big(R_{4/3}(|y|)\big),
 \qquad |y|\to\infty,
\end{equation}
where \(\gamma\ne0\).  Then the critical moment diverges for
\(\gamma>0\) and is finite for \(\gamma<0\).  More precisely, the endpoint
integrand in \cref{eq:equality-endpoint-integral} satisfies
\begin{equation}\label{eq:bstar-log-endpoint-rate}
 \log \mathfrak h_x(y)
 =\frac{\gamma I_{4/3}v_*^{4/3}}6
   \frac{|y|^4}{\log|y|}
  +o\bigg(\frac{|y|^4}{\log|y|}\bigg).
\end{equation}
\end{corollary}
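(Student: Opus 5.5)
The corollary is \cref{thm:bstar-nonuniversal} and \cref{rem:exact-slow-endpoint-rate} with the explicit choice
\[
 h(r):=\frac{\gamma}{\log(e+r)},
\]
so that \(\gamma R_{4/3}(r)=r^{4/3}h(r)\).

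\emph{Step 1: check the hypotheses.} The function \(h\) is eventually of the sign of \(\gamma\). Its modulus is slowly varying, so it is regularly varying of index \(\rho=0\in(-2/9,0]\), and \(h(r)\to0\). Assumption \cref{eq:bstar-log-assumption} is therefore exactly \cref{eq:bstar-slow-assumption}, with remainder \(o(|y|^{4/3}|h(|y|)|)=o(R_{4/3}(|y|))\). \Cref{thm:bstar-nonuniversal} then gives divergence for \(\gamma>0\) and finiteness for \(\gamma<0\).

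\emph{Step 2: extract the rate.} Apply \cref{eq:bstar-slow-endpoint-rate} with \(\rho=0\). This gives \(I_{4/3+\rho}=I_{4/3}\) and
\[
 \log\mathfrak h_x(y)=\frac{I_{4/3}v_*^{4/3}}{2}\,|y|^4\,h(|y|^3v_*)\,(1+o(1)).
\]
Now compute \(h(|y|^3v_*)=\gamma/\log(e+|y|^3v_*)\). Since \(\log(e+|y|^3v_*)=3\log|y|+O(1)\), we get \(h(|y|^3v_*)=\gamma/(3\log|y|)\,(1+o(1))\). The factor \(\frac12\cdot\frac13=\frac16\) yields \cref{eq:bstar-log-endpoint-rate}.

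\emph{Step 3: the point needing care.} The result relies on \cref{rem:exact-slow-endpoint-rate}, i.e.\ on squeezing the two one-sided comparisons with perturbations \((1\pm\varepsilon)h/2\). I will make this explicit.

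For fixed \(\varepsilon\), \cref{lem:slowly-varying-bridge} gives upper and lower bounds on the conditional bridge term. Each has the form \(\mathfrak L(d_*)|x+y|^4\) plus \((1\pm\varepsilon)\frac{I_{4/3}v_*^{4/3}}{2}|x+y|^4h(|x+y|^3v_*)(1+o(1))\).

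Add the endpoint terms. The quartic terms cancel at \(b_*\). The \(\Lambda\) correction \(\frac{9b_*}{70}|y|^{10/3}\), the endpoint Gaussian, and the replacement of \(|x+y|\) by \(|y|\) contribute \(O(|y|^{10/3})\). By \cref{eq:slow-endpoint-negligible}, this is \(o(|y|^4/\log|y|)\).

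Slow variation lets us replace \(h(|x+y|^3v_*)\) by \(h(|y|^3v_*)\) up to a factor \(1+o(1)\). Dividing by \(|y|^4/\log|y|\), the liminf and limsup therefore lie within \((1\pm\varepsilon)\) of \(\gamma I_{4/3}v_*^{4/3}/6\). Letting \(\varepsilon\downarrow0\) after taking the limit in \(y\) is legitimate, since only the bounds depend on \(\varepsilon\).

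One further check is needed. The perturbation \(\Lambda\) picks up from the \(R_{4/3}\) term is of order \(|y|^{10/3}/\log|y|\), and it is absorbed in the same negligible class.
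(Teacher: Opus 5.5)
Your proposal is correct and matches the paper's proof: specialise \cref{thm:bstar-nonuniversal} and \cref{rem:exact-slow-endpoint-rate} to \(h(r)=\gamma/\log(e+r)\), which is signed slowly varying with \(\rho=0\), and use \(\log(e+|y|^3v_*)=3\log|y|+O(1)\) to turn the factor \(\tfrac12\) into \(\tfrac16\). Your Step~3 spells out the \(\varepsilon\)-squeeze that the remark only asserts, and you carry it out correctly.
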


\begin{proof}
Apply \cref{thm:bstar-nonuniversal,eq:bstar-slow-endpoint-rate} with
\(h(r)=\gamma/\log(e+r)\), and use
\(\log(e+|y|^3v_*)=3\log|y|+O(1)\).
\end{proof}

Both outcomes are realised within \(\mathcal D_\star\): for any
\(\varepsilon>0\), take
\[
 q_\pm(y)=\kappa_\infty y^2-b_*|y|^{4/3}
          \pm\varepsilon R_{4/3}(|y|)
 \qquad (|y|\text{ sufficiently large}),
\]
complete \(q_\pm\) smoothly and positively on a compact set, and define
\(\lambda_\pm(y)=-\int_0^yq_\pm(u)\dd u\).  Then
\(q_\pm(y)/y^2\to\kappa_\infty\), so
\(\lambda_\pm\in\mathcal D_\star\), while their critical moments have
opposite behaviour.

\subsection{Finite-depth pure-power cascade}
\label{sec:finite-cascade}

\begin{lemma}[Finite lower-power saddle expansion]
\label{lem:finite-saddle-expansion}
Let \(B,D>0\), let
\[
 F_\pm(v):=\pm Bv-D|v|^{4/3},
 \qquad
 v_D:=\bigg(\frac{3B}{4D}\bigg)^3,
\]
and fix a finite family
\[
 1<p_m<\cdots<p_1<\frac43,
 \qquad A_1,\ldots,A_m\in\mathbb R.
\]
For \(r\ge1\), put
\[
 Q_r(v):=\sum_{j=1}^m A_jr^{-(4-3p_j)}|v|^{p_j}.
\]
Then
\begin{equation}\label{eq:finite-saddle-supremum}
 \sup_{v\in\mathbb R}\{r^4F_\pm(v)+r^4Q_r(v)\}
 =r^4F_\pm(\pm v_D)
  +\sum_{j=1}^m A_jv_D^{p_j}r^{3p_j}
  +O(r^{6p_1-4}).
\end{equation}
The same expansion holds for the logarithm of the corresponding integral
if its exponent is augmented by a measurable \(E_r\) satisfying, for some
fixed \(s<4/3\),
\begin{equation}\label{eq:finite-saddle-error}
 |E_r(v)|\le Cr^2(1+|v|^s),
 \qquad v\in\mathbb R,\ r\ge1.
\end{equation}
More precisely,
\begin{equation}
\label{eq:finite-saddle-integral}
 \log\int_{\mathbb R}
 \exp\{r^4F_\pm(v)+r^4Q_r(v)+E_r(v)\}\dd v
 =r^4F_\pm(\pm v_D)
  +\sum_{j=1}^m A_jv_D^{p_j}r^{3p_j}
  +O(r^{6p_1-4}).
\end{equation}
The constants may depend on the displayed finite family but not on \(r\).
\end{lemma}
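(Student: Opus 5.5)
The plan is to treat the supremum as a perturbation of the concave saddle problem for \(F_\pm\), and then transfer the result to the integral by a direct Laplace argument. We work with the \(+\) sign; the \(-\) sign follows by the reflection \(v\mapsto-v\), since \(Q_r\) is even.

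\emph{Step 1: localisation.} Divide the objective by \(r^4\) and write \(\varepsilon_j:=r^{-(4-3p_j)}\to0\). Each perturbation \(A_j\varepsilon_j|v|^{p_j}\) has \(p_j<4/3\). Hence outside a fixed compact interval \(K=[-K_0,K_0]\) the term \(-D|v|^{4/3}\) dominates every perturbation uniformly in \(r\ge1\), and \(F_++Q_r\le F_+(v_D)-1\) there. On \(K\), the perturbation is uniformly \(O(\varepsilon_1)\). Since \(F_+\) has a unique nondegenerate maximiser \(v_D>0\), with \(F_+''(v_D)=-\tfrac49 Dv_D^{-2/3}<0\), and a strict gap away from \(v_D\), any maximiser \(\hat v_r\) of \(F_++Q_r\) must satisfy \(F_+(v_D)-F_+(\hat v_r)\le 2\sup_K|Q_r|=O(\varepsilon_1)\). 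This forces \(|\hat v_r-v_D|=O(\varepsilon_1^{1/2})\).

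\emph{Step 2: sharpening the location and expanding the value.} Near \(v_D\), each \(|v|^{p_j}\) is smooth with bounded derivatives. Stationarity, \(F_+'(\hat v_r)=-Q_r'(\hat v_r)=O(\varepsilon_1)\), together with nondegeneracy, improves the location to \(\hat v_r-v_D=O(\varepsilon_1)\). A second-order Taylor expansion then gives
\[
\sup(F_++Q_r)=F_+(v_D)+Q_r(v_D)+O\big(|\hat v_r-v_D|^2+\varepsilon_1|\hat v_r-v_D|\big)=F_+(v_D)+Q_r(v_D)+O(\varepsilon_1^2).
\]
The lower bound comes from simply evaluating the objective at \(v_D\). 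Multiplying by \(r^4\) gives \(r^4Q_r(v_D)=\sum_jA_jv_D^{p_j}r^{3p_j}\) and an error \(r^4\varepsilon_1^2=r^{6p_1-4}\), which is \cref{eq:finite-saddle-supremum}.

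\emph{Step 3: the integral.} For the upper bound, bound \(E_r\) by \(Cr^2(1+|v|^s)\). On \(K\) this is \(O(r^2)\), and since \(p_1>1\) we have \(6p_1-4>2\), so \(O(r^2)\) is absorbed into \(O(r^{6p_1-4})\). Off \(K\), the estimate \(r^2|v|^s\le \tfrac D2 r^4|v|^{4/3}+C\) holds because \(s<4/3\); this keeps the tails below \(e^{r^4(F_+(v_D)-1/2)}\). The compact part contributes at most \(|K|\,e^{\sup}\), so the upper bound matches the supremum up to \(O(1)\). For the lower bound, integrate over \([v_D-r^{-2},v_D+r^{-2}]\). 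There the objective loses only \(O(r^4\cdot r^{-4})+O(r^{3p_1}r^{-2})\) from the Taylor expansion and the perturbation, \(O(r^2)\) from \(E_r\), and \(O(\log r)\) from the interval length. Each of these is \(O(r^{6p_1-4})\), using \(3p_1-2<6p_1-4\), which holds because \(p_1>2/3\).

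\emph{Main obstacle.} The delicate step is Step 2, the improvement from the crude localisation \(O(\varepsilon_1^{1/2})\) to \(O(\varepsilon_1)\). This requires the perturbation derivatives to be uniformly \(O(\varepsilon_1)\) in a fixed neighbourhood of \(v_D\), which holds because \(v_D>0\) keeps \(|v|^{p_j}\) away from its singularity at the origin. With that in place, the quadratic remainder \(O(\varepsilon_1^2)\) is exactly what produces \(r^{6p_1-4}\).
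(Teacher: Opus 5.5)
Your proof is correct and follows essentially the same route as the paper. Tail domination and the strict gap localise the perturbed maximiser near \(v_D\), and its displacement is then shown to be \(O(r^{-(4-3p_1)})\); you get this by bootstrapping from the crude \(O(\varepsilon_1^{1/2})\) bound via stationarity and nondegeneracy, where the paper invokes the implicit-function theorem. A second-order Taylor expansion then gives the \(O(r^{6p_1-4})\) error, and the integral is handled, as in the paper, by restriction to a compact set plus a lower bound over an interval of length of order \(r^{-2}\). The only other difference is that you centre that interval at \(v_D\) rather than at the perturbed maximiser, which costs an extra \(O(r^{3p_1-2})\) that you correctly absorb since \(3p_1-2<6p_1-4\).
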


\begin{proof}
We give the proof for \(F_+\); reflection gives \(F_-\).  The function
\(F_+\) has the unique maximiser \(v_D>0\), and
\(F_+''(v_D)<0\).  Set
\[
 \delta_r:=r^{-(4-3p_1)}.
\]
On every compact neighbourhood of \(v_D\),
\(\|Q_r\|_{C^2}=O(\delta_r)\).  Outside a fixed compact interval,
the negative term \(-D|v|^{4/3}\) dominates all the powers
\(|v|^{p_j}\), uniformly for large \(r\).  Uniform convergence
\(Q_r\to0\) on compact sets and the strict gap away from \(v_D\) therefore
localise every maximiser of \(F_++Q_r\) to a fixed neighbourhood of
\(v_D\).  The implicit-function theorem and \(F_+'(v_D)=0\) give a unique
maximiser \(v_r\) there and
\[
 v_r-v_D=O(\delta_r).
\]
Taylor expansion, first for \(F_+\) and then for \(Q_r\), yields
\[
 F_+(v_r)+Q_r(v_r)
 =F_+(v_D)+Q_r(v_D)+O(\delta_r^2).
\]
Multiplication by \(r^4\) proves
\cref{eq:finite-saddle-supremum}, because
\(r^4\delta_r^2=r^{6p_1-4}\).

For the integral, the same tail domination first restricts it to a fixed
compact interval: since \(s<4/3\), the term in
\cref{eq:finite-saddle-error} is \(o(r^4|v|^{4/3})\) as
\(|v|\to\infty\), uniformly for large \(r\).  On that compact interval
\(E_r=O(r^2)\).  The upper bound follows from the supremum estimate and
the finite length of the interval.  For the lower bound, integrate over an
interval of length \(r^{-2}\) about \(v_r\).  On that interval the loss in
\(r^4(F_++Q_r)\) is \(O(1)\), while
\cref{eq:finite-saddle-error} is bounded below by \(-O(r^2)\).
Thus the logarithmic normalisation is \(O(\log r)\).  Since \(p_1>1\),
\(6p_1-4>2\), both \(O(r^2)\) and \(O(\log r)\) are absorbed by the
remainder in \cref{eq:finite-saddle-integral}.
\end{proof}

\begin{lemma}[Finite multiscale critical-bridge expansion]
\label{lem:multiscale-critical-bridge}
Assume the critical relation \cref{eq:critical-surface}, fix \(d>0\), and
let \(1<p_m<\cdots<p_1<4/3\) and
\(a_1,\ldots,a_m\in\mathbb R\).  With
\(f=\ell_{x,y}+\sigma\beta\), define
\begin{equation}\label{eq:multiscale-bridge-functional}
 G_{d,\boldsymbol a}(y)
 :=\mathbb E\exp\bigg\{
   \frac{\kappa_\infty}{2}\|f\|_2^2
   -d\|f\|_{4/3}^{4/3}
   +\frac12\sum_{j=1}^ma_j\|f\|_{p_j}^{p_j}
 \bigg\}.
\end{equation}
Put \(I_p:=\int_0^Te_1(s)^p\dd s\), let \(v_d\) be as in
\cref{lem:slowly-varying-bridge}, and set \(r=|x+y|\).  Then
\begin{equation}\label{eq:multiscale-bridge-expansion}
 \log G_{d,\boldsymbol a}(y)
 =\mathfrak L(d)r^4
  +\frac12\sum_{j=1}^m
    a_jI_{p_j}v_d^{p_j}r^{3p_j}
  +O(r^{6p_1-4}).
\end{equation}
In particular, if \(p_1\le10/9\), the final remainder is \(O(r^{8/3})\).
\end{lemma}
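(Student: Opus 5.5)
The argument follows the proof of \cref{lem:sharp-four-thirds}, with the finite family of lower powers carried along and \cref{lem:finite-saddle-expansion} used in place of \cref{lem:perturbed-laplace}. We apply the first-mode representation \cref{eq:first-mode-representation} with
\[
 V(f)=d\|f\|_{4/3}^{4/3}-\tfrac12\sum_{j}a_j\|f\|_{p_j}^{p_j}.
\]
Conditionally on the transverse part \(g\), this reduces \(G_{d,\boldsymbol a}(y)\) to \(e^{-\kappa_\infty l_1^2/2}\widehat H_y\), which is \(e^{O(1+y^2)}\), times \(\widehat{\mathbb E}_y\int e^{B_yu-V(ue+g)}\dd u\). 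We then substitute \(u=r^3v\). With \(B_y u=\pm B_0r^4v+O(r^3|v|)\), the first-mode exponent becomes
\[
 r^4F_\pm(v)+r^4Q_r(v)+E_r(v),
\]
where \(D=dI_{4/3}\), \(A_j=\tfrac12a_jI_{p_j}\) and \(r^4Q_r(v)=\sum_jA_jr^{3p_j}|v|^{p_j}\); this agrees with \(Q_r\) of \cref{lem:finite-saddle-expansion}, since \(r^4\cdot r^{-(4-3p_j)}=r^{3p_j}\). \Cref{lem:finite-saddle-expansion} then gives the claimed expansion with \(A_jv_D^{p_j}r^{3p_j}\) and \(v_D=v_d\). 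The leading constant \(\mathfrak L(d)\) is the same as in \cref{lem:sharp-four-thirds}, and the \(O(\log r)\) and \(O(r^2)\) normalisations are absorbed because \(6p_1-4>2\).

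Everything therefore rests on showing that replacing \(\|ue+g\|_p^p\) by \(I_p|u|^p\) produces errors of the form \cref{eq:finite-saddle-error}, that is, \(|E_r(v)|\le Cr^2(1+|v|^s)\) with some \(s<4/3\), uniformly after averaging over \(g\). We do this separately for the \(4/3\) term and for each \(p_j\).

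\emph{The \(4/3\) term.} We reuse the convexity bound \cref{eq:four-thirds-convexity} for the upper side and the remainder bound \cref{eq:four-thirds-remainder} on \(E_M\) for the lower side, exactly as in \cref{eq:sharp-upper-integral,eq:sharp-lower-integral}. The errors are \(C(1+|y|)|u|^{1/3}+C|u|^{2/3}+C(1+|y|^{4/3})\). After \(u=r^3v\) these become \(Cr^2|v|^{1/3}+Cr^2|v|^{2/3}+Cr^{4/3}\), which is of the required form with \(s=2/3\).

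\emph{The terms with \(1<p_j<4/3\).} Their coefficients \(a_j\) may have either sign, so two-sided control is needed. We use the elementary estimate
\[
 \bigl||a+b|^p-|a|^p-p\operatorname{sgn}(a)|a|^{p-1}b\bigr|\le C|b|^p,
\]
valid for \(1<p\le2\), pointwise with \(a=ue\) and \(b=g\). This gives
\[
 \bigl|\|ue+g\|_p^p-I_p|u|^p\bigr|\le C|u|^{p-1}|\langle e^{p-1},|g|\rangle|+C\|g\|_p^p .
\]
After the change of variables and averaging over \(g\), the first term is at most \(Cr^{3p-3}(1+r)(1+|v|^{p-1})\), which is \(O(r^2(1+|v|^{p-1}))\) because \(3p-2<2\). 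The second term is \(O(r^p)\).

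\emph{Handling the randomness of \(g\).} For the upper bound, \(|\langle e^{p-1},|g|\rangle|\) and \(\|g\|_p^p\) depend on \(g\), which is not exactly Gaussian in these functionals. We bound them by \(\|g\|_\infty\) and \(\|g\|_\infty^p\) respectively. Under \(\widehat{\mathbb P}_y\), Fernique \cref{eq:tilted-fernique} together with \(\|\widehat m_y\|_\infty\le C(1+|y|)\) gives \(\widehat{\mathbb E}_y e^{t\|g\|_\infty}\le e^{Ct(1+|y|)+Ct^2}\). Taking \(t\asymp|u|^{p-1}\), which is at most \(r^{3p-3}|v|^{p-1}\), the \(t^2\) term is \(r^{6p-6}|v|^{2p-2}\). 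Since \(6p-6<2\) and \(2p-2<4/3\), this is again of the form \cref{eq:finite-saddle-error}. Several error terms must be combined inside a single exponential expectation, and we do this with Hölder's inequality over finitely many factors.

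\emph{Main obstacle.} The step I expect to be hardest is this uniform exponential-moment control of the combined transverse errors, which is needed to keep \(s<4/3\) and the \(r\)-power at most \(2\). For the lower bound this is easier: we restrict to \(E_M=\{\|\beta^\perp\|_\infty\le M\}\), on which all errors are deterministic, and discard the nonnegative transverse quadratic, exactly as in \cref{lem:sharp-four-thirds}. Finally, for \(p_1\le10/9\) we have \(6p_1-4\le8/3\), which gives the stated special case.
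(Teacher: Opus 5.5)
Your proposal is correct and follows essentially the paper's route: the first-mode representation, Fernique control of the transverse corrections to the form \(Cr^2(1+|v|^s)\) with \(s<4/3\), and then \cref{lem:finite-saddle-expansion}; the paper simply treats all powers \(1<p\le4/3\) at once via \(\bigl||a+b|^p-|a|^p\bigr|\le C_p(|a|^{p-1}|b|+|b|^p)\) and a single bound on \(\widehat{\mathbb E}_y e^{tZ+CZ^{4/3}}\), instead of your separate \(4/3\) step and H\"older combination. One small correction: \(B_yu=\operatorname{sgn}(x+y)B_0r^4v\) holds exactly by \cref{eq:first-mode-data}, and this exactness is needed, because a genuine \(O(r^3|v|)\) term could not be placed in \cref{eq:finite-saddle-error} and would exceed the \(O(r^{6p_1-4})\) remainder whenever \(p_1<7/6\).
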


\begin{proof}
Use the representation in \cref{lem:first-mode-apparatus}.  Write
\(g=\widehat m_y+\widehat\beta\), set
\(Z=\|\widehat\beta\|_\infty\), and recall that
\[
 \|\widehat m_y\|_\infty\le C(1+|y|),
 \qquad
 \widehat{\mathbb P}_y(Z>z)\le Ce^{-c_Fz^2}.
\]
For every \(1<p\le4/3\), the elementary inequality
\begin{equation}\label{eq:power-transverse-difference}
 \big||a+b|^p-|a|^p\big|
 \le C_p\big(|a|^{p-1}|b|+|b|^p\big)
\end{equation}
gives
\begin{equation}\label{eq:norm-transverse-difference}
 \big|\|ue+g\|_p^p-I_p|u|^p\big|
 \le C_p\big(|u|^{p-1}\|g\|_\infty
                  +\|g\|_\infty^p\big).
\end{equation}
Consequently, after extracting the scalar terms
\[
 B_yu-dI_{4/3}|u|^{4/3}
 +\frac12\sum_{j=1}^ma_jI_{p_j}|u|^{p_j},
\]
the absolute value of the remaining transverse exponent is bounded by
\begin{equation}\label{eq:multiscale-transverse-remainder}
 C\bigg(1+|u|^{1/3}+\sum_j|u|^{p_j-1}\bigg)(1+r+Z)
 +C(1+r+Z)^{4/3}.
\end{equation}
Here and below constants may depend on the fixed finite family.

The Gaussian tail implies the uniform exponential estimate
\begin{equation}\label{eq:fernique-linear-subquadratic}
 \log\widehat{\mathbb E}_y
  \exp\{tZ+CZ^{4/3}\}\le C(1+t^2),
 \qquad t\ge0.
\end{equation}
Indeed, after decreasing \(c_F\) if necessary, the displayed Gaussian tail
gives \(\widehat{\mathbb E}_y e^{c_FZ^2/2}<\infty\).  Young's inequality
gives
\[
 CZ^{4/3}\le \frac{c_F}{4}Z^2+C,
 \qquad
 tZ\le \frac{t^2}{c_F}+\frac{c_F}{4}Z^2,
\]
which proves \cref{eq:fernique-linear-subquadratic}.  Applying
\cref{eq:fernique-linear-subquadratic} to
\cref{eq:multiscale-transverse-remainder}, and using \(p_j<4/3\), bounds
the conditional expectation from above by the scalar first-mode integral
with an additional exponent which, after \(u=r^3v\), is bounded by
\[
 Cr^2(1+|v|^{2/3}).
\]
The factors \(e^{-\kappa_\infty l_1^2/2}\widehat H_y\) in
\cref{eq:first-mode-representation} contribute at most \(O(r^2)\) to the
logarithm.

For the matching lower bound, fix \(M\) for which
\(\widehat{\mathbb P}_y(Z\le M)>0\); this probability is independent of
\(y\), because the centred law in \cref{eq:tilted-mode-law} is independent
of \(y\).  Restrict the tilted expectation to \(\{Z\le M\}\) and use the
negative of the right-hand side of
\cref{eq:multiscale-transverse-remainder}.  After \(u=r^3v\), this again
has absolute value at most \(Cr^2(1+|v|^{2/3})\).  Also
\(\widehat H_y\ge1\), so the prefactor costs at most \(O(r^2)\) from below.

We may therefore apply \cref{lem:finite-saddle-expansion} to both bounds,
with
\[
 B=B_0,\qquad D=dI_{4/3},\qquad
 A_j=\frac{a_j}{2}I_{p_j}.
\]
The maximisers are \(\pm v_d\), according to the endpoint tail, and their
absolute values agree.  Moreover
\(F_\pm(\pm v_d)=\mathfrak L(d)\).  The change of variables contributes
only \(3\log r\), which is absorbed by \(O(r^{6p_1-4})\).  The resulting
upper and lower bounds have identical displayed coefficients and prove
\cref{eq:multiscale-bridge-expansion}.
\end{proof}

\begin{lemma}[Finite-power endpoint expansion]
\label{lem:finite-power-endpoint}
Suppose \(1<p_m<\cdots<p_1<2\) and
\[
 q(y)=\kappa_\infty y^2+\sum_{j=1}^m a_j|y|^{p_j}
      +o(|y|^{p_m}),
 \qquad |y|\to\infty.
\]
If \(\Lambda'=\lambda\) and \(\lambda'=-q\), then
\begin{equation}\label{eq:finite-power-Lambda}
 \Lambda(y)=-\frac{\kappa_\infty}{12}y^4
 -\sum_{j=1}^m
   \frac{a_j}{(p_j+1)(p_j+2)}|y|^{p_j+2}
 +o(|y|^{p_m+2}).
\end{equation}
\end{lemma}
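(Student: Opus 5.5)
The plan is to integrate the asymptotic expansion of \(q\) twice, one tail at a time, with the additive constants absorbed into the remainder. Since \(p_m>1\), every term is superlinear, and the only real care is keeping the \(o(|y|^{p_m})\) remainder small after two integrations. Treat \(y\to+\infty\) first; the negative tail follows by applying the same argument to \(y\mapsto\Lambda(-y)\), whose second derivative is again \(-q(-y)\), with \(|y|\) in place of \(y\).

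\emph{First integration.} Write \(q(u)=\kappa_\infty u^2+\sum_j a_ju^{p_j}+\epsilon(u)u^{p_m}\) for \(u\ge1\), with \(\epsilon(u)\to0\) and \(\epsilon\) locally bounded. This is possible because \(q\) is continuous. Then
\[
 \lambda(y)=\lambda(1)-\int_1^y q(u)\dd u
 =-\frac{\kappa_\infty}{3}y^3-\sum_j\frac{a_j}{p_j+1}y^{p_j+1}
  -\int_1^y\epsilon(u)u^{p_m}\dd u+O(1).
\]
To control the remainder, fix \(\eta>0\) and choose \(U\) with \(|\epsilon|\le\eta\) on \([U,\infty)\). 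This gives
\[
 \Big|\int_1^y\epsilon u^{p_m}\dd u\Big|\le C_U+\eta y^{p_m+1}/(p_m+1),
\]
so the remainder is \(o(y^{p_m+1})\). The constant \(O(1)\) is also \(o(y^{p_m+1})\).

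\emph{Second integration.} Integrate once more from \(1\) to \(y\), using \(\Lambda(y)=\Lambda(1)+\int_1^y\lambda\). The polynomial terms give exactly
\[
 -\frac{\kappa_\infty}{12}y^4-\sum_j\frac{a_j}{(p_j+1)(p_j+2)}y^{p_j+2}.
\]
The same \(\eta\)-splitting converts the \(o(u^{p_m+1})\) remainder into \(o(y^{p_m+2})\). The constants and the linear term \(O(y)\) are \(o(y^{p_m+2})\) because \(p_m+2>1\). This proves \cref{eq:finite-power-Lambda}.

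No step is a genuine obstacle. The only point needing attention is that the remainder is merely \(o(\cdot)\) and not uniform near the origin. The \(\eta\)-splitting argument above, a Cesàro-type integration of little-\(o\), handles this, together with the fact that \(u^{p_m}\) has a divergent integral so that the compact part is negligible.
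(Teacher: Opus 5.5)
Your proposal is correct and follows essentially the same route as the paper. The paper subtracts the explicit antiderivative from \(\Lambda\) and integrates the resulting \(o(|y|^{p_m})\) second derivative twice with the same \(\varepsilon\)-splitting; it handles both tails at once, whereas you integrate one tail term by term and obtain the other by reflection.
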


\begin{proof}
Subtract the displayed explicit antiderivative from \(\Lambda\).  The
second derivative of the difference is \(o(|y|^{p_m})\).  For every
\(\varepsilon>0\), its absolute value is at most
\(\varepsilon|y|^{p_m}\) outside a fixed compact interval.  Two integrations
then bound the difference by
\(C_\varepsilon(1+|y|)+C\varepsilon|y|^{p_m+2}\).  Divide by
\(|y|^{p_m+2}\), take the two tail limits, and then let
\(\varepsilon\downarrow0\).
\end{proof}

\begin{theorem}[Finite-depth pure-power critical cascade]
\label{thm:finite-critical-cascade}
Let \(\lambda\in\mathcal D\), translate its equilibrium to zero, and assume
the critical relation \cref{eq:critical-surface}.  Define
\begin{equation}\label{eq:cascade-beta-recursion}
 \beta_0:=\frac43,
 \qquad
 \beta_{n+1}:=\frac{\beta_n+2}{3}\quad(n\ge0),
\end{equation}
and, with \(I_p=\int_0^Te_1(s)^p\dd s\), define
\begin{equation}\label{eq:cascade-coefficient-recursion}
 c_0^*:=-b_*,
 \qquad
 c_{n+1}^*:=
 \frac{2c_n^*}
 {\sigma^2(\beta_n+1)(\beta_n+2)
  I_{\beta_{n+1}}v_*^{\beta_{n+1}}}.
\end{equation}
Then
\begin{equation}\label{eq:cascade-beta-closed-form}
 \beta_n=1+3^{-(n+1)},
 \qquad c_n^*<0.
\end{equation}

Fix an integer \(N\ge0\), let \(1<\beta<\beta_N\), and suppose
\begin{equation}\label{eq:finite-cascade-assumption}
 q(y)=\kappa_\infty y^2
      +\sum_{j=0}^Nc_j^*|y|^{\beta_j}
      +c|y|^\beta+o(|y|^\beta),
 \qquad |y|\to\infty.
\end{equation}
For every starting state \(x\), the following classification holds.
\begin{enumerate}[label=\textup{(\roman*)}]
\item If \(\beta>\beta_{N+1}\) and \(c\ne0\), then
\[
 c>0\Longrightarrow\mathcal N_{1/2}(T,x)=\infty,
 \qquad
 c<0\Longrightarrow\mathcal N_{1/2}(T,x)<\infty.
\]
\item If \(1<\beta<\beta_{N+1}\), then
\(\mathcal N_{1/2}(T,x)=\infty\) for every \(c\in\mathbb R\).
\item If \(\beta=\beta_{N+1}\), then
\[
 c>c_{N+1}^*\Longrightarrow\mathcal N_{1/2}(T,x)=\infty,
 \qquad
 c<c_{N+1}^*\Longrightarrow\mathcal N_{1/2}(T,x)<\infty.
\]
No assertion is made at \(c=c_{N+1}^*\) without a finer asymptotic
expansion.
\end{enumerate}
Thus the recursion holds at every prescribed finite depth within the
finite pure-power class; it makes no claim for arbitrary little-\(o\)
remainders at coefficient equality or at the accumulation point
\(\lim_n\beta_n=1\).
\end{theorem}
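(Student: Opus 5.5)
The argument reduces to an endpoint-rate computation in the representation \cref{eq:equality-endpoint-integral}. The integrand \(\mathfrak h_x(y)\) is governed by two ingredients: the terminal expansion of \(\Lambda\) from \cref{lem:finite-power-endpoint}, and the bridge expansion from \cref{lem:multiscale-critical-bridge}. The closed form \(\beta_n=1+3^{-(n+1)}\) follows by induction from the recursion \cref{eq:cascade-beta-recursion}: if \(\beta_n-1=3^{-(n+1)}\), then \(\beta_{n+1}-1=(\beta_n-1)/3\). The sign \(c_n^*<0\) also follows inductively, since every factor in the denominator of \cref{eq:cascade-coefficient-recursion} is positive.

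For the classification, I apply \cref{lem:multiscale-critical-bridge} with \(d=d_*=b_*/2\). The family \(\{p_j\}\) consists of \(\beta_1,\dots,\beta_N,\beta\), and the coefficients are \(a_j=c_j^*\) and \(c\). Since \(\beta_0=4/3\) is carried by \(d\), its coefficient is absorbed there. The little-\(o\) remainder is handled by sandwiching \(q\) between the expansions with \(c\pm\varepsilon\) in place of \(c\). Monotonicity of the functional in \(q\) then gives one-sided comparisons, exactly as in the proof of \cref{thm:bstar-nonuniversal}. Under the endpoint integration I have \(r=|x+y|\), and replacing \(r^{3p}\) by \(|y|^{3p}\) costs \(O(|y|^{3p-1})\). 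The bridge side then contributes
\[
\mathfrak L(d_*)|y|^4+\tfrac12\sum_{j=1}^N c_j^*I_{\beta_j}v_*^{\beta_j}|y|^{3\beta_j}+\tfrac12 cI_\beta v_*^\beta|y|^{3\beta}.
\]
The remainder is \(O(|y|^{6\beta_1-4})=O(|y|^{8/3})\). The terminal side contributes \(\sigma^{-2}\Lambda(y)\), which is
\[
-\frac{\kappa_\infty}{12\sigma^2}|y|^4-\sum_{j=0}^N\frac{c_j^*}{\sigma^2(\beta_j+1)(\beta_j+2)}|y|^{\beta_j+2}-\frac{c}{\sigma^2(\beta+1)(\beta+2)}|y|^{\beta+2}.
\]
The quartic terms cancel by the definition of \(b_*\).

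Since \(\beta_j+2=3\beta_{j+1}\), the terminal term of index \(j\) sits at the same power as the bridge term of index \(j+1\). The coefficient at power \(3\beta_{j+1}\) is
\[
\tfrac12c_{j+1}^*I_{\beta_{j+1}}v_*^{\beta_{j+1}}-\frac{c_j^*}{\sigma^2(\beta_j+1)(\beta_j+2)}.
\]
This vanishes exactly by \cref{eq:cascade-coefficient-recursion}, for \(j=0,\dots,N-1\). For \(j=0\) this is precisely the cancellation behind \(\gamma_*\) in \cref{cor:second-critical-power}, which gives a consistency check. What survives is the term \(-c_N^*/(\sigma^2(\beta_N+1)(\beta_N+2))\,|y|^{3\beta_{N+1}}\), with strictly positive coefficient. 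Alongside it are the bridge term \(\tfrac12cI_\beta v_*^\beta|y|^{3\beta}\) and the terminal term in \(|y|^{\beta+2}\). The latter is lower order because \(\beta+2<3\beta\) when \(\beta>1\).

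The three cases follow by comparing \(3\beta\) with \(3\beta_{N+1}\). In case (i), \(\beta>\beta_{N+1}\), so the \(c\)-term dominates and its sign decides: \(\log\mathfrak h_x\) tends to \(\pm\infty\) like a positive power, with an integrable stretched-exponential upper bound when \(c<0\). In case (ii) the positive surviving term dominates, so the integral diverges. In case (iii) the total coefficient is proportional to \(c-c_{N+1}^*\), which decides the outcome.

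I expect the main obstacle to be error bookkeeping. The remainder \(O(r^{6p_1-4})\) in \cref{lem:multiscale-critical-bridge} depends on the largest power present, which here is \(p_1=\beta_1=10/9\) when \(N\ge1\), giving \(O(r^{8/3})\). This remainder must be \(o(|y|^{3\min(\beta,\beta_{N+1})})\), which holds only because \(3\beta>3>8/3\). The other error terms are the \(x\)-shift, the endpoint Gaussian, and the \(\varepsilon\)-sandwich, which contributes \(\varepsilon|y|^{3\beta}\). These must all be checked to sit strictly below the deciding power. For the sandwich, \(\varepsilon\) should be fixed small relative to the strict gap before sending \(|y|\to\infty\), so that no limits are interchanged. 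When \(N=0\), the family is just \(\{\beta\}\), the remainder is \(O(r^{6\beta-4})\), and \(6\beta-4<3\beta\) since \(\beta<4/3\); the same conclusion then holds.
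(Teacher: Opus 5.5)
Your proposal is correct and follows the paper's proof essentially step for step. It uses the same application of \cref{lem:multiscale-critical-bridge} with $d=d_*$ and perturbing powers $\beta_1,\dots,\beta_N,\beta$, the same pairing of the terminal term at $\beta_j+2$ with the bridge term at $3\beta_{j+1}$ through \cref{eq:cascade-coefficient-recursion}, the same positive residual at $3\beta_{N+1}$, and the same $c\pm\varepsilon$ sandwich.

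One small bookkeeping slip: the bridge remainder need only be $o$ of the dominant power $|y|^{3\max\{\beta,\beta_{N+1}\}}$, not of the minimum. For $N=0$ and $\beta\ge 11/9$ the remainder $O(|y|^{6\beta-4})$ can swamp the residual $|y|^{10/3}$ term, as the paper remarks. Your check $6\beta-4<3\beta$ is exactly what that case requires, so the classification is unaffected.
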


\begin{proof}
The closed form for \(\beta_n\) follows from
\(\beta_{n+1}-1=(\beta_n-1)/3\).  The sign assertion for \(c_n^*\) follows
inductively from \cref{eq:cascade-coefficient-recursion}.

Let \(\mathfrak h_x\) be the endpoint integrand in
\cref{eq:equality-endpoint-integral}.  For the moment ignore the final
little-\(o\) term in \cref{eq:finite-cascade-assumption}.  Apply
\cref{lem:multiscale-critical-bridge} with \(d=d_*=b_*/2\), with the
coefficients \(c_j^*\), \(1\le j\le N\), and with the final coefficient
\(c\).  When \(N\ge1\), the largest perturbing power is
\(\beta_1=10/9\), so the bridge remainder is \(O(|y|^{8/3})\).  When
\(N=0\), it is \(O(|y|^{6\beta-4})\), which is smaller than
\(|y|^{3\beta}\) and, if \(\beta\le10/9\), smaller than
\(|y|^{10/3}\).  For \(N=0\) and \(\beta\ge11/9\), this remainder may
therefore absorb the displayed \(A_0|y|^{10/3}\) term below, but it remains
\(o(|y|^{3\beta})\), so the classification is unchanged.

By \cref{lem:finite-power-endpoint}, the terminal factor contributes
the corresponding twice-integrated powers.  The quartic terms cancel through
\(\mathfrak L(d_*)=\kappa_\infty/(12\sigma^2)\), which is the defining
identity for \(b_*\).  For \(1\le j\le N\), the bridge contribution
from \(c_j^*\) and the endpoint contribution from \(c_{j-1}^*\) have the
common power
\[
 3\beta_j=\beta_{j-1}+2
\]
and cancel because \cref{eq:cascade-coefficient-recursion} is equivalent
to
\[
 -\frac{c_{j-1}^*}
 {\sigma^2(\beta_{j-1}+1)(\beta_{j-1}+2)}
 +\frac{c_j^*}{2}I_{\beta_j}v_*^{\beta_j}=0.
\]

Replacing \(|x+y|^4\) by \(|y|^4\) costs \(O(|y|^3)\).  Every analogous
replacement in a lower-power bridge term costs \(o(|y|^3)\); the endpoint
Gaussian is \(O(y^2)\).  The new coefficient \(c\) also contributes an
endpoint term of order \(|y|^{\beta+2}\), which is lower than its bridge
term because \(\beta+2<3\beta\).  Since \(\beta_N+2>3\) and
\(3\beta>3\), all these terms, together with the bridge remainders above,
are lower order.  Hence
\begin{equation}\label{eq:cascade-endpoint-rate}
 \log\mathfrak h_x(y)
 =A_N|y|^{\beta_N+2}
  +\frac c2I_\beta v_*^\beta|y|^{3\beta}
  +o\big(|y|^{\max\{\beta_N+2,3\beta\}}\big),
\end{equation}
where
\begin{equation}\label{eq:cascade-positive-residual}
 A_N:=-\frac{c_N^*}
 {\sigma^2(\beta_N+1)(\beta_N+2)}>0.
\end{equation}

To restore the little-\(o\) remainder in
\cref{eq:finite-cascade-assumption}, fix \(\varepsilon>0\).  After changing
a constant on a compact set, \(q\) is bounded above and below by the same
finite sum with \(c\) replaced by \(c+\varepsilon\) and
\(c-\varepsilon\), respectively.  The endpoint expansion is unaffected at
the displayed leading scale, and the conditional bridge expectation is
squeezed between the two corresponding multiscale functionals, up to fixed
positive factors.  Divide by the dominant power in
\cref{eq:cascade-endpoint-rate} and let \(\varepsilon\downarrow0\).  This
proves \cref{eq:cascade-endpoint-rate} for the stated asymptotic assumption.

If \(3\beta>\beta_N+2\), equivalently
\(\beta>\beta_{N+1}\), the term with coefficient \(c\) decides the sign.
If \(3\beta<\beta_N+2\), the positive coefficient \(A_N\) decides it.  At
equality, the coefficient is
\[
 A_N+\frac c2I_{\beta_{N+1}}v_*^{\beta_{N+1}},
\]
which vanishes precisely at \(c=c_{N+1}^*\).  A positive leading endpoint
rate makes the integral in \cref{eq:equality-endpoint-integral} diverge,
whereas a negative rate gives an integrable stretched-exponential upper
bound.  This proves all three assertions.
\end{proof}

\begin{corollary}[The third critical power]
\label{cor:third-critical-power}
Under the hypotheses of \cref{thm:finite-critical-cascade}, suppose
\[
 q(y)=\kappa_\infty y^2-b_*|y|^{4/3}
      +\gamma_*|y|^{10/9}
      +c|y|^\beta+o(|y|^\beta),
 \qquad 1<\beta<\frac{10}{9}.
\]
Then
\[
 \beta_2=\frac{28}{27},
 \qquad
 c_2^*=\frac{81\gamma_*}
 {266\sigma^2I_{28/27}v_*^{28/27}}<0.
\]
For \(\beta>28/27\), the sign of nonzero \(c\) decides divergence or
finiteness; for \(1<\beta<28/27\), the moment diverges; and at
\(\beta=28/27\), the moment diverges for \(c>c_2^*\) and is finite for
\(c<c_2^*\).  No assertion is made at \(c=c_2^*\).
\end{corollary}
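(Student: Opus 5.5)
The plan is to treat \cref{cor:third-critical-power} as the case \(N=1\) of \cref{thm:finite-critical-cascade}. Two things then need checking. First, the assumed expansion must have exactly the form \cref{eq:finite-cascade-assumption}. Second, the recursive quantities \(\beta_2\) and \(c_2^*\) must reduce to the displayed closed forms. No new analysis is needed: the bridge and endpoint estimates are all contained in \cref{lem:multiscale-critical-bridge,lem:finite-power-endpoint}, as already assembled in the theorem.

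\emph{Step 1: identify the first two cascade terms.} By definition \(c_0^*=-b_*\) and \(\beta_0=4/3\), so \(\beta_1=(4/3+2)/3=10/9\). The recursion \cref{eq:cascade-coefficient-recursion} with \(n=0\) gives
\[
 c_1^*=\frac{-2b_*}{\sigma^2\cdot\frac73\cdot\frac{10}{3}\,I_{10/9}v_*^{10/9}}
 =-\frac{9b_*}{35\sigma^2I_{10/9}v_*^{10/9}}.
\]
This is exactly \(\gamma_*\) of \cref{cor:second-critical-power}. One should confirm that \(v_*=v_{d_*}\) with \(d_*=b_*/2\) is the same saddle point in both places; it is, since both are defined through \cref{lem:slowly-varying-bridge}. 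Hence the hypothesis of the corollary reads
\[
 q(y)=\kappa_\infty y^2+c_0^*|y|^{\beta_0}+c_1^*|y|^{\beta_1}+c|y|^\beta+o(|y|^\beta),
\]
with \(1<\beta<\beta_1\). This is \cref{eq:finite-cascade-assumption} with \(N=1\).

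\emph{Step 2: compute \(\beta_2\) and \(c_2^*\).} The closed form gives \(\beta_2=1+3^{-3}=28/27\), which is also \((10/9+2)/3\). With \(n=1\), \((\beta_1+1)(\beta_1+2)=\frac{19}{9}\cdot\frac{28}{9}=\frac{532}{81}\), so
\[
 c_2^*=\frac{2\gamma_*\cdot81}{532\,\sigma^2I_{28/27}v_*^{28/27}}
 =\frac{81\gamma_*}{266\,\sigma^2I_{28/27}v_*^{28/27}}.
\]
Negativity follows from \(\gamma_*<0\), or directly from \cref{eq:cascade-beta-closed-form}.

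\emph{Step 3: read off the trichotomy.} Parts (i)--(iii) of \cref{thm:finite-critical-cascade} with \(N=1\) translate directly into the statement. For \(\beta>28/27\), the sign of nonzero \(c\) decides. For \(1<\beta<28/27\), the moment diverges, since the residual coefficient \(A_1>0\) of \cref{eq:cascade-positive-residual} dominates. At \(\beta=28/27\), the threshold is \(c_2^*\).

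The only point requiring care, and hence the main obstacle, is bookkeeping rather than analysis. One must check that \(\gamma_*\) in \cref{cor:second-critical-power} carries the same normalisation (the factor \(1/2\) from \(q/2\), and \(\sigma^{-2}\) from \(\Lambda/\sigma^2\)) as the recursion. The computation in Step 1 confirms that it does, so the corollary follows.
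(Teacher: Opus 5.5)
Your proposal is correct and matches the paper's proof: apply \cref{thm:finite-critical-cascade} with \(N=1\), then compute \(\beta_2=28/27\) and \((\beta_1+1)(\beta_1+2)=532/81\), and substitute these into \cref{eq:cascade-coefficient-recursion}. You also check explicitly that \(c_1^*=-9b_*/(35\sigma^2I_{10/9}v_*^{10/9})=\gamma_*\). The paper leaves this step implicit, but it is exactly what makes the hypothesis a literal instance of \cref{eq:finite-cascade-assumption}, so including it is worthwhile.
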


\begin{proof}
Apply \cref{thm:finite-critical-cascade} with \(N=1\).  The displayed
values follow from
\[
 \beta_2=\frac{10/9+2}{3}=\frac{28}{27},
 \qquad
 (\beta_1+1)(\beta_1+2)=\frac{532}{81},
\]
and \cref{eq:cascade-coefficient-recursion}.
\end{proof}

\subsection{Complete finite-power classification and asymptotic depth}
\label{sec:finite-power-completeness}

The preceding theorem is local in the hierarchy: it assumes that a prescribed
critical prefix has already been tuned.  We now turn it into a complete
classification on a fixed finite-power class.  The finite expansion is part
of the hypothesis; no infinite formal series is being imposed.

Throughout this subsection, fix
\(\kappa_\infty,\sigma,T>0\) satisfying the critical relation
\cref{eq:critical-surface}, and retain the sequences
\((\beta_n)_{n\ge0}\) and \((c_n^*)_{n\ge0}\) from
\cref{thm:finite-critical-cascade}.  All depth notation below is relative to
this fixed critical tuple.

\begin{definition}[Symmetric finite pure-power class]
\label{def:Psym}
A reversion rate \(q\) belongs to
\(\mathcal P_{\mathrm{sym}}(\kappa_\infty)\) if
\(q=-\lambda'\) for a drift \(\lambda\in\mathcal D_\star\) whose equilibrium
has been translated to zero, and if \(q\) admits an expansion
\begin{equation}\label{eq:Psym-expansion}
 q(y)=\kappa_\infty y^2+\sum_{j=1}^m a_j|y|^{p_j}+r(y),
 \qquad |y|\to\infty,
\end{equation}
where \(m\ge0\),
\[
 2>p_1>\cdots>p_m>1,\qquad a_j\ne0,
 \qquad r(y)=o(|y|).
\]
For \(m=0\), the sum is empty.  The word \emph{symmetric} refers to the
finite power germ in \cref{eq:Psym-expansion}; the remainder itself need not
be even.  The associated translated drift is unique, because its equilibrium
condition gives \(\lambda(0)=0\), and is therefore
\[
 \lambda(y)=-\int_0^y q(u)\dd u.
\]
\end{definition}

The lower cutoff at one is intrinsic to the cascade:
\(p\mapsto(p+2)/3\) has the unique fixed point \(p=1\).  It also ensures
that every finite-depth endpoint scale is strictly larger than three.  The
class is natural for the recursive theorem, but it is not claimed to be
maximal; slowly varying and oscillatory corrections need separate hypotheses.

\begin{lemma}[Canonical finite-power germ]
\label{lem:Psym-canonical}
The integer \(m\), the ordered exponents \(p_j\), and the coefficients
\(a_j\) in \cref{eq:Psym-expansion} are uniquely determined by \(q\).
\end{lemma}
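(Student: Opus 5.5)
The plan is to suppose that \(q\) admits two expansions of the form \cref{eq:Psym-expansion},
\[
 \kappa_\infty y^2+\sum_{j=1}^m a_j|y|^{p_j}+r(y)
 =\kappa_\infty y^2+\sum_{k=1}^{m'} a'_k|y|^{p'_k}+r'(y),
\]
with both remainders \(o(|y|)\), and to show \(m=m'\), \(p_j=p'_j\), \(a_j=a'_j\). The leading term \(\kappa_\infty y^2\) is fixed in advance, so the two quadratic terms cancel. Subtracting then gives an identity
\[
 \sum_{\ell=1}^{L} d_\ell|y|^{s_\ell}=r'(y)-r(y)=o(|y|),
 \qquad |y|\to\infty.
\]
Here \(\{s_\ell\}\) is the union of the two exponent sets, with \(2>s_1>\dots>s_L>1\), and each \(d_\ell\) is the difference of the coefficients attached to \(s_\ell\), with the convention that a missing exponent carries coefficient \(0\).

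The key step is the elementary lemma that a finite combination of distinct powers, all strictly greater than \(1\), can be \(o(|y|)\) only if all coefficients vanish. I would prove it by induction on the leading nonzero term. Suppose some \(d_\ell\ne0\), and let \(\ell_0\) be the smallest such index. Divide the identity by \(|y|^{s_{\ell_0}}\). The left side then tends to \(d_{\ell_0}\), because every other surviving term has a strictly smaller exponent. The right side is \(o(|y|^{1-s_{\ell_0}})\to0\), because \(s_{\ell_0}>1\). This forces \(d_{\ell_0}=0\), a contradiction, so all \(d_\ell=0\). It suffices to let \(y\to+\infty\) along one tail; the expansion is symmetric in \(|y|\), so no two-tail argument is needed.

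It then remains to convert \(d_\ell=0\) for every \(\ell\) into equality of the germs. Each \(a_j\) and each \(a'_k\) is nonzero by \cref{def:Psym}. Hence an exponent occurring in only one of the two lists would give \(d_\ell=\pm a_j\ne0\), which is impossible. So the two exponent sets coincide, the coefficients agree exponent by exponent, and the strict ordering of the exponents fixes the indexing, giving \(m=m'\). For \(m=0\) the same argument shows that no nonzero power term with exponent in \((1,2)\) can appear in the other expansion. The remainder is then determined as \(r=q-\kappa_\infty y^2-\sum_j a_j|y|^{p_j}\), although the statement does not require this.

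I do not expect a genuine obstacle. The only point requiring care is that the cutoff \(p_j>1\) together with \(r=o(|y|)\) is exactly what prevents a power term from being absorbed into the remainder. That is why the strict inequality \(p_m>1\) in \cref{def:Psym} is essential, and the proof should invoke it explicitly at the division step.
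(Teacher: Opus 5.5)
Your proposal is correct and follows essentially the same argument as the paper. You subtract the two representations, take the largest exponent with nonzero combined coefficient, and divide by that power, using \(p>1\) so that the \(o(|y|)\) remainder difference vanishes; your final step, using \(a_j\ne0\) to get identical exponent lists and \(m=m'\), only spells out what the paper compresses into ``the finite sums coincide term by term.''
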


\begin{proof}
Suppose two such representations are given and subtract them.  If their
finite sums differ, let \(p>1\) be the largest exponent whose combined
coefficient \(a\) is nonzero.  All remaining finite powers are
\(o(|y|^p)\), and the difference of the two remainders is
\(o(|y|)=o(|y|^p)\).  Division by \(|y|^p\) would therefore give
\(a+o(1)=0\), a contradiction.  Thus the finite sums coincide term by term.
\end{proof}

For \(q\in\mathcal P_{\mathrm{sym}}(\kappa_\infty)\), define its
\emph{critical-prefix length} by
\begin{equation}\label{eq:critical-prefix-depth}
 \delta_{\mathrm{cp}}(q):=\max\bigg(\{0\}\cup
 \bigg\{k\ge1:q(y)=\kappa_\infty y^2
   +\sum_{j=0}^{k-1}c_j^*|y|^{\beta_j}
   +o\big(|y|^{\beta_{k-1}}\big)\bigg\}\bigg).
\end{equation}
This maximum is finite: by \cref{lem:Psym-canonical}, a prefix of length
\(k\) requires the first \(k\) canonical terms to be present with the stated
coefficients, while \cref{eq:Psym-expansion} contains only finitely many
terms.  Put \(\delta=\delta_{\mathrm{cp}}(q)\) and subtract the corresponding
prefix.  If the
remaining finite sum is nonempty, its largest term is uniquely of the form
\begin{equation}\label{eq:residual-power-data}
 q(y)-\kappa_\infty y^2-
   \sum_{j=0}^{\delta-1}c_j^*|y|^{\beta_j}
 =a(q)|y|^{p(q)}+o\big(|y|^{p(q)}\big),
 \qquad a(q)\ne0.
\end{equation}
If it is empty, we use the convention
\begin{equation}\label{eq:missing-power-convention}
 p(q):=\beta_\delta,\qquad a(q):=0.
\end{equation}
Maximality of \(\delta\) implies that, when \(p(q)=\beta_\delta\), one always
has \(a(q)\ne c_\delta^*\).

\begin{theorem}[Complete classification on the finite pure-power class]
\label{thm:Psym-complete}
Let \(q\in\mathcal P_{\mathrm{sym}}(\kappa_\infty)\), let
\(\lambda(y)=-\int_0^yq(u)\dd u\) be its associated translated drift, and
assume the critical relation \cref{eq:critical-surface}.
Let \(\delta=\delta_{\mathrm{cp}}(q)\), \(p=p(q)\), and \(a=a(q)\) be defined by
\cref{eq:critical-prefix-depth,eq:residual-power-data,eq:missing-power-convention}.
Then, for every \(x\in\mathbb R\), exactly
one of the following alternatives applies:
\begin{enumerate}[label=\textup{(\roman*)}]
\item If \(p>\beta_\delta\), then
\[
 a>0\Longrightarrow\mathcal N_{1/2}(T,x)=\infty,
 \qquad
 a<0\Longrightarrow\mathcal N_{1/2}(T,x)<\infty.
\]
\item If \(1<p<\beta_\delta\), then
\(\mathcal N_{1/2}(T,x)=\infty\), irrespective of the sign of \(a\).
\item If \(p=\beta_\delta\), then
\[
 a>c_\delta^*\Longrightarrow\mathcal N_{1/2}(T,x)=\infty,
 \qquad
 a<c_\delta^*\Longrightarrow\mathcal N_{1/2}(T,x)<\infty.
\]
\end{enumerate}
In particular, the convention \cref{eq:missing-power-convention} always
places a terminated critical prefix on the divergent side, because
\(0>c_\delta^*\).
\end{theorem}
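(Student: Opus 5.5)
The plan is to reduce \cref{thm:Psym-complete} to the finite-depth cascade \cref{thm:finite-critical-cascade}, applied with $N=\delta-1$ when $\delta\ge1$, and to \cref{thm:alpha-transition} together with \cref{thm:universality}-type arguments when $\delta=0$.

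\emph{Reduction to the cascade hypothesis.} Put $\delta=\delta_{\mathrm{cp}}(q)$ and $p=p(q)$. By \cref{lem:Psym-canonical}, the expansion \cref{eq:Psym-expansion} rewrites uniquely as
\[
 q(y)=\kappa_\infty y^2+\sum_{j=0}^{\delta-1}c_j^*|y|^{\beta_j}+a|y|^p+o(|y|^p).
\]
This holds with $a\ne0$ in the nonempty case. In the empty case it holds with $p=\beta_\delta$, $a=0$, and remainder $r(y)=o(|y|)$, which is $o(|y|^{\beta_\delta})$. The canonical ordering forces $p<\beta_{\delta-1}$ when $\delta\ge1$, and $p<2$ when $\delta=0$.

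\emph{Case $\delta\ge1$.} This is exactly \cref{eq:finite-cascade-assumption} with $N=\delta-1$, $\beta=p$ and $c=a$. The three cases of \cref{thm:finite-critical-cascade} then give (i)--(iii). In the case $p=\beta_\delta$, the edge value $a=c_\delta^*$ is excluded by maximality of $\delta$. In the empty case, $a=0>c_\delta^*$ falls in the divergent branch of (iii).

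\emph{Case $\delta=0$.} Here $\beta_0=4/3$, and the remainder terms are already handled by earlier results as follows.
\begin{itemize}
\item If $p>4/3$, then $a<0$ gives finiteness by \cref{thm:alpha-transition}(ii). Its confinement hypothesis \cref{eq:one-sided-Lambda-confinement} follows from \cref{lem:finite-power-endpoint}.
\item If $p>4/3$ and $a>0$, then eventually $q\ge\kappa_\infty y^2$, so \cref{eq:one-sided-divergence-zero} gives divergence.
\item If $1<p<4/3$, then \cref{eq:one-sided-divergence} holds with $\alpha=p$, and divergence follows regardless of the sign of $a$. When $a>0$ one may use \cref{eq:one-sided-divergence-zero} instead.
\item If $p=4/3$ and $a<0$, then \cref{thm:alpha-transition}(iii) applies with $b=-a$. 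Its condition $b<b_*$ is exactly $a>c_0^*$, and $b>b_*$ is exactly $a<c_0^*$.
\item If $p=4/3$ and $a>0$, then \cref{eq:one-sided-divergence-zero} gives divergence, consistent with $a>c_0^*$.
\item In the empty case $q=\kappa_\infty y^2+o(|y|)$. This satisfies \cref{eq:one-sided-divergence} with any small $b$ and $\alpha\in(1,4/3)$, so divergence follows.
\end{itemize}
In every instance, $q(y)=O(y^2)$ holds, as \cref{thm:alpha-transition}(i) requires.

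\emph{Exactly one alternative, and the main check.} Exactly one of (i)--(iii) applies because $p$ compares with $\beta_\delta$ by trichotomy. The closing claim follows from $c_\delta^*<0$ in \cref{eq:cascade-beta-closed-form}. The main point requiring care is the bookkeeping between the two theorems. First, the remainder $o(|y|)$ of \cref{def:Psym} must be absorbed into the $o(|y|^\beta)$ slot of \cref{thm:finite-critical-cascade}; this works since $\beta>1$. Second, the drift must satisfy $\lambda\in\mathcal D$ together with the terminal asymptotics used in that proof; these come from \cref{lem:finite-power-endpoint}.
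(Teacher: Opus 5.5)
Your proposal is correct and follows essentially the same route as the paper. For $\delta=0$ you use the same case split handled by \cref{thm:alpha-transition}(i)--(iii), including the empty-residual case via a lower bound with $\alpha\in(1,4/3)$. For $\delta\ge1$ you apply \cref{thm:finite-critical-cascade} with $N=\delta-1$, using maximality of $\delta$ to exclude $a=c_\delta^*$ and the absorption $o(|y|)=o(|y|^{\beta_\delta})$ for the terminated prefix.
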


\begin{proof}
We first consider \(\delta=0\).  Then \(\beta_\delta=4/3\).  If \(p>4/3\) and
\(a>0\), \cref{eq:residual-power-data} gives
\(q(y)\ge\kappa_\infty y^2-C\), after changing \(C\) on a compact set;
\cref{thm:alpha-transition}(i) gives divergence.  If \(p>4/3\) and
\(a<0\), then for some \(b>0\) and all sufficiently large \(|y|\),
\[
 q(y)\le\kappa_\infty y^2-b|y|^p.
\]
Moreover, two integrations of \cref{eq:Psym-expansion} give
\(\Lambda(y)/y^4\to-\kappa_\infty/12\), so the terminal-confinement
condition \cref{eq:one-sided-Lambda-confinement} holds.  Finiteness follows
from \cref{thm:alpha-transition}(ii).

If \(1<p<4/3\), a positive \(a\) again gives
\(q(y)\ge\kappa_\infty y^2-C\), whereas a negative \(a\) gives, for a
suitable \(b>0\),
\[
 q(y)\ge\kappa_\infty y^2-b|y|^p-C.
\]
Both signs are therefore divergent by \cref{thm:alpha-transition}(i).
At \(p=4/3\), a coefficient \(a>0\) is covered by the first divergence
comparison.  If \(a=0\), this is the missing-power convention, so the
residual is \(o(|y|)\).  Fix any \(\alpha\in(1,4/3)\).  Outside a compact
set the residual is bounded below by \(-|y|^\alpha\), and hence
\cref{thm:alpha-transition}(i) again gives divergence.  For \(a<0\), write
\(b=-a\) and apply
\cref{thm:alpha-transition}(iii).  Since \(c_0^*=-b_*\), this is precisely
the comparison of \(a\) with \(c_0^*\).  The missing-power convention has
\(a=0\), so it is included in the divergent branch.  This proves all three
alternatives when \(\delta=0\).

Now suppose \(\delta\ge1\) and put \(N=\delta-1\).  If the residual finite sum is
nonempty, \cref{eq:residual-power-data} is exactly
\cref{eq:finite-cascade-assumption} with its tuned prefix through \(N\),
final exponent \(p\), and coefficient \(a\).  If the residual sum is empty,
then \(r=o(|y|)=o(|y|^{\beta_\delta})\), so the same hypothesis holds with
\(p=\beta_\delta\) and \(a=0\).  The three assertions are therefore respectively
parts (i), (ii), and (iii) of \cref{thm:finite-critical-cascade}.  The
coefficient-equality case cannot occur by the maximal definition of \(\delta\),
so the alternatives are exhaustive.  Every cited result holds for every
starting state, completing the proof.
\end{proof}

\begin{corollary}[Tail-germ invariance]
\label{cor:tail-germ-invariance}
Let \(q_1,q_2\in\mathcal P_{\mathrm{sym}}(\kappa_\infty)\), with associated
translated drifts \(\lambda_1,\lambda_2\), satisfy
\cref{eq:critical-surface}, and suppose \(q_1=q_2\) outside a compact
set.  Then, for every \(x\in\mathbb R\),
\[
 \mathcal N^{(1)}_{1/2}(T,x)<\infty
 \quad\Longleftrightarrow\quad
 \mathcal N^{(2)}_{1/2}(T,x)<\infty.
\]
\end{corollary}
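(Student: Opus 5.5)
The plan is to reduce the corollary to \cref{thm:Psym-complete}: I will show that the classifying data \((\delta_{\mathrm{cp}}(q),p(q),a(q))\) agree for \(q_1\) and \(q_2\). Then both moments fall under the same alternative of that theorem, with the same sign or coefficient comparison, and so have the same finiteness verdict for every \(x\).

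First I check that the canonical germs coincide. If \(q_1=q_2\) for \(|y|\ge R\), then any expansion \cref{eq:Psym-expansion} of \(q_1\) is also an expansion of \(q_2\). The remainder changes only on a compact set, and \(o(|y|)\) is a purely asymptotic condition, so the new remainder is still \(o(|y|)\). By \cref{lem:Psym-canonical}, \(m\), the exponents \(p_j\) and the coefficients \(a_j\) are therefore the same for \(q_1\) and \(q_2\).

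Next I check that the derived data coincide. The critical-prefix length \cref{eq:critical-prefix-depth} is defined by asymptotic relations \(q(y)=\kappa_\infty y^2+\sum_{j<k}c_j^*|y|^{\beta_j}+o(|y|^{\beta_{k-1}})\) as \(|y|\to\infty\). Each such relation holds for \(q_1\) if and only if it holds for \(q_2\), because the two functions are equal outside a compact set. Hence \(\delta_{\mathrm{cp}}(q_1)=\delta_{\mathrm{cp}}(q_2)\). The residual data \(p(q),a(q)\) in \cref{eq:residual-power-data} are determined by the tail of the residual. The missing-power convention \cref{eq:missing-power-convention} depends only on \(\delta\) and on whether the residual finite sum is empty, and both are fixed by the common germ. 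So \(p(q_1)=p(q_2)\) and \(a(q_1)=a(q_2)\).

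Finally I apply \cref{thm:Psym-complete} to each \(q_i\) with its translated drift \(\lambda_i\). Both satisfy \cref{eq:critical-surface} with the same \(\kappa_\infty,\sigma,T\). Membership \(q_i\in\mathcal P_{\mathrm{sym}}(\kappa_\infty)\) already puts \(\lambda_i\in\mathcal D_\star\), so the hypotheses hold. Since the theorem's alternatives are exhaustive and their conclusions depend only on \((\delta,p,a)\), with the equality case excluded by maximality of \(\delta\), the two moments are both finite or both infinite. I do not expect a real obstacle here. The one point I will make explicit is that \(\lambda_1\) and \(\lambda_2\) differ outside the compact set by an additive constant, namely \(\int_0^R(q_2-q_1)\), and \(\Lambda_1,\Lambda_2\) differ there by an affine function. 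The theorem never uses \(\Lambda\) beyond asymptotic expansions of the form \cref{eq:finite-power-Lambda}, up to errors of order \(|y|\) that are well below every active scale, which is larger than three. So the classification is applied independently to each drift, and no direct comparison of the two moments is needed.
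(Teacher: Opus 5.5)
Your proposal is correct and follows the paper's own argument: the canonical germs coincide by \cref{lem:Psym-canonical}, so the data \((\delta_{\mathrm{cp}},p,a)\) coincide and \cref{thm:Psym-complete} gives both drifts the same outcome for every \(x\). One small slip in your side remark, which does not affect the proof: \(\lambda_1-\lambda_2\) equals \(\int_0^R(q_2-q_1)\) on the right tail but \(-\int_{-R}^0(q_2-q_1)\) on the left tail, so \(\Lambda_1-\Lambda_2\) is affine on each tail separately and is \(O(1+|y|)\) overall, which is how the paper states it.
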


\begin{proof}
The two canonical finite-power germs are identical by
\cref{lem:Psym-canonical}, so \cref{thm:Psym-complete} assigns the same
outcome.  The transfer identity also displays why compact changes do not
create a hidden scale.  Since \(q_1-q_2\) has compact support,
\(\lambda_1-\lambda_2\) is constant on each sufficiently remote tail and
any antiderivatives satisfy
\[
 \Lambda_1(y)-\Lambda_2(y)=O(1+|y|).
\]
The running-potential difference is uniformly bounded.  Whenever the
conditional bridge expectations are finite, the logarithms of the two
endpoint integrands therefore differ by at most \(O(1+|y|)\), whereas every
strict deciding endpoint rate in the proof of \cref{thm:Psym-complete} has
power greater than one.  In the branches where the critical bridge
expectation is already infinite, multiplication by a bounded running factor
preserves that fact.  This also gives a direct transfer-level explanation of
the tail-germ conclusion.
\end{proof}

The theorem gives a finite algorithm.  At stage \(n\), after the first
\(n\) critical terms have been removed, compare the largest remaining
power with \(\beta_n\).  A strict exponent comparison stops the algorithm;
at equality compare its coefficient with \(c_n^*\).  Only exact coefficient
equality advances to stage \(n+1\).  If there is no remaining power, use the
zero-coefficient convention \cref{eq:missing-power-convention}.

\begin{proposition}[Finite pointwise depth and no uniform depth bound]
\label{prop:decision-depth}
Let \(\delta_{\mathrm{dec}}(q)\) be the number of stages executed by the
preceding decision algorithm, including its terminal stage.  Then
\begin{equation}\label{eq:decision-depth-identity}
 \delta_{\mathrm{dec}}(q)=\delta_{\mathrm{cp}}(q)+1<\infty
 \qquad\text{for every }q\in\mathcal P_{\mathrm{sym}}(\kappa_\infty),
\end{equation}
but
\begin{equation}\label{eq:decision-depth-unbounded}
 \sup_{q\in\mathcal P_{\mathrm{sym}}(\kappa_\infty)}
 \delta_{\mathrm{dec}}(q)=\infty.
\end{equation}
\end{proposition}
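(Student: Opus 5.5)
The proof has two parts, both essentially combinatorial consequences of \cref{thm:Psym-complete} and \cref{lem:Psym-canonical}.

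For the identity \cref{eq:decision-depth-identity}, I would trace the algorithm on a fixed $q\in\mathcal P_{\mathrm{sym}}(\kappa_\infty)$ with $\delta=\delta_{\mathrm{cp}}(q)$. The induction hypothesis is that, for $n<\delta$, stage $n$ finds the largest remaining power equal to $\beta_n$ with coefficient exactly $c_n^*$. This holds because the definition \cref{eq:critical-prefix-depth} and the uniqueness in \cref{lem:Psym-canonical} force the canonical germ to begin with $c_0^*|y|^{\beta_0},\dots,c_{\delta-1}^*|y|^{\beta_{\delta-1}}$. So stages $0,\dots,\delta-1$ all advance. At stage $\delta$ the residual data $(p(q),a(q))$ of \cref{eq:residual-power-data,eq:missing-power-convention} fall into exactly one of the alternatives of \cref{thm:Psym-complete}. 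Either the exponent comparison is strict, or $p=\beta_\delta$ with $a\ne c_\delta^*$ by maximality of $\delta$; the empty-residual case uses $a=0\ne c_\delta^*<0$. In every case the algorithm stops, so exactly $\delta+1$ stages are executed. Finiteness follows because $\delta$ is at most the number $m$ of terms in the finite germ.

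For \cref{eq:decision-depth-unbounded}, I would construct, for each $N$, an element $q_N$ of the class with $\delta_{\mathrm{cp}}(q_N)=N$. The natural choice is
\[
q_N(y)=\kappa_\infty y^2+\sum_{j=0}^{N-1}c_j^*|y|^{\beta_j}
\]
for $|y|\ge R_N$, completed smoothly and strictly positively on $[-R_N,R_N]$, for example as in the construction following \cref{cor:bstar-logarithmic}. Then set $\lambda_N(y)=-\int_0^yq_N$.

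The following checks are required.
\begin{itemize}
\item $\lambda_N\in\mathcal D$: it is $C^1$, strictly decreasing because $q_N>0$ after choosing $R_N$ large so that the tail polynomial is positive, and it vanishes at $0$.
\item $\lambda_N\in\mathcal D_\star$: superlinearity and drift domination follow from $q_N(y)\sim\kappa_\infty y^2$, as noted after \cref{thm:universality}.
\item The expansion \cref{eq:Psym-expansion} holds with exponents $\beta_0>\dots>\beta_{N-1}$ in $(1,2)$, nonzero coefficients since $c_j^*<0$, and remainder $r\equiv0$ in the tail.
\end{itemize}

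Then $\delta_{\mathrm{cp}}(q_N)\ge N$ directly. Equality holds because a prefix of length $N+1$ would require a canonical term $c_N^*|y|^{\beta_N}$, which is absent, contradicting \cref{lem:Psym-canonical}. Hence $\delta_{\mathrm{dec}}(q_N)=N+1\to\infty$.

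The only step needing care is the smooth positive completion on the compact set while keeping $q_N=-\lambda_N'$ continuous. This is routine: glue the tail function to a positive constant with a smooth partition of unity, noting that the tail function is positive on $|y|\ge R_N$ for $R_N$ large.
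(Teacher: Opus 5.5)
Your proposal is correct and follows the paper's proof essentially step for step. The identity comes from two facts: the first \(\delta_{\mathrm{cp}}(q)\) stages are exact coefficient matches, and maximality of \(\delta_{\mathrm{cp}}(q)\) forces termination at the next stage, using \(0>c_\delta^*\) when no power remains. Unboundedness comes from the same tuned-prefix family \(q_N=\kappa_\infty y^2+\sum_{j<N}c_j^*|y|^{\beta_j}\), completed positively on a compact set; your bound \(\delta_{\mathrm{cp}}(q)\le m\) and your use of \cref{lem:Psym-canonical} to get \(\delta_{\mathrm{cp}}(q_N)=N\) exactly only spell out what the paper leaves implicit.
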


\begin{proof}
By \cref{eq:critical-prefix-depth}, the first
\(\delta_{\mathrm{cp}}(q)\) stages are exactly the equalities
\(a=c_n^*\), \(0\le n<\delta_{\mathrm{cp}}(q)\).  Maximality of
\(\delta_{\mathrm{cp}}(q)\) says
that the next stage is strict, or has no remaining power and hence compares
\(0\) with the negative number \(c_{\delta_{\mathrm{cp}}(q)}^*\).  This proves
\cref{eq:decision-depth-identity}.

For any integer \(N\ge0\), prescribe outside a sufficiently large compact
set
\[
 q_N(y)=\kappa_\infty y^2+
       \sum_{j=0}^{N-1}c_j^*|y|^{\beta_j},
\]
with the sum empty for \(N=0\).  The quadratic term dominates the finite
lower-order sum, so \(q_N\) has a smooth strictly positive completion.
Defining \(\lambda_N(y)=-\int_0^yq_N(u)\dd u\) gives a member of
\(\mathcal D_\star\): indeed,
\(q_N(y)\sim\kappa_\infty y^2\),
\(\lambda_N(y)\sim-\kappa_\infty y^3/3\), and
\(\sigma^2q_N/\lambda_N^2\to0\).  Its canonical germ has
\(\delta_{\mathrm{cp}}(q_N)=N\), and hence
\(\delta_{\mathrm{dec}}(q_N)=N+1\).  Since \(N\) is arbitrary,
\cref{eq:decision-depth-unbounded} follows.
\end{proof}

\begin{corollary}[Opposite outcomes after an arbitrarily long common prefix]
\label{cor:arbitrary-depth-pairs}
Assume \(\kappa_\infty\sigma^2T^2=\pi^2\).  For every \(N\ge0\), there
exist translated drifts
\(\lambda_+,\lambda_-\in\mathcal D_\star\) whose reversion rates
\(q_+,q_-\in\mathcal P_{\mathrm{sym}}(\kappa_\infty)\) have the common
expansion
\[
 q_\pm(y)=\kappa_\infty y^2+
  \sum_{j=0}^{N}c_j^*|y|^{\beta_j}
  \mathbin{\pm}\varepsilon|y|^\beta+o(|y|^\beta),
 \qquad \beta_{N+1}<\beta<\beta_N,
\]
for some \(\varepsilon>0\), such that, for every \(x\in\mathbb R\),
\[
 \mathcal N^{(+)}_{1/2}(T,x)=\infty,
 \qquad
 \mathcal N^{(-)}_{1/2}(T,x)<\infty.
\]
The two reversion-rate germs agree through power \(\beta_N\), and their
terminal potentials agree through power \(\beta_N+2\).
\end{corollary}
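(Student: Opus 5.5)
The construction is explicit, and the classification is then read off from \cref{thm:finite-critical-cascade}(i) with $N$ as given. Fix $N\ge0$, choose any exponent $\beta\in(\beta_{N+1},\beta_N)$, for instance the midpoint, and any $\varepsilon>0$. Outside a large compact set $[-R,R]$, prescribe the exact finite sum
\[
 q_\pm(y)=\kappa_\infty y^2+\sum_{j=0}^{N}c_j^*|y|^{\beta_j}\pm\varepsilon|y|^\beta,
\]
so the remainder is identically zero outside the compact set. Because the quadratic term dominates the finitely many lower powers, we can take $R$ large so that this expression is positive for $|y|\ge R$. We then complete $q_\pm$ on $[-R,R]$ by a continuous, strictly positive function; $C^0$ suffices for $\lambda\in C^1$. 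Set $\lambda_\pm(y)=-\int_0^y q_\pm(u)\dd u$.

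The first step is to check membership. Since $q_\pm>0$ everywhere, $\lambda_\pm$ is $C^1$, strictly decreasing and vanishes at $0$, so $\lambda_\pm\in\mathcal D$. From $q_\pm(y)\sim\kappa_\infty y^2$ we get $\lambda_\pm(y)\sim-\kappa_\infty y^3/3$, which gives superlinearity and $\sigma^2q_\pm/\lambda_\pm^2\to0$. Hence $\lambda_\pm\in\mathcal D_\star$, exactly as in the proof of \cref{prop:decision-depth}. The exponents $\beta_0>\dots>\beta_N>\beta>1$ lie in $(1,2)$, the coefficients are nonzero (the $c_j^*<0$ by \cref{eq:cascade-beta-closed-form}, and $\pm\varepsilon\neq0$), and the remainder is $o(|y|)$. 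Therefore $q_\pm\in\mathcal P_{\mathrm{sym}}(\kappa_\infty)$ with the displayed common expansion.

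The second step is the outcome. The expansion of $q_\pm$ is precisely \cref{eq:finite-cascade-assumption} with $c=\pm\varepsilon$ and $\beta\in(\beta_{N+1},\beta_N)$. \Cref{thm:finite-critical-cascade}(i) therefore gives $\mathcal N^{(+)}_{1/2}(T,x)=\infty$ and $\mathcal N^{(-)}_{1/2}(T,x)<\infty$ for every $x$. Equivalently, via \cref{thm:Psym-complete}: $\delta_{\mathrm{cp}}=N+1$, $p=\beta>\beta_{N+1}$ and $a=\pm\varepsilon$, so alternative (i) applies. The only point to verify is that $\delta_{\mathrm{cp}}(q_\pm)=N+1$ and not larger. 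By \cref{lem:Psym-canonical}, the next canonical term has exponent $\beta\ne\beta_{N+1}$, so it cannot match $c_{N+1}^*|y|^{\beta_{N+1}}$.

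The last step is the agreement of the terminal potentials. Apply \cref{lem:finite-power-endpoint} to both rates. The $\Lambda_\pm$ share every term $-\frac{c_j^*}{(\beta_j+1)(\beta_j+2)}|y|^{\beta_j+2}$ for $j\le N$, together with the quartic term. They first differ at the power $\beta+2<\beta_N+2$, and the remainders are lower order. After normalising both at $0$, the two potentials therefore agree through power $\beta_N+2$.

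There is no real obstacle here. The only care needed is the positivity of the compact completion, so that $\lambda_\pm$ is strictly decreasing, and the check that the chosen $\beta$ avoids both $\beta_{N+1}$ and $\beta_N$, which keeps the critical prefix length exactly $N+1$.
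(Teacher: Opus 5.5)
Your proposal is correct and follows the paper's proof essentially step for step: the same exact finite-sum tails with a strictly positive continuous compact completion, membership in \(\mathcal D_\star\) as in \cref{prop:decision-depth}, and the two outcomes from \cref{thm:finite-critical-cascade}(i) with \(c=\pm\varepsilon\). The only cosmetic difference is that you get the terminal-potential agreement by applying \cref{lem:finite-power-endpoint} to each rate separately, whereas the paper twice integrates the difference \(q_+-q_-=2\varepsilon|y|^\beta+o(|y|^\beta)\) directly.
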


\begin{proof}
Choose \(\beta\in(\beta_{N+1},\beta_N)\).  Outside a sufficiently large
compact set, set
\[
 \widetilde q_\pm(y)=\kappa_\infty y^2+
  \sum_{j=0}^{N}c_j^*|y|^{\beta_j}
  \mathbin{\pm}\varepsilon|y|^\beta.
\]
For large radius the leading quadratic term makes both tails positive.
Using a common smooth cutoff, complete them to strictly positive continuous
functions which agree on a neighbourhood of zero, and define
\(\lambda_\pm(y)=-\int_0^yq_\pm(u)\dd u\).  The same asymptotics as in the
proof of \cref{prop:decision-depth} put both drifts in
\(\mathcal D_\star\).  Part (i) of
\cref{thm:finite-critical-cascade}, with the tuned prefix through \(N\),
gives the two asserted outcomes.  Finally, twice integrating the difference
\(q_+-q_-=2\varepsilon|y|^\beta+o(|y|^\beta)\) gives
\[
 \Lambda_+(y)-\Lambda_-(y)
 =-\frac{2\varepsilon}{(\beta+1)(\beta+2)}|y|^{\beta+2}
   +o(|y|^{\beta+2}).
\]
Since \(\beta<\beta_N\), this difference is
\(o(|y|^{\beta_N+2})\), proving the final statement.
\end{proof}

\begin{remark}[The accumulation scale]
\label{rem:cascade-accumulation}
The recursion has a direct scaling origin.  A correction
\(|y|^\beta\) evaluated on the critical first-mode saddle
\(|u|\asymp|y|^3\) produces a bridge term of order \(|y|^{3\beta}\).
Twice integrating a predecessor \(|y|^\alpha\) in \(q\) produces a terminal
term of order \(|y|^{\alpha+2}\).  Cancellation therefore requires
\[
 3\beta=\alpha+2,
\]
which is exactly \(\beta_{n+1}=(\beta_n+2)/3\).  The fixed point of this
map is one, explaining both \(\beta_n\downarrow1\) and the lower endpoint in
\cref{def:Psym}.

At the critical coefficient,
\[
 \mathfrak L(d_*)=\frac14B_0v_*
 =\frac{\kappa_\infty}{12\sigma^2},
 \qquad
 v_*=\frac{\kappa_\infty}{3\sigma^2B_0}
     =\frac{\pi}{3\sqrt2\,\sigma^2\sqrt T}.
\]
Moreover, \(I_1=2\sqrt{2T}/\pi\), and hence
\[
 \sigma^2I_1v_*=\frac23.
\]
The coefficient recursion consequently gives
\[
 \frac{c_{n+1}^*}{c_n^*}
 =\frac{2}{\sigma^2(\beta_n+1)(\beta_n+2)
 I_{\beta_{n+1}}v_*^{\beta_{n+1}}}
 \longrightarrow
 \frac{2}{6\sigma^2I_1v_*}=\frac12.
\]
Thus the critical coefficients themselves decay geometrically to zero, with
the universal limiting ratio \(1/2\).  The identity
\(\sigma^2I_1v_*=2/3\) is the numerical trace of the midpoint geometry in
\cref{eq:first-mode-midpoint}.  Individual finite-depth ratios are not
universal; for the numerical normalisation
\(\sigma=1\), \(\kappa_\infty=3\), and \(T=T_c\), the first is
\(c_1^*/c_0^*=0.42376\ldots\).

The hierarchy is correspondingly non-uniform in depth.  Indeed,
\[
 \beta_n-\beta_{n+1}=\frac{2}{3^{n+2}},
\]
so, for any fixed \(K>1\), the two adjacent powers satisfy
\[
 \frac{r^{\beta_n}}{r^{\beta_{n+1}}}\ge K
 \quad\Longleftrightarrow\quad
 \log r\ge\frac{3^{n+2}}2\log K.
\]
Moreover, the limiting coefficient ratio implies
\(\log|c_n^*|=-n\log2+o(n)\).  Thus even a fixed multiplicative separation
of adjacent powers is postponed to state magnitudes that grow extremely
rapidly with the depth.  The unbounded-depth theorem is therefore a
structural statement about asymptotic decision rules, not a claim that high
levels should be visible at moderate state magnitudes.

At every finite depth, both competing powers satisfy
\(\beta_n+2=3\beta_{n+1}>3\).  Hence replacing \(x+y\) by \(y\), whose
first omitted term is cubic, cannot affect any finite-level classification.
The displacement of the cancelled quartic bridge term is
\[
 \mathfrak L(d_*)\big(|x+y|^4-|y|^4\big)
 =\frac{\kappa_\infty x}{3\sigma^2}y^3+O(y^2).
\]
Thus the limiting scale \(\beta_n+2\downarrow3\) is the first scale at
which the starting state can enter the leading endpoint balance.

There is a useful formal check just beyond the symmetric class.  If an odd
linear correction \(\gamma y\) were added to \(q\), then its first-mode
bridge contribution at the limiting saddle would be
\[
 \frac\gamma2 I_1v_*y^3=\frac{\gamma}{3\sigma^2}y^3,
\]
whereas twice integration contributes
\(-\gamma y^3/(6\sigma^2)\) through the terminal potential.  Combining
these terms with the preceding displacement gives the formal cubic
coefficient
\begin{equation}\label{eq:formal-accumulation-coefficient}
 \frac{\gamma+2\kappa_\infty x}{6\sigma^2}\,y^3.
\end{equation}
This calculation is not an accumulation-point theorem: the linear power is
the fixed point of the recursion, it breaks the symmetric germ, and the
finite-depth remainder estimates are not uniform in the depth.  It only
identifies the cubic quantity that a separate limiting theory would have to
control.  No infinite-depth classification is claimed here.
\end{remark}

\subsection{Equality-surface synthesis and scope}

The two entry transitions are summarised in
\cref{fig:four-thirds-phase-diagram}.  Beyond the second threshold, the
local continuation is \cref{thm:finite-critical-cascade}; on the whole class
\(\mathcal P_{\mathrm{sym}}(\kappa_\infty)\), its exhaustive form is
\cref{thm:Psym-complete}.

\begin{figure}[H]
\centering
\begin{tikzpicture}[x=1cm,y=1cm]
  \begin{scope}
    \node[font=\small] at (2.95,2.72)
      {\textup{(a)} Power transition (\(b>0\))};
    \fill[gray!18] (2.95,0.55) rectangle (5.9,2.15);
    \draw (0,0.55) rectangle (5.9,2.15);
    \draw[very thick,gray!55] (2.95,0.55) -- (2.95,2.15);
    \node[font=\small] at (1.48,1.30) {divergent};
    \node[font=\small] at (4.43,1.30) {finite};
    \node[font=\scriptsize,fill=white,inner sep=1pt] at (2.95,1.75)
      {split by \(b\)};
    \fill[white] (2.95,1.22) circle (3.2pt);
    \begin{scope}
      \clip (2.95,1.22) circle (3.2pt);
      \fill[gray!60] (2.95,1.05) rectangle (3.2,1.40);
    \end{scope}
    \draw[thick] (2.95,1.22) circle (3.2pt);
    \draw[->] (-0.05,0.30) -- (6.15,0.30)
      node[right,font=\scriptsize] {\(\alpha\)};
    \draw (2.95,0.24) -- (2.95,0.36);
    \node[font=\scriptsize] at (2.95,0.02) {\(4/3\)};
    \node[font=\scriptsize] at (1.48,0.80) {\(\alpha<4/3\)};
    \node[font=\scriptsize] at (4.43,0.80) {\(\alpha>4/3\)};
  \end{scope}

  \begin{scope}[xshift=7.25cm]
    \node[font=\small] at (2.95,2.72)
      {\textup{(b)} Coefficient transition (\(\alpha=4/3\))};
    \fill[gray!18] (2.95,0.55) rectangle (5.9,2.15);
    \draw (0,0.55) rectangle (5.9,2.15);
    \draw[very thick,gray!55] (2.95,0.55) -- (2.95,2.15);
    \node[font=\small] at (1.48,1.30) {divergent};
    \node[font=\small] at (4.43,1.30) {finite};
    \node[font=\scriptsize,fill=white,inner sep=1pt] at (2.95,1.75)
      {split by \(\operatorname{sgn}h\)};
    \fill[white] (2.95,1.22) circle (3.2pt);
    \begin{scope}
      \clip (2.95,1.22) circle (3.2pt);
      \fill[gray!60] (2.95,1.05) rectangle (3.2,1.40);
    \end{scope}
    \draw[thick] (2.95,1.22) circle (3.2pt);
    \draw[->] (-0.05,0.30) -- (6.15,0.30)
      node[right,font=\scriptsize] {\(b/b_*\)};
    \draw (2.95,0.24) -- (2.95,0.36);
    \node[font=\scriptsize] at (2.95,0.02) {\(1\)};
    \node[font=\scriptsize] at (1.48,0.80) {\(b<b_*\)};
    \node[font=\scriptsize] at (4.43,0.80) {\(b>b_*\)};
  \end{scope}
\end{tikzpicture}
\caption{The critical equality-surface phase diagram; shading denotes
finiteness.  Within the two-term family \cref{eq:intro-second-order},
panel \textup{(a)} records divergence for \(\alpha<4/3\) and finiteness for
\(\alpha>4/3\), for every \(b>0\); the line \(\alpha=4/3\) is resolved by
panel \textup{(b)}.  On that line,
\(b<b_*\) gives divergence and \(b>b_*\) gives finiteness.  At \(b=b_*\),
the split marker records non-universality: every eventually signed slowly
varying correction, and every signed regularly varying correction above
power \(10/9\), is divergent for positive sign and finite for negative sign.
At power \(10/9\) there is a second coefficient threshold, while smaller
corrections give divergence; see \cref{cor:second-critical-power}.  The
logarithmic family in \cref{cor:bstar-logarithmic} is an explicit
specialisation.  At coefficient equality, finite pure-power refinements
continue through the cascade in \cref{thm:finite-critical-cascade}, beginning
with \(28/27\) in \cref{cor:third-critical-power}.  Separate panels
avoid comparing \(b\) across
different powers, for which it has different physical dimensions.}
\label{fig:four-thirds-phase-diagram}
\end{figure}

\begin{remark}[Scope of the equality classification]
\label{rem:equality-not-partition}
The one-sided hypotheses are strictly broader than the exact power
asymptotics used previously.  For example,
\[
 q(y)=\kappa_\infty y^2-\frac{b y^2}{\log(e+|y|)}
\]
satisfies \cref{eq:one-sided-finiteness}: for any fixed
\(\alpha\in(4/3,2)\), the logarithmic correction eventually dominates
\(|y|^\alpha\), while \(q(y)\ge cy^2\) for some \(c>0\).  Its critical
moment is therefore finite.

Signed regularly varying corrections are classified through the second
critical power by
\cref{thm:bstar-nonuniversal,cor:second-critical-power}; the same results
settle all smaller \(o(|y|^{10/9})\) corrections before coefficient equality.
For finite pure-power expansions, \cref{thm:Psym-complete} gives a complete
classification and \cref{prop:decision-depth} identifies the amount of tail
data it uses.  Arbitrary slowly varying remainders at a tuned coefficient and
irregular sign-changing tails are not part of
\(\mathcal P_{\mathrm{sym}}(\kappa_\infty)\).  A concrete oscillatory example
and the estimate missing from its analysis are recorded as an open problem
in \cref{sec:discussion}.

At \(b=b_*\), the quartic endpoint rate in
\cref{eq:four-thirds-endpoint-rate} cancels.
Signed regular variation is then decided down to power \(10/9\), and the
exact two-term tail is divergent.  At \(\gamma=\gamma_*\),
\cref{thm:finite-critical-cascade} classifies the next pure-power correction
when a finite expansion is given.  A slowly varying correction can dominate
every lower pure power, and irregular sign-changing tails without a coherent
stationary-phase contribution remain outside the classification.
\end{remark}

\begin{corollary}[Non-universality at leading order]
\label{cor:equality-nonuniversal}
Fix \(\kappa_\infty,\sigma,T>0\) with
\(\kappa_\infty\sigma^2T^2=\pi^2\).  There exist two drifts
\(\lambda_0,\lambda_1\in\mathcal D_\star\) satisfying
\[
 \lim_{|y|\to\infty}\frac{-\lambda_i'(\theta+y)}{y^2}
 =\kappa_\infty,\qquad i=0,1,
\]
and, for antiderivatives \(\Lambda_i'=\lambda_i\),
\[
 \lim_{|y|\to\infty}\frac{\Lambda_i(\theta+y)}{y^4}
 =-\frac{\kappa_\infty}{12},\qquad i=0,1,
\]
such that
\[
 \mathcal N^{(0)}_{1/2}(T,x)=\infty,
 \qquad
 \mathcal N^{(1)}_{1/2}(T,x)<\infty
 \quad\text{for every }x\in\mathbb R.
\]
\end{corollary}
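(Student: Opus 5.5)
The plan is to exhibit two explicit drifts and read off their critical moments from results already proved, with \(\theta=0\) after translation. The simplest choice is to take both from the two-term family at power \(4/3\) and apply \cref{thm:alpha-transition}(iii). Outside a large compact set, put
\[
 q_0(y)=\kappa_\infty y^2-\tfrac12 b_*|y|^{4/3},
 \qquad
 q_1(y)=\kappa_\infty y^2-2b_*|y|^{4/3}.
\]
Complete each smoothly and strictly positively on the compact set, as in \cref{prop:decision-depth}, and define \(\lambda_i(y)=-\int_0^yq_i(u)\dd u\). One could alternatively take \(q_0(y)=\kappa_\infty y^2\) eventually and use \cref{eq:one-sided-divergence-zero}, or use the logarithmic pair following \cref{cor:bstar-logarithmic}. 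Any of these works; the \(4/3\) pair has the advantage that a single theorem handles both sides.

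The key steps are as follows.

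\emph{Membership.} Since \(q_i>0\) everywhere, \(\lambda_i\) is \(C^1\), strictly decreasing and vanishes at \(0\), so \(\lambda_i\in\mathcal D\). The limit \(q_i(y)/y^2\to\kappa_\infty\) gives \(\lambda_i(y)\sim-\kappa_\infty y^3/3\). Hence \(\lambda_i\) is superlinear, and \(\sigma^2q_i/\lambda_i^2\to0\) makes it drift-dominated, so \(\lambda_i\in\mathcal D_\star\). The first displayed limit in the statement holds by construction.

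\emph{Terminal limit.} Two integrations, exactly as in \cref{eq:Lambda-asymptotic} or \cref{lem:finite-power-endpoint}, give \(\Lambda_i(y)/y^4\to-\kappa_\infty/12\) for any antiderivative. The additive constant and the \(|y|^{10/3}\) corrections are lower order.

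\emph{Outcomes.} Both \(q_i\) satisfy \cref{eq:four-thirds-assumption} exactly outside a compact set, with \(b=b_*/2<b_*\) and \(b=2b_*>b_*\) respectively. \Cref{thm:alpha-transition}(iii) then gives \(\mathcal N^{(0)}_{1/2}(T,x)=\infty\) and \(\mathcal N^{(1)}_{1/2}(T,x)<\infty\) for every \(x\).

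There is no real obstacle here. The only point requiring care is that the compact completion neither destroys positivity nor alters the tail hypotheses. Positivity is ensured by choosing the cutoff radius large enough that the quadratic term dominates the \(|y|^{4/3}\) term. The tail hypotheses are unaffected because \cref{thm:alpha-transition} uses only the asymptotic expansion of \(q\) and \(\Lambda(y)/y^4\to-\kappa_\infty/12\), and a compact modification only changes constants. The proof therefore reduces to the two applications of \cref{thm:alpha-transition}(iii).
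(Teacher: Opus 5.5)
Your proof is correct, but it takes a different route from the paper. The paper uses only the two soft halves of \cref{thm:alpha-transition}. Its divergent drift is the pure cubic \(\lambda_0(\theta+y)=-(\kappa_\infty/3)y^3\), so \(q_0=\kappa_\infty y^2\) exactly. Part (i), in the form \cref{eq:one-sided-divergence-zero}, then applies; this amounts to the infinite first bridge factor, i.e.\ the equality case of \cref{thm:cubic-boundary}. Its finite drift has \(q_1=\kappa_\infty y^2-b|y|^{3/2}\) outside a compact set and is handled by part (ii) with \(\alpha=3/2\). That step needs only the H\"older--Young bound \cref{eq:critical-bridge-upper} of \cref{lem:critical-bridge}, with \(p=3<4\).

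Your pair instead sits on the borderline power \(4/3\) and is decided by part (iii). That part rests on the sharp bridge asymptotic of \cref{lem:sharp-four-thirds}, the refined Laplace analysis behind it, and the explicit constant \(b_*\).

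Your route gives a slightly sharper illustration of non-universality. Your two tails share \(\kappa_\infty\) and the limit of \(\Lambda/y^4\), and also the exponent of the first correction; they differ only in its coefficient. The paper's route gains economy and robustness. Its examples depend only on the cruder one-sided comparisons, with no sharp Laplace asymptotics and no \(b_*\). Its divergent example is also the canonical cubic model itself, which needs no compact completion.
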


\begin{proof}
For the divergent example take
\(\lambda_0(\theta+y)=-(\kappa_\infty/3)y^3\).
Then \(q_0(\theta+y)=\kappa_\infty y^2\), and
\cref{eq:one-sided-divergence-zero} and
\cref{thm:alpha-transition}(i) apply.

For the finite example, fix \(b>0\), choose \(R\) large enough that
\(\kappa_\infty y^2-b|y|^{3/2}>0\) for \(|y|\ge R\), and choose a smooth
strictly positive function \(q_1\) which agrees with
\(\kappa_\infty y^2-b|y|^{3/2}\) outside \([-R,R]\).  Define
\[
 \lambda_1(\theta+y):=-\int_0^yq_1(u)\dd u.
\]
Then \(\lambda_1\in C^1\), is strictly decreasing, and has its unique zero at
\(\theta\), so \(\lambda_1\in\mathcal D\).  For a smaller positive constant
in place of \(b\), the bounds in \cref{eq:one-sided-finiteness} hold with
\(\alpha=3/2\), and integration gives
\cref{eq:one-sided-Lambda-confinement}; hence
\cref{thm:alpha-transition}(ii) gives finiteness.
For both \(i=0,1\), \(q_i(\theta+y)/y^2\to\kappa_\infty\).  Integrating twice
gives the asserted \(\Lambda_i\)-asymptotic and
\(\lambda_i(\theta+y)\sim-\kappa_\infty y^3/3\); hence both drifts belong to
\(\mathcal D_\star\).
\end{proof}

\subsection{A reproducible finite-difference check}
\label{sec:bstar-numerics}

The rate in \cref{eq:four-thirds-endpoint-rate} can be normalised without
parameters:
\begin{equation}\label{eq:normalised-bstar-rate}
 \frac{12\sigma^2}{\kappa_\infty}\mathcal R(b)
 =\bigg(\frac{b_*}{b}\bigg)^3-1.
\end{equation}
Thus its zero is unique and occurs at the coefficient derived in
\cref{eq:b-star}; \cref{fig:bstar-rate} also checks the sign convention in
the endpoint comparison.

For a numerical check, we solve the transferred Feynman--Kac equation.  With
\(\Lambda(0)=0\), let
\begin{equation}\label{eq:numerical-pde}
 \partial_tv=\frac{\sigma^2}{2}\partial_{yy}v+\frac{q(y)}2v,
 \qquad
 v(0,y)=e^{\Lambda(y)/\sigma^2}.
\end{equation}
Then \cref{eq:transfer-Psi} gives
\(\mathcal N_{1/2}(t,x)=e^{-\Lambda(x)/\sigma^2}v(t,x)\).  This is the
gauge-equivalent equation whose potential is \(q/2\); in the pure cubic
case it is the inverted-oscillator equation used in
\cref{prop:cubic-blowup-rate}.

The ancillary script {\footnotesize\texttt{anc/numerical\_check.py}}, included with the
source submission, uses centred second differences on \((-L,L)\), zero
Dirichlet boundary data, and fourth-order Runge--Kutta time stepping.  We
validate it at
\(q(y)=3y^2\), \(\Lambda(y)=-y^4/4\), where
\cref{eq:cubic-mehler-integral} is exact.  The domain is \(L=15\), the time
step is at most \(5\times10^{-5}\), and the two finite-difference columns
halve the spatial mesh.  If \(\Delta\) denotes the fine-grid value of
\(\log N_T\) minus the Mehler value, the final column is
\(e^\Delta-1\), the relative error in \(N_T\), not in \(\log N_T\).

\begin{table}[H]
\centering
\small
\begin{tabular}{@{}rrrrr@{}}
\toprule
\(T/T_c\) & \(h=0.015\) & \(h=0.0075\) & Mehler & relative error in \(N_T\) \\
\midrule
0.50 & 0.297173 & 0.297166 & 0.297163 & \(2.44\times10^{-6}\) \\
0.75 & 1.346522 & 1.346007 & 1.345835 & \(1.71\times10^{-4}\) \\
0.90 & 7.677211 & 7.548137 & 7.508005 & \(4.09\times10^{-2}\) \\
\bottomrule
\end{tabular}
\caption{Mesh-halving validation against the exact Mehler integral; all
three central columns report \(\log N_T\).}
\label{tab:mehler-validation}
\end{table}

Mesh halving moves every row toward the exact value.  The remaining error
grows markedly near \(T_c\): the finest computation is accurate to four
significant digits at \(T/T_c=0.75\), but only to about four percent in
\(N_T\) at \(T/T_c=0.90\).  Thus the benchmark validates the implementation
and displays the expected loss of numerical resolution near blow-up; it is
not used as evidence for an asymptotic coefficient.

\begin{figure}[H]
\centering
\begin{tikzpicture}[x=5.4cm,y=1.05cm]
  \draw[->] (0.62,0) -- (1.70,0)
    node[right,font=\small] {\(b/b_*\)};
  \draw[->] (0.62,-0.95) -- (0.62,2.55)
    node[above,font=\small,align=center]
      {normalised\\endpoint rate};
  \draw[very thick,domain=0.68:1.62,samples=120,smooth]
    plot (\x,{1/(\x*\x*\x)-1});
  \draw[dashed,gray] (1,-0.88) -- (1,0);
  \fill (1,0) circle (1.6pt);
  \draw (1,-0.07) -- (1,0.07);
  \node[below,font=\small] at (1,-0.08) {\(1\)};
  \node[font=\small] at (0.95,1.35) {divergent};
  \node[font=\small] at (1.39,-0.28) {finite};
\end{tikzpicture}
\caption{The dimensionless quartic endpoint rate
\cref{eq:normalised-bstar-rate}.  It is positive for \(b<b_*\), vanishes at
\(b=b_*\), and is negative for \(b>b_*\).}
\label{fig:bstar-rate}
\end{figure}

Numerically,
\[
 J_{4/3}=1.8214879859\ldots,
 \qquad
 b_*=1.187696\ldots\,\sigma^{1/3}\kappa_\infty^{5/6}.
\]
For \(\kappa_\infty=3\) and \(\sigma=1\), for example,
\[
 T_c=\frac{\pi}{\sqrt3}=1.813799\ldots,
 \qquad b_*=2.966924\ldots .
\]
If space and time have dimensions \(L\) and \(\tau\), then
\([b]=\tau^{-1}L^{-4/3}\), agreeing with
\(\Big[\sigma^{1/3}\kappa_\infty^{5/6}\Big]\).

\section{Horizon-independent moments of the two densities}
\label{sec:martingality}

The preceding results separate three statements that are sometimes
conflated:
\begin{enumerate}[label=\textup{(\roman*)}]
\item finiteness of the Novikov functional
  \(\mathcal N_{1/2}(T,x)\);
\item applicability of a classical sufficient criterion for a stochastic
  exponential;
\item the martingale property of that stochastic exponential.
\end{enumerate}

\begin{theorem}[No higher moments of the drift-removing density]
\label{thm:no-higher-moments}
Let \(\lambda\in\mathcal D\) be superlinear and satisfy
\(\rho_\sigma(\lambda)<1\).  For every \(m>1\), \(T>0\), and starting state \(x\),
\[
 \mathbb E^{\mathbb P^x}\big[Z_{0,T}^m\big]=\infty.
\]
Thus \(Z\) is a true martingale by \cref{thm:Z-martingale}, but its terminal
density belongs to no \(L^m(\mathbb P^x)\) with \(m>1\).
\end{theorem}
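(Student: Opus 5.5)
The plan is to move the computation to the drift-removed law, where the $m$-th moment becomes an exponential functional of Brownian motion with a \emph{positive} multiple of $\lambda^2$ in the running potential. Superlinearity then forces divergence through the same tube excursion used in \cref{thm:supercritical-coefficient}.

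\emph{Step 1: reduction to Brownian motion.} By \cref{thm:Z-martingale}, $Z_{0,T}=\d\mathbb Q^x/\d\mathbb P^x$ on $\mathcal F_T$, with $Z_{0,T}>0$ almost surely and $\mathbb Q^x$ equivalent to $\mathbb P^x$. Hence
\[
 \mathbb E^{\mathbb P^x}\big[Z_{0,T}^m\big]
 =\mathbb E^{\mathbb Q^x}\big[Z_{0,T}^{m-1}\big]
 =\mathbb E^{\mathbb Q^x}\big[D_T^{-(m-1)}\big],
 \qquad D_T=Z_{0,T}^{-1}.
\]
Under $\mathbb Q^x$ the coordinate process is $x+\sigma W$. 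Combining \cref{eq:pathwise-identity} with It\^o's formula for $\Lambda$, as in the proof of \cref{lem:local-transfer}, gives pathwise
\[
 \log D_T=\frac{\Lambda(X_T)-\Lambda(x)}{\sigma^2}
 +\frac12\int_0^Tq(X_s)\dd s
 -\frac1{2\sigma^2}\int_0^T\lambda(X_s)^2\dd s .
\]
Therefore
\[
 \mathbb E^{\mathbb P^x}\big[Z_{0,T}^m\big]
 =\mathbb E^{\mathbb Q_0}\bigg[\exp\bigg\{(m-1)\bigg(
 \frac{\Lambda(x)-\Lambda(X_T)}{\sigma^2}
 +\int_0^T\Big(\frac{\lambda^2}{2\sigma^2}-\frac q2\Big)(X_s)\dd s\bigg)\bigg\}\bigg].
\]
If one prefers not to invoke the global density here, the same identity follows on the events $A_n$ of \cref{cor:extended-transfer} by localisation, and then by monotone convergence.

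\emph{Step 2: lower bounds on the exponent.} The terminal term now has the favourable sign. By \cref{lem:Lambda-concave}, $-\Lambda(X_T)\ge-\Lambda(\theta)$, so no endpoint analysis is needed. Next choose $\rho'$ with $\rho_\sigma(\lambda)<\rho'<1$. Exactly as in the proof of \cref{thm:subcritical-coefficient}, $q\le\rho'\lambda^2/\sigma^2$ outside a compact set, and $q$ is bounded on that compact set. This gives
\[
 \frac{\lambda^2}{2\sigma^2}-\frac q2\ge\frac{1-\rho'}{2\sigma^2}\lambda^2-C
 \quad\text{on }\mathbb R .
\]
Since $m-1>0$, the exponent is therefore at least
\[
 (m-1)\frac{1-\rho'}{2\sigma^2}\int_0^T\lambda(X_s)^2\dd s-C',
\]
where $C'$ depends on $x,T,m$ but not on the path.

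\emph{Step 3: divergence via the excursion tube.} I would restrict the expectation to the excursion event $A_R$ of \cref{lem:excursion}, realised in a tail where superlinearity holds. On $A_R$ we have $|\lambda(X_s)|\ge\lambda_*(R)=|\lambda(\theta+R-1)|$ for $s\in[T/3,2T/3]$. The tube estimates used in \cref{thm:supercritical-coefficient} then give
\[
 \mathbb E^{\mathbb P^x}\big[Z_{0,T}^m\big]\ge p_{\mathrm{tube}}
 \exp\bigg\{(m-1)\frac{(1-\rho')T}{6\sigma^2}\lambda_*(R)^2
 -\frac{12R^2+24R}{\sigma^2T}-C''\bigg\}.
\]
Because $\lambda_*(R)/R\to\infty$, the exponent tends to $+\infty$ as $R\to\infty$, for every fixed $m>1$ and $T>0$.

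\emph{Main obstacle.} The delicate point is making sure the constants coming from \cref{lem:excursion} are used exactly as in \cref{thm:supercritical-coefficient}: the probability cost must be of order $e^{-CR^2}$, uniformly in $R$, and the terminal factor must not interfere. Here the terminal factor is harmless, since it enters with the sign that concavity bounds from below. So the only real input is that the reward $\lambda_*(R)^2$ beats the quadratic Cameron--Martin cost, which is precisely superlinearity. The hypothesis $\rho_\sigma(\lambda)<1$ is used only to keep a positive fraction of $\lambda^2$ after subtracting $q$.
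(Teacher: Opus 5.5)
Your proposal is correct and follows the paper's proof essentially step for step. You reduce to $\mathbb E^{\mathbb P^x}[Z_{0,T}^m]=\mathbb E^{\mathbb Q_0}[D_T^{1-m}]$, use $\rho_\sigma(\lambda)<1$ to keep a positive fraction of $\lambda^2$ after subtracting $q$, and restrict to the excursion $A_R$ of \cref{lem:excursion}, where superlinearity beats the $O(R^2)$ tube cost. The only cosmetic difference is the terminal term: you bound it globally via \cref{lem:Lambda-concave}, whereas the paper uses that $X_T$ returns to a neighbourhood of $\theta$ on $A_R$. Both bounds work.
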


\begin{proof}
Let \(\mathbb Q_0\) be the Brownian law in \cref{cor:extended-transfer}, and write
\(D_T=\d\mathbb P^x/\d \mathbb Q_0=Z_{0,T}^{-1}\).  The localised
Girsanov--It\^o calculation, now between two equivalent probability
measures by \cref{thm:Z-martingale}, gives
\[
 D_T=\exp\bigg\{
  \frac{\Lambda(X_T)-\Lambda(x)}{\sigma^2}
  +\frac12\int_0^Tq(X_s)\dd s
  -\frac1{2\sigma^2}\int_0^T\lambda(X_s)^2\dd s
 \bigg\}.
\]
For \(r=m-1>0\),
\begin{equation}\label{eq:Zm-brownian}
 \mathbb E^{\mathbb P^x}[Z_{0,T}^m]
 =\mathbb E^{\mathbb Q_0}[D_T^{\,1-m}]
 =\mathbb E^{\mathbb Q_0}\bigg[
  \exp\bigg\{
   \frac{r(\Lambda(x)-\Lambda(X_T))}{\sigma^2}
   +\frac r2\int_0^T
     \bigg(\frac{\lambda(X_s)^2}{\sigma^2}-q(X_s)\bigg)\dd s
  \bigg\}\bigg].
\end{equation}

Put \(\delta=(1-\rho_\sigma(\lambda))/2>0\).  By the definition of the limsup,
outside a compact set
\[
 \frac{\lambda(y)^2}{\sigma^2}-q(y)
 \ge\delta\frac{\lambda(y)^2}{\sigma^2},
\]
while continuity gives a finite lower bound for the left-hand side on all
of \(\mathbb R\).  Restrict \cref{eq:Zm-brownian} to the returning excursion
\(A_R\) of \cref{lem:excursion}.  Its middle leg yields
\[
 \int_0^T
 \bigg(\frac{\lambda(X_s)^2}{\sigma^2}-q(X_s)\bigg)\dd s
 \ge \frac{\delta T}{3\sigma^2}
   \inf_{|y-\theta-R|\le1}\lambda(y)^2-C_T .
\]
The terminal term in \cref{eq:Zm-brownian} is bounded below on \(A_R\),
because \(X_T\) remains in a fixed neighbourhood of \(\theta\).
Combining these bounds with \cref{eq:excursion-bounds} gives
\[
 \mathbb E^{\mathbb P^x}\big[Z_{0,T}^m\big]
 \ge c\exp\bigg\{
  \frac{r\delta T}{6\sigma^2}
    \inf_{|y-\theta-R|\le1}\lambda(y)^2
  -\frac{12R^2+24R}{\sigma^2T}-C
 \bigg\}.
\]
Superlinearity makes the positive term dominate the quadratic tube cost as
\(R\to\infty\), proving the assertion.
\end{proof}

\begin{proposition}[Bounded forward density under Brownian motion]
\label{prop:forward-density-moments}
Let \(\lambda\in\mathcal D\) satisfy \(\rho_\sigma(\lambda)<1\), and set
\(D_T=\d\mathbb P^x/\d \mathbb Q_0=Z_{0,T}^{-1}\).  For every \(T>0\) and
starting state \(x\),
\begin{equation}\label{eq:D-infinity-upper}
 0<D_T\le
 \exp\bigg\{
  \frac{\Lambda(\theta)-\Lambda(x)}{\sigma^2}
  +T\sup_{y\in\mathbb R}\Psi_0(y)
 \bigg\}<\infty
 \qquad \mathbb Q_0\text{-a.s.}
\end{equation}
Thus \(D_T\in L^\infty(\mathbb Q_0)\); in particular,
\(\mathbb E^{\mathbb Q_0}\big[D_T^m\big]<\infty\) for every \(m>0\).
\end{proposition}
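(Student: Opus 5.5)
The plan is to read the bound directly off the explicit form of the forward density derived in the proof of \cref{thm:no-higher-moments}. By \cref{thm:Z-martingale}, \(\mathbb P^x\) and \(\mathbb Q_0\) are equivalent on \(\mathcal F_T\), so \(D_T=Z_{0,T}^{-1}\) is well defined and strictly positive \(\mathbb Q_0\)-almost surely. That theorem requires only \(\lambda\in\mathcal D\), not superlinearity. Moreover, the localised Girsanov--It\^o computation of \cref{lem:local-transfer}, applied on \(A_n=\{\sup_{s\le T}|X_s|\le n\}\) and then passed to the limit as \(n\to\infty\) (\(A_n\uparrow\Omega\) almost surely), gives the pathwise identity
\[
 D_T=\exp\bigg\{\frac{\Lambda(X_T)-\Lambda(x)}{\sigma^2}+\int_0^T\Psi_0(X_s)\dd s\bigg\}.
\]
The exponent is exactly the one in \cref{eq:transfer-Psi} at \(a=0\). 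Positivity is then immediate, since the exponent is finite pathwise: \(X\) is continuous and \(\Psi_0\) is continuous.

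The upper bound comes from two deterministic estimates, mirroring \cref{thm:subcritical-coefficient} at \(a=0\). First, \cref{lem:Lambda-concave} gives \(\Lambda(X_T)\le\Lambda(\theta)\) pathwise. Second, \(\Psi_0=q/2-\lambda^2/(2\sigma^2)\) is bounded above. To see this, choose \(\rho'\in(\rho_\sigma(\lambda),1)\). Outside a compact set we have \(\sigma^2q\le\rho'\lambda^2\), hence \(\Psi_0\le(\rho'-1)\lambda^2/(2\sigma^2)<0\) there. On the compact set, \(\Psi_0\) is bounded by continuity. The hypothesis \(0<1-\rho_\sigma(\lambda)\) is precisely the condition \(a=0<(1-\rho_\sigma(\lambda))/2\) of \cref{thm:subcritical-coefficient}, so we may cite that result's first assertion directly. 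Therefore \(\int_0^T\Psi_0(X_s)\dd s\le T\sup\Psi_0\), and inserting both estimates into the formula gives \cref{eq:D-infinity-upper}. The consequences \(D_T\in L^\infty(\mathbb Q_0)\) and finiteness of every positive moment follow trivially.

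The only point needing care is identifying \(D_T\) with the global exponential formula rather than just its localised version. I would handle it by noting that on each \(A_n\) the density \(\d\mathbb P/\d\mathbb Q_0\) restricted to \(\mathcal F_{T\wedge\tau_{n+1}}\) coincides with that exponential, as in the proof of \cref{lem:local-transfer}. Equivalence from \cref{thm:Z-martingale} then shows that the global Radon--Nikodym derivative agrees with it on \(A_n\). Since \(\bigcup_nA_n\) has full \(\mathbb Q_0\)-measure, the two agree almost surely. No other obstacle is expected.
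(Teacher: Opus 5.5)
Your proposal is correct and matches the paper's proof: you use the explicit density formula from the proof of \cref{thm:no-higher-moments}, the concavity bound \(\Lambda(X_T)\le\Lambda(\theta)\) from \cref{lem:Lambda-concave}, and the upper bound on \(\Psi_0\) from \cref{thm:subcritical-coefficient} at \(a=0\). Your localisation argument identifying the Radon--Nikodym derivative with the exponential formula on each \(A_n\) is a more explicit version of the step the paper states in one line, namely that the localised Girsanov--It\^o calculation holds between the two measures, which are equivalent by \cref{thm:Z-martingale}.
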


\begin{proof}
The proof of \cref{thm:subcritical-coefficient} with \(a=0\) shows that
\(\rho_\sigma(\lambda)<1\) implies \(\sup\Psi_0<\infty\).  The density formula in
the proof of \cref{thm:no-higher-moments} gives
\[
 D_T=\exp\bigg\{
  \frac{\Lambda(X_T)-\Lambda(x)}{\sigma^2}
  +\int_0^T\Psi_0(X_s)\dd s
 \bigg\}.
\]
By \cref{eq:Lambda-bounded}, \(\Lambda(X_T)\le\Lambda(\theta)\), and the
integral is bounded above by \(T\sup\Psi_0\).  This proves
\cref{eq:D-infinity-upper}; the moment assertion is immediate.
\end{proof}

The theorem holds under superlinearity and \(\rho_\sigma(\lambda)<1\), while the
proposition requires only \(\rho_\sigma(\lambda)<1\).  The class
\(\mathcal D_\star\), for which superlinearity holds and
\(\rho_\sigma(\lambda)=0\), is the principal special case in the phase diagram.
There the two results give a sharp directional asymmetry.  For every
\(m>1\), the order-\(m\) R\'enyi divergences satisfy
\[
 \mathrm R_m(\mathbb P^x\Vert \mathbb Q_0)
 =\frac1{m-1}\log\mathbb E^{\mathbb Q_0}\big[D_T^m\big]<\infty,
 \qquad
 \mathrm R_m(\mathbb Q_0\Vert\mathbb P^x)
 =\frac1{m-1}\log\mathbb E^{\mathbb P^x}\big[Z_{0,T}^m\big]=\infty.
\]
At infinite order the forward statement strengthens to
\[
 \mathrm R_\infty(\mathbb P^x\Vert \mathbb Q_0)
 =\log\|D_T\|_{L^\infty(\mathbb Q_0)}<\infty.
\]
Thus likelihood-ratio variance is infinite in the physical-to-Brownian
direction, while the reverse density has moments of every order and even a
constant envelope from \cref{eq:D-infinity-upper}.  The first statement is
proved directly from \cref{eq:Zm-brownian}; it is not a formal consequence
of divergence of \(\mathcal N_a\).

The higher-moment question is also part of the classical reverse-H\"older and
BMO theory of stochastic exponentials; see
\citet{lepinglememin1978,kazamaki}.  Any global reverse-H\"older \(R_p\)
bound on \([0,T]\) would, by taking the initial stopping time, imply
\(Z_{0,T}\in L^p(\mathbb P^x)\).  Hence \cref{thm:no-higher-moments} rules
out every such regime with \(p>1\), while
\cref{prop:forward-density-moments} shows that the reciprocal density is
essentially bounded under \(\mathbb Q_0\).  No full BMO characterization is
asserted here.

Novikov finiteness implies martingality, but the converse is false.  In the
examples above, statement (i) fails at an exact horizon for cubic reversion
(\cref{thm:cubic-boundary}), on every positive horizon for supercritical
reversion (\cref{thm:q-dichotomy}), and at every coefficient \(a\ge1/2\) for
sinh reversion (\cref{ex:smr-threshold}).  Statement (ii) fails with it, in
the strong sense that the failure survives partitioning
(\cref{thm:obstruction}).  Yet (iii) holds throughout: for every
\(\lambda\in\mathcal D\), \cref{thm:Z-martingale} produces a true martingale,
because the criterion it invokes asks about the non-explosion of an auxiliary
diffusion whose drift vanishes identically rather than about an exponential
moment under \(\mathbb P\).

The two questions concern different measures.  The critical-moment results are
statements about exponential functionals under the physical law
\(\mathbb P\), where the state is a nonlinear diffusion; the martingale
criterion is a statement about the auxiliary dynamics, where the state is
Brownian.  There is no tension between an infinite Novikov moment and a true
martingale, and the results above locate the exact point at which the first
fails while the second does not.

\section{Discussion}\label{sec:discussion}

The first main result is the exact spectral boundary.  The transfer identity
\cref{eq:transfer-Psi} turns the Novikov coefficient into a Brownian
Feynman--Kac problem with potential \(q/2\).  Asymptotically quadratic
reversion is therefore governed by the first bridge eigenvalue, giving
\(\kappa_\infty\sigma^2T^2=\pi^2\) and, for cubic drift, the constant
\(\pi^2/3\) with equality divergent.  Away from the Novikov coefficient, the
\(\lambda^2\)-term in the effective potential gives the horizon-independent
transition at \(a=1/2\) on \(\mathcal D_\star\).

The second main result is a complete equality classification on
\(\mathcal P_{\mathrm{sym}}(\kappa_\infty)\).  The \(4/3\) transition and
its coefficient \(b_*\) start the recursion
\(\beta_{n+1}=(\beta_n+2)/3\).  \Cref{thm:Psym-complete} proves that every
finite pure-power germ is decided by the first strict comparison, while
\cref{prop:decision-depth,cor:arbitrary-depth-pairs} show that the required
number of comparisons is finite pointwise but unbounded over the class.
This distinction is the structural content of the equality theory:
classification is complete, but no universal finite truncation of the tail
data suffices.  The sequence accumulates at power one and the endpoint rates
at cubic order; \cref{rem:cascade-accumulation} identifies the formal
limiting balance without asserting an infinite-depth theorem.  Slowly
varying and irregular sign-changing tails lie outside the finite-power class;
the regularly varying results describe part of that remaining territory.

A concrete open case is the oscillatory tail
\begin{equation}\label{eq:oscillatory-equality-example}
 q(y)=\kappa_\infty y^2+|y|^{3/2}\sin y
 \qquad (|y|\text{ sufficiently large}),
\end{equation}
with a smooth positive completion on a compact set.  It crosses the critical
parabola infinitely often and has no two-term asymptotic of the form used in
\cref{thm:alpha-transition}.  A formal stationary-phase calculation in the
conditional first-mode representation predicts divergence.  A proof would
require, for each fixed endpoint, an \(o(|u|)\) stationary-phase remainder as
the first-mode coordinate \(|u|\to\infty\), in particular along recurring
favourable phase intervals.  The present transverse estimates do not provide
that oscillatory control.  We leave this case as an open problem.

Moment criticality is distinct from loss of mass.  The drift-removing density
is a true martingale throughout \(\mathcal D\), while
\cref{thm:no-higher-moments} shows that on \(\mathcal D_\star\) it has no
higher moments under the physical law.  In the reverse direction,
\cref{prop:forward-density-moments} shows that the density under Brownian
motion is essentially bounded, and hence has every positive moment.  These
two results hold under the weaker hypotheses stated there, with
\(\mathcal D_\star\) as the principal special case.  The relevant phenomenon
is the directional asymmetry of the two densities; the exact-simulation
change of measure itself remains valid.

Constant volatility is structural in the reduction, rather than merely a
normalisation.  To see the residual explicitly, replace \(\sigma\) in
\cref{eq:sde} by a positive \(C^2\) function \(\varsigma\), set
\[
 F(x):=\int_\theta^x\frac{\dd z}{\varsigma(z)},
 \qquad Y=F(X),
\]
and write \(G=F^{-1}\) on the range of \(F\).  It\^o's formula gives
\[
 \dd Y_t=
 \left(
   \frac{\lambda(X_t)}{\varsigma(X_t)}
   -\frac12\varsigma'(X_t)
 \right)\dd t+\dd W_t.
\]
In the Lamperti coordinate, put
\[
 b(y):=\frac{\lambda(G(y))}{\varsigma(G(y))}
       -\frac12\varsigma'(G(y)),
 \qquad
 h(y):=\frac{\lambda(G(y))}{\varsigma(G(y))}.
\]
The transformed dynamics have drift \(b\), whereas the original functional
is built from \(h^2\).  Removing \(b\) and applying It\^o's formula therefore
leaves the running potential
\begin{align}\label{eq:lamperti-residual}
 \widetilde\Psi_{1/2}(y)
 &=-\frac12b'(y)+\frac12\big(h(y)^2-b(y)^2\big)\notag\\
 &=-\frac12b'(y)
   +\frac12b(y)\varsigma'(G(y))
   +\frac18\varsigma'(G(y))^2.
\end{align}
Here \(b'\) is the derivative in the Lamperti coordinate.  Returning to the
original coordinate gives the equivalent identity
\begin{equation}\label{eq:lamperti-residual-x}
 \widetilde\Psi_{1/2}(F(x))
 =-\frac12\lambda'(x)
  +\frac{\lambda(x)\varsigma'(x)}{\varsigma(x)}
  +\frac14\varsigma(x)\varsigma''(x)
  -\frac18\varsigma'(x)^2.
\end{equation}
Constant volatility removes the last three terms and recovers
\(q/2=-\lambda'/2\).  For variable volatility, the mixed term
\(\lambda\varsigma'/\varsigma\) can be of drift order rather than
derivative-of-drift order when the logarithmic volatility slope does not
decay at the compensating rate.  It is negative when volatility increases
away from the equilibrium in both tails, and positive when volatility
decreases outward.  Thus stochastic volatility can add either confinement
or reward before the curvature terms in
\cref{eq:lamperti-residual-x} are considered.  Extending the theory requires
a new state-dependent auxiliary operator, not just a change of notation.
\appendix
\section{Proof of the shifted Cameron--Martin criterion}
\label[appendix]{app:cameron-martin-proof}

\begin{proof}[Proof of \cref{lem:cameron-martin}]
The covariance operator of Brownian motion on \(L^2[0,T]\) has kernel
\(s\wedge r\).  Its eigenproblem is
\(\nu f''=-f\), with \(f(0)=0\) and \(f'(T)=0\), and hence
\begin{equation}\label{eq:bm-eigenvalues}
 \varphi_n(s)=\sqrt{\frac2T}\sin\frac{(2n-1)\pi s}{2T},
 \qquad
 \nu_n=\frac{4T^2}{(2n-1)^2\pi^2},\qquad n\ge1.
\end{equation}
Writing \(m_n=\langle u\mathbf1,\varphi_n\rangle\) and
\(\xi_n=\langle W,\varphi_n\rangle\), Parseval, monotone convergence, and
independence give
\begin{equation}\label{eq:bm-product}
 \mathbb E\bigg[e^{\frac{\gamma^2}{2}\|u\mathbf1+W\|_2^2}\bigg]
 =\lim_{N\to\infty}\prod_{n=1}^N
  \mathbb E\bigg[e^{\frac{\gamma^2}{2}(m_n+\xi_n)^2}\bigg],
 \qquad \xi_n\sim\mathcal N(0,\nu_n).
\end{equation}
By \cref{lem:gaussian-factor}, the first factor is infinite when
\(\gamma^2\nu_1\ge1\).  If \(\gamma^2\nu_1<1\), all factors are finite,
\(\sum_n\nu_n=T^2/2\), and
\[
 \sum_nm_n^2=\|u\mathbf1\|_2^2=u^2T,
\]
so both the determinant product and the shifted exponential product converge.
Thus finiteness is equivalent to
\(\gamma^2\nu_1<1\), which by \cref{eq:bm-eigenvalues} is
\(\gamma T<\pi/2\).
\end{proof}

\section{Brownian tubes and proof of the obstruction theorem}
\label[appendix]{app:tubes}

\begin{lemma}[Tube estimate]\label{lem:tube}
Let \(\varphi\colon[0,T]\to\mathbb R\) be piecewise linear with
\(\varphi(0)=x\), finitely many pieces, and slope \(\dot\varphi\).  Let
\[
 p_{\mathrm{tube}}:=\mathbb Q_0\big(\sup_{s\le T}|X_s-x|<1\big)>0.
\]
Then
\begin{equation}
\label{eq:tube-bound}
 \mathbb Q_0\big(\sup_{s\le T}|X_s-\varphi(s)|<1\big)
 \ge p_{\mathrm{tube}}
 \exp\bigg\{-\frac1{2\sigma^2}\int_0^T\dot\varphi(s)^2\dd s
 -\frac1{\sigma^2}\big(|\dot\varphi(T-)|
             +\operatorname{TV}(\dot\varphi)\big)\bigg\}.
\end{equation}
\end{lemma}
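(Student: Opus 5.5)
The plan is to prove the tube estimate by a Cameron--Martin--Girsanov shift, followed by an integration by parts of the stochastic integral against the piecewise linear path, and then a reduction to the centred tube event.

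Under \(\mathbb Q_0\) write \(X=x+\sigma W\) and \(h:=(\varphi-x)/\sigma\), which is absolutely continuous with \(h(0)=0\) and piecewise constant derivative \(\dot h=\dot\varphi/\sigma\). The event \(\{\sup_{s\le T}|X_s-\varphi(s)|<1\}\) equals \(\{\sup_{s\le T}|W_s-h_s|<1/\sigma\}\). Cameron--Martin gives
\[
 \mathbb P\big(\|W-h\|_\infty<1/\sigma\big)
 =\mathbb E\Big[\exp\Big\{-\int_0^T\dot h\dd W-\tfrac12\int_0^T\dot h^2\dd s\Big\}\mathbf1_{\{\|W\|_\infty<1/\sigma\}}\Big].
\]
The deterministic term is already \(-\frac1{2\sigma^2}\int_0^T\dot\varphi^2\dd s\).

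Next I would bound the stochastic integral pathwise on the centred event \(E:=\{\|W\|_\infty<1/\sigma\}\). Since \(\dot h\) is a finite step function, summation by parts gives exactly
\[
 \int_0^T\dot h\dd W=\dot h(T-)W_T-\sum_{\text{jumps }t_k}\big(\dot h(t_k+)-\dot h(t_k-)\big)W_{t_k},
\]
using \(W_0=0\). On \(E\) this has absolute value at most \(\sigma^{-1}\big(|\dot h(T-)|+\operatorname{TV}(\dot h)\big)=\sigma^{-2}\big(|\dot\varphi(T-)|+\operatorname{TV}(\dot\varphi)\big)\). Inserting this lower bound for the exponential into the expectation restricted to \(E\), and noting that \(\mathbb P(E)=\mathbb Q_0(\sup_{s\le T}|X_s-x|<1)=p_{\mathrm{tube}}\), yields \cref{eq:tube-bound}.

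The remaining point is that \(p_{\mathrm{tube}}>0\). This follows from the standard positivity of small-ball probabilities for Brownian motion, for instance from the explicit series for the exit time of a strip or from the support theorem. No step is a real obstacle. The only care needed is to keep the sign conventions and the factor \(\sigma\) in the summation by parts consistent, so that the constant comes out exactly as \(\sigma^{-2}\) rather than \(\sigma^{-1}\).
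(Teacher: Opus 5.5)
Your proof is correct and follows the same route as the paper: a Cameron--Martin shift by \(\varphi-x\), summation by parts against the piecewise constant slope, and a pathwise bound on the tube event, which leaves exactly \(p_{\mathrm{tube}}\). The only difference is presentational. You transport the expectation back to the centred event \(\{\|W\|_\infty<1/\sigma\}\) under the original law, whereas the paper keeps the event under the shifted measure and bounds \(\int_0^T\dot\varphi\,\mathrm{d}(X-\varphi)\) directly on it.
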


\begin{proof}
Write \(X=x+\sigma W\) under \(\mathbb Q_0\), set
\(\widetilde\varphi=\varphi-x\), and let
\(A=\{\sup_s|X_s-\varphi(s)|<1\}\).  By Cameron--Martin, under
\(\widetilde {\mathbb {Q}}\) with
\[
 \frac{\d\widetilde {\mathbb {Q}}}{\d \mathbb Q_0}
 =\exp\bigg(
  \frac1{\sigma^2}\int_0^T\dot\varphi\dd X
  -\frac1{2\sigma^2}\int_0^T\dot\varphi^2\dd s
 \bigg),
\]
the process \(X-\widetilde\varphi\) is again
\(x+\sigma\times\)Brownian motion.  Hence
\[
 \mathbb Q_0(A)=\mathbb E^{\widetilde {\mathbb {Q}}}\bigg[
  \mathbf1_A\exp\bigg(
  -\frac1{\sigma^2}\int_0^T\dot\varphi\dd X
  +\frac1{2\sigma^2}\int_0^T\dot\varphi^2\dd s
 \bigg)\bigg].
\]
Because the slope is piecewise constant, integration by parts gives
\[
 \int_0^T\dot\varphi\dd X
 =\int_0^T\dot\varphi\,\d(X-\varphi)
  +\int_0^T\dot\varphi^2\dd s.
\]
On \(A\),
\[
 \bigg|\int_0^T\dot\varphi\,\d(X-\varphi)\bigg|
 \le |\dot\varphi(T-)|+\operatorname{TV}(\dot\varphi).
\]
Finally \(\widetilde {\mathbb {Q}}(A)=p_{\mathrm{tube}}\), which proves
\cref{eq:tube-bound}.
\end{proof}

The same excursion serves both \cref{thm:supercritical-coefficient} and
\cref{thm:obstruction}, so we record it once.  Its third leg returns the path
to a fixed neighbourhood of \(\theta\), which is what keeps the terminal
contribution bounded uniformly in the excursion height.

\begin{lemma}[Returning excursion]\label{lem:excursion}
Let \(\lambda\in\mathcal D\), let \(x\in\mathbb R\) and \(T>0\) be fixed, and
set \(C_\Lambda:=\sup_{|y-\theta|\le1}|\Lambda(y)|+|\Lambda(x)|<\infty\).  For
\(R>|x-\theta|+2\) let \(\varphi_R\) rise linearly from \(x\) to
\(\theta+R\) on \([0,T/3]\), remain at \(\theta+R\) on \([T/3,2T/3]\), and
descend linearly to \(\theta\) on \([2T/3,T]\), and put
\(A_R:=\{\sup_{s\le T}|X_s-\varphi_R(s)|<1\}\).  Then, for all sufficiently
large \(R\),
\begin{equation}\label{eq:excursion-bounds}
 \mathbb Q_0(A_R)\ge p_{\mathrm{tube}}
 \exp\bigg\{-\frac{12R^2+24R}{\sigma^2T}\bigg\},
 \qquad
 \frac{\Lambda(X_T)-\Lambda(x)}{\sigma^2}\ge-\frac{C_\Lambda}{\sigma^2}
 \quad\text{on }A_R,
\end{equation}
and \(A_R\subseteq\{\sup_{s\le T}|X_s|\le|\theta|+R+1\}\), so that \(A_R\) is
an admissible localisation set in \cref{lem:local-transfer}.
\end{lemma}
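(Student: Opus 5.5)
The plan is to derive both bounds from the tube estimate \cref{lem:tube} applied to the piecewise linear path \(\varphi_R\), together with the geometry of the tube.

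The path \(\varphi_R\) has three pieces, each of duration \(T/3\). On the first piece the slope is \(s_1=3(\theta+R-x)/T\), on the middle piece it is \(0\), and on the last piece it is \(s_3=-3R/T\). The Cameron--Martin energy is therefore
\[
 \frac1{2\sigma^2}\int_0^T\dot\varphi_R^2\dd s
 =\frac{3}{2\sigma^2T}\big((\theta+R-x)^2+R^2\big).
\]
The boundary correction in \cref{eq:tube-bound} is
\(\sigma^{-2}(|\dot\varphi_R(T-)|+\operatorname{TV}(\dot\varphi_R))\).
Since \(\operatorname{TV}(\dot\varphi_R)=|s_1|+|s_3|\), this correction is \(O(R/T)\). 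Write \(\theta+R-x=R+O(1)\), with the \(O(1)\) term bounded by \(|x-\theta|\). The total exponent is then
\[
 \frac{3R^2}{\sigma^2T}+O\big((R+1)/(\sigma^2 T)\big).
\]
Comparing constants, this is at most \((12R^2+24R)/(\sigma^2T)\) once \(R\) is large. The generous constant \(12\) leaves room to absorb all lower-order terms. In particular, for \(R>|x-\theta|+2\) one has \((\theta+R-x)^2\le4R^2\), and the linear terms sum to at most \(24R\) for large \(R\). This gives the first bound in \cref{eq:excursion-bounds}.

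For the terminal bound, note that \(\varphi_R(T)=\theta\). On \(A_R\) this gives \(|X_T-\theta|<1\), so \(\Lambda(X_T)\ge-\sup_{|y-\theta|\le1}|\Lambda(y)|\). Subtracting \(\Lambda(x)\le|\Lambda(x)|\) then gives the lower bound \(-C_\Lambda/\sigma^2\). The constant \(C_\Lambda\) is finite because \(\Lambda\) is continuous.

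For the localisation claim, observe that \(\varphi_R\) takes values between \(\min(x,\theta)\) and \(\theta+R\). Because \(R>|x-\theta|\), every value satisfies \(|\varphi_R(s)|\le|\theta|+R\). On \(A_R\) this yields \(|X_s|\le|\theta|+R+1\). Hence \(A_R\) is contained in the ball of integer radius \(n=\lceil|\theta|+R+1\rceil\), and it is \(\mathcal F_T\)-measurable, so it is admissible in \cref{lem:local-transfer}.

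The only delicate step is the bookkeeping of constants in the first bound. It must hold uniformly for large \(R\) with the stated explicit form, so I would check the inequality
\[
 \tfrac32\big((R+|x-\theta|)^2+R^2\big)+3(2R+|x-\theta|)\le12R^2+24R
\]
directly. This inequality holds whenever \(R\ge|x-\theta|+2\).
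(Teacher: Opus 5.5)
Your proposal is correct and follows the paper's proof step for step: you apply \cref{lem:tube} to \(\varphi_R\) (computing the energy exactly where the paper uses the cruder bound \(|\dot\varphi_R|\le6R/T\)), use \(\varphi_R(T)=\theta\) for the terminal term, and use the range of \(\varphi_R\) for the inclusion. One bookkeeping slip: the linear term \(3(2R+|x-\theta|)\) in your final displayed inequality is only \(T\operatorname{TV}(\dot\varphi_R)\) and omits \(T|\dot\varphi_R(T-)|=3R\); replacing it by \(9R+3|x-\theta|\) still leaves the left side below \(\tfrac{15}{2}R^2+12R\le12R^2+24R\) for every \(R>|x-\theta|\), so the conclusion stands.
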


\begin{proof}
The slopes of \(\varphi_R\) are bounded by \(6R/T\) for large \(R\), so
\[
 \int_0^T\dot\varphi_R^2\dd s\le\frac{24R^2}{T},
 \qquad
 |\dot\varphi_R(T-)|+\operatorname{TV}(\dot\varphi_R)
 \le\frac{24R}{T},
\]
and \cref{lem:tube} gives the first bound in
\cref{eq:excursion-bounds}.  On \(A_R\) we have \(|X_T-\theta|<1\), whence
\(\Lambda(X_T)\ge-\sup_{|y-\theta|\le1}|\Lambda(y)|\) and the second bound.
The inclusion is immediate from \(\sup_s|\varphi_R(s)-\theta|=R\) and the
tube half-width, using \(|x-\theta|<R\).
\end{proof}

\begin{proof}[Proof of \cref{thm:obstruction}]
Assume without loss of generality that \cref{eq:supercritical} holds in the
right tail along \(R_n\uparrow\infty\); the left-tail case is symmetric.  Let
\(\varphi_R\) and \(A_R\) be as in \cref{lem:excursion}.

For Novikov, use
\(F=(2\sigma^2)^{-1}\int_0^T\lambda(X_s)^2\dd s\) in
\cref{eq:transfer}.  The two integrated \(\lambda^2\) terms cancel.  Since
\(q\ge0\), on \(A_R\),
\[
 \frac12\int_0^Tq(X_s)\dd s\ge\frac{T}{6}q_*^+(R),
\]
since the middle leg keeps the path within distance one of \(\theta+R\) for
time \(T/3\).  Combining this with \cref{eq:excursion-bounds},
\begin{align*}
 \mathbb E_x\bigg[
  \exp\bigg\{\tfrac12\int_0^T\eta(X_s)^2\dd s\bigg\}\bigg]
 &\ge p_{\mathrm{tube}}
 \exp\bigg\{
  \frac{T}{6}q_*^+(R)
  -\frac{12R^2+24R}{\sigma^2T}
  -\frac{C_\Lambda}{\sigma^2}
 \bigg\}.
\end{align*}
Along \(R=R_n\), the exponent tends to infinity, proving
\cref{eq:novikov-diverges}.  Monotonicity gives the result for
\(a\ge1/2\), and the same argument on every positive subinterval proves the
partitioned statement.

For Kazamaki, \cref{eq:pathwise-identity} gives
\[
 -\frac12\int_0^T\eta\dd W
 =-\frac1{2\sigma^2}\int_0^T\lambda\dd X
  +\frac1{2\sigma^2}\int_0^T\lambda^2\dd s.
\]
Take this quantity as \(F\) in \cref{eq:transfer}.  Again the
\(\lambda^2\) terms cancel, and the \(\int\lambda\dd X\) terms combine
to
\[
 \frac1{2\sigma^2}\int_0^T\lambda\dd X
 =\frac{\Lambda(X_T)-\Lambda(x)}{2\sigma^2}
  +\frac14\int_0^Tq(X_s)\dd s.
\]
On the same tube the exponent is bounded below by
\(Tq_*^+(R)/12-C_\Lambda/(2\sigma^2)\), again using \(q\ge0\).  The tube cost
is unchanged, and supercriticality again forces the lower bound to infinity.
This proves \cref{eq:kazamaki-diverges}.
\end{proof}

\bibliography{refer}

@article{benes1971,
  author = {Bene{\v s}, V{\'a}clav E.},
  title = {Existence of Optimal Stochastic Control Laws},
  journal = {SIAM Journal on Control},
  volume = {9},
  number = {3},
  pages = {446--472},
  year = {1971},
  doi = {10.1137/0309034}
}

@article{andersenpiterbarg2007,
  author = {Andersen, Leif B. G. and Piterbarg, Vladimir V.},
  title = {Moment Explosions in Stochastic Volatility Models},
  journal = {Finance and Stochastics},
  volume = {11},
  number = {1},
  pages = {29--50},
  year = {2007},
  doi = {10.1007/s00780-006-0011-7}
}

@article{beskosroberts2005,
  author = {Beskos, Alexandros and Roberts, Gareth O.},
  title = {Exact Simulation of Diffusions},
  journal = {The Annals of Applied Probability},
  volume = {15},
  number = {4},
  pages = {2422--2444},
  year = {2005},
  doi = {10.1214/105051605000000485}
}

@article{beskospapaspiliopoulosroberts2006,
  author = {Beskos, Alexandros and Papaspiliopoulos, Omiros and
            Roberts, Gareth O.},
  title = {Retrospective Exact Simulation of Diffusion Sample Paths with
           Applications},
  journal = {Bernoulli},
  volume = {12},
  number = {6},
  pages = {1077--1098},
  year = {2006},
  doi = {10.3150/bj/1165269151}
}

@article{beskospapaspiliopoulosroberts2008,
  author = {Beskos, Alexandros and Papaspiliopoulos, Omiros and
            Roberts, Gareth O.},
  title = {A Factorisation of Diffusion Measure and Finite Sample Path
           Constructions},
  journal = {Methodology and Computing in Applied Probability},
  volume = {10},
  number = {1},
  pages = {85--104},
  year = {2008},
  doi = {10.1007/s11009-007-9060-4}
}

@article{cameronmartin1944,
  author = {Cameron, Robert H. and Martin, William T.},
  title = {Transformations of Wiener Integrals under Translations},
  journal = {Annals of Mathematics},
  volume = {45},
  number = {2},
  pages = {386--396},
  year = {1944},
  doi = {10.2307/1969276}
}

@book{chungzhao1995,
  author = {Chung, Kai Lai and Zhao, Zhongxin},
  title = {From {Brownian} Motion to {Schr{\"o}dinger}'s Equation},
  series = {Grundlehren der mathematischen Wissenschaften},
  volume = {312},
  publisher = {Springer},
  address = {Berlin},
  year = {1995},
  doi = {10.1007/978-3-642-57856-4}
}

@article{chensong2002,
  author = {Chen, Zhen-Qing and Song, Renming},
  title = {General Gauge and Conditional Gauge Theorems},
  journal = {The Annals of Probability},
  volume = {30},
  number = {3},
  pages = {1313--1339},
  year = {2002},
  doi = {10.1214/aop/1029867129}
}

@article{chenli2003,
  author = {Chen, Xia and Li, Wenbo V.},
  title = {Quadratic Functionals and Small Ball Probabilities for the
           {$m$}-Fold Integrated {Brownian} Motion},
  journal = {The Annals of Probability},
  volume = {31},
  number = {2},
  pages = {1052--1077},
  year = {2003},
  doi = {10.1214/aop/1048516545}
}

@article{fatalov2003,
  author = {Fatalov, Vladimir R.},
  title = {Asymptotics of Large Deviations of {Gaussian} Processes of
           {Wiener} Type for {$L^p$}-Functionals, {$p>0$}, and the
           Hypergeometric Function},
  journal = {Sbornik: Mathematics},
  volume = {194},
  number = {3},
  pages = {369--390},
  year = {2003},
  doi = {10.1070/SM2003v194n03ABEH000721}
}

@article{gaohannigtorcaso2003,
  author = {Gao, Fuchang and Hannig, Jan and Torcaso, Fred},
  title = {Integrated {Brownian} Motions and Exact {$L_2$}-Small Balls},
  journal = {The Annals of Probability},
  volume = {31},
  number = {3},
  pages = {1320--1337},
  year = {2003},
  doi = {10.1214/aop/1055425782}
}

@article{zhao1986,
  author = {Zhao, Zhongxin},
  title = {Green Function for {Schr\"odinger} Operator and Conditioned
           {Feynman--Kac} Gauge},
  journal = {Journal of Mathematical Analysis and Applications},
  volume = {116},
  number = {2},
  pages = {309--334},
  year = {1986},
  doi = {10.1016/S0022-247X(86)80001-4}
}

@article{kellerressel2011,
  author = {Keller-Ressel, Martin},
  title = {Moment Explosions and Long-Term Behavior of Affine Stochastic
           Volatility Models},
  journal = {Mathematical Finance},
  volume = {21},
  number = {1},
  pages = {73--98},
  year = {2011},
  doi = {10.1111/j.1467-9965.2010.00423.x}
}

@book{kazamaki,
  author = {Kazamaki, Norihiko},
  title = {Continuous Exponential Martingales and {BMO}},
  series = {Lecture Notes in Mathematics},
  volume = {1579},
  publisher = {Springer},
  year = {1994},
  doi = {10.1007/BFb0073585}
}

@article{lepinglememin1978,
  author = {L{\'e}pingle, Dominique and M{\'e}min, Jean},
  title = {Sur l'int\'egrabilit\'e uniforme des martingales exponentielles},
  journal = {Zeitschrift f{\"u}r Wahrscheinlichkeitstheorie und Verwandte
             Gebiete},
  volume = {42},
  number = {3},
  pages = {175--203},
  year = {1978},
  doi = {10.1007/BF00641409}
}

@book{kh,
  author = {Khasminskii, Rafail Z.},
  title = {Stochastic Stability of Differential Equations},
  edition = {2},
  publisher = {Springer},
  year = {2012}
}

@article{klebanerliptser2014,
  author = {Klebaner, Fima and Liptser, Robert},
  title = {When a Stochastic Exponential Is a True Martingale: Extension of
           the {Bene{\v s}} Method},
  journal = {Theory of Probability and Its Applications},
  volume = {58},
  number = {1},
  pages = {38--62},
  year = {2014},
  doi = {10.1137/S0040585X97986382}
}

@book{ks,
  author = {Karatzas, Ioannis and Shreve, Steven E.},
  title = {Brownian Motion and Stochastic Calculus},
  edition = {2},
  series = {Graduate Texts in Mathematics},
  volume = {113},
  publisher = {Springer},
  year = {1991}
}

@book{ls,
  author = {Liptser, Robert S. and Shiryaev, Albert N.},
  title = {Statistics of Random Processes I: General Theory},
  edition = {2},
  series = {Stochastic Modelling and Applied Probability},
  volume = {5},
  publisher = {Springer},
  year = {2001}
}

@article{mu,
  author = {Mijatovi{\'c}, Aleksandar and Urusov, Mikhail},
  title = {On the Martingale Property of Certain Local Martingales},
  journal = {Probability Theory and Related Fields},
  volume = {152},
  number = {1--2},
  pages = {1--30},
  year = {2012},
  doi = {10.1007/s00440-010-0314-7}
}

@article{musiela1985,
  author = {Musiela, Marek},
  title = {Divergence, Convergence and Moments of Some Integral Functionals
           of Diffusions},
  journal = {Zeitschrift f{\"u}r Wahrscheinlichkeitstheorie und Verwandte
             Gebiete},
  volume = {70},
  number = {1},
  pages = {49--65},
  year = {1985},
  doi = {10.1007/BF00532237}
}

@article{musiela1986,
  author = {Musiela, Marek},
  title = {On {Kac} Functionals of One-dimensional Diffusions},
  journal = {Stochastic Processes and their Applications},
  volume = {22},
  number = {1},
  pages = {79--88},
  year = {1986},
  doi = {10.1016/0304-4149(86)90115-8}
}

@article{novikov,
  author = {Novikov, Alexander A.},
  title = {On an Identity for Stochastic Integrals},
  journal = {Theory of Probability and its Applications},
  volume = {17},
  number = {4},
  pages = {717--720},
  year = {1972},
  doi = {10.1137/1117088},
  note = {Russian original: Teor. Veroyatnost. i Primenen. \textbf{17}
          (1972); English translation issue dated 1973}
}

@article{ruf,
  author = {Ruf, Johannes},
  title = {A New Proof for the Conditions of Novikov and Kazamaki},
  journal = {Stochastic Processes and their Applications},
  volume = {123},
  number = {2},
  pages = {404--421},
  year = {2013},
  doi = {10.1016/j.spa.2012.09.011}
}

@article{stummer1993,
  author = {Stummer, Wolfgang},
  title = {The {Novikov} and Entropy Conditions of Multidimensional Diffusion
           Processes with Singular Drift},
  journal = {Probability Theory and Related Fields},
  volume = {97},
  number = {4},
  pages = {515--542},
  year = {1993},
  doi = {10.1007/BF01192962}
}

@article{stummer1997,
  author = {Stummer, Wolfgang},
  title = {On Exponential Moments of Two {Brownian} Functionals},
  journal = {Statistics \& Probability Letters},
  volume = {31},
  number = {3},
  pages = {233--237},
  year = {1997},
  doi = {10.1016/S0167-7152(96)00035-1}
}

@article{stummersturm2000,
  author = {Stummer, Wolfgang and Sturm, Karl-Theodor},
  title = {On Exponentials of Additive Functionals of {Markov} Processes},
  journal = {Stochastic Processes and their Applications},
  volume = {85},
  number = {1},
  pages = {45--60},
  year = {2000},
  doi = {10.1016/S0304-4149(99)00064-2}
}

@article{takeda2002,
  author = {Takeda, Masayoshi},
  title = {Conditional Gaugeability and Subcriticality of Generalized
           {Schr\"odinger} Operators},
  journal = {Journal of Functional Analysis},
  volume = {191},
  number = {2},
  pages = {343--376},
  year = {2002},
  doi = {10.1006/jfan.2001.3864}
}

@article{larssonruf2019,
  author = {Larsson, Martin and Ruf, Johannes},
  title = {Stochastic Exponentials and Logarithms on Stochastic Intervals
           --- A Survey},
  journal = {Journal of Mathematical Analysis and Applications},
  volume = {476},
  number = {1},
  pages = {2--12},
  year = {2019},
  doi = {10.1016/j.jmaa.2018.11.040}
}

@article{dandapaniprotter2022,
  author = {Dandapani, Aditi and Protter, Philip},
  title = {Strict Local Martingales and the {Khasminskii} Test for
           Explosions},
  journal = {Stochastic Processes and their Applications},
  volume = {150},
  pages = {716--728},
  year = {2022},
  doi = {10.1016/j.spa.2019.03.009}
}

@book{binghamgoldieteugels1987,
  author = {Bingham, Nicholas H. and Goldie, Charles M. and
            Teugels, Jef L.},
  title = {Regular Variation},
  series = {Encyclopedia of Mathematics and its Applications},
  volume = {27},
  publisher = {Cambridge University Press},
  address = {Cambridge},
  year = {1987},
  doi = {10.1017/CBO9780511721434}
}

@book{bogachev1998,
  author = {Bogachev, Vladimir I.},
  title = {Gaussian Measures},
  series = {Mathematical Surveys and Monographs},
  volume = {62},
  publisher = {American Mathematical Society},
  address = {Providence, RI},
  year = {1998},
  doi = {10.1090/surv/062}
}

\end{document}